\theoremstyle{plain}
\newtheorem{theorem}{Theorem}[section]
\newtheorem*{theorem-nn}{Theorem}
\newtheorem{lemma}[theorem]{Lemma}
\newtheorem*{proposition-nn}{Proposition}
\newtheorem{corollary}[theorem]{Corollary}
\theoremstyle{definition}
\newtheorem{definition}[theorem]{Definition}
\newtheorem{defn}[theorem]{Definition}
\newtheorem{remark}[theorem]{Remark}
\newtheorem*{acknowledgment}{Acknowledgment}
\theoremstyle{remark}
\newcommand{\ta}{\tau}
\newcommand{\bZ}{\mathbbm{Z}}\newcommand{\bQ}{\mathbbm{Q}}
\newcommand{\bC}{\mathbbm{C}}\newcommand{\bG}{\mathbbm{G}}
\newcommand{\bF}{\mathbbm{F}}
\newcounter{sub}
{\begin{list}{(\arabic{sub})}{\usecounter{sub}%
\setlength{\leftmargin}{2em}}}{\end{list}}
\def\fn#1{\operatorname{#1}} 
\def\bm#1{\mathbbm{#1}}
\title{Multiplicative Invariant Fields of Dimension $\le 6$}
\author[A. Hoshi]{Akinari Hoshi}
\address{Department of Mathematics, Niigata University, Niigata 950-2181,
Japan}
\email{hoshi@math.sc.niigata-u.ac.jp}
\author[M. Kang]{Ming-chang Kang}
\address{Department of Mathematics, National Taiwan University, Taipei, Taiwan}
\email{kang@math.ntu.edu.tw}
\author[A. Yamasaki]{Aiichi Yamasaki}
\address{Department of Mathematics, Kyoto University, Kyoto 606-8502, Japan}
\email{aiichi.yamasaki@gmail.com}
\thanks{{\it Key words and phrases.} Rationality problems,
Noether's problem, crystallographic groups, integral representations,
unramified Brauer groups, algebraic tori.\\
This work was partially supported by JSPS KAKENHI Grant Numbers
24540019, 25400027, 16K05059.
Parts of the work were finished when the
first-named author and the third-named author were visiting the
National Center for Theoretic Sciences (Taipei), whose
support is gratefully acknowledged.}
\subjclass[2010]{Primary 14E08, 20C10, 14F22, 20J06.}
\begin{document}
\begin{abstract}
The finite subgroups of $GL_4(\bm{Z})$ are classified up to conjugation in \cite{BBNWZ}; in particular, there exist $710$ non-conjugate finite groups in $GL_4(\bm{Z})$. Each finite group $G$ of $GL_4(\bm{Z})$ acts naturally on $\bm{Z}^{\oplus 4}$; thus we get a faithful $G$-lattice $M$ with ${\rm rank}_\bm{Z} M=4$. In this way, there are exactly $710$ such lattices. Given a $G$-lattice $M$ with ${\rm rank}_\bm{Z} M=4$,
the group $G$ acts on the rational function field $\bm{C}(M):=\bm{C}(x_1,x_2,x_3,x_4)$
by multiplicative actions, i.e. purely monomial automorphisms over $\bm{C}$. We are concerned with the rationality problem of the fixed field $\bm{C}(M)^G$. A tool of our investigation is the unramified Brauer group of the field $\bm{C}(M)^G$ over $\bm{C}$. It is known that, if the unramified Brauer group, denoted by  ${\rm Br}_u(\bm{C}(M)^G)$, is non-trivial, then
the fixed field $\bm{C}(M)^G$ is not rational (= purely transcendental) over $\bm{C}$. A formula of the unramified Brauer group ${\rm Br}_u(\bm{C}(M)^G)$ for the multiplicative invariant field was found by Saltman in 1990. However, to calculate ${\rm Br}_u(\bm{C}(M)^G)$ for a specific multiplicatively invariant field requires additional efforts, even when the lattice $M$ is of rank equal to $4$. There is a direct decomposition ${\rm Br}_u(\bm{C}(M)^G)= B_0(G) \oplus H^2_u(G,M)$ where $H^2_u(G,M)$ is some subgroup of $H^2(G,M)$. The first summand $B_0(G)$, which is related to the faithful linear representations of $G$, has been investigated by many authors. But the second summand $H^2_u(G,M)$ doesn't receive much attention except when the rank is $\le 3$. Theorem 1. Among the $710$ finite groups $G$, let $M$ be the associated faithful
$G$-lattice with ${\rm rank}_\bm{Z} M=4$,
there exist precisely $5$ lattices $M$ with ${\rm Br}_u(\bm{C}(M)^G)\neq 0$.
In these situations, $B_0(G)=0$ and thus 
${\rm Br}_u(\bm{C}(M)^G)\subset H^2(G,M)$. 
The $5$ groups are isomorphic to 
$D_4$, $Q_8$, $QD_8$, $SL_2(\bF_3)$, $GL_2(\bF_3)$ whose 
{\rm GAP IDs} 
are {\rm (4,12,4,12), (4,32,1,2), (4,32,3,2), (4,33,3,1), (4,33,6,1)} 
respectively 
in {\rm \cite{BBNWZ}} and in {\rm \cite{GAP}}.
Theorem 2. There exist $6079$ (resp. $85308$) finite subgroups $G$ 
in $GL_5(\bm{Z})$ (resp. $GL_6(\bm{Z})$). 
Let $M$ be the lattice with rank $5$ (resp. $6$) associated to each group $G$. 
Among these lattices precisely $46$ (resp. $1073$) 
of them satisfy the condition 
${\rm Br}_u(\bm{C}(M)^G)\neq 0$. The {\rm GAP IDs} (actually the 
{\rm CARAT IDs}) of the corresponding groups $G$ may be determined explicitly. 
Motivated by these results, we construct $G$-lattices $M$ of rank 
$2n+2$, $4n$, $p(p-1)$ ($n$ is any positive integer and 
$p$ is any odd prime number) satisfying that $B_0(G)=0$ 
and $H^2_u(G,M)\neq 0$; and therefore $\bm{C}(M)^G$ are not rational 
over $\bm{C}$. 
For these $G$-lattices $M$, we prove that the flabby class 
$[M]^{fl}$ of $M$ is not invertible. 
We also construct an example of $(C_2)^3$-lattice (resp. $A_6$-lattice) 
$M$ of rank $7$ (resp. $9$) with ${\rm Br}_u(\bm{C}(M)^G)\neq 0$. 
As a consequence, we give a counter-example to Noether's problem 
for $N\rtimes A_6$ over $\bC$ where $N$ is some abelian group. 
\end{abstract}

\maketitle

\tableofcontents

%
\section{Introduction}\label{seInt}

Let $k$ be a field, $G$ be a finite group and
$\rho : G\rightarrow GL(V)$ be a faithful representation of $G$ where $V$ is a finite-dimensional vector space over $k$.
Then $G$ acts on the rational function field $k(V)$. 
Noether's problem asks whether the fixed field $k(V)^G$ is rational 
(= purely transcendental) over $k$ \cite[Sa2]{Sw}.

In order to solve the rationality problem of $k(V)^G$, it is natural and almost inevitable that we reduce the problem to that of the multiplicative invariant field $k(M)^G$ defined in Definition \ref{d1.2}; an illustration of reducing Noether's problem to the multiplicative invariant field can be found, for example, in \cite{CHKK}. When $M$ is a $G$-lattice with ${\rm rank}_\bm{Z} M=n$, the multiplicative invariant field $k(M)^G$ is nothing but $k(x_1,\ldots, x_n)^G$, the fixed field of the rational function field $k(x_1,\ldots, x_n)$ on which $G$ acts by multiplicative actions. The purpose of this article is to study the multiplicative invariant fields $k(M)^G$ where $M$ is a $G$-lattice with ${\rm rank}_\bm{Z} M=n \le 6$. In Theorem \ref{thHaj87} and Theorem \ref{thHKHR} we will review the known results for the rationality problem of the multiplicative invariant fields up to dimension $3$; see Theorem \ref{th116} for some partial result of dimension $4$.

\begin{defn}\label{d1.1}
Let $G$ be a finite group and $\bm{Z}[G]$ be the group ring.
A finitely generated $\bm{Z}[G]$-module $M$ is called a {\it $G$-lattice} if,
as an abelian group, $M$ is a free abelian group of finite rank.
We will write ${\rm rank}_\bm{Z} M$ for the rank of $M$ as a free
abelian group.
A $G$-lattice $M$ is called {\it faithful} if, for any
$\sigma\in G\setminus\{1\}$, $\sigma\cdot x\neq x$ for some $x\in M$.

Suppose that $G$ is any finite group and $\Phi : G\rightarrow GL_n(\bm{Z})$ is a group homomorphism, i.e. an integral representation of $G$. Then the group $\Phi(G)$ acts naturally on the free abelian group $M:=\bm{Z}^{\oplus n}$; thus $M$ becomes a $\bm{Z}[G]$-module. We call $M$ the $G$-lattice associated to $\Phi$ (or $\Phi(G)$). Conversely, if $M$ is a $G$-lattice with ${\rm rank}_\bm{Z} M=n$, write $M=\oplus_{1\leq i\leq n}\bm{Z}\cdot x_i$.
Then there is a group homomorphism
$\Phi : G\rightarrow GL_n(\bm{Z})$ defined as follows: If
$\sigma\cdot x_i=\sum_{1\leq j\leq n}a_{ij}\,x_j$
where $\sigma\in G$ and $a_{ij}\in\bm{Z}$,
define $\Phi(\sigma)=(a_{ij})_{1\leq i,j\leq n}\in GL_n(\bm{Z})$.

When the group homomorphism $\Phi : G\rightarrow GL_n(\bm{Z})$ is injective, the corresponding $G$-lattice is a faithful $G$-lattice. For examples, any finite subgroup $G$ of $GL_n(\bm{Z})$ gives rise to
a faithful $G$-lattice of rank $n$.

The list of all the finite subgroups of $GL_n(\bm{Z})$ (with $n \le 4$), up to conjugation, can be found in the book \cite{BBNWZ} and in GAP. As to the situations of $GL_n(\bm{Z})$ (with $n \ge 5$), Plesken etc. found the lists of all the finite subgroups of $GL_n(\bm{Z})$ (with $n =5$ and $6$); see \cite{PS} and the references therein. These lists may be found in the GAP package
CARAT \cite{CARAT} and also in \cite[Chapter 3]{HY}.

Here is a list of the total number of lattices,
up to isomorphism, of a given rank:\\
\begin{center}
\begin{tabular}{c|r}
rank & number of the lattices\\\hline
$1$ & $2$\\
$2$ & $13$\\
$3$ & $73$\\
$4$ & $710$\\
$5$ & $6079$\\
$6$ & $85308$
\end{tabular}
~\\~\\
\end{center}
\end{defn}

\begin{defn}\label{d1.2}
Let $M$ be a $G$-lattice of rank $n$ and write
$M=\oplus_{1\leq i\leq n}\bm{Z}\cdot x_i$.
For any field $k$, define $k(M)=k(x_1,x_2,\ldots,x_n)$
the rational function field of $n$ variables over $k$.
Define a {\it multiplicative action} of $G$ on $k(M)$: For any $\sigma\in G$,
if $\sigma\cdot x_i=\sum_{1\leq j\leq n}a_{ij}\,x_j$ in the $G$-lattice $M$,
then we define $\sigma\cdot x_i=\prod_{1\leq j\leq n}x_j^{a_{ij}}$
in the field $k(M)$.
Note that $G$ acts trivially on $k$.
The above multiplicative action is called a {\it purely monomial action} of $G$
on $k(M)$ in \cite{HK1}; elements in the fixed field
$k(M)^G=\{u\in k(M) : \sigma\cdot u=u\ {\rm for\ any}\ \sigma\in G\}$
is called a {\it multiplicative field invariant} in \cite{Sa4}.

In case $M$ is the $G$-lattice $\bm{Z}[G]$ where $M=\oplus_{g \in G} \bm{Z} \cdot x_g$ and $h \cdot x_g=x_{hg}$ for $h, g \in G$, then $k(M)=k(x_g : g \in G)$. In this situation we simply write the fixed field $k(M)^G$ as $k(G)$. Note that $k(G)= k(V_{\rm reg})^G$ where $G \rightarrow GL(V_{\rm reg})$ is the regular representation of $G$ over $k$.

By the no-name lemma, it is known that $k(G)$ is stably rational over $k$ if and only if so is $k(V)^G$ where $\rho : G\rightarrow GL(V)$ is any faithful representation of $G$ over $k$ (see the proof of \cite[Proposition 2.2]{CHK}). 
Thus the rationality problem of $k(G)$ over $k$ is also called 
{\it Noether's problem} \cite{Sw}.
\end{defn}

\begin{defn}\label{d1.3}
Let $k$ be a field and $\mu$ be a multiplicative subgroup of $k \setminus \{0 \}$ containing all the
roots of unity in $k$.
If $M$ is a $G$-lattice, a {\it $\mu$-extension} is an exact sequence of
$\bm{Z}[G]$-modules given by
$(\alpha) : 1\rightarrow\mu\rightarrow M_\alpha\rightarrow M\rightarrow 0$
where $G$ acts trivially on $\mu$. Be aware that $M_{\alpha}=\mu \oplus M$ as abelian groups, but not as $\bm{Z}[G]$-modules except when the extension $(\alpha)$ splits.

As in Definition \ref{d1.2}, if $M=\oplus_{1\leq i\leq n}\bm{Z}\cdot x_i$ and $M_{\alpha}$ is a $\mu$-extension,
we define the field $k_\alpha(M)=k(x_1,\ldots,x_n)$ the rational function
field of $n$ variables over $k$; the action of $G$ on $k_\alpha(M)$ will be described in the next paragraph. Note that $M_{\alpha}$ is embedded into the
multiplicative group $k_\alpha(M) \setminus \{ 0 \}$ by sending $(\epsilon, \sum_{1 \le i \le n}b_ix_i)\in \mu \oplus M$ to the element $\epsilon \prod_{1 \le i \le n}x_i^{b_i}$ in the field $k_\alpha(M) =k(x_1,\ldots,x_n)$.

The group $G$ acts on $k_\alpha(M)$ by a twisted multiplicative action:
Suppose that, in $M$ we have $\sigma\cdot x_i=\sum_{1\leq j\leq n}a_{ij}\,x_j$, and
in $M_\alpha$ we have $\sigma\cdot x_i=\varepsilon_i(\sigma)+\sum_{1\leq j\leq n}a_{ij}\,x_j$ where $\varepsilon_i(\sigma)\in\mu$.
Then we define
$\sigma\cdot x_i=\varepsilon_i(\sigma)\prod_{1\leq j\leq n}x_j^{a_{ij}}$
in $k_\alpha(M)$. Again $G$ acts trivially on the coefficient field $k$.
The above group action is called a {\it monomial group action} in \cite{HK1};
the elements of $k_\alpha(M)^G$ are called {\it twisted multiplicative
field invariants} in \cite{Sa5}.

Note that, if the extension $(\alpha) : 1\rightarrow \mu\rightarrow M_\alpha
\rightarrow M\rightarrow 0$ is a split extension, then $k_\alpha(M)=k(M)$
and the twisted multiplicative action is reduced to the multiplicative
action in Definition \ref{d1.2}.
\end{defn}

Now return to the rationality problem.

First of all, recall some terminology.

Let $k$ be a field, and $L$ be a finitely generated field
extension of $k$. $L$ is called {\it $k$-rational} (or {\it rational over
$k$}) if $L$ is purely transcendental over $k$, i.e.\ $L$ is
isomorphic to some rational function field over $k$.
$L$ is called {\it stably $k$-rational} if $L(y_1,\ldots,y_m)$ is $k$-rational
for some $y_1,\ldots,y_m$ which are algebraically independent over $L$.
$L$ is called {\it $k$-unirational} if $L$ is $k$-isomorphic to a
subfield of some $k$-rational extension of $k$.
We recall another notion, retract rationality, due to Saltman \cite{Sa3}.

\begin{defn}\label{d1.9}
Let $k$ be an infinite field and $L$ be a field containing $k$.
$L$ is called {\it retract $k$-rational}, if $L$ is the quotient field of some affine
domain $A$ over $k$ and there exist $k$-algebra morphisms $\varphi: A\to
k[X_1,\ldots,X_m][1/f]$, $\psi:k[X_1,\ldots, X_m][1/f] \to A$
satisfying that $\psi\circ \varphi=1_A$, the identity map on $A$, where 
$k[X_1,\ldots,X_m]$ is a polynomial ring of $m$ variables over $k$, $f\in
k[X_1,\ldots,X_m]\backslash \{0\}$.
\end{defn}

It is not difficult to see that ``$k$-rational" $\Rightarrow$ ``stably $k$-rational" $\Rightarrow$ ``retract $k$-rational" $\Rightarrow$ ``$k$-unirational".

As mentioned before, in solving Noether's problem, no matter
what in the affirmative direction or in the negative direction,
it is crucial to consider the rationality problems of
$k(M)^G$ or $k_\alpha(M)^G$ where $M$ (resp. $M_\alpha$) is a $G$-lattice
(resp. a $\mu$-extension). In the following we list some previously known results along this line.

\begin{theorem}[{Hajja \cite{Ha}}]\label{thHaj87}
Let $k$ be a field and $G$ be a finite group acting on $k(x_1,x_2)$
by monomial $k$-automorphisms.
Then $k(x_1,x_2)^{G}$ is $k$-rational.
\end{theorem}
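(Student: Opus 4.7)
The plan is to exploit the classification of finite subgroups of $GL_2(\bm{Z})$ and then proceed case by case, handling the scalar twist inside each case. A monomial $k$-automorphism of $k(x_1,x_2)$ has the shape $\sigma(x_i)=c_i(\sigma)\,x_1^{a_{i1}(\sigma)}x_2^{a_{i2}(\sigma)}$, so a monomial $G$-action is encoded by a homomorphism $\phi:G\to GL_2(\bm{Z})$ (the exponent matrix) together with a map $c:G\to(k^{\times})^2$ satisfying the twisted cocycle condition that makes $\sigma\mapsto(\phi(\sigma),c(\sigma))$ into a group homomorphism. The problem is therefore a combination of an integral representation of $G$ and a $1$-cocycle with values in the split torus $(k^{\times})^2$.

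The first reduction is to the case where $\phi$ is injective. Set $N:=\ker\phi$; each $\sigma\in N$ acts diagonally by $\sigma\cdot x_i=c_i(\sigma)x_i$, where $c_i:N\to k^{\times}$ is an honest group homomorphism whose image is a finite group of roots of unity. The $N$-invariant monomials $x_1^{a}x_2^{b}$ are exactly those for which $c_1(\sigma)^{a}c_2(\sigma)^{b}=1$ for all $\sigma\in N$; this defines a rank-$2$ sublattice of $\bm{Z}^2$, and its Smith normal form yields a $\bm{Z}$-basis $(y_1,y_2)$ with $k(x_1,x_2)^N=k(y_1,y_2)$ rational. The residual group $G/N$ then embeds into $GL_2(\bm{Z})$ and still acts monomially on $k(y_1,y_2)$, so we may assume $G\hookrightarrow GL_2(\bm{Z})$.

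The second step is the case-by-case analysis. Up to conjugation there are exactly $13$ finite subgroups of $GL_2(\bm{Z})$: the cyclic groups of orders $1,2,3,4,6$ (with several conjugacy classes of order $2$) and the dihedral groups of orders $2,4,6,8,12$. For each such $G$ and each residual twist $c$, one writes down two explicit algebraically independent invariants. The prototypical example: if $G=\langle\sigma\rangle\cong C_2$ acts by $x_1\mapsto c_1x_1^{-1}$, $x_2\mapsto x_2$, the invariants are $u_1=x_1+c_1x_1^{-1}$ and $u_2=x_2$, the twist being absorbed inside the Laurent polynomial rather than trivialized. The cyclic cases of order $3,4,6$ are handled by analogous ``twisted averages'', and the dihedral cases by symmetrizing cyclic invariants under the additional involution. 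In every case, the degree identity $[k(x_1,x_2):k(x_1,x_2)^G]=|G|$ combined with the correct transcendence degree shows that the two displayed invariants generate the whole fixed field.

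The main obstacle is the combinatorial bookkeeping of the $13$ cases together with their twists in $H^1(G,(k^{\times})^2)$: the twists can be genuinely nontrivial (for instance $k^{\times}/(k^{\times})^2$ for the inversion action of $C_2$), so they cannot in general be eliminated by a substitution $y_i\mapsto\lambda_iy_i$. What saves us in rank $2$ is that there is always enough room to write down an invariant Laurent polynomial whose coefficients absorb the twist — a feature that fails starting from rank $4$, as the main theorems of the present paper will demonstrate.
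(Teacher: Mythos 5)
Your overall strategy --- reduce to a faithful exponent representation, then work through the $13$ conjugacy classes of finite subgroups of $GL_2(\bm{Z})$ together with their twists --- is exactly the route of Hajja's proof, which this paper does not reproduce but describes as ``constructing the transcendence bases in a case by case fashion.'' Your first reduction is correct and complete: faithfulness forces $|N|$ to be prime to $\mathrm{char}\,k$ and the characters $c_1,c_2$ to generate the full dual of $N$, so the $N$-invariant monomials form a sublattice $L$ of index $|N|$, a $\bm{Z}$-basis of $L$ gives $k(x_1,x_2)^N=k(y_1,y_2)$, and $G/N$ acts monomially with injective exponent map on the new variables.

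The gap is in the second step, which is where the entire content of the theorem lives. First, your verification principle is stated backwards: two algebraically independent invariants $u_1,u_2$ together with $[k(x_1,x_2):k(x_1,x_2)^G]=|G|$ do not show $k(u_1,u_2)=k(x_1,x_2)^G$, since a proper subfield of the fixed field can still have transcendence degree $2$. What one must prove is the upper bound $[k(x_1,x_2):k(u_1,u_2)]\le |G|$; in your $C_2$ prototype this is visible from $x_1^2-u_1x_1+c_1=0$, but establishing it is the nontrivial verification for the larger groups. Second, the remaining twelve cases are only asserted. For the twisted cyclic actions of order $3$, $4$, $6$ (e.g. $x_1\mapsto x_2\mapsto c\,x_1^{-1}x_2^{-1}$ or $x_1\mapsto x_2\mapsto c\,x_1^{-1}$) the ``twisted average'' $\sum_i\sigma^i(x_1)$ need not belong to a generating pair, and for the dihedral groups of order $8$ and $12$ one must first pass to the fixed field of the rotation subgroup and then control the degree again; the claim that ``there is always enough room to absorb the twist'' is precisely the theorem being proved and is not self-evident --- indeed it is the feature that fails in higher rank, as you yourself note. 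So the skeleton is right and matches the actual proof, but as written the argument carries out one case of thirteen.
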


\begin{theorem}[{Hajja, Kang \cite{HK1,HK2}, Hoshi, Rikuna \cite{HR}}]
\label{thHKHR}
Let $k$ be a field and $G$ be a finite group acting on
$k(x_1,x_2,x_3)$ by purely monomial $k$-automorphisms.
Then $k(x_1,x_2,x_3)^G$ is $k$-rational.
\end{theorem}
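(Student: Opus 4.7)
The plan is to reduce to the case where $G$ is a finite subgroup of $GL_3(\bm{Z})$ by factoring through the image of $G$ in $GL_3(\bm{Z})$ (the kernel fixes $k(x_1,x_2,x_3)$ pointwise), and then to exploit the classification in \cite{BBNWZ} of the $73$ conjugacy classes of such subgroups. It suffices to prove $k$-rationality of $k(M)^G$ for each of the $73$ corresponding faithful rank-$3$ $G$-lattices $M$.

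The principal reduction tool is the following. Suppose $M$ fits in a $G$-exact sequence $0 \to N \to M \to M/N \to 0$ with $N$ of rank $2$ and $M/N$ the trivial rank-$1$ lattice. A torsion-freeness argument shows that, since the $G$-action on $M$ is faithful and $M/N$ is trivial, $G$ must act faithfully on $N$ as well. Picking a lift $x_3\in M$ of a generator of $M/N$, one has $g(x_3)=m_g\,x_3$ with $m_g\in k(N)^{\times}$ a Laurent monomial, and $g\mapsto m_g$ is a $1$-cocycle for the faithful $G$-action on $k(N)$. Hilbert~$90$ applied to the Galois extension $k(N)/k(N)^G$ trivialises this cocycle, yielding $y\in k(M)^{\times}$ with $k(M)^G=k(N)^G(y)$. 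Since the induced action of $G$ on $N$ is purely monomial in $2$ variables, Theorem~\ref{thHaj87} gives $k$-rationality of $k(N)^G$, hence of $k(M)^G$. A symmetric argument via the no-name lemma handles the dual situation where $N$ has rank $1$ with trivial $G$-action and $M/N$ has rank $2$. These reductions, sometimes applied after passing to a normal-subgroup tower to manufacture a trivial sublattice or quotient, dispose of the bulk of the $73$ cases.

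For the residual cases where $M$ admits no such useful filtration --- in particular when $G$ acts irreducibly on $M\otimes \bm{Q}$, as happens for the Weyl groups $W(A_3)\cong S_4$ and $W(B_3)\cong S_4\times C_2$ and certain of their subgroups acting via the standard reflection representation on a rank-$3$ root lattice --- one constructs generators of $k(M)^G$ explicitly. Concretely, one takes matrix generators of $G$ from \cite{BBNWZ}, forms candidate $G$-invariants $u_1,u_2,u_3$ via $G$-orbit sums and products, Newton-type identities, or Galois descent applied to a chain of normal subgroups, verifies algebraic independence over $k$, and confirms the degree identity $[k(M):k(u_1,u_2,u_3)]=|G|$; combined with L\"uroth and the transcendence-degree count, this forces $k(M)^G=k(u_1,u_2,u_3)$.

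The main obstacle lies in these irreducible cases. Formal homological tools such as flabby class analysis yield at best stable rationality (indeed all $73$ rank-$3$ lattices turn out to have permutation flabby class), but establishing honest rationality in a fixed small dimension requires the explicit invariant-theoretic constructions above. Carrying this out uniformly across all $73$ conjugacy classes, particularly for the maximal groups of order up to $48$, is the technical heart of the theorem and is what required the combined efforts of \cite{HK1,HK2,HR}.
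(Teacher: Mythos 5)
The paper offers no proof of this statement: it is quoted from \cite{HK1,HK2,HR}, and the only gloss given is that those proofs ``construct the transcendence bases in a case by case fashion'' over the $73$ conjugacy classes of finite subgroups of $GL_3(\bm{Z})$ from \cite{BBNWZ}. Your outline reproduces exactly that strategy --- reduction to the $73$ faithful rank-$3$ lattices, peeling off a rank-$1$ piece via a filtration and Hilbert 90 so as to fall back on Theorem \ref{thHaj87}, and explicit invariant-theoretic constructions for the irreducible cases --- so in approach it matches the cited proofs. The reduction machinery you describe is sound: the unipotent-of-finite-order argument does show $G$ acts faithfully on $N$ when $M/N$ is trivial, the cocycle $g\mapsto m_g$ is killed by Hilbert 90, and in the dual situation the correct tool is simply Theorem \ref{thHaj87} applied over the base field $k(x_1)$ (the induced action on the remaining two variables is monomial with coefficients in $k(x_1)^{\times}$); invoking the no-name lemma there is a terminological slip but not an error of substance.

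Two caveats. First, your parenthetical assertion that all $73$ rank-$3$ lattices have permutation flabby class is false: by Kunyavskii's classification of three-dimensional algebraic tori (see \cite{HY}), $15$ of the $73$ classes have non-trivial, indeed non-invertible, flabby class, and the corresponding non-split tori are not even stably rational. This does not damage your argument --- rationality of the split invariant field $k(M)^G$ does not require $[M]^{fl}=0$, as Theorem \ref{t7.5} gives only sufficient conditions --- but the aside should be deleted. Second, and more seriously as a matter of completeness, the irreducible cases are only a roadmap in your write-up: actually producing $u_1,u_2,u_3$ for each remaining group, verifying independence and the degree count, is the entire content of \cite{HK2} and \cite{HR} (the ``last case'' of \cite{HR} resisted precisely these reductions), and your proposal asserts rather than supplies that work.
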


\begin{theorem}[Hoshi, Kang, Kitayama {\cite[Theorem 1.16]{HKK}}]\label{th116}
Let $k$ be a field, $G$ be a finite group and $M$ be a $G$-lattice
with ${\rm rank}_{\bZ} M=4$ such that $G$ acts on $k(M)$ by
purely monomial $k$-automorphisms.
If $M$ is decomposable, i.e. $M=M_1\oplus M_2$ as $\bZ[G]$-modules where
$1\le {\rm rank}_{\bZ} M_1 \le 3$, then $k(M)^G$ is $k$-rational.
\end{theorem}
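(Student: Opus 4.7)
The approach is a two-stage descent reducing the rank-$4$ problem to Theorems~\ref{thHaj87} and~\ref{thHKHR}. By symmetry of the decomposition, assume $r := \mathrm{rank}_{\bZ}\, M_1 \le s := \mathrm{rank}_{\bZ}\, M_2$, so $(r, s) \in \{(1, 3), (2, 2)\}$. Choose bases so that $k(M) = k(x_1, \dots, x_r, y_1, \dots, y_s)$ and let $N := \ker(G \to \mathrm{Aut}(M_1))$, a normal subgroup that fixes each $x_i$ and acts purely monomially on the $y_j$'s. Applying Theorem~\ref{thHKHR} (or Theorem~\ref{thHaj87} if $s = 2$) with base field $k(x_1, \dots, x_r)$ yields $k(M)^N = k(x_1, \dots, x_r, v_1, \dots, v_s)$, a purely transcendental degree-$4$ extension of $k$, where the $v_i$'s may be chosen in $k(y_1, \dots, y_s)$. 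It remains to compute $(k(M)^N)^{\overline{G}}$, where $\overline{G} := G/N$ acts faithfully and purely monomially on $(x_1, \dots, x_r)$ and by a $k$-automorphism on $(v_1, \dots, v_s)$. Crucially, $k(v_1, \dots, v_s)^{\overline{G}} = k(M_2)^G$, which is itself $k$-rational by Theorems~\ref{thHaj87}--\ref{thHKHR}.

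For $(r, s) = (1, 3)$: if $M_1 \cong \bZ$ has trivial action, rationality is immediate. Otherwise $M_1 \cong \bZ^-$ and $\overline{G} = \langle \sigma \rangle$ has order $2$ with $\sigma(x_1) = x_1^{-1}$. Set $a := x_1 + x_1^{-1}$ and $b := (x_1 - x_1^{-1})/2$, so that $b^2 = (a^2 - 4)/4$, $\sigma(a) = a$, and $\sigma(b) = -b$. If $\sigma$ acts trivially on $K := k(v_1, v_2, v_3)$, then $k(M)^G = K(a)$ is obviously rational. Otherwise $K/K^\sigma$ is degree-$2$ Galois, so I may pick $\beta \in K$ with $\sigma(\beta) = -\beta$ (e.g.\ $\beta = \alpha - \sigma(\alpha)$ for any $\alpha \notin K^\sigma$); then $\gamma := \beta b$ is $\sigma$-invariant and satisfies $\gamma^2 = \beta^2 (a^2-4)/4$ with $\beta^2 \in K^\sigma$. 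This defines a conic in $(a, \gamma)$ over $K^\sigma$ with the obvious rational point $(2, 0)$; a standard parametrization exhibits $k(M)^G$ as $K^\sigma(t)$, which is $k$-rational since $K^\sigma = k(M_2)^G$ is.

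For $(r, s) = (2, 2)$ the same two-stage reduction leaves the task of computing $k(v_1, v_2)(x_1, x_2)^{\overline{G}}$, where $\overline{G}$ acts purely monomially on $(x_1, x_2)$ but (in general) non-trivially on $k(v_1, v_2)$. This is the main obstacle: Theorem~\ref{thHaj87} as stated requires the base field to be pointwise fixed by the group. The plan is a finite case analysis on the image $\overline{G} \hookrightarrow GL_2(\bZ)$ (cyclic $C_n$ with $n \in \{1,2,3,4,6\}$ or dihedral $D_n$ with $n \in \{1,2,3,4,6\}$). For each conjugacy class one produces $\overline{G}$-invariant combinations of $(x_1, x_2)$, possibly twisted by anti-invariant elements of $k(v_1, v_2)$ in the spirit of the $(1,3)$ argument, and verifies that the resulting fixed field is a rational extension of $k(v_1, v_2)^{\overline{G}} = k(M_2)^G$, which is itself $k$-rational by Theorem~\ref{thHaj87}. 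Combining the two cases finishes the proof.
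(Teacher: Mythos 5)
First, note that this paper does not prove Theorem \ref{th116} at all: it is imported verbatim from \cite[Theorem 1.16]{HKK}, so there is no internal proof to compare against. Judging your proposal on its own merits: the two-stage reduction (kill $N=\ker(G\to\mathrm{Aut}(M_1))$ using Theorems \ref{thHaj87}--\ref{thHKHR} over the base $k(M_1)$, then deal with the residual $\overline{G}$-action) is the right skeleton, and your $(1,3)$ argument is essentially complete in characteristic $\neq 2$: the fixed field is the function field of the conic $\gamma^2=d(a^2-4)/4$ over $K^{\sigma}$, which has the rational point $(2,0)$. Two caveats there: the division by $2$ and the use of anti-invariants break down in characteristic $2$ (where one needs an Artin--Schreier variant, e.g. $u=x_1/a$ with $\sigma(u)=u+1$), and the theorem is stated for arbitrary $k$.

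The genuine gap is the $(2,2)$ case, which you correctly identify as the main obstacle but then resolve only by announcing a case analysis. This cannot be waved through. After the first descent you face a purely monomial action of $\overline{G}\le GL_2(\bZ)$ on $k(x_1,x_2)$ over a base field $k(v_1,v_2)$ on which $\overline{G}$ acts non-trivially and \emph{non-monomially} (the $v_i$ produced by Hajja's theorem are complicated rational functions, and the residual action on them is uncontrolled). The heuristic ``each factor's invariant field is rational, so the whole thing is'' is demonstrably false in this setting: Theorem \ref{th62} (i.e. \cite[Theorem 6.2]{HKK}) exhibits a decomposable $D_4$-lattice of type $3+2$ with both $\bC(M_1)^G$ and $\bC(M_2)^G$ rational but $\bC(M)^G$ not even retract rational. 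What \cite{HKK} actually does is develop rationality theorems for \emph{quasi-monomial} actions in dimensions $1$ and $2$ (monomial action on $x_1,x_2$ with coefficients in, and a nontrivial action on, the base field $K$); the two-dimensional theorem there has exceptional cases governed by conics/quaternion algebras over $K^G$, and the substance of the $(2,2)$ case is showing those exceptions do not occur for actions arising from a purely monomial rank-$4$ lattice. Until you supply that input (or an explicit verification over all subdirect products of the finite subgroups of $GL_2(\bZ)$), the proof is incomplete precisely where the theorem is hard.
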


\begin{theorem}[Hoshi, Kang, Kitayama {\cite[Theorem 6.2]{HKK}}]\label{th62}
Let $k$ be a field, $G$ be a finite group and $M$ be a $G$-lattice
such that $G$ acts on $k(M)$ by purely monomial $k$-automorphisms.
Assume that
{\rm (i)} $M=M_1\oplus M_2$ as $\bZ[G]$-modules where
${\rm rank}_{\bZ}M_1=3$ and ${\rm rank}_{\bZ} M_2=2$,
{\rm (ii)} either $M_1$ or $M_2$ is a faithful $G$-lattice.
Then $k(M)^G$ is $k$-rational except the following situation:
{\rm char} $k\ne 2$, $G=\langle\sigma,\tau\rangle \simeq D_4$
and $M_1=\bigoplus_{1\le i\le 3}
\bZ x_i$, $M_2=\bigoplus_{1\le j\le 2} \bZ y_j$ such that
$\sigma:x_1\leftrightarrow x_2$, $x_3\mapsto -x_1-x_2-x_3$,
$y_1\mapsto y_2\mapsto -y_1$, $\tau: x_1\leftrightarrow x_3$,
$x_2\mapsto -x_1-x_2-x_3$, $y_1\leftrightarrow y_2$ where the
$\bZ[G]$-module structure of $M$ is written additively.
For the exceptional case, $k(M)^G$ is not retract $k$-rational.
\end{theorem}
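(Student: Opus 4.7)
The plan is to split the proof into two parts: (1) establishing $k$-rationality in all non-exceptional cases, and (2) establishing failure of retract $k$-rationality for the exceptional $D_4$-lattice.

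For part (1), I would run a case analysis based on which summand is faithful. If $M_2$ is faithful, then $G$ embeds in $GL_2(\bZ)$, constraining $G$ to be cyclic or dihedral of small order; if $M_1$ is faithful, then $G$ embeds in $GL_3(\bZ)$, giving one of the $73$ conjugacy classes from \cite{BBNWZ}. In either case, one enumerates the compatible pairs $(M_1, M_2)$ using \cite{BBNWZ} and, for each pair, tries to construct explicit rational generators of $k(M)^G$. The natural workhorse is a two-step descent: first rationalize the faithful summand via Theorem~\ref{thHaj87} or Theorem~\ref{thHKHR}, producing $k(M_j)^G$ purely transcendental over $k$; then extend to $k(M)^G$ by exhibiting the remaining invariants through a relative (``twisted monomial'') version of the Hajja--Kang argument, applied over the coefficient field $k(M_j)^G$ on which $G$ no longer acts trivially.

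For part (2), the plan is to invoke Saltman's formula recalled in the abstract,
\begin{equation*}
{\rm Br}_u(k(M)^G) \;=\; B_0(G) \,\oplus\, H^2_u(G,M),
\end{equation*}
and exhibit a nonzero element of the right-hand side. For $G = D_4$ one has $B_0(D_4) = 0$, so the task reduces to showing $H^2_u(G,M) \neq 0$. Concretely, I would compute $H^2(G,M)$ directly from the explicit $D_4$-action on the given rank-$5$ lattice, then verify Saltman's criterion---that a candidate class in $H^2(G,M)$ has the right restrictions to the bicyclic subgroups of $G$. The hypothesis $\ch k \neq 2$ enters because the obstruction class is $2$-primary. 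Once $H^2_u(G,M) \neq 0$ is established, ${\rm Br}_u(k(M)^G) \neq 0$, which in particular rules out retract $k$-rationality.

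The main obstacle is part (1): even with \cite{BBNWZ} providing a manageable list of cases, many of them force a descent through an intermediate field on which $G$ still acts non-trivially, so a direct appeal to Theorems~\ref{thHaj87}--\ref{thHKHR} is unavailable. One has to handle these by hand, producing explicit transcendence bases through a sequence of invariant substitutions adapted to the $(M_1, M_2)$ splitting. The exceptional $D_4$-lattice is the unique case where the relative Hajja--Kang construction must fail, and part (2) confirms cohomologically that no other rationality argument could succeed there.
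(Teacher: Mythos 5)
This theorem is quoted from \cite[Theorem 6.2]{HKK}; the present paper does not reprove it, so your proposal can only be measured against the cited source and against Remark (3) following Theorem \ref{t1.5}. Your part (2) is the right idea over $\bC$ and matches the paper's discussion: $B_0(D_4)=0$, $H^2_u(G,M)\simeq H^2_u(G,M_1)\oplus H^2_u(G,M_2)$ with the nontrivial contribution $\bZ/2\bZ$ coming from the \emph{non-faithful} summand $M_1$, detected by Saltman's bicyclic-restriction criterion applied to the faithful lattice $M$. But as written it proves less than the statement. Saltman's formula ${\rm Br}_u(k(M)^G)=B_0(G)\oplus H^2_u(G,M)$ (Theorems \ref{t2.12} and \ref{thSa4}, Definition \ref{d1.4}) is only available for $k$ algebraically closed of characteristic $0$, whereas the theorem asserts failure of retract $k$-rationality for \emph{every} $k$ with ${\rm char}\,k\neq 2$. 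To cover arbitrary such $k$ you need a characteristic-free obstruction: show that the flabby class $[M]^{fl}$ is not invertible (a purely integral-representation-theoretic computation, e.g.\ by exhibiting a subgroup $H$ with $H^1(H,E)\neq 0$ for a flabby resolution $0\to M\to P\to E\to 0$) and combine it with the retract-rationality criteria of Saltman and Kang (cf.\ Theorem \ref{t7.5} and Corollary \ref{t7.6}); alternatively, base-change retract $k$-rationality to $\bar k$ and treat positive characteristic separately. Your remark that ``${\rm char}\,k\neq 2$ enters because the class is $2$-primary'' is a heuristic, not a proof.

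Part (1) is a plan rather than an argument, and the step you defer is exactly the hard one. After killing the kernel $N$ of $G\to{\rm Aut}(M_2)$ you are left with $G/N$ acting on $k(M_1)^{N}(y_1,y_2)$ monomially in $y_1,y_2$ but nontrivially on the base field $k(M_1)^N$; Theorem \ref{thHaj87} assumes the group acts by $k$-automorphisms, i.e.\ fixes the base field, so it does not apply, and the ``relative Hajja--Kang argument'' you invoke is precisely the quasi-monomial machinery that \cite{HKK} had to develop (rationality of two-dimensional quasi-monomial actions, in essence the rationality of two-dimensional tori and conic-bundle-type fixed fields). Without stating and using such a result the case analysis cannot be completed, and it is this finer analysis that isolates the single $D_4$-lattice (the one with CARAT ID $(5,100,11)$ in Table $2$-$3$) as the unique exception.
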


The proofs of Theorem \ref{thHaj87} and Theorem \ref{thHKHR} are achieved by constructing the transcendence bases in a case by case fashion (there are no more than $13 + 73$ cases!). When we move to the rank $4$ lattices, we have $710$ lattices. This explains partially the reason why we consider only the decomposable lattices in Theorem
\ref{th116}. However, Theorem \ref{th62} tells us some pathological case may arise even for the decomposable lattices of rank $5$. How to understand the rationality problem of $k(M)^G$ for the situation ${\rm rank}_\bm{Z} M \ge 4$?

A useful obstruction to the rationality problem is the unramified Brauer group of an algebraic variety $V$ defined over $\bC$, denoted by $\fn{Br}_u(V)$. If a unirational smooth projective variety $V$ is rational, then $\fn{Br}_u(V)=0$. $\fn{Br}_u(V)$ is a birational invariant; it is the torsion subgroup of $H^3(V, \bZ)$ in the counter-example constructed by M. Artin and Mumford \cite{AM}. The unramified Brauer group was recast by Saltman in an algebraic form \cite{Sa2}; thus we may define $\fn{Br}_u(K)$ for any field extension $K$ over $\bC$.  Saltman and Bogomolov proposed very power methods to computing $\fn{Br}_u(K)$ \cite[Bo]{Sa5}. In this article, we focus on the computing of $\fn{Br}_u(\bm{C}(M)^G)$ for any multiplicative invariant field $\bm{C}(M)^G$ where $M$ is a lattice of rank $\le 6$. If $\bm{C}(M)^G$ is retract rational, it is necessary that $\fn{Br}_u(\bm{C}(M)^G)=0$. Equivalently, the non-vanishing of $\fn{Br}_u(\bm{C}(M)^G)$ implies that the field $\bm{C}(M)^G$ is not retract rational; in particular, it is not rational over $\bC$. We will emphasize that the vanishing of $\fn{Br}_u(\bm{C}(M)^G)$ is a necessary condition, but not a sufficient condition, of retract rationality; it may happen that $\fn{Br}_u(\bm{C}(M)^G)=0$ while $\bm{C}(M)^G$ is not retract rational (see \cite[HKY]{Pe}).

In case ${\rm rank}_\bm{Z} M \le 3$, $\fn{Br}_u(\bm{C}(M)^G)=0$ for all lattices $M$ because $\bm{C}(M)^G$ are always rational (see Theorem \ref{thHaj87} and Theorem \ref{thHKHR}). We will classify all the lattices $M$ with $\fn{Br}_u(\bm{C}(M)^G)\neq 0$ when ${\rm rank}_\bm{Z} M =4, 5$ and $6$. Thus $\bm{C}(M)^G$ are not retract rational for these lattices (and thus are not rational). In this way we obtain a plenty of ``counter-examples" to the rationality problem for lattices of rank $\ge 4$.

On the other hand, Theorem \ref{t7.5} provides a sufficient condition for  $\bm{C}(M)^G$ to be retract $\bm{C}$-rational. In this way, among lattices $M$ with rank $= 4, 5$ and $6$, we find many lattices $M$ (but not all of them) such that $\bm{C}(M)^G$ are retract $\bm{C}$-rational.

\medskip
\begin{defn}\label{d1.12}
Let $k\subset K$ be a finitely generated extension of fields.
The notion of the {\it unramified Brauer group} of $K$ over $k$ was introduced by Saltman \cite{Sa2}; we denote it by $\fn{Br}_{u,k}(K)$, or by $\fn{Br}_u(K)$ if the base field $k$ is understood from the context.

By definition, $\fn{Br}_{u,k}(K)=\bigcap_R \fn{Image} \{ \fn{Br}(R)\to
\fn{Br}(K)\}$ where $\fn{Br}(R)\to \fn{Br}(K)$ is the natural morphism of
Brauer groups and $R$ runs over all the discrete valuation rings $R$
such that $k\subset R\subset K$ and $K$ is the quotient field of
$R$.

If $k$ is an infinite field and $K$ is retract $k$-rational, it can be shown that the natural map $\fn{Br}(k)\to \fn{Br}_{u,k} (K)$ is an isomorphism \cite[Proposition 2.2]{Sa4}.
In particular, if $k$ is an algebraically closed field (for examples, $k= \bm{C}$) and $K$ is
retract $k$-rational, then $\fn{Br}_{u,k}(K)=0$. Consequently, if $k$ is algebraically closed and ${\rm Br}_{u,k}(K)$ is non-trivial, then $K$ is not retract $k$-rational (and thus it is not stably $k$-rational, in particular).

In Definition \ref{d1.4}, we will see that ${\rm Br}_u(\bm{C}(M)^G) \simeq B_0(G) \oplus H^2_u(G,M)$ where $B_0(G)$ is the so-called Bogomolov multiplier and $H^2_u(G,M)$ is some subgroup of the group cohomology $H^2(G,M)$. We remark that $B_0(G)$ is related to the rationality of $\bC(V)^G$ where $G \to GL(V)$ is any faithful linear representation of $G$ over $\bC$; on the other hand, $H^2_u(G,M)$ arises from the multiplicative nature of the field $\bm{C}(M)^G$.
\end{defn}

\bigskip
Here is one of the main results of this article, which tells exactly the lattices $M$ with rank $\le 6$ for which the unramified Brauer group of $\bC(M)^G$ is non-trivial. For those $M$ with trivial ${\rm Br}_u(\bm{C}(M)^G)$ we do not know whether $\bC(M)^G$ are $\bC$-rational or not.

\begin{theorem}\label{t1.5}
Let $G$ be a finite group and $M$ be a faithful $G$-lattice.
\\
{\rm (1)} If ${\rm rank}_\bm{Z} M \le 3$, then ${\rm Br}_u(\bm{C}(M)^G)=0$.\\
{\rm (2)} If ${\rm rank}_\bm{Z} M =4$, then ${\rm Br}_u(\bm{C}(M)^G)\neq 0$
if and only if $M$ is one of the $5$ cases in {\rm Table} $1$. 
Moreover, if $M$ is one of the $5$ $G$-lattices with 
${\rm Br}_u(\bm{C}(M)^G)\neq 0$, then $B_0(G)=0$ and ${\rm Br}_u(\bm{C}(M)^G)=H_u^2(G,M)$.\\
{\rm (3)} If ${\rm rank}_\bm{Z} M =5$, then ${\rm Br}_u(\bm{C}(M)^G)\neq 0$ if and only if
$M$ is one of the $46$ cases in {\rm Table} $2$ of {\rm Section} $\ref{seTables}$.
Moreover, if $M$ is one of the $46$ $G$-lattices with 
${\rm Br}_u(\bm{C}(M)^G)\neq 0$, then $B_0(G)=0$ and ${\rm Br}_u(\bm{C}(M)^G)=H_u^2(G,M)$. \\
{\rm (4)} If ${\rm rank}_\bm{Z} M =6$, then
${\rm Br}_u(\bm{C}(M)^G)\neq 0$ if and only if
$M$ is one of the $1073$ cases as in {\rm Table} $3$ of {\rm Section} $\ref{seTables}$.
Moreover, if $M$ is one of the 
$1073$ $G$-lattices with 
${\rm Br}_u(\bm{C}(M)^G)\neq 0$, then $B_0(G)=0$ and ${\rm Br}_u(\bm{C}(M)^G)=H_u^2(G,M)$,
except for $24$ cases
with $B_0(G)=\bm{Z}/2\bm{Z}$ where the {\rm CARAT ID} of $G$ are
$(6,6458,i)$, $(6,6459,i)$, $(6,6464,i)$ $(1\leq i\leq 8)$.
Note that $22$ cases out of the exceptional $24$ cases satisfy $H_u^2(G,M)=0$.
$($See {\rm  Definition} $\ref{d1.4}$ for the definitions of $B_0(G)=H_u^2(G,\bm{Q}/\bm{Z})$ and $H_u^2(G,M)$.$)$
\end{theorem}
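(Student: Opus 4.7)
The plan rests on the Saltman decomposition
${\rm Br}_u(\bm{C}(M)^G) \cong B_0(G) \oplus H^2_u(G, M)$
recalled in Definition \ref{d1.12}, which turns the theorem into two parallel
group-cohomology computations, ranged over the lattice databases of
\cite{BBNWZ} and \cite{CARAT}. Part (1) is immediate: by Theorems
\ref{thHaj87} and \ref{thHKHR}, $\bm{C}(M)^G$ is $\bm{C}$-rational whenever
${\rm rank}_\bm{Z} M \le 3$, hence retract rational, hence has vanishing
unramified Brauer group by Definition \ref{d1.12}.

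For parts (2), (3), (4) the strategy is a systematic \texttt{GAP}-assisted
enumeration over every representative $G \subset GL_n(\bm{Z})$ for $n = 4, 5, 6$.
For each of the $710$, $6079$, $85308$ lattices $M$, first build a projective
$\bm{Z}[G]$-resolution of $\bm{Z}$ (via a truncated bar resolution or the
\texttt{HAP} package) and read off $H^2(G, M)$. Next, extract $H^2_u(G, M)$ as
the intersection of the kernels of the restriction maps
$H^2(G, M) \to H^2(H, M)$ over the subgroup family appearing in Saltman's
1990 formula (\cite{Sa4}, see Definition \ref{d1.4}). In parallel, compute
$B_0(G) = H^2_u(G, \bm{Q}/\bm{Z})$ via Bogomolov's formula as the analogous
intersection over bicyclic subgroups of $G$. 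Tabulate the pairs
$(B_0(G), H^2_u(G, M))$ for which at least one summand is nonzero, and verify
that the output reproduces exactly the five groups of Table $1$, the $46$
entries of Table $2$, and the $1073$ entries of Table $3$, together with the
finer assertions that $B_0(G)$ vanishes outside the $24$ rank-$6$ lattices with
CARAT IDs $(6,6458,i), (6,6459,i), (6,6464,i)$ for $1 \le i \le 8$, and that
$22$ of those $24$ lattices additionally satisfy $H^2_u(G,M)=0$.

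The principal obstacle is computational scale at rank $6$: $85308$ lattices,
some with $|G|$ in the low thousands, make the cohomology-plus-restriction
loop expensive in both time and memory. I would mitigate this by
(i) short-circuiting whenever $H^2(G, M) = 0$, (ii) sharing intermediate data
across lattices in the same $\bm{Q}$-class so that the resolution of $\bm{Z}$
over $\bm{Z}[G]$ is reused across many $\bm{Z}$-classes, and
(iii) validating the implementation on the small cases where $H^2_u(G, M)$ has
been computed by hand in \cite{CHKK} and \cite{HKK}, and against the
rank $\le 3$ vanishing as an independent sanity check. A secondary care point
is the correct implementation of Saltman's subgroup family for $H^2_u(G, M)$,
since the ``unramified'' condition is subtle; I would test it on all examples
with known answers before running the rank-$6$ enumeration.
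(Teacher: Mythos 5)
Your proposal is mathematically correct and follows essentially the same route as the paper: the splitting ${\rm Br}_u(\bm{C}(M)^G)\simeq B_0(G)\oplus H^2_u(G,M)$, part (1) from the known rationality in rank $\le 3$, and a GAP enumeration computing $H^2_u(G,M)=\bigcap_A \ker({\rm res}\colon H^2(G,M)\to H^2(A,M))$ over bicyclic $A$ together with $B_0(G)$ via Bogomolov's formula. The one substantive caveat concerns feasibility at rank $6$, where your mitigations (short-circuiting on $H^2(G,M)=0$, sharing resolutions across a $\bm{Q}$-class) do not replace the two theoretic reductions the paper actually needs: (i) the restriction $H^2_u(G,M)\to\bigoplus_p H^2_u(G_p,M)$ is injective, so combined with Barge's criterion (Theorem \ref{thB1}) one only ever runs the kernel-intersection on groups whose Sylow subgroups are non-bicyclic and already known to be ``bad'', which collapses the $85308$ lattices to a short list; and (ii) for the $2$-groups of order $2^9$ and $2^{10}$ in $GL_6(\bm{Z})$ (and for the orders $32m$, $64m$ with $27\mid |G|$) the direct computation is reported to be out of reach on a personal computer, and the paper instead chooses an index-$2$ normal subgroup $N$ with $M^N=0$ and uses the $7$-term Hochschild--Serre sequence (Theorem \ref{thDHW}) to show $H^2(G,M)\to H^2(N,M)$ is injective, reducing to the already-settled order-$2^8$ case. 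Without some substitute for these two steps your enumeration, as described, would likely not terminate in practice on the largest cases, so you should either incorporate them or argue explicitly that your implementation can handle groups of order up to $1024$ acting on rank-$6$ lattices directly.
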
~\vspace*{-4mm}\\

Table $1$: $M$ is indecomposable of rank $4$ ($5$ cases with $H_u^2(G,\bm{Q}/\bm{Z})=0$)\\

\begin{tabular}{lllll}
\hline
$G(n,i)$ & $G$ & GAP ID & $H_u^2(G,M)$\\\hline
$(8,3)$ & $D_4$ & $(4,12,4,12)$ & $\bZ/2\bZ$\\
$(8,4)$ & $Q_8$ & $(4,32,1,2)$ & $(\bZ/2\bZ)^{\oplus 2}$\\
$(16,8)$ & $QD_8$ & $(4,32,3,2)$ & $\bZ/2\bZ$\\
$(24,3)$ & $SL_2(\bm{F}_3)$ & $(4,33,3,1)$ & $(\bZ/2\bZ)^{\oplus 2}$\\
$(48,29)$ & $GL_2(\bm{F}_3)$ & $(4,33,6,1)$ & $\bZ/2\bZ$\\
\end{tabular}\\

\begin{remark}
(1) 
Theorem \ref{t1.5} remains valid if we replace the coefficient field $\bm{C}$ by any algebraically closed field $k$ with {\rm char} $k=0$.\\
(2) If $M$ is of rank $\leq 6$ and ${\rm Br}_u(\bm{C}(M^G))\neq 0$,
then $G$ is solvable and non-abelian, and
${\rm Br}_u(\bm{C}(M)^G)\simeq \bZ/2\bZ$, $\bZ/3\bZ$ or
$\bZ/2\bZ\oplus\bZ/2\bZ$.
Namely, if $G$ is abelian or non-solvable with
${\rm Br}_u(\bC(M)^G)\neq 0$, then $n\geq 7$
(for explicit examples, see {\rm Section} $\ref{seAbelian}$ and {\rm Section} $\ref{seA6}$).
The case where ${\rm Br}_u(\bm{C}(M)^G)\simeq\bZ/3\bZ$ occurs only for
$4$ groups $G$ of order $27$, $27$, $54$, $54$
with the {\rm CARAT ID}
$(6,2865,1)$, $(6,2865,3)$, $(6,2899,3)$, $(6,2899,5)$
which are isomorphic to $C_9\rtimes C_3$, $C_9\rtimes C_3$,
$(C_9\rtimes C_3)\rtimes C_2$, $(C_9\rtimes C_3)\rtimes C_2$ respectively.
See {\rm Section} $\ref{seTables}$ for details.
For {\rm CARAT ID}, see {\rm Section} $\ref{seCarat}$. 
\\
(3)
The group $G \, (\simeq D_4)$ which appears as the exceptional case in Theorem \ref{th62} (i.e. \cite[Theorem 6.2]{HKK}) satisfies the property that ${\rm Br}_u(\bm{C}(M)^G)= H^2 _u (G,M)\neq 0$ where $M$ is the associated lattice. It follows that $\bC(M)^G$ is not retract $\bm{C}$-rational.
In fact, this group is the group with {\rm CARAT ID} $(5,100,11)$ which is the unique group in Table $2$-$3$.

In Theorem \ref{th62}, note that both $\bC(M_1)^G$ and $\bC(M_2)^G$ are $\bm{C}$-rational by Theorem \ref{thHKHR} and Theorem \ref{thHaj87}. 
Thus
${\rm Br}_u(\bm{C}(M_2)^G)=0$ and 
$H^2 _u (G,M_2)=0$. But $M_1$ is not a faithful $G$-lattice and we cannot apply Theorem \ref{thSa4} to $\bC(M_1)^G$. Hence it is possible that $H^2 _u (G,M_1)$ is non-trivial. Because $H^2 _u (G,M) \simeq H^2 _u (G,M_1)\oplus H^2 _u (G,M_2)$, this allows for the possibility that $H^2 _u (G,M)$ is non-trivial. Indeed, by the same method as in {\rm Section} $\ref{seAbelian}$ and {\rm Section} $\ref{seA6}$, it can be shown that $H^2 _u (G,M_1) \simeq \bZ/2 \bZ$ and therefore ${\rm Br}_u(\bm{C}(M)^G)= H^2 _u (G,M_1)\simeq \bZ/2 \bZ$.

We also note that the fixed field $\bm{C}(M)^G$ is stably isomorphic to $\bm{C}(M')^{D_4}$ for some indecomposable $D_4$-lattice $M'$ in Theorem \ref{t1.5} (2) (see Theorem \ref{equiv} for details). \\
(4) In the decomposable cases $M=M_1\oplus M_2$, the lattices in Table $2$-$2$ (resp. $3$-$2$, $3$-$3$-$2$, $3$-$4$, $3$-$5$, $3$-$6$) of {\rm Section} $\ref{seTables}$ satisfy the property that $H^2 _u(G, M_1)\neq 0$, but $M_1$ is not a faithful $G$-lattice in some situations; thus the non-triviality of ${\rm Br}_u(\bC(M)^G)=H^2 _u(G, M)$ is automatic. However, for the lattices in Table $2$-$3$ (resp. $3$-$3$-$1$), we have ${\rm Br}_u(\bm{C}(M_i)^G)=0$ for $(i=1,2)$. An explanation of this peculiar phenomenon can be found in the remark (3) above. \\
{\rm (5)} Here is a summary of Theorem \ref{t1.5}:\\
\begin{center}
\begin{tabular}{c|r|r}
rank & number of the lattices & number of lattices with non-trivial\\
& & unramified Brauer groups\\\hline
$1$ & $2$ & $0$\\
$2$ & $13$ & $0$\\
$3$ & $73$ & $0$\\
$4$ & $710$ & $5$\\
$5$ & $6079$ & $46$\\
$6$ & $85308$ & $1073$
\end{tabular}
~\\~\\
\end{center}


\end{remark}

The idea of the proof of Theorem \ref{t1.5} is to write an algorithm {\tt H2nrM(g)} for computing $H_u^2(G,M)$ (see Definition \ref{d1.4} and Section \ref{seGAP} for the algorithm). In devising this algorithm, some known GAP functions do not fit our purpose. Thus we are forced to modify these functions. For examples, we write new algorithms for computing the $2$-cocycles and the restriction maps of cohomology groups. With this algorithm in arms, GAP computation will tell us the unramified Brauer groups. All of the computation of this paper may be carried out in a personal computer. As it is common in most questions involving computations, we should employ some theoretic arguments to minimize the computing time and the memory of the computer; thus the traditional methods are inevitable (see the proof of Theorem \ref{thm3}). We remark that the main problem of Theorem \ref{t1.5} is to determine $H_u^2(G,M)$, because there have been algorithms for the computation of $B_0(G)$ (see Theorem \ref{t2.12} and Definition \ref{d1.4} for $H_u^2(G,M)$ and $B_0(G) \simeq H_u^2(G,\bQ/\bZ)$).

 Motivated by the $G$-lattices in Theorem \ref{t1.5}, in Section \ref{seHigher} we will construct $G$-lattices $M$ of rank $2n+2$, $4n$ and $p(p-1)$ ($n$ is any positive integer and $p$ is any odd prime number) such that the unramified Brauer groups of $\bC(M)^G$ are non-trivial. The proof of this result is the traditional theoretic method, because $n$ and $p$ are general integers that the computer computing cannot be of help. The key tool of the proof is the 7-term exact sequence associated to the Hochschild-Serre spectral sequence.

\bigskip
We organize this paper as follows. We recall some preliminaries in Section \ref{sePre}. In particular, Saltman's formula of the unramified Brauer group of $\bC(M)^G$ in terms of certain subgroups of the cohomology groups $H^2(G,M)$ and $H^2(G,\bm{Q}/\bm{Z})$ is given. The proof of Theorem \ref{t1.5} is given in Section \ref{sePT}, while the important algorithm {\tt H2nrM(g)} for computing $H_u^2(G,M)$ is contained in Section \ref{seGAP}.

\bigskip
For computing $H_u^2(G,M)$, we rely on
Saltman's formula (see Theorem \ref{thSa4}):
$H_u^2(G,M)\simeq \bigcap_{A} {\rm Ker}({\rm res} : H^2(G,M)
\rightarrow H^2(A,M)$ where $A$ runs over bicyclic subgroups of $G$.
Hence the main part of the algorithm {\tt H2nrM(g)}
is to computing the restriction map
${\rm res} : H^2(G,M)\rightarrow H^2(A,M)$.
The function {\tt ResH2(g,h)} serves to compute the restriction map
${\rm res} : H^2(G,M)\rightarrow H^2(H,M)$ where $H$ is any subgroup of $G$. This function is available from\\
{\tt https://www.math.kyoto-u.ac.jp/\~{}yamasaki/Algorithm/MultInvField/res.gap}

\bigskip

We remark that, in Section \ref{sePT}, the proofs of Theorems \ref{thm1}, \ref{thm2}, \ref{thm3} are preceded by some theoretic explanation which serves to justify the computer algorithm following the discussion; thus the complete proof consists of the discussion and the algorithm (together with the computer computation). The final results are contained in the tables of Section \ref{seTables}.

The lattices of general ranks modeled on some lattices in Theorem \ref{t1.5} are constructed in Section \ref{seHigher}. Section \ref{seC}, Section \ref{seAbelian} and Section \ref{seA6} arise from a second thought of Theorem \ref{th62}. In fact, Barge's Theorem (Theorem \ref{thB1}) guarantees that there is a $G$-lattice $M$ with ${\rm Br}_u(\bm{C}(M)^G) \neq 0$ where $G \simeq (C_2)^3$, although the rank of this ``bad" lattice is not specified. However, from the tables in Section \ref{seTables}, we cannot find $G$-lattices with non-trivial unramified Brauer groups with $G \simeq (C_2)^3$. This prompts us to conclude that these ``bad" lattices are of rank $\ge 7$. But no database for $G$-lattices of rank $7$ was available when we wrote this article. Thus we construct these lattices ourselves in Section \ref{seC} and compute their unramified Brauer groups in Theorem \ref{t5.4}. So far all the ``bad" $G$-lattices are related to solvable groups $G$. In Section \ref{seA6}, we construct $G$-lattices $M$ with ${\rm Br}_u(\bm{C}(M)^G) \neq 0$ and $G \simeq A_6$, the alternating group; such lattices are of rank $9$ (see Theorem \ref{t7.1}). Remember Kunyavskii's Theorem that $B_0(G)=0$ for any non-abelian simple group $G$ \cite{Ku}. Thus the multiplicative nature of $\bm{C}(M)^G$ will add a new component to its unramified Brauer group, a phenomenon which we have encountered in Theorem \ref{t1.5}, Theorem \ref{t5.4} and Theorem \ref{t7.1}. As a consequence, we also 
give a counter-example to Noether's problem 
for $N\rtimes A_6$ over $\bC$ where $N$ is some abelian group as in Section \ref{seA6}. 

\medskip
Notation and terminology: Recall the definition of $k(G)$ (and thus $\bC(G)$ in Definition \ref{d1.2}). The groups $C_n$ and $D_n$ refer to the cyclic group of order $n$ and the dihedral group of order $2n$ respectively. If $n$ is any positive integer, the quasi-dihedral group of order $16n$ is denoted by $QD_{8n}$. If $n$ is any positive integer, the generalized quaternion group of order $8n$ is denoted by $Q_{8n}$. For a linear map $\sigma$, we will write its matrix with respect to a basis $x_i, \ldots, x_n$ in the row form, i.e. if $\sigma \cdot x_i = \sum_{1 \le j \le n} a_{ij} x_j$, then the matrix of $\sigma$ is $(a_{ij})_{1 \le i, j \le n}$. If $M$ is a faithful $G$-lattice and $M$ decomposes into $M_1 \oplus M_2$, it may happen that $M_i$ is not a faithful $G$-lattice. We will denote by $G_i =G |_{M_i}$ the image of the group homomorphism $G \to {\rm Aut}(M_i)$; this notation is used in the tables of Section \ref{seTables}. The reader can find the definitions of the $\bm{Z}$-class and the $\bm{Q}$-class of $GL_n(\bm{Z})$ at the third paragraph of Section \ref{seC}. 

\begin{acknowledgment}
The authors would like to thank the referee who 
gave them useful comments and suggestions. 
\end{acknowledgment}

\section{Preliminaries and the unramified Brauer groups}\label{sePre}

In this section we review some preliminaries which will be used in the sequel, in particular, the powerful theorems of Bogomolov and Saltman for computing the unramified Brauer group.

Recall that a group $A$ is called bicyclic if $A$ is either a cyclic group or a direct
product of two cyclic groups.

\begin{theorem}[{Bogomolov \cite{Bo}, Saltman \cite[Theorem 12]{Sa5}}]
\label{t2.12}
Let $G$ be a finite group and $k$ be an algebraically closed field with
$\fn{char}k=0$. Let $\bm{Q}/\bm{Z}$ denote the $\bm{Z}[G]$-module with the trivial $G$-action $($i.e. $\bm{Q}/\bm{Z}$ is isomorphic to the multiplicative
subgroup of all roots of unity in $k$$)$. Then $\fn{Br}_{u,k}(k(G))$ is
isomorphic to the group $B_0(G)$ defined by
\[
B_0(G)=\bigcap_A \fn{Ker}(\fn{res}: H^2(G,\bm{Q}/\bm{Z})\to H^2(A,\bm{Q}/\bm{Z}))
\]
where $A$ runs over all the bicyclic subgroups of $G$.
\end{theorem}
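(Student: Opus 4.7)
The plan is to prove the theorem in two stages: first identify $\fn{Br}_{u,k}(k(G))$ with a subgroup of $H^2(G,\bm{Q}/\bm{Z})$ described by stabilizers of prime divisors, and then reduce ``all abelian'' to ``all bicyclic''. Since $k(G)=k(V_{\rm reg})^G$ with $V:=V_{\rm reg}$ a faithful $G$-representation over $k$, the starting point is the short exact sequence of $G$-modules
\[
1\to k^{*}\to k(V)^{*}\to \fn{Div}(V)\to 0,
\]
in which $\fn{Div}(V)$ is a permutation $G$-module, decomposing as a direct sum over $G$-orbits of prime divisors $D\subset V$ of induced lattices $\bm{Z}[G/G_D]$. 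Taking $G$-cohomology, applying Shapiro's lemma on each orbit summand, and using the splitting $k^{*}\simeq \bm{Q}/\bm{Z}\oplus U$ with $U$ uniquely divisible torsion-free (so that $H^i(G,k^{*})=H^i(G,\bm{Q}/\bm{Z})$ for $i\ge 1$), one obtains a natural map $H^2(G,\bm{Q}/\bm{Z})\to H^2(G,k(V)^{*})$ compatible with residues at prime divisors.

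Second, a class in $H^2(G,k(V)^{*})$ represents an unramified element of $\fn{Br}(k(V)^G)$ precisely when it lies in the image of $H^2(G,\bm{Q}/\bm{Z})$ and has trivial residue at every DVR of $k(V)^G$ containing $k$. Via purity (Auslander--Goldman) and an explicit analysis of decomposition groups on an equivariant smooth model, the residue of $\alpha\in H^2(G,\bm{Q}/\bm{Z})$ at a prime divisor $D$ vanishes iff $\fn{res}^G_{G_D}(\alpha)=0$ in $H^2(G_D,\bm{Q}/\bm{Z})$. Because $G$ acts linearly on $V$, each stabilizer $G_D$ is abelian (it acts diagonalizably on the one-dimensional normal space to $D$); conversely every abelian subgroup $A\le G$ arises as the stabilizer of a generic $A$-invariant hyperplane inside the fixed subspace $V^A$. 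Together these give Bogomolov's original formula with $A$ ranging over all abelian subgroups.

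Saltman's refinement to bicyclic subgroups is then a purely cohomological matter. The key input is that for every finite abelian group $A$ the restriction map
\[
H^2(A,\bm{Q}/\bm{Z})\longrightarrow \prod_{B\le A,\ B\text{ bicyclic}}H^2(B,\bm{Q}/\bm{Z})
\]
is injective; this follows from the well-known isomorphism $H^2(A,\bm{Q}/\bm{Z})\simeq \bigwedge^{2}\widehat{A}$ and the tautology that an alternating bilinear pairing $A\times A\to \bm{Q}/\bm{Z}$ is determined by its values on all pairs of elements, equivalently, by its restriction to every $2$-generated (that is, bicyclic) subgroup. Hence $\fn{res}^G_A(\alpha)=0$ for all abelian $A$ iff $\fn{res}^G_B(\alpha)=0$ for all bicyclic $B$, and Bogomolov's description specialises to the stated $B_0(G)$.

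The most delicate step is the purity/residue computation identifying local ramification at a DVR with restriction to the decomposition group. This requires reducing an arbitrary DVR of $k(V)^G$ centred on the generic fibre, via equivariant resolution of singularities on $V$, to a divisorial valuation on a smooth $G$-equivariant model, and then a cocycle-level argument on the decomposition group showing that the Brauer residue of $\alpha$ coincides with its group-cohomological restriction to $G_D$. Once this geometric input is secured, the remaining ingredients (Shapiro's lemma, the structure of $k^{*}$, and the Schur-multiplier arithmetic above) are formal manipulations and their combination produces the theorem.
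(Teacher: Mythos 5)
The paper does not actually prove this statement: it is quoted with attribution to Bogomolov and to Saltman's Theorem 12 of [Sa4], so your proposal can only be measured against those sources. Two pieces of your sketch are sound and match the literature: the homological set-up ($\fn{Div}(V)$ a permutation module, Shapiro's lemma, $k^{*}\simeq \bm{Q}/\bm{Z}\oplus(\text{uniquely divisible})$, giving an injection $H^2(G,\bm{Q}/\bm{Z})\hookrightarrow \fn{Br}(k(V)/k(V)^G)$), and the reduction from abelian to bicyclic subgroups via $H^2(A,\bm{Q}/\bm{Z})\simeq \fn{Hom}(\wedge^2 A,\bm{Q}/\bm{Z})$ together with the fact that an alternating pairing vanishes iff it vanishes on every $2$-generated subgroup.

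The gap is in the middle, at the residue analysis, and it is not cosmetic. First, the stabilizer $G_D$ of a prime divisor $D\subset V$ is \emph{not} abelian in general: linearity only forces the \emph{inertia} group (the pointwise stabilizer of $D$) to be cyclic, via its faithful action on the one-dimensional normal space; the full decomposition group merely normalizes it. For example, $S_3$ acting on its regular representation stabilizes the hyperplane $\sum_i x_i=0$, so $G_D=S_3$ there. Second, the asserted equivalence ``the residue of $\alpha$ at $D$ vanishes iff $\fn{res}^G_{G_D}(\alpha)=0$'' is false in the direction you need: in the same example the inertia is trivial, so the residue vanishes automatically while $\fn{res}_{G_D}(\alpha)=\alpha$ need not. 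Consequently your route to the inclusion $\fn{Br}_{u}(k(G))\subseteq B_0(G)$ (realizing every abelian $A$ as some $G_D$) collapses, because the residue at the divisor you construct only detects the cyclic inertia, never all of $A$; the standard argument for that inclusion is instead functoriality of $\fn{Br}_u$ along $k(V)^G\subset k(V)^A$ combined with Fischer's theorem that $k(V)^A$ is $k$-rational, hence has trivial unramified Brauer group. For the reverse inclusion the correct local statement --- and the place where ``$k$ algebraically closed'' is used --- is that for any DVR of $k(V)^G$ the inertia group $T$ of an extension to $k(V)$ is cyclic and \emph{central} in the decomposition group $Z$ (since $Z/T$ acts on $T\simeq\mu_e$ through the cyclotomic character of a residue field already containing all roots of unity), and the residue of $\alpha$ is expressed through the restrictions of $\alpha$ to the bicyclic subgroups $\langle\sigma,\tau\rangle$ with $\sigma$ generating $T$ and $\tau\in Z$. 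It is these subgroups, not the divisor stabilizers, that control ramification; your argument needs to be rebuilt around that inertia/decomposition computation.
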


If $G \to GL(V)$ is any faithful linear representation of $G$ over $\bC$, then $\fn{Br}_{u,\bC}(\bC(V)^G) \simeq B_0(G)$ by the No-Name Lemma \cite{Sa5}. The group $B_0(G)$ is a subgroup of $H^2(G,\bm{Q}/\bm{Z})$ (the Schur multiplier); thus it is called {\it the Bogomolov multiplier} of $G$ in \cite{Ku}. Note that, the formula in \cite[Theorem 12]{Sa5} can be used to compute not
only $\fn{Br}_{u,\bm{C}}(\bm{C}(V)^G)$, but also $\fn{Br}_{u,\bm{C}}(\bm{C}_\alpha(M)^G)$ where $\bm{C}_\alpha(M)$ is the rational function field associated to the $\mu$-extension $M_{\alpha}$ (see Definition \ref{d1.3}). We record it as follows.

\begin{theorem}[Saltman {\cite[Theorem 12]{Sa5}}]\label{thSa4}
Let $k$ be an algebraically closed field with {\rm char} $k=0$, and $G$ be a finite group. If $M$ is a $G$-lattice and $(\alpha): 1 \to \mu \to M_{\alpha} \to M \to 0$ is a $\mu$-extension such that {\rm (i)} $M$ is a faithful $G$-lattice, and {\rm (ii)} $H^2(G,\mu) \to H^2(G,M_\alpha)$ is injective, then 
\[
{\rm Br}_{u,k}(k_\alpha(M)^G)=\bigcap_{A} {\rm Ker}(
{\rm res} : H^2(G,M_\alpha)\rightarrow H^2(A,M_\alpha))
\]
where $A$ runs over all the bicyclic subgroups of $G$.

In particular, if the $\mu$-extension $(\alpha) : 1\rightarrow\mu\rightarrow M_\alpha\rightarrow M\rightarrow 0$ splits, then ${\rm Br}_{u,k}(k(M)^G)\simeq B_0(G) \oplus \bigcap_{A} {\rm Ker}(
{\rm res} : H^2(G,M)\rightarrow H^2(A,M))$ where $A$ runs over bicyclic subgroups of $G$.
\end{theorem}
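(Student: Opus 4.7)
The plan is to follow Saltman's general strategy for invariant fields of multiplicative actions, by first turning the unramified Brauer group into a cohomological object on $G$, and then analyzing ramification valuation-by-valuation.

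First I would set up the cohomological picture. Since $k_\alpha(M)=k(x_1,\ldots,x_n)$ is purely transcendental over the algebraically closed field $k$, we have $\fn{Br}(k_\alpha(M))=0$, so the Hochschild--Serre spectral sequence for the $G$-Galois extension $k_\alpha(M)/k_\alpha(M)^G$ yields an injection $\fn{Br}(k_\alpha(M)^G)\hookrightarrow H^2(G,k_\alpha(M)^\times)$. Using the embedding $M_\alpha\hookrightarrow k_\alpha(M)^\times$ from Definition~\ref{d1.3}, I would analyze the short exact sequence $1\to M_\alpha\to k_\alpha(M)^\times\to k_\alpha(M)^\times/M_\alpha\to 1$ of $\bZ[G]$-modules. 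The quotient is a $G$-permutation module on the monic irreducibles of $k_\alpha(M)$ modulo monomials, so Shapiro's lemma reduces $H^i$ of the quotient to cohomology of stabilizers with trivial $k^\times$-coefficients; because $k$ is algebraically closed, these cohomology groups are controlled by $\mu$. A diagram chase, using hypothesis (ii) that $H^2(G,\mu)\to H^2(G,M_\alpha)$ is injective, will identify the image of $H^2(G,M_\alpha)$ inside $H^2(G,k_\alpha(M)^\times)$ with (a subgroup containing) $\fn{Br}(k_\alpha(M)^G)$, so that ultimately the Brauer group is a subquotient of $H^2(G,M_\alpha)$.

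Next I would carry out the ramification analysis. A Brauer class on $k_\alpha(M)^G$ is unramified iff its residue vanishes at every rank-one discrete valuation $v$ trivial on $k$. For such a $v$, choose an extension $w$ to $k_\alpha(M)$ and let $D_w\le G$ be its decomposition group; the residue of a class $\beta\in H^2(G,M_\alpha)$ at $v$ is controlled by $\fn{res}^G_{D_w}(\beta)$ together with the image of $M_\alpha$ in the residue field of $w$. Because the action of $G$ on $k_\alpha(M)$ comes from the monomial action on the lattice $M_\alpha$, the geometric valuations of the torus $\fn{Spec}(k[M_\alpha])$ may be analyzed explicitly: any $G$-invariant valuation of $k_\alpha(M)$ is, up to conjugation, determined by a primitive element of $M$ (the order of vanishing on a coordinate hyperplane) together with the residual structure. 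The faithfulness hypothesis (i) forces the decomposition groups that can give a nontrivial contribution to be of the form $\langle\sigma\rangle\times\langle\tau\rangle$ (one factor fixing the hyperplane, one factor from the inertia on the uniformizer), i.e.\ bicyclic. Hence unramifiedness at $v$ is equivalent to the vanishing of $\fn{res}^G_A(\beta)$ for the bicyclic group $A=D_w$.

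Finally, to conclude I would check the reverse implication: if $\fn{res}^G_A(\beta)=0$ for every bicyclic subgroup $A\le G$, then the residue vanishes at every $v$, so $\beta$ lies in $\fn{Br}_{u,k}(k_\alpha(M)^G)$; this uses the same local computation of residues via restriction to $D_w$. For the ``in particular'' clause, when $(\alpha)$ splits we have $M_\alpha\simeq\mu\oplus M$ as $\bZ[G]$-modules, so $H^2(G,M_\alpha)\simeq H^2(G,\mu)\oplus H^2(G,M)$ and the restriction map respects this decomposition; combining with Theorem~\ref{t2.12} (Bogomolov--Saltman) applied to the $\mu$-summand then gives the stated direct-sum decomposition $B_0(G)\oplus H_u^2(G,M)$. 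The main obstacle in this plan is the valuation-theoretic step in the previous paragraph: one must justify that every ``bad'' discrete valuation on $k_\alpha(M)^G$ can indeed be realized with a bicyclic decomposition group, and that the residue map matches the cohomological restriction on the nose. This relies on Saltman's careful study of invariant valuations on tori (and the interaction between $\mu$ and the lattice $M$ coming from the extension~$(\alpha)$), and is the place where hypotheses (i) and (ii) are indispensable.
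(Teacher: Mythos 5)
You should first be aware that the paper contains no proof of Theorem~\ref{thSa4}: it is quoted verbatim from Saltman \cite[Theorem 12]{Sa4} (the paper only records the statement and, in Definition~\ref{d1.4}, the consequences it needs), so there is no in-paper argument to compare yours against. Measured against Saltman's actual proof, your outline has the right overall architecture: one identifies $\fn{Br}(k_\alpha(M)^G)$ with a subgroup of $H^2(G,k_\alpha(M)^\times)$ via Hochschild--Serre and the vanishing of $\fn{Br}(k_\alpha(M))$; one uses the sequence $1\to M_\alpha\to k_\alpha(M)^\times\to k_\alpha(M)^\times/M_\alpha\to 1$, whose quotient is (up to the uniquely divisible piece $k^\times/\mu$) a permutation module on irreducible divisors, so that Shapiro's lemma together with hypothesis (ii) makes $H^2(G,M_\alpha)\to H^2(G,k_\alpha(M)^\times)$ injective and shows the unramified classes land in $H^2(G,M_\alpha)$; and one then tests unramifiedness by residues at discrete valuations. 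Your treatment of the ``in particular'' clause is also correct: splitness makes (ii) automatic, the intersection of kernels splits along $M_\alpha\simeq\mu\oplus M$, and the $\mu$-summand gives $B_0(G)$ by the formula of Theorem~\ref{t2.12}.

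The step that would fail as written is your assertion that faithfulness ``forces the decomposition groups that can give a nontrivial contribution to be of the form $\langle\sigma\rangle\times\langle\tau\rangle$.'' Decomposition groups of valuations of $k_\alpha(M)^G$ trivial on $k$ are not bicyclic in general (and one must consider \emph{all} rank-one discrete valuations, not only the toric ones attached to primitive vectors of $M$). The correct mechanism, due to Bogomolov and Saltman, is different: in characteristic $0$ the inertia group $I_w$ is cyclic, and the residue of a class $\beta$ at $w$ is determined by the restrictions of $\beta$ to the bicyclic subgroups $\langle\sigma,\tau\rangle$ with $\tau\in I_w$ and $\sigma$ commuting with $\tau$ in $D_w$; hence vanishing of $\fn{res}^G_A\beta$ for all bicyclic $A$ kills all residues. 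For the converse inclusion one does not argue valuation-by-valuation at all, but observes that an unramified class restricts to an unramified class of $k_\alpha(M)^A$, and that $\fn{Br}_u(k_\alpha(M)^A)=0$ when $A$ is bicyclic (the content of Theorem~\ref{thB1}-type results, and where faithfulness of $M$ is used to keep $A$ acting faithfully). So the theorem says that residues are \emph{detected} on bicyclic subgroups of decomposition groups, not that decomposition groups are bicyclic; with that replacement your plan coincides with Saltman's proof.
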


\begin{defn}\label{d1.4}
{}From Definition \ref{d1.12}, $\fn{Br}_{u,k}(K)$ is a subgroup of the Brauer group $\fn{Br}(K)$. On the other hand, from the field extension $k_\alpha(M)^G \subset k_\alpha(M)$, the map of the Brauer groups ${\rm Br}(k_\alpha(M)^G) \to {\rm Br}(k_\alpha(M))$ sends ${\rm Br}_{u,k}(k_\alpha(M)^G)$ to ${\rm Br}_{u,k}(k_\alpha(M))$ \cite[Theorem 2.1]{Sa4}. Since ${\rm Br}_{u,k}(k_\alpha(M))=0$ by \cite[Proposition, 2.2]{Sa4}, it follows that the unramified Brauer group ${\rm Br}_{u,k}(k_\alpha(M)^G)$ is a subgroup of the relative Brauer group ${\rm Br}(k_\alpha(M)/k_\alpha(M)^G)$. As ${\rm Br}(k_\alpha(M)/k_\alpha(M)^G)$ is isomorphic to the cohomology group $H^2(G,k_\alpha(M)^{\times})$, we may regard ${\rm Br}_{u,k}(k_\alpha(M)^G)$ as a subgroup of $H^2(G,k_\alpha(M)^{\times})$.

Through the embedding of $M_\alpha$ into $k_\alpha(M)^\times$,
there is a canonical injection of $H^2(G,M_\alpha)$ into ${\rm Br}(k_\alpha(M)^G)$ \cite[page 536]{Sa5}. Identifying both ${\rm Br}_{u,k}(k_\alpha(M)^G)$ and $H^2(G,M_\alpha)$ as subgroups of $H^2(G,k_\alpha(M)^{\times})$, it can be shown that ${\rm Br}_{u,k}(k_\alpha(M)^G)$ is a subgroup of $H^2(G,M_\alpha)$ \cite[page 536]{Sa5}. Thus we write $H^2_u(G,M_\alpha)$ for ${\rm Br}_{u,k}(k_\alpha(M)^G)$ (see \cite{Sa5})  .

Note that there is a natural map $H^2(G,\bm{Q}/\bm{Z}) \to H^2(G,M_\alpha)$.
Clearly this map is injective if the $\mu$-extension $(\alpha) : 1\rightarrow\mu\rightarrow M_\alpha\rightarrow M\rightarrow 0$ splits. In this case, regarding $H^2(G,\bm{Q}/\bm{Z})$ and $H^2(G,M)$ as subgroups of $H^2(G,M_\alpha)$, we define $H^2_u(G,\bm{Q}/\bm{Z})=H^2(G,\bm{Q}/\bm{Z}) \cap
{\rm Br}_u(k_\alpha(M)^G)$ and $H^2_u(G,M)=H^2(G,M) \cap
{\rm Br}_u(k_\alpha(M)^G)$.
It follows that
${\rm Br}_u(k_\alpha(M)^G)=H_u^2(G,\bQ/\bZ)\oplus H_u^2(G,M)$.
By Theorems \ref{t2.12} and \ref{thSa4},
we have $H^2_u(G,\bQ/\bZ)\simeq B_0(G)$ and
$H^2_u(G,M)\simeq \bigcap_{A} {\rm Ker}(
{\rm res} : H^2(G,M)\rightarrow H^2(A,M))$ where $A$ runs over
bicyclic subgroups of $G$.
\end{defn}

\begin{theorem}[Huebschmann \cite{Hu}, Dekimpe, Hartl, Wauters \cite{DHW}]\label{thDHW}
Let $G$ be a finite group and $N$ be a normal subgroup of $G$.
Then the Hochschild--Serre spectral sequence gives rise to the following
$7$-term exact sequence
\begin{align*}
0 &\to H^1(G/N,M^N) \to H^1(G,M)\to H^1(N,M)^{G/N} \to H^2(G/N,M^N) \\
&\to H^2(G,M)_1
\to H^1(G/N,H^1(N,M))
\to H^3(G/N,M^N)
\end{align*}
where $H^2(G,M)_1={\rm Ker}({\rm res}:H^2(G,M)\to H^2(N,M))$.
In particular, if $M^N=0$, then
we have two isomorphisms $H^1(G,M)\simeq H^1(N,M)^{G/N}$ and
$H^2(G,M)_1\simeq H^1(G/N,H^1(N,M))$.
\end{theorem}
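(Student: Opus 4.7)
The plan is to extract the sequence from the Hochschild--Serre (Lyndon--Hochschild--Serre) spectral sequence
\[
E_2^{p,q}=H^p(G/N,H^q(N,M))\ \Longrightarrow\ H^{p+q}(G,M),
\]
equipped with its decreasing filtration $F^\bullet H^n(G,M)$ whose associated graded satisfies $F^p/F^{p+1}\simeq E_\infty^{p,n-p}$. First I would recall the standard identification of the edge map $H^2(G,M)\to E_\infty^{0,2}\hookrightarrow E_2^{0,2}=H^2(N,M)^{G/N}\subseteq H^2(N,M)$ with the restriction, which gives the crucial identification
\[
H^2(G,M)_1=\ker(\mathrm{res})=F^1H^2(G,M).
\]

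Second, I would observe that on the slice $p+q\le 2$ every higher differential vanishes: any $d_r$ with $r\ge 3$ starting in or landing in $E_r^{p,q}$ with $p+q\le 2$ has one endpoint in a quadrant where $E_r$ is zero. Consequently $E_\infty^{p,q}=E_3^{p,q}$ throughout the relevant range, so only the transgression $d_2$ contributes, yielding
\[
E_\infty^{1,0}=H^1(G/N,M^N),\q E_\infty^{0,1}=\ker\bigl(d_2\colon H^1(N,M)^{G/N}\to H^2(G/N,M^N)\bigr),
\]
\[
E_\infty^{2,0}=\mathrm{coker}\bigl(d_2\colon H^1(N,M)^{G/N}\to H^2(G/N,M^N)\bigr),
\]
\[
E_\infty^{1,1}=\ker\bigl(d_2\colon H^1(G/N,H^1(N,M))\to H^3(G/N,M^N)\bigr).
\]

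Third, splicing the short exact sequences
\[
0\to F^1H^1(G,M)\to H^1(G,M)\to E_\infty^{0,1}\to 0,\q 0\to E_\infty^{2,0}\to F^1H^2(G,M)\to E_\infty^{1,1}\to 0
\]
coming from the filtration, and then inserting the four transgression/edge maps listed above, produces the claimed $7$-term exact sequence once one uses $F^1H^2(G,M)=H^2(G,M)_1$ from the first step. Exactness at each of the seven nodes is just the translation of ``kernel of $d_2$'' or ``cokernel of $d_2$'' into the language of the filtration. For the ``in particular'' assertion, the hypothesis $M^N=0$ forces $H^p(G/N,M^N)=0$ for every $p\ge 0$, so the first, fourth, and seventh terms of the $7$-term sequence vanish, leaving the announced isomorphisms $H^1(G,M)\simeq H^1(N,M)^{G/N}$ and $H^2(G,M)_1\simeq H^1(G/N,H^1(N,M))$.

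The only real obstacle is bookkeeping: one must check that the abstract edge and transgression maps of the spectral sequence are genuinely the honest inflation, restriction, and $d_2$ of group cohomology, and that the connecting map $H^2(G,M)_1\to H^1(G/N,H^1(N,M))$ arising from the filtration composes to zero with the preceding inflation $H^2(G/N,M^N)\to H^2(G,M)_1$. Both points are standard features of the Lyndon--Hochschild--Serre spectral sequence, require no extra input, and are where the references cited in the statement supply the careful verification.
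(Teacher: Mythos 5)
Your derivation is correct and is exactly the standard low-degree analysis of the Lyndon--Hochschild--Serre spectral sequence; the paper itself gives no proof of Theorem \ref{thDHW} (it is quoted from Huebschmann and Dekimpe--Hartl--Wauters), and your argument is the canonical one those references formalize, including the key identification $F^1H^2(G,M)=\ker(\mathrm{res})=H^2(G,M)_1$ via the edge map. One small overstatement: it is not true that every higher differential vanishes on the whole slice $p+q\le 2$, since $d_3\colon E_3^{0,2}\to E_3^{3,0}$ lands in a subquotient of $H^3(G/N,M^N)$ and need not be zero; this is harmless because the four $E_\infty$-terms you actually use, namely $E_\infty^{1,0}$, $E_\infty^{0,1}$, $E_\infty^{2,0}$, $E_\infty^{1,1}$, stabilize at the $E_3$-page for position reasons, and the $(0,2)$-spot enters only through the inclusion $E_\infty^{0,2}\hookrightarrow E_2^{0,2}\subseteq H^2(N,M)$, which suffices to identify $F^1H^2(G,M)$ with $\ker(\mathrm{res})$ regardless of $d_3$.
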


\begin{theorem}[{Fischer \cite{Fi}, see also Swan \cite[Theorem 6.1]{Sw}}]\label{thFi}
Let $G$ be a finite abelian group with exponent $e$.
Assume that {\rm (i)} either {\rm char} $k=0$ or {\rm char} $k>0$ with
{\rm char} $k$ ${\not |}$ $e$, and
{\rm (ii)} $k$ contains a primitive $e$-th root of unity.
Then $k(G)$ is $k$-rational.
In particular, $\bm{C}(G)$ is $\bm{C}$-rational.
\end{theorem}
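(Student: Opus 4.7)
The plan is to diagonalize the regular representation and then explicitly describe the invariant subfield as the Laurent monomial ring invariants.

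First I would identify $k(G)$ with $k(V_{\rm reg})^G$, where $V_{\rm reg}$ is the regular representation. Under hypotheses (i) and (ii), the group algebra $k[G]$ is semisimple (Maschke) and split (all characters take values in the $e$-th roots of unity, which lie in $k$). Because $G$ is abelian, every irreducible $k[G]$-module is $1$-dimensional, and the regular representation decomposes as
\[
V_{\rm reg} \cong \bigoplus_{\chi \in \widehat{G}} k_\chi,
\]
each character appearing exactly once. Hence I can choose a basis $\{y_\chi\}_{\chi \in \widehat{G}}$ of $V_{\rm reg}$ on which $G$ acts diagonally: $g \cdot y_\chi = \chi(g)\, y_\chi$. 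This converts the linear action into a purely multiplicative (actually diagonal) action on $k(y_\chi : \chi \in \widehat{G})$.

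Next I would analyze invariant Laurent monomials. A monomial $y^{\mathbf{a}} = \prod_\chi y_\chi^{a_\chi}$ is fixed by $G$ iff the character $\prod_\chi \chi^{a_\chi}$ is trivial, i.e.\ iff $\mathbf{a}$ lies in the kernel
\[
L \;=\; \ker\bigl(\bm{Z}^{\widehat{G}} \twoheadrightarrow \widehat{G}\bigr),\qquad e_\chi \mapsto \chi,
\]
which is a sublattice of full rank with $[\bm{Z}^{\widehat{G}}:L] = |\widehat{G}| = |G|$. Pick any $\bm{Z}$-basis $\lambda_1,\dots,\lambda_n$ of $L$ (with $n=|G|$) and set $z_j = y^{\lambda_j}$. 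These $n$ Laurent monomials are algebraically independent over $k$ (since the $\lambda_j$ are $\bm{Z}$-independent), and they are all $G$-invariant, so $k(z_1,\dots,z_n) \subseteq k(V_{\rm reg})^G$.

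The key step is to upgrade this inclusion to equality, which is a degree comparison. On one side, since $G$ acts faithfully we have $[k(V_{\rm reg}):k(V_{\rm reg})^G] = |G|$ by Galois theory. On the other side, since each $y_\chi$ satisfies a relation expressing a power $y_\chi^m$ in terms of the $z_j$, the extension $k(V_{\rm reg})/k(z_1,\dots,z_n)$ is finite of degree $[\bm{Z}^{\widehat{G}}:L]=|G|$ (the standard fact that for a finite-index sublattice $L \subset \bm{Z}^n$, the Laurent polynomial extension $k[y^{\pm 1}]$ over $k[y^{\pm L}]$ has degree equal to the lattice index). Comparing the two degrees forces $k(V_{\rm reg})^G = k(z_1,\dots,z_n)$, which is purely transcendental over $k$. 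The main obstacle, and the only step requiring care, is this last degree count; everything before it is essentially formal diagonalization.
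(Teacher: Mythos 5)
The paper states this result as a cited classical theorem (Fischer; Swan, Theorem 6.1) and gives no proof of its own, so there is nothing internal to compare against. Your argument --- diagonalize the regular representation using hypotheses (i)--(ii), identify the invariant Laurent monomials with a finite-index sublattice $L \subset \bm{Z}^{\widehat{G}}$ of index $|G|$, and match the resulting degree $[k(V_{\rm reg}):k(z_1,\dots,z_n)]=[\bm{Z}^{\widehat{G}}:L]=|G|$ against $[k(V_{\rm reg}):k(V_{\rm reg})^G]=|G|$ from Artin's lemma --- is correct, and it is essentially the standard proof found in the reference the paper cites.
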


\begin{theorem}[{Chu and Kang \cite[Theorem 1.6]{CK}}] \label{thCK}
Let $G$ be a $p$-group of order $\leq p^4$.
If $k$ is a field satisfying {\rm (i)} $\fn{char}k=p>0$, or
{\rm (ii)} $\fn{char}k\ne p$ with $\zeta_e\in k$ where
$e$ is the exponent of the group $G$ and $\zeta_e$ is a primitive $e$-th root of unity, then $k(G)$ is $k$-rational.
\end{theorem}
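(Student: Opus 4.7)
The plan is to split along the two hypotheses of the theorem. Under hypothesis (i), $\fn{char} k = p > 0$, the conclusion is Kuniyoshi's classical theorem: Noether's problem has a positive answer for every finite $p$-group in characteristic $p$, with no restriction on the order whatsoever. The standard argument is by induction on $|G|$: pick a central element $z$ of order $p$, and observe that the extension $k(G)/k(G/\langle z\rangle)$ is built by adjoining the roots of a single Artin--Schreier polynomial. Combined with the inductive hypothesis applied to $k(G/\langle z\rangle)$, this yields that $k(G)$ is $k$-rational. So case (i) can simply be invoked.

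Under hypothesis (ii), $\fn{char} k \neq p$ and $\zeta_e \in k$, the abelian subcase is Fischer's theorem (Theorem \ref{thFi}), so the real content is the non-abelian $p$-groups of order $p^3$ or $p^4$. I would proceed by an explicit case analysis organized by the classification of these groups: there are $2$ non-abelian isomorphism types of order $p^3$, and about ten non-abelian types of order $p^4$ (the precise count depending mildly on $p\bmod 3$). For each such $G$ the recipe is: (a) pick a normal abelian subgroup $N\triangleleft G$ with $G/N$ of small rank; (b) use the no-name lemma together with the assumption $\zeta_e\in k$, which diagonalizes the $N$-action on the regular representation, to write $k(G)$ as $k(M)^{G/N}(y_1,\dots,y_r)$ for an appropriate $G/N$-lattice $M$ built out of the characters of $N$; and (c) apply Theorem \ref{thHaj87} or Theorem \ref{thHKHR} to conclude that $k(M)^{G/N}$ is $k$-rational whenever $\fn{rank}_{\bm{Z}} M \le 3$.

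The main obstacle will be that for some of the order-$p^4$ groups the lattice $M$ produced in step (b) can have rank $\ge 4$, so the known low-dimensional monomial-action theorems do not apply as a black box. For these residual cases one has to construct a transcendence basis essentially by hand: typically one exploits the fact that the innermost central layer of $G$ acts diagonally on $k(G)$, computes its invariants explicitly, and then performs a Masuda-style substitution to linearize the action of a remaining cyclic quotient. A secondary subtlety is that the classification of $p$-groups of order $p^4$ bifurcates according to small congruence conditions on $p$, so a few arguments have to be recorded separately in small characteristic; this lengthens the case list but does not change the underlying method.
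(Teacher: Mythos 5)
First, a point of reference: this paper does not prove Theorem \ref{thCK} at all --- it is quoted from Chu and Kang \cite[Theorem 1.6]{CK} and used as a black box (e.g.\ in Step 4 of the proof of Theorem \ref{thm1}, to conclude $H_u^2(G,\bm{Q}/\bm{Z})=0$ for small $p$-groups). So there is no in-paper argument to compare yours against; the comparison below is with the actual proof in \cite{CK}. Your case (i) is fine: this is Kuniyoshi's theorem, and the Artin--Schreier induction you describe is the standard argument. Your case (ii) also follows the right overall strategy, which is essentially that of \cite{CK}: take a normal abelian subgroup $N$ of index $p$ (every group of order $\le p^4$ has one), diagonalize $N$ using $\zeta_e\in k$, and analyze the induced action of $G/N\simeq C_p$.

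There are, however, two genuine gaps. First, step (c) does not go through as stated. After diagonalizing $N$, the residual action of $G/N$ on the field generated by the $N$-eigenvectors is a \emph{twisted} multiplicative (i.e.\ monomial) action in the sense of Definition \ref{d1.3}, not a purely monomial one: $\sigma$ sends an $N$-eigenvector to another eigenvector times a scalar. Theorem \ref{thHaj87} does cover twisted monomial actions, but only in dimension $2$; Theorem \ref{thHKHR} is stated only for \emph{purely} monomial actions in dimension $3$ and cannot be invoked for a rank-$3$ lattice carrying a nontrivial twist (general $3$-dimensional monomial actions are delicate and have exceptional non-rational cases). One must either show that the twist can be normalized away using $\zeta_e\in k$ or linearize it directly, and that is exactly the explicit content of \cite{CK}. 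Second, the residual lattices of rank $\ge 4$, which you defer to constructing ``a transcendence basis by hand,'' are where the substance of the theorem lives; without indicating the linearization device (the $C_p$-action permutes the characters of $N$ in free orbits, so up to twist the lattice is a permutation lattice and Masuda-type substitutions apply), the proof is not complete. A minor correction: the classification of groups of order $p^4$ is uniform in odd $p$ (there are $15$ for every odd $p$ and $14$ for $p=2$); congruence conditions on $p$ only enter at order $p^5$ and beyond.
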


\begin{theorem}[{Chu, Hu, Kang and Prokhorov \cite[Theorem 1.5]{CHKP}}]
\label{thCHKP}
Let $G$ be a group of order $32$ with exponent $e$.
If $k$ is a field satisfying {\rm (i)} $\fn{char} k=2$,
or {\rm (ii)} $\fn{char}k\ne 2$ with $\zeta_e \in k$ $($where $\zeta_e$ is a primitive $e$-th root of unity$)$,
then $k(G)$ is $k$-rational.
\end{theorem}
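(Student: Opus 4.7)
The plan is to proceed by a case-by-case analysis over the $51$ isomorphism classes of groups of order $32$, combining general reduction principles with explicit constructions for the recalcitrant cases. The $7$ abelian groups are disposed of immediately by Fischer's theorem (Theorem \ref{thFi}), since the hypothesis that $\zeta_e \in k$ (or $\ch k = 2$) is precisely what is needed there. For the $44$ non-abelian groups, the central technique is to exploit the no-name lemma together with the already established rationality for $p$-groups of order $\le p^4 = 16$ (Theorem \ref{thCK}): the former lets us replace the regular representation by any faithful $G$-representation of our choice, and the latter supplies an inductive foothold on proper subgroups and quotients.

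The workhorse strategy is, for each non-abelian $G$, to find a normal subgroup $N\trianglelefteq G$ such that the quotient $G/N$ is cyclic of small order (typically $2$ or $4$). Choosing a faithful representation of the form $V=\mathrm{Ind}_N^G(W)$ for a faithful $N$-representation $W$, the fixed field $k(V)^N$ becomes a purely transcendental extension of $k(W)^N$, which is $k$-rational by Theorem \ref{thCK}. One then writes $k(V)^N=k(y_1,\dots,y_r)$ and analyses the induced action of $G/N$ on this rational function field. The final descent from $k(V)^N$ to $k(V)^G$ is a cyclic Galois descent, and the hypothesis $\zeta_e\in k$ is exactly what makes it tractable: in characteristic $\neq 2$ one decomposes $k(V)^N$ along eigenspaces of a generator of $G/N$ (Kummer theory), reducing to a diagonal multiplicative action whose fixed field is rational after an elementary substitution; in characteristic $2$ the analogous step uses Artin--Schreier theory.

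The hard part will be the minority of groups for which no normal subgroup with cyclic quotient of small index yields a tractable invariant field. Typical obstructions arise when $G$ contains subgroups isomorphic to $Q_8$, $QD_{16}$, or other subgroups with intricate commutator structure; the induced action of $G/N$ on $k(V)^N$ can then become highly non-monomial and the standard reductions fail. For these cases the expected remedy is to build explicit transcendence bases by identifying $G$-invariant rational functions tailored to $G$ (for example, by averaging via norm or trace over a well-chosen subgroup, or by exploiting a central element to split off a one-variable sub-extension) and then verifying algebraic independence by direct computation. The principal technical burden is thus the bookkeeping of the $44$ non-abelian cases, together with devising ad hoc invariants for a handful of exceptional groups of exponent $16$ or with a quaternion-type Sylow structure; the numerical hypothesis on $\zeta_e$ plays a uniform role throughout as the device that converts otherwise pathological cyclic actions into actions of Kummer (or Artin--Schreier) type.
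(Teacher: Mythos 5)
First, note that the paper you are reading does not prove Theorem \ref{thCHKP} at all: it is quoted verbatim from Chu--Hu--Kang--Prokhorov \cite{CHKP} and used as a black box (in Step 4 of the proof of Theorem \ref{thm1}, to conclude $H_u^2(G,\bm{Q}/\bm{Z})=0$ for $2$-groups of order $32$). So there is no in-paper proof to compare against; your sketch has to be measured against the cited source, whose architecture it does mirror at a high level: exhaustion over the $51$ groups of order $32$, Fischer's theorem (Theorem \ref{thFi}) for the $7$ abelian ones, reduction to a single well-chosen faithful representation via the no-name lemma, induction from (abelian) normal subgroups, and descent along the quotient.

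That said, your plan has a genuine gap exactly where the content of the theorem lives. You assert that the final descent from $k(V)^N$ to $k(V)^G$ is "a cyclic Galois descent" which the hypothesis $\zeta_e\in k$ renders tractable by diagonalizing and reducing to "a diagonal multiplicative action whose fixed field is rational after an elementary substitution." This is false as a general principle: multiplicative (monomial) actions on $k(y_1,\dots,y_r)$ are only known to be rational for $r\le 3$ (Theorems \ref{thHaj87} and \ref{thHKHR}), and the present paper's own Theorem \ref{t1.5}/\ref{thm1} exhibits purely monomial actions of $D_4$, $Q_8$ and $QD_8$ in dimension $4$ whose invariant fields are not even retract rational. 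Since groups of order $32$ routinely force faithful representations of dimension $\ge 4$, the "elementary substitution" step is precisely the part that must be argued group by group; deferring it to "ad hoc invariants for a handful of exceptional groups" defers the entire proof. A second, related gap: the no-name lemma gives you that $k(V)^N$ is rational over $k(W)^N$, but it gives you no control over how $G/N$ acts on the resulting transcendence basis $y_1,\dots,y_r$, so the descent is not even correctly set up. The published proof avoids both problems by inducing from one-dimensional characters of abelian normal subgroups so that the whole $G$-action is monomial of controlled (low) dimension from the outset, and by supplying explicit invariant generators for the exceptional groups; your sketch would need to be rebuilt along those lines to close.
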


\begin{theorem}[{Barge \cite[Theorem II.7]{Ba1}}]\label{thB1}
Let $G$ be a finite group.
The following two statements are equivalent:\\
{\rm (i)} all the Sylow subgroups of $G$ are bicyclic;\\
{\rm (ii)} ${\rm Br}_u(\bm{C}(M)^G)=0$ for all $G$-lattices $M$.
\end{theorem}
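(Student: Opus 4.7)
The plan is to reduce the equivalence entirely to the cohomological reformulation supplied by Theorem~\ref{thSa4}: specializing to the split $\mu$-extension gives
\[
{\rm Br}_u(\bm{C}(M)^G) \;\simeq\; B_0(G)\,\oplus\,H^2_u(G,M),
\]
where both summands are intersections of kernels of restriction maps $\mathrm{res}\colon H^2(G,-)\to H^2(A,-)$ as $A$ runs over the bicyclic subgroups of $G$. Thus condition (ii) is equivalent to the simultaneous vanishing of $B_0(G)$ and of $H^2_u(G,M)$ for every $G$-lattice $M$.

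For (i)$\Rightarrow$(ii), I would argue one prime at a time. Fix any $G$-lattice $M$ and any prime $p\mid|G|$, and let $P$ be a Sylow $p$-subgroup. The classical Cartan--Eilenberg fact that $\mathrm{cor}\circ\mathrm{res}=[G:P]\cdot\mathrm{id}$ forces the restriction $H^2(G,N)\to H^2(P,N)$ to be injective on $p$-primary parts for any $G$-module $N$. Under hypothesis (i) the Sylow $P$ is itself bicyclic, so $P$ appears among the subgroups cutting out $H^2_u(G,M)$ and $B_0(G)$. Consequently the $p$-primary component of each of these intersections is killed, and running over all $p$ dividing $|G|$ yields both $H^2_u(G,M)=0$ and $B_0(G)=0$.

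For (ii)$\Rightarrow$(i) I would argue the contrapositive. Suppose some Sylow $p$-subgroup $P$ of $G$ is not bicyclic; the goal is to produce a single $G$-lattice $M$ with $H^2_u(G,M)\neq 0$. The strategy has two stages. First, for the non-bicyclic $p$-group $P$, construct a $P$-lattice $M_0$ and a class $\alpha\in H^2(P,M_0)$ which is non-zero but whose restriction to every bicyclic subgroup of $P$ vanishes; such an $M_0$ can be built as a permutation-type lattice (for instance the augmentation ideal of an appropriate permutation module, along the lines of the constructions in Section~\ref{seHigher}), with $\alpha$ coming from the connecting homomorphism of a short exact sequence chosen so that $\alpha$ is detected only by non-bicyclic subquotients of $P$. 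Second, promote this to $G$ via induction: set $M=\mathrm{Ind}_P^G M_0$, so by Shapiro's lemma $H^2(G,M)\simeq H^2(P,M_0)$ and $\alpha$ corresponds to a class $\beta\in H^2(G,M)$. For any bicyclic $A\leq G$, the Mackey decomposition gives
\[
\mathrm{res}_A^G\,H^2(G,M) \;\simeq\; \bigoplus_{g\in A\backslash G/P}H^2(A\cap {}^gP,\,{}^gM_0),
\]
and the image of $\beta$ in each summand is a restriction of (a conjugate of) $\alpha$ to $A\cap {}^gP$. Every such intersection is a subgroup of the bicyclic group $A$, hence bicyclic; by the choice of $\alpha$, each of these restrictions is zero. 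Therefore $\beta\in H^2_u(G,M)$ while $\beta\neq 0$ by Shapiro, contradicting (ii).

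The main obstacle is the first stage of the reverse direction: producing the pair $(M_0,\alpha)$ uniformly across all non-bicyclic $p$-groups $P$. I would handle this by reducing to a minimal non-bicyclic section of $P$---either the elementary abelian $(\bZ/p)^3$ (when $P$ has $p$-rank $\geq 3$) or a minimal non-bicyclic non-abelian $p$-group---and exhibiting the required lattice explicitly on that section (for the abelian case one must work with $H^2_u(P,M_0)$ rather than $B_0(P)$, since Bogomolov multipliers of abelian groups vanish). Inflation back along the quotient map then yields the desired $P$-lattice and class.
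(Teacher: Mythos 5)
First, a point of orientation: the paper does not prove Theorem~\ref{thB1} at all --- it is quoted as a black box from Barge's paper --- so there is no internal proof to compare against; your proposal is in effect an attempt to reprove Barge's theorem. Your direction (i)$\Rightarrow$(ii) is correct and is exactly the standard argument (restriction to a Sylow $p$-subgroup is injective on $p$-primary parts, and a bicyclic Sylow subgroup occurs among the groups $A$ cutting out the intersection in Theorem~\ref{thSa4}); this is the same reduction the paper itself uses in the proofs of Theorems~\ref{thm1}--\ref{thm3}. The only loose end there is that Saltman's formula requires $M$ faithful, so for non-faithful $M$ you must pass to $G/K$ and observe that quotients inherit the bicyclic-Sylow property; this is routine.

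The genuine gap is in (ii)$\Rightarrow$(i), and it sits exactly where you locate it: producing, for \emph{every} non-bicyclic $p$-group $P$, a $P$-lattice $M_0$ and a class $\alpha\in H^2(P,M_0)$ that is non-zero yet dies on all bicyclic subgroups. Your induction/Mackey/Shapiro step for promoting $(M_0,\alpha)$ from $P$ to $G$ is fine (subgroups of bicyclic groups are bicyclic, so each Mackey summand is a restriction of $\alpha$ to a bicyclic subgroup), but the proposed way of obtaining $(M_0,\alpha)$ --- reduction to a ``minimal non-bicyclic section'' --- fails in general. The modular group $M_{p^4}=\langle a,b\mid a^{p^3}=b^p=1,\ b^{-1}ab=a^{1+p^2}\rangle$ is non-bicyclic, but every proper subgroup and every proper quotient of it is abelian of rank at most $2$, hence bicyclic; so there is no smaller section to reduce to, and the lattice must be built on $M_{p^4}$ itself (for $p=2$ this is the group with GAP ID $(16,6)$, whose witness lattices in Table $3$-$1$-$1$ have rank $6$ and were found by exhaustive search, not by a structural construction). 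Even in the case you single out as tractable, $G=(C_2)^3$, the construction is not a routine ``augmentation ideal'' exercise: the paper's Sections~\ref{seC}--\ref{seAbelianA6} show that no faithful $(C_2)^3$-lattice of rank $\le 6$ has non-trivial unramified Brauer group, and the first examples (Theorem~\ref{t5.4}) occur in rank $7$ and were located only after classifying all $(C_2)^3$-subgroups of $GL_7(\bZ)$. So the first stage of your reverse direction is asserted rather than proved, and that assertion is essentially the whole content of Barge's theorem; as written, the proposal does not constitute a proof of the implication (ii)$\Rightarrow$(i).
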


\begin{theorem}[Barge {\cite[Theorem IV-1]{Ba2}}] \label{thB2}
Let $G$ be a finite group.
The following two statements are equivalent:\\
{\rm (i)} all the Sylow subgroups of $G$ are cyclic;\\
{\rm (ii)} ${\rm Br}_u(\bm{C}_{\alpha}(M)^G)=0$ for all $G$-lattices $M$,
for all short exact sequences of
$\bm{Z}[G]$-modules $\alpha : 0 \rightarrow \bm{C}^{\times}
\rightarrow M_{\alpha} \rightarrow M \rightarrow 0$.
\end{theorem}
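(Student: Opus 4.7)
The plan is to apply Saltman's formula (Theorem \ref{thSa4}), which identifies ${\rm Br}_u(\bm{C}_\alpha(M)^G)$ with the subgroup $H^2_u(G, M_\alpha) = \bigcap_A \ker\!\bigl({\rm res}^G_A : H^2(G,M_\alpha) \to H^2(A,M_\alpha)\bigr)$ of $H^2(G, M_\alpha)$, where $A$ runs over the bicyclic subgroups of $G$. Both directions of the theorem then reduce to cohomological questions about restriction maps. Note that, in contrast with Theorem \ref{thB1} (whose sufficient condition is that Sylow subgroups be bicyclic), the twisted version strengthens the requirement to cyclic Sylows; heuristically, this is because the Schur multiplier $H^2(C_p \times C_p, \bm{C}^\times) \simeq \bm{Z}/p\bm{Z}$ becomes an additional obstruction once we allow non-split $\mu$-extensions.

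For (i) $\Rightarrow$ (ii), suppose every Sylow $p$-subgroup $P_p$ is cyclic, hence bicyclic. Any $\xi \in H^2_u(G, M_\alpha)$ satisfies ${\rm res}^G_{P_p}(\xi) = 0$ for every prime $p$. The identity ${\rm cor}^G_{P_p} \circ {\rm res}^G_{P_p} = [G:P_p] \cdot {\rm id}$, together with the facts that $[G:P_p]$ is coprime to $p$ and $H^2(G, M_\alpha)$ is $|G|$-torsion, forces the $p$-primary component $\xi_{(p)}$ to vanish. Since this holds for every prime $p$, $\xi = 0$. When the hypotheses of Theorem \ref{thSa4} (faithfulness of $M$ and injectivity of $H^2(G, \mu) \to H^2(G, M_\alpha)$) are not met, one first passes to the faithful quotient of $G$ acting on $M_\alpha$, which preserves the cyclicity of Sylow subgroups.

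For (ii) $\Rightarrow$ (i), I would argue by contrapositive. If some Sylow $p$-subgroup of $G$ is non-cyclic, then $G$ contains a subgroup $A \simeq C_p \times C_p$ whose Schur multiplier $H^2(A, \bm{C}^\times) \simeq \bm{Z}/p\bm{Z}$ is non-trivial; choose a generator $\beta$. The task is to construct a faithful $G$-lattice $M$ and a $\mu$-extension $\alpha$ such that $\beta$ lifts to a class in $H^2(G, M_\alpha)$ which is non-zero yet restricts to zero on every bicyclic subgroup. A natural starting point is an induced lattice like $M = {\rm Ind}_A^G \bm{Z}$ (modified to ensure faithfulness if needed), with the extension built from $\beta$ via Shapiro's lemma, which supplies the identification $H^2(G, {\rm Ind}_A^G \bm{C}^\times) \simeq H^2(A, \bm{C}^\times)$ used to transport $\beta$ to the $G$-level.

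The main obstacle is the simultaneous verification of (a) non-vanishing of the lifted class in $H^2(G, M_\alpha)$ and (b) vanishing of its restriction to every bicyclic $B \leq G$, including $A$ itself. Condition (b) is the delicate point, since $\beta$ is manifestly non-zero in $H^2(A, \bm{C}^\times)$, so the extension $\alpha|_A$ must be arranged so that $\beta$ becomes a coboundary in the larger group $H^2(A, M_\alpha|_A)$ after one absorbs it into $M_\alpha$. A Mackey double-coset analysis of how each bicyclic $B$ meets the $G$-conjugates of $A$, combined with the long exact cohomology sequence attached to $\alpha$, should control the restriction behaviour on each $B$ and complete the argument.
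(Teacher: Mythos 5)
First, note that the paper does not prove Theorem \ref{thB2}; it is quoted from Barge \cite[Theorem IV-1]{Ba2}, so there is no in-paper proof to compare against. Judged on its own terms, your direction (i) $\Rightarrow$ (ii) is essentially sound: if every Sylow subgroup is cyclic it is in particular bicyclic, so it occurs among the subgroups $A$ in Saltman's intersection, and the restriction--corestriction identity kills each $p$-primary component. (You should still say more about the cases where the hypotheses of Theorem \ref{thSa4} fail -- passing to a faithful quotient does not address the injectivity of $H^2(G,\mu)\to H^2(G,M_\alpha)$ -- but that direction is repairable, e.g.\ by running res--cor directly inside $H^2(G,k_\alpha(M)^\times)$ and using retract rationality of twisted invariants of cyclic groups.)

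The direction (ii) $\Rightarrow$ (i) has two genuine gaps. First, a non-cyclic Sylow $p$-subgroup need not contain $C_p\times C_p$: generalized quaternion $2$-groups are non-cyclic and have a unique involution, so for $G=Q_8$ (or $SL_2(\bm{F}_3)$, whose Sylow $2$-subgroup is $Q_8$) your construction never gets started -- there is no subgroup $A\simeq C_2\times C_2$ and moreover $H^2(Q_8,\bm{C}^\times)=0$, so no Schur-multiplier class is available. The paper's own Table $1$ shows what actually happens there: the obstruction is $H^2_u(Q_8,M)\simeq(\bm{Z}/2\bm{Z})^{\oplus 2}$ for a split extension, i.e.\ it lives in the lattice part, not the algebra part. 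Second, and more structurally, your framework is self-defeating in the key case $G=A=C_p\times C_p$: such a $G$ is itself bicyclic, so whenever Theorem \ref{thSa4} applies its formula contains the term $\fn{Ker}(\fn{res}:H^2(G,M_\alpha)\to H^2(G,M_\alpha))=0$, forcing $H^2_u(G,M_\alpha)=0$. The nontrivial unramified class that Barge's theorem promises for $C_p\times C_p$ therefore cannot be detected as an element of $\bigcap_A\fn{Ker}(\fn{res})\subset H^2(G,M_\alpha)$ at all; it must arise from a class of $H^2(G,\bm{C}^\times)$ that dies in $H^2(G,M_\alpha)$ -- precisely the situation excluded by hypothesis (ii) of Theorem \ref{thSa4} -- and so lies outside the group you propose to compute. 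Your requirements (a) and (b) are thus in direct contradiction within your chosen framework, and the Shapiro/Mackey analysis you defer cannot resolve this: a correct proof has to work with the unramified Brauer group of $\bm{C}_\alpha(M)^G$ itself rather than with the subgroup $H^2_u(G,M_\alpha)$.
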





\section{CARAT ID of the $\bZ$-classes in dimensions $5$ and $6$}\label{seCarat}
We explain how to access the GAP ID 
and the CARAT ID of a finite subgroup $G$ of $GL_n(\bZ)$ $(n\leq 6)$. 
We need the GAP (\cite{GAP}) packages CrystCat and CARAT to do 
the computations below.

The CrystCat package of GAP provides a catalog of $\bQ$-classes 
and $\bZ$-classes (conjugacy classes) of finite subgroups $G$ 
of $GL_n(\bQ)$ and $GL_n(\bZ)$ $(2\leq n\leq 4)$. 
For $2\leq n\leq 4$, the GAP ID $(n,i,j,k)$ of a finite subgroup 
$G$ of $GL_n(\bZ)$ means that $G$ belongs to 
the $k$-th $\bZ$-class of the $j$-th $\bQ$-class of 
the $i$-th crystal system of dimension $n$ in GAP 
(see also \cite[Table 1]{BBNWZ}).

The CARAT\footnote{CARAT works on Linux or macOS, but not on Windows.} 
(\cite{CARAT}) package of GAP provides all conjugacy classes of finite 
subgroups of $GL_n(\bQ)$ ($n \leq 6$) (see \cite{PS}). 
There exist exactly $2$ (reps. $13$, $73$, $710$, $6079$, $85308$)
$\bZ$-classes forming $2$ (resp. $10$, $32$, $227$, $955$, $7103$) 
$\bQ$-classes in dimension $n=1$ (resp. $2$, $3$, $4$, $5$, $6$)
\footnote{In the old version of CARAT, 
the number of $\bQ$-classes of $GL_6(\bQ)$, 7104, 
was not correct because of the overlapping two same $\bQ$-classes 
(cf. \cite{PS}). 
The third-named author detected it and reported the correct number, 7103, 
to the CARAT group (see also \cite{CARAT}). 
It also turned out that 
the correct number of $\bZ$-classes of $GL_6(\bQ)$ is 85308 (in the old version, 
85311 was wrong). 
This has been fixed in version 2.1b1 of CARAT.}.

After unpacking the CARAT, we get the $\bQ$-catalog file
{\tt carat-2.1b1/tables/qcatalog.tar.gz}.
Unpacking this file,
we get lists of $\bQ$-classes of $GL_n(\bQ)$ 
$(n=1,\ldots,6)$ in ${\tt qcatalog/data1}, \ldots, {\tt qcatalog/data6}$.
Generators of each group are in individual files under the folders
${\tt qcatalog/dim1/}, \ldots, {\tt qcatalog/dim6/}$. 

The third-named author wrote the perl script {\tt crystlst.pl} to 
collect these generators into a single file. 
Files ${\tt cryst1.gap}, \ldots, {\tt cryst6.gap}$ are
lists of representatives of $\bQ$-classes
of $GL_1(\bQ), \ldots, GL_6(\bQ)$ respectively.
These files are available from  
{\tt https://www.math.kyoto-u.ac.jp/\~{}yamasaki/Algorithm/RatProbAlgTori/} as {\tt GLnQ.zip}.

Let $G$ be a finite subgroup of $GL_n(\bZ)$. 
CARAT has a command {\tt ZClassRepsQClass(G)}
to compute the complete $\bZ$-class representatives of the 
$\bQ$-class of $G$. 
We split the $\bQ$-class of $G$ into $\bZ$-classes by the 
command {\tt ZClassRepsQClass(G)}. 
For the $l$-th group $\widetilde{G}$ in the list of $\bZ$-classes 
obtained by {\tt ZClassRepsQClass(}$G${\tt )} 
where $G$ is the $m$-th group ($\bQ$-class) in {\tt qcatalog/data$n$}, 
we say that the CARAT ID of $\widetilde{G}$ is $(n,m,l)$.

The third-named author wrote a GAP program to determine
the $\bQ$-class and the $\bZ$-class of a group $G$.
The files {\tt crystcat.gap} and {\tt caratnumber.gap}
contain programs related to the GAP ID and the CARAT ID respectively.
The file {\tt caratnumber.gap} uses other files which are packed in the 
{\tt crystdat.zip}.
This zip file should be packed at the current directory. 
All the files above are available from\\ 
{\tt https://www.math.kyoto-u.ac.jp/\~{}yamasaki/Algorithm/RatProbAlgTori/}.


%

\bigskip

\section{Proof of Theorem {\ref{t1.5}}}\label{sePT}



Using GAP \cite{GAP}, we can find all the $710$ lattices $M$ of rank $4$.
The readers may find all these finite groups and their associated lattices by using the related functions which are available from\\
{\tt https://www.math.kyoto-u.ac.jp/\~{}yamasaki/Algorithm/MultInvField/}.
\footnote{
We do not need 
CARAT package \cite{CARAT} of GAP when $G\leq {\rm GL}(n,\bZ)$ 
($n\leq 4$) although it is needed when $n=5, 6$.}

\bigskip
Now we  start to prove Theorem \ref{t1.5}.

The key idea is to device a computer algorithm to compute $H_u^2(G,M)$. We call this algorithm the function {\tt H2nrM(g)} of GAP \cite{GAP}. For any $G$-lattice $M$ where $G$ is a finite subgroup of $GL_n(\bm{Z})$, this function returns $H_u^2(G,M)$ (see Section \ref{seGAP}; Section \ref{seTables} is the outcome of the computation).
The function {\tt H2nrM(g)} and associated functions are available from
{\tt https://www.math.kyoto-u.ac.jp/\~{}yamasaki/Algorithm/MultInvField/}.\\

The proof of Theorem \ref{t1.5} proceeds as follows.

By Theorem \ref{thHKHR}, if $M$ is a faithful $G$-lattice of rank $\leq 3$,
then $\bm{C}(M)^G$ is $\bm{C}$-rational and hence ${\rm Br}_u(\bm{C}(M)^G)=0$. For the remaining cases, we will prove the following three theorems. 
Note that there exist $710$ (resp. $6079$, $85308$) faithful 
$G$-lattices $M$ of rank $4$ (resp. $5$, $6$).

\begin{theorem}\label{thm1}
%
Let $G$ be a finite group and $M$ be a faithful $G$-lattice of rank $4$.
Then
${\rm Br}_u(\bm{C}(M)^G)\neq 0$ if and only if
$M$ is one of the $5$ cases as in {\rm Table} $1$ of 
{\rm Section} $\ref{seTables}$ $($or {\rm Section} $\ref{seInt}$$)$.
In particular, such $G$-lattices $M$ with
${\rm Br}_u(\bm{C}(M)^G)\neq 0$ are indecomposable.
Moreover, if $M$ is one of the five $G$-lattices with 
${\rm Br}_u(\bm{C}(M)^G)\neq 0$, then
${\rm Br}_u(\bm{C}(M)^G)=H_u^2(G,M)$; in other words, $H_u^2(G,\bm{Q}/\bm{Z})=0$.
\end{theorem}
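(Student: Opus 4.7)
The plan is to reduce everything to two group-cohomological computations via the splitting
\[
{\rm Br}_u(\bm{C}(M)^G)\simeq B_0(G)\oplus H^2_u(G,M)
\]
of Definition \ref{d1.4}, and then evaluate each summand through Saltman's formula (Theorem \ref{thSa4}), which presents it as the intersection over bicyclic subgroups $A\le G$ of the kernels of the restriction maps $H^2(G,-)\to H^2(A,-)$. Before any intensive computation, I would apply Barge's Theorem \ref{thB1} as a sieve: any $G$ all of whose Sylow subgroups are bicyclic has ${\rm Br}_u(\bm{C}(M)^G)=0$ for every $M$, so one can restrict attention to the much smaller sub-list of finite subgroups of $GL_4(\bm{Z})$ possessing a non-bicyclic Sylow subgroup. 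The full catalog of $710$ lattices is available from CrystCat, so this triage can be carried out mechanically.

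For the surviving candidates, I would implement Saltman's formula as a GAP routine: represent $H^2(G,M)$ by explicit $2$-cocycles valued in the lattice $M$, enumerate the bicyclic subgroups of $G$, compute each restriction map on the level of cocycles, and intersect the kernels to obtain $H^2_u(G,M)$. This isolates the lattices with $H^2_u(G,M)\ne 0$. For each such $G$ one still needs $B_0(G)=0$; this can be verified by an analogous Bogomolov/Saltman routine for trivial coefficients $\bm{Q}/\bm{Z}$, or more cheaply by noting that each of the five candidate groups $D_4$, $Q_8$, $QD_8$, $\SL_2(\bF_3)$, $\GL_2(\bF_3)$ has a classically known trivial Bogomolov multiplier. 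This yields the identification ${\rm Br}_u(\bm{C}(M)^G)=H^2_u(G,M)$ in the nontrivial cases. Indecomposability of the $5$ exceptional lattices is then automatic from Theorem \ref{th116}: any decomposable rank-$4$ $G$-lattice produces a rational fixed field and hence a trivial unramified Brauer group, ruling out its appearance on the exceptional list.

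The main obstacle I expect is engineering a correct and efficient restriction map on $H^2(G,M)$ with nontrivial lattice coefficients. Standard GAP cohomology functions are oriented toward trivial or finite-module coefficients, so one must work directly with $2$-cocycles valued in the free abelian group $M$, compute $H^2(A,M)$ from scratch for each bicyclic $A$, and handle the change-of-basis inherent in pulling cocycles back along $A\hookrightarrow G$. Once this machinery is coded and validated against the rationality results of Theorems \ref{thHaj87} and \ref{thHKHR} (both of which force the intersection of restriction kernels to be trivial in rank $\le 3$), the actual sweep through the $710$ rank-$4$ lattices is brief and fits comfortably on a personal computer; its output, combined with the trivial-Bogomolov-multiplier check on the surviving five groups, is exactly Table $1$.
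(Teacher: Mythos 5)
Your overall strategy is the same as the paper's: sieve with Barge's Theorem \ref{thB1}, compute $H^2_u(G,M)$ for the survivors by implementing Saltman's intersection-of-restriction-kernels formula on explicit $2$-cocycles in GAP, verify $B_0(G)=0$, and get indecomposability for free from Theorem \ref{th116}. However, there is one genuine logical gap in how you close the ``only if'' direction. You verify $B_0(G)=0$ only for the five groups on which $H^2_u(G,M)\neq 0$. That suffices for the ``Moreover'' clause, i.e.\ for the identification ${\rm Br}_u(\bm{C}(M)^G)=H^2_u(G,M)$ on the exceptional list, but it does not show that the list is complete. Since ${\rm Br}_u(\bm{C}(M)^G)\simeq B_0(G)\oplus H^2_u(G,M)$, a lattice that survives the Barge sieve and has $H^2_u(G,M)=0$ could still have ${\rm Br}_u(\bm{C}(M)^G)\neq 0$ through a nonzero $B_0(G)$. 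You therefore need $B_0(G)=0$ for \emph{every} finite subgroup of $GL_4(\bm{Z})$ with a non-bicyclic Sylow subgroup, not just for the five winners. This is not a vacuous worry: in rank $6$ the analogous phenomenon actually occurs --- there are $24$ lattices with $B_0(G)=\bm{Z}/2\bm{Z}$, $22$ of which have $H^2_u(G,M)=0$, and these contribute to the count $1073$ in Theorem \ref{thm3}.

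The repair is exactly the paper's Step 4: for $2$-groups in $GL_4(\bm{Z})$ of order $\le 32$ one has $\bm{C}(G)$ rational, hence $B_0(G)=0$, by Theorems \ref{thCK} and \ref{thCHKP}; for orders $64$ and $128$ one appeals to \cite{CHKK} and \cite{Mo}; and in any case the vanishing of $B_0(G)$ for all $227$ $\bm{Q}$-classes in $GL_4(\bm{Z})$ is confirmed by a single {\tt BogomolovMultiplier} sweep. Aside from this, your organization differs only cosmetically from the paper's (the paper first classifies the bad $2$-groups and then restricts the non-$p$-groups to those whose $2$-Sylow subgroup is bad, using the injectivity of $H^2(G,M)\to\oplus_p H^2(G_p,M)$ to cut down the computation, whereas you would run the Saltman routine on all Barge survivors directly), and your proposed sanity check against Theorems \ref{thHaj87} and \ref{thHKHR} in rank $\le 3$ is a sensible addition.
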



\begin{theorem}\label{thm2}
Let $G$ be a finite group and $M$ be a faithful $G$-lattice of rank $5$.
Then
${\rm Br}_u(\bm{C}(M)^G)\neq 0$ if and only if
$M$ is one of the $46$ cases as in {\rm Table} $2$ of {\rm Section} $\ref{seTables}$.
In particular, there exist $27$ $($resp. $18$, $1$$)$ $G$-lattices
with ${\rm Br}_u(\bm{C}(M)^G)\neq 0$
which are indecomposable $($resp. decomposable into indecomposable component
with rank $4+1$, $3+2$$)$.
Moreover, if $M$ is one of the $46$ $G$-lattices with 
${\rm Br}_u(\bm{C}(M)^G)\neq 0$, then ${\rm Br}_u(\bm{C}(M)^G)=H_u^2(G,M)$; in other words,  $H_u^2(G,\bm{Q}/\bm{Z})=0$.
\end{theorem}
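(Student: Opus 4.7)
The plan is to apply Saltman's formula (Theorem \ref{thSa4}) in the split $\mu$-extension case, which gives the direct sum decomposition ${\rm Br}_u(\bm{C}(M)^G) \simeq B_0(G) \oplus H_u^2(G,M)$ for every faithful $G$-lattice $M$. Since $B_0(G)$ depends only on the abstract group $G$, we can first tabulate the list of candidate groups arising as finite subgroups of $GL_5(\bm{Z})$ (using CARAT, there are $6079$ $\bm{Z}$-classes), compute $B_0(G)$ for each via Theorem \ref{t2.12}, and then compute $H_u^2(G,M) = \bigcap_A \ker({\rm res}:H^2(G,M)\to H^2(A,M))$ as $A$ ranges over bicyclic subgroups of $G$. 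The enumeration of the $46$ cases and the verification that $B_0(G)=0$ for every one of them (hence ${\rm Br}_u(\bm{C}(M)^G)=H_u^2(G,M)$) are then read off from the tables produced by the computer.

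First I would loop over the $6079$ $\bm{Z}$-class representatives produced by CARAT and, for each one, use the function \texttt{H2nrM(g)} described in Section~7 to compute $H_u^2(G,M)$. The function proceeds by: (i) building a resolution and presenting $H^2(G,M)$ as the cokernel of the coboundary from $2$-cochains to $3$-cochains; (ii) enumerating all bicyclic subgroups $A\leq G$ (every such $A$ is generated by $\leq 2$ elements, so the list is short); (iii) computing the restriction maps to $H^2(A,M)$ using \texttt{ResH2(g,h)}; and (iv) taking the intersection of the kernels as a subgroup of $H^2(G,M)$. Cross-checking with the formula of Theorem~\ref{t2.12} in parallel yields $B_0(G)$ for each group, and the classes in rank~$5$ where $B_0(G)\neq 0$ can be compared with the known Bogomolov-multiplier tables to confirm that none of the $46$ exceptional lattices comes from a group with nontrivial $B_0(G)$.

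Next I would extract from the resulting list the indecomposable-vs-decomposable split. For each lattice with ${\rm Br}_u\neq 0$, the $\bm{Z}[G]$-decomposition is determined by computing the endomorphism ring and using standard GAP routines for splitting $\bm{Z}[G]$-lattices; then one verifies the claimed $27+18+1$ partition. For the $3+2$ case, which is the unique decomposable pair with both summands of rank $\geq 2$, the non-triviality of the unramified Brauer group is forced by the $D_4$-phenomenon already flagged in Remark~(3): the rank-$3$ summand $M_1$ has $H_u^2(G,M_1)\simeq\bm{Z}/2\bm{Z}$ even though $\bm{C}(M_1)^G$ itself is rational (because $M_1$ is not faithful), and this class survives in $H_u^2(G,M)\simeq H_u^2(G,M_1)\oplus H_u^2(G,M_2)$. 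This gives a theoretic check on the computer output for that exceptional entry.

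The hard part will be controlling the scale of the computation and trusting it. A naive implementation of $H^2(G,M)$ via projective resolutions blows up in memory for groups of order a few hundred, so the implementation has to be careful: one uses the integral bar resolution truncated up to degree $3$, stores cocycles by their values on a generating set, and replaces the built-in GAP routine \texttt{CocyclesH2} by a hand-written one tailored to our lattice modules; this is where the genuine work of the section lies. To make the verification credible, I would rerun the computation on the $5$ known rank-$4$ cases from Theorem~\ref{thm1} as a sanity check, confirm agreement with Theorems~\ref{t2.12} and \ref{thB1} (in particular that every $G$ in Table~$2$ has a non-bicyclic Sylow subgroup), and finally compile the $46$ triples $(n,m,l)$ of CARAT IDs into Table~$2$ together with the computed groups $H_u^2(G,M)$.
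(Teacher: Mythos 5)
Your proposal is correct and follows essentially the same route as the paper: compute $H_u^2(G,M)=\bigcap_A\ker({\rm res}:H^2(G,M)\to H^2(A,M))$ with the GAP function \texttt{H2nrM}, verify $B_0(G)=0$ separately, and read off the decomposition types. The only difference is organizational: rather than running \texttt{H2nrM} on all $6079$ lattices, the paper first handles the non-bicyclic $2$-groups and then uses the injectivity of ${\rm res}:H^2_u(G,M)\to\bigoplus_p H^2_u(G_p,M)$ to restrict attention to those non-$2$-groups whose $2$-Sylow subgroup is one of the $41$ bad ones found in the first pass — a pruning that is merely a convenience at rank $5$ but becomes essential at rank $6$.
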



\begin{theorem}\label{thm3}
Let $G$ be a finite group and $M$ be a faithful $G$-lattice of rank $6$.
Then
${\rm Br}_u(\bm{C}(M)^G)\neq 0$ if and only if
$M$ is one of the 
$1073$ cases as in {\rm Table} $3$ of {\rm Section} $\ref{seTables}$.
In particular, there exist 
$613$
$($resp. $161$,
$230$,
$59$, $5$, $5$$)$
$G$-lattices with ${\rm Br}_u(\bm{C}(M)^G)\neq 0$
which are indecomposable $($resp. decomposable into indecomposable component
with rank $5+1$, $4+2$, $4+1+1$, $3+3$, $3+2+1$$)$.
Moreover, if $M$ is one of the 
$1073$ $G$-lattices with 
${\rm Br}_u(\bm{C}(M)^G)\neq 0$, then ${\rm Br}_u(\bm{C}(M)^G)=H_u^2(G,M)$,
i.e. $H_u^2(G,\bm{Q}/\bm{Z})=0$, except for $24$ cases
with $H_u^2(G,\bm{Q}/\bm{Z})=\bm{Z}/2\bm{Z}$ of the {\rm CARAT ID}
$(6,6458,i)$, $(6,6459,i)$, $(6,6464,i)$ $(1\leq i\leq 8)$.
The $22$ cases $($resp. $2$ cases$)$
of the exceptional $24$ cases satisfy $H_u^2(G,M)=0$
$($resp. $H_u^2(G,M)\simeq\bm{Z}/2\bm{Z})$.
In particular, only $2$ cases satisfy $H_u^2(G,\bm{Q}/\bm{Z})\neq 0$
and $H_u^2(G,M)\neq 0$.
\end{theorem}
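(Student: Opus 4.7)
The plan is to invoke Saltman's formula (Theorem \ref{thSa4}), which since the $\mu$-extension splits gives the direct-sum decomposition
\[
{\rm Br}_u(\bm{C}(M)^G) \;=\; B_0(G) \;\oplus\; H^2_u(G,M),
\]
and then to compute both summands for each of the $85308$ faithful $G$-lattices of rank $6$ by means of the algorithm {\tt H2nrM(g)} together with the existing machinery for Bogomolov multipliers. As announced in the paragraph preceding the statement of Theorem \ref{thm3}, the proof falls into two parts: a theoretical pruning that cuts the $85308$ cases down to a tractable list, followed by the computer computation on what survives.

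First, for the pruning I would apply Barge's Theorem (Theorem \ref{thB1}): every $G$ all of whose Sylow subgroups are bicyclic contributes zero to both $B_0(G)$ and $H^2_u(G,M)$, so such groups are discarded at once. For a decomposable lattice $M = M_1 \oplus M_2$, and its three-summand analogues of type $4{+}1{+}1$ and $3{+}2{+}1$, the cohomology splits as $H^2(G,M) = \bigoplus_i H^2(G,M_i)$, and the bicyclic-restriction procedure respects this splitting, so the analysis reduces to the summands. When every $M_i$ is faithful of rank $\le 3$, Theorem \ref{thHKHR} forces $H^2_u(G,M_i) = 0$; when a faithful summand has rank $4$ or $5$, the contribution is read off directly from Theorems \ref{thm1} and \ref{thm2}. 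The subtlety, already flagged in the remarks after Theorem \ref{t1.5}, is that a summand of a faithful $M$ need not itself be faithful; in that case Theorem \ref{thSa4} does not apply to $M_i$ in isolation, so I would compute $H^2(G,M_i)$ directly and then run the bicyclic-restriction test inside $G$ rather than inside the smaller image $G|_{M_i}$.

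Second, for every group that survives the pruning I would run {\tt H2nrM(g)}, which builds $H^2(G,M)$ from a Smith-normal-form reduction of the $2$-cocycle matrix, enumerates the bicyclic subgroups $A \le G$, and intersects the kernels of the restriction maps $H^2(G,M) \to H^2(A,M)$ produced by {\tt ResH2(g,h)}. The parallel computation of $B_0(G)$ proceeds by the identical bicyclic-intersection procedure applied to $H^2(G,\bm{Q}/\bm{Z})$ as in Theorem \ref{t2.12}. Cross-referencing the two outputs should yield exactly $1073$ lattices with nontrivial unramified Brauer group, partitioned by decomposition type into the counts $613, 161, 230, 59, 5, 5$, together with a distinguished list of $24$ groups with CARAT identifiers $(6,6458,i)$, $(6,6459,i)$, $(6,6464,i)$ for $1 \le i \le 8$ that alone carry $B_0(G) = \bm{Z}/2\bm{Z}$; among those $24$ one then verifies the split $22 + 2$ between $H^2_u(G,M) = 0$ and $H^2_u(G,M) \simeq \bm{Z}/2\bm{Z}$, which in turn isolates the two lattices for which both summands of ${\rm Br}_u$ are nonzero.

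The hard part will be memory and running time rather than any single cohomological obstruction. Even after Barge-pruning and the decomposable reduction, the bicyclic-subgroup lattice of a solvable group acting on a rank-$6$ $\bm{Z}$-module is large, and generic GAP routines for $H^2$ expand the cocycle space too aggressively to terminate in reasonable resources. The engineering step that makes the computation feasible is the custom implementation of {\tt ResH2} on Smith-normal-form representatives of cocycles, which avoids materializing the full $2$-cochain group. A final theoretical sanity check would compare the list of $24$ exceptional groups with published Bogomolov-multiplier tables, confirming that no case outside this list can acquire a nonzero $B_0(G)$ and closing the classification.
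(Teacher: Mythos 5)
Your overall strategy---Saltman's splitting ${\rm Br}_u(\bm{C}(M)^G)=B_0(G)\oplus H^2_u(G,M)$, pruning by Barge's Theorem \ref{thB1}, and then running {\tt H2nrM} together with a Bogomolov-multiplier routine on the survivors---is the same skeleton the paper uses, and your observation that $H^2_u(G,M)$ splits over direct summands (with the correct caveat about non-faithful summands) is valid, although the paper only exploits decomposability after the fact, when classifying the $1073$ lattices by type in Step 3. The genuine gap is your claim that the custom Smith-normal-form implementation of {\tt ResH2} is what makes the brute-force computation feasible for everything that survives the pruning. The paper states explicitly that for the non-bicyclic $2$-groups of order $2^9$ (there are $128$ of them) and $2^{10}$ ($4$ of them), and likewise for the surviving groups of order $32\cdot 27$, $32\cdot 81$ and $64\cdot 27$, applying {\tt H2nrM} directly ``takes very long computation time and requires huge memory sources for a personal computer.'' These groups are not disposed of by Barge's theorem, and your summand reduction does not help since they need not decompose; as written, your proof does not terminate on those cases.

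The missing idea is the theoretical detour via the Hochschild--Serre seven-term exact sequence (Theorem \ref{thDHW}). For such a group $G$ one finds a normal subgroup $N$ of index $2$ with $M^N=0$, so that $H^2(G,M)_1={\rm Ker}({\rm res}\colon H^2(G,M)\to H^2(N,M))\simeq H^1(G/N,H^1(N,M))$; one then checks, in the $2$-group case, that $H^1(N,M)$ is an elementary abelian $2$-group on which the generator of $G/N$ acts with Jordan blocks $\bigl(\begin{smallmatrix}1&1\\0&1\end{smallmatrix}\bigr)$ (detected numerically by $\dim_{\bm{F}_2}H^1(N,M)=2\dim_{\bm{F}_2}H^1(G,M)$), forcing $H^1(G/N,H^1(N,M))=0$, or, in the $27\mid |G|$ case, that $H^1(N,M)$ is $0$ or $\bm{Z}/3\bm{Z}$ so that $\gcd(|G/N|,|H^1(N,M)|)=1$. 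Either way the restriction $H^2(G,M)\to H^2(N,M)$ is injective, hence so is $H^2_u(G,M)\to H^2_u(N,M)$, and the vanishing for $G$ follows from the vanishing already established for the smaller group $N$ (e.g.\ order $256$). Only the residual groups admitting no such $N$ (for instance $68$ of the $128$ groups of order $512$, and $2$ of the $4$ groups of order $1024$) are fed to {\tt H2nrM} directly. You need to add this reduction; it is precisely the ``theoretic argument to minimize the computing time'' that the paper flags as indispensable for Theorem \ref{thm3}.
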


Note that new phenomena arise when the rank increases. For examples, in Theorem \ref{thm2}, there are decomposable lattices $M$ with ${\rm Br}_u(\bm{C}(M)^G)\neq 0$ (compare with Theorem \ref{th116} and Theorem \ref{thm1}). Also, in Theorem \ref{thm3}, there are lattices $M$ with ${\rm Br}_u(\bm{C}(M)^G)\neq 0$ and $H_u^2(G,\bm{Q}/\bm{Z}) \neq 0$ while $H_u^2(G,M)$ may be trivial or not (compare with Theorem \ref{thm2}). In Theorem \ref{equiv}, we will find an example that multiplicative invariant fields arising from different lattices become stably isomorphic.

\bigskip
We explain how to apply the function {\tt H2nrM(g)} of GAP \cite{GAP} to prove Theorem \ref{t1.5}.

Let $G$ be a finite subgroup of $GL_n(\bm{Z})$. For each prime number $p \mid |G|$, choose a $p$-Sylow subgroup $G_p$ of $G$. Since the restriction map $H^i (G, M_{\alpha}) \to \oplus_{p\mid |G|}H^i (G_p, M_{\alpha})$ is injective for $i \ge 1$ (see \cite[page 130, Proposition 4]{Se}), it follows that $H^2_u (G, M_{\alpha}) \to \oplus_{p\mid |G|}H^2_u (G_p, M_{\alpha})$ is also injective. On the other hand, if the $\mu$-extension $(\alpha) : 1\rightarrow \mu\rightarrow M_{\alpha}\rightarrow M\rightarrow 0$ is a split extension, then $H_u^2(G, M_{\alpha}) \simeq H_u^2(G,\bm{Q}/\bm{Z})\oplus H_u^2(G,M)$.

In other words, in order to show that $H^2_u (G, M_{\alpha})=0$ in Theorems \ref{thm1}, \ref{thm2} and \ref{thm3}, it suffices to show that $B_0(G)=0$ and $H^2_u (G_p, M)=0$ for $p\mid |G|$.

To show that $H^2_u (G_p, M)=0$, we may apply Theorem \ref{thB1} to narrow down  the class of lattices and then apply the function {\tt H2nrM(g)}. In the following proofs of Theorems \ref{thm1}, Theorem \ref{thm2} and Theorem \ref{thm3}, the Steps $1$-$4$ are meant to explain the ideas of the algorithms, then the computer program follows (with the corresponding Steps $1$-$4$).

Now we go ahead to the proofs of these theorems.

\vspace*{2mm}
{\it Proof of Theorem \ref{thm1}.}\vspace*{2mm}

We will compute $H_u^2(G,M)$ first. 
The computation of $B_0(G)$ is given in Step $4$.

\medskip
Step 1. First we study the structure of maximal finite $p$-subgroups of $GL_4(\bm{Z})$.
It is easy to see that, if $G$ is a finite $p$-group in $GL_4(\bm{Z})$, then $p=2, 3,$ or $5$ 
(see the 4th command of Step 1 on page \pageref{4th}).

The maximal $3$-subgroup, denoted by $G_3$ (resp. $5$-subgroup, denoted by $G_5$)
is of order $9$ (resp. $5$). Hence
${\rm Br}_u(\bm{C}(M)^{G_p})=0$ by Theorem \ref{thB1}. It remains to
check $2$-groups $G$ with $|G|\geq 8$ which are not bicyclic.

It can be shown a 
$2$-group in $GL_4(\bm{Z})$ is of order $\le 128$. 
Moreover, 
the total numbers of non-bicyclic subgroups $G$ in $GL_4(\bm{Z})$ of order
$8$, $16$, $32$, $64$, $128$ are: $90$, $81$, $33$, $14$, $2$ respectively 
(see the 6th command of Step 1 on page \pageref{6th}).

Apply the function {\tt H2nrM(G)} to the lattices associated to these groups. We find that only three groups $G$ have non-trivial $H^2_u (G, M)$. The GAP ID of these three groups are :
$(4,12,4,12)$, $(4,32,1,2)$, $(4,32,3,2)$. More precisely, if the GAP ID of $G$ is $(4,12,4,12)$, then
$G \simeq D_4$ and $H_u^2(G,M) \simeq \bm{Z}/2\bm{Z}$; if the GAP ID of $G$ is $(4,32,1,2)$, then
$G \simeq Q_8$ and $H_u^2(G,M) \simeq \bm{Z}/2\bm{Z}\oplus\bm{Z}/2\bm{Z}$; if the GAP ID of $G$ is $(4,32,3,2)$, then
$G \simeq QD_8$ and $H_u^2(G,M) \simeq \bm{Z}/2\bm{Z}$ 
(see the 7th to the 10th commands of Step 1 on page \pageref{7th}).

Conclusion: Let $G$ be a finite subgroup of $GL_4(\bm{Z})$ (no matter what $G$ is a $2$-group or not). If the $2$-Sylow subgroup of $G$ is not isomorphic to groups with GAP ID $(4,12,4,12)$, $(4,32,1,2)$, $(4,32,3,2)$, then  ${\rm Br}_u(\bm{C}(M)^{G})=0$.

\medskip
Step 2.
Let $G$ be a finite subgroup of $GL_4(\bm{Z})$ which is not a $2$-group. Because of Theorem \ref{thB1}, it suffices to consider the case when $G$ is not bicyclic.
By the conclusion of Step 1, we may restrict only to those groups of order $8m$ or $16m$ where $m \ge 3$ is an odd integer (if the group order is $8$ or $16$, it becomes a $2$-group which has been treated in Step 1). Inspecting the list of finite subgroups in $GL_4(\bm{Z})$, we find that there are no groups of order $8 \cdot 7$, $8 \cdot 11$, $8 \cdot 13, \ldots, 16 \cdot 5, 16 \cdot 7, \ldots$. We are reduced to groups of order $24$, $40$, $72$, $120$, $48$, $144$, $240$.

The total numbers of non-bicyclic subgroups $G$ in $GL_4(\bm{Z})$ of order
$24$, $40$, $72$, $120$ are: $95$, $2$, $27$, $6$ respectively 
(see Reference A of Step 2 on page \pageref{refA}); 
and the total numbers of non-bicyclic subgroups $G$ in $GL_4(\bm{Z})$ 
of order $48$, $144$, $240$ are: $58$, $9$, $2$ respectively 
(see Reference B of Step 2 on page \pageref{refB}).

Not all of the above groups are eligible, because only those groups 
whose $2$-Sylow subgroups are isomorphic to  one of the three groups 
we find in Step 1 are the ``bad" groups. 
The numbers of these ``bad" groups are $4$ (for order $24$) 
and $1$ (for order $72$) 
(see Reference C of Step 2 on page \pageref{refC}); 
similarly, the numbers of these ``bad" groups are $4$ 
(for order $48$) and $1$ (for order $144$) 
(see Reference D of Step 2 on page \pageref{refD}).

Apply the function {\tt H2nrM(G)} to the above ten groups $G$,
we find that $H_u^2(G,M)=0$
except for two groups. These two groups are the groups whose GAP ID are
$(4,33,3,1)$ and $(4,33,6,1)$.
The group $G$ of the GAP ID $(4,33,3,1)$ (resp. $(4,33,6,1)$)
is isomorphic to $SL_2(\bm{F}_3)$
(resp. $GL_2(\bm{F}_3)$)
with $H_u^2(G,M)=\bm{Z}/2\bm{Z}\oplus\bm{Z}/2\bm{Z}$
(resp. $\bm{Z}/2\bm{Z}$).

\medskip
Step 3. By Theorem \ref{th116},
all of the $5=3+2$ $G$-lattices $M$ with $H_u^2(G,M)\neq 0$ 
in Steps $1$ and $2$ 
are indecomposable because $\bm{C}(M)^G$ is $\bm{C}$-rational 
if $M$ is decomposable 
(see Step 3 on page \pageref{Step3}).

\medskip
Step 4.
We will show that $H_u^2(G,\bm{Q}/\bm{Z})=0$ when $G$ are the ``bad" groups found above.

By Theorem \ref{thCK} and Theorem \ref{thCHKP},
if $G$ is a $p$-group of order $\leq p^4$ or $32$,
then $\bm{C}(G)$ is $\bm{C}$-rational and hence $H_u^2(G,\bm{Q}/\bm{Z})=0$.
It remains to check whether $H_u^2(G,\bm{Q}/\bm{Z})=0$ for $|G|=64$ or $128$.
It follows from \cite{CHKK} and \cite{Mo} that $H_u^2(G,\bm{Q}/\bm{Z})=0$
when $G$ is a subgroup of $GL_4(\bm{Z})$ of order $64$ or $128$.
We may also check this using the function {\tt B0G(G)}
in the program b0g.g by Moravec \cite{Mo} or the function
{\tt BogomolovMultiplier(G)} in HAP \cite{HAP} 
(see Step 4 on page \pageref{Step4}). \\

Before proceeding to the computations of GAP \cite{GAP} below,
we should read the related data from\\
{\tt https://www.math.kyoto-u.ac.jp/\~{}yamasaki/Algorithm/MultInvField/}.
\footnote{
We need 
CARAT package \cite{CARAT} of GAP for  
$G\leq {\rm GL}(n,\bZ)$ $(n=5, 6)$ and it 
works on Linux or macOS, but not on Windows.}
\\


\begin{verbatim}
gap> Read("MultInvField.gap");

# Step 1
gap> GL4Z:=Flat(List([1..33],x->List([1..NrQClassesCrystalSystem(4,x)],
> y->List([1..NrZClassesQClass(4,x,y)],z->MatGroupZClass(4,x,y,z)))));; 
# all the finite subgroups of GL4(Z) up to Z-conjugate
gap> Length(GL4Z);
710
gap> o4:=List(GL4Z,Order);;
\end{verbatim}\vspace*{-1.6mm}
\label{4th}
\begin{verbatim}
gap> Collected(Factors(Lcm(o4)));
[ [ 2, 7 ], [ 3, 2 ], [ 5, 1 ] ]
gap> nbc4_2:=Filtered(GL4Z,x->2^7 mod Order(x)=0
> and not IsBicyclic(x));; # |G|=2^n case
\end{verbatim}\vspace*{-1.6mm}
\label{6th}
\begin{verbatim}
gap> Collected(List(nbc4_2,Order)); # #=90+81+33+14+2
[ [ 8, 90 ], [ 16, 81 ], [ 32, 33 ], [ 64, 14 ], [ 128, 2 ] ]
\end{verbatim}\vspace*{-1.6mm}
\label{7th}
\begin{verbatim}
gap> ns4_2:=Filtered(nbc4_2,x->H2nrM(x).H2nrM<>[]); 
# choosing only the cases with non-trivial H2nr(G,M)
[ MatGroupZClass( 4, 12, 4, 12 ), MatGroupZClass( 4, 32, 1, 2 ),
  MatGroupZClass( 4, 32, 3, 2 ) ]
gap> ns4_2i:=List(ns4_2,CrystCatZClassNumber); # #=3 with non-trivial H2nr(G,M)
[ [ 4, 12, 4, 12 ], [ 4, 32, 1, 2 ], [ 4, 32, 3, 2 ] ]
gap> List(ns4_2,Order); # order of G
[ 8, 8, 16 ]
gap> List(ns4_2,StructureDescription); # structure of G
[ "D8", "Q8", "QD16" ]

# Step 2
gap> nbc4_8m:=Filtered(GL4Z,x->Order(x) mod 16=8 and Order(x)>8
> and not IsBicyclic(x));; # |G|=8m case
\end{verbatim}\vspace*{-1.6mm}
\label{refA}
\begin{verbatim}
gap> Collected(List(nbc4_8m,Order)); # #=95+2+27+6 (Reference A)
[ [ 24, 95 ], [ 40, 2 ], [ 72, 27 ], [ 120, 6 ] ]
gap> nbc4_8m_2:=Filtered(nbc4_8m,
> x->CrystCatZClassNumber(SylowSubgroup(x,2)) in ns4_2i); 
# only the case where 2-Sylow subgroup is in Step 1
[ MatGroupZClass( 4, 32, 5, 2 ), MatGroupZClass( 4, 32, 5, 3 ),
  MatGroupZClass( 4, 33, 1, 1 ), MatGroupZClass( 4, 33, 3, 1 ),
  MatGroupZClass( 4, 33, 7, 1 ) ]
gap> Collected(List(nbc4_8m_2,x->CrystCatZClass(SylowSubgroup(x,2)))); 
[ [ [ 4, 32, 1, 2 ], 5 ] ]
\end{verbatim}\vspace*{-1.6mm}
\label{refC}
\begin{verbatim}
gap> Collected(List(nbc4_8m_2,Order)); # #=5=4+1 (Reference C)
[ [ 24, 4 ], [ 72, 1 ] ]
gap> ns4_8m:=Filtered(nbc4_8m_2,x->H2nrM(x).H2nrM<>[]); 
# choosing only the cases with non-trivial H2nr(G,M)
[ MatGroupZClass( 4, 33, 3, 1 ) ]
gap> ns4_8mi:=List(ns4_8m,CrystCatZClassNumber); # #=1 with non-trivial H2nr(G,M)
[ [ 4, 33, 3, 1 ] ]
gap> List(ns4_8m,Order); # order of G
[ 24 ]
gap> List(ns4_8m,StructureDescription); # structure of G
[ "SL(2,3)" ]

gap> nbc4_16m:=Filtered(GL4Z,x->Order(x) mod 32=16 and Order(x)>16
> and not IsBicyclic(x));; # |G|=16m case
\end{verbatim}\vspace*{-1.6mm}
\label{refB}
\begin{verbatim}
gap> Collected(List(nbc4_16m,Order)); # #=58+9+2 (Reference B)
[ [ 48, 58 ], [ 144, 9 ], [ 240, 2 ] ]
gap> nbc4_16m_2:=Filtered(nbc4_16m,
> x->CrystCatZClassNumber(SylowSubgroup(x,2)) in ns4_2i);
# only the case where 2-Sylow subgroup is in Step 1
[ MatGroupZClass( 4, 32, 11, 2 ), MatGroupZClass( 4, 32, 11, 3 ),
  MatGroupZClass( 4, 33, 4, 1 ), MatGroupZClass( 4, 33, 6, 1 ),
  MatGroupZClass( 4, 33, 11, 1 ) ]
gap> Collected(List(nbc4_16m_2,x->CrystCatZClass(SylowSubgroup(x,2))));
[ [ [ 4, 32, 3, 2 ], 5 ] ]
\end{verbatim}\vspace*{-1.6mm}
\label{refD}
\begin{verbatim}
gap> Collected(List(nbc4_16m_2,Order)); # #=5+4+1 (Reference D)
[ [ 48, 4 ], [ 144, 1 ] ]
gap> ns4_16m:=Filtered(nbc4_16m_2,x->H2nrM(x).H2nrM<>[]); 
# choosing only the cases with non-trivial H2nr(G,M)
[ MatGroupZClass( 4, 33, 6, 1 ) ]
gap> ns4_16mi:=List(ns4_16m,CrystCatZClassNumber); # #=1 with non-trivial H2nr(G,M)
[ [ 4, 33, 6, 1 ] ]
gap> List(ns4_16m,Order); # order of G
[ 48 ]
gap> List(ns4_16m,StructureDescription); # structure of G
[ "GL(2,3)" ]

gap> ns4:=Union(ns4_2i,ns4_8mi,ns4_16mi); # all cases with non-trivial H2nr(G,M)
[ [ 4, 12, 4, 12 ], [ 4, 32, 1, 2 ], [ 4, 32, 3, 2 ], [ 4, 33, 3, 1 ], [ 4, 33, 6, 1 ] ]
gap> Length(ns4); # total #=5=3+1+1 cases with non-trivial H2nr(G,M)
5

\end{verbatim}\vspace*{-2mm}
\label{Step3}
\begin{verbatim}
# Step 3
gap> NS4:=Intersection(ns4,d4); 
# d4 is the list of GAP IDs of indecomposable lattices of rank 4
[ [ 4, 12, 4, 12 ], [ 4, 32, 1, 2 ], [ 4, 32, 3, 2 ], [ 4, 33, 3, 1 ], [ 4, 33, 6, 1 ] ]
gap> Length(NS4); # all the 5 cases are indecomposable
5
gap> List(NS4,x->H2nrM(MatGroupZClass(x[1],x[2],x[3],x[4])).H2nrM); 
# structure of H2nr(G,M)
[ [ 2 ], [ 2, 2 ], [ 2 ], [ 2, 2 ], [ 2 ] ]

\end{verbatim}\vspace*{-2mm}
\label{Step4}
\begin{verbatim}
# Step 4
gap> LoadPackage("HAP");
true
gap> GL4Q:=Flat(List([1..33],x->List([1..NrQClassesCrystalSystem(4,x)],
> y->MatGroupZClass(4,x,y,1))));;
# all the finite subgroups of GL4(Z) up to Q-conjugate
gap> nb4:=Filtered(GL4Q,x->BogomolovMultiplier(x)<>[]); # checking B0(G)=0
[  ]
\end{verbatim}

\vspace*{2mm}
{\it Proof of Theorem \ref{thm2}.}\vspace*{2mm}

We compute $H^2_u (G,M)$ in Steps $1$-$3$ as before.

\medskip
Step 1.
It is not difficult to show that, if $G$ is a finite $p$-group in $GL_5(\bm{Z})$, then $p=2, 3,$ or $5$ 
(see the 4th command of Step 1 on page \pageref{4th-2}).

The maximal $3$-subgroup, denoted by $G_3$ (resp. $5$-subgroup, denoted by $G_5$)
is of order $9$ (resp. $5$). Hence
${\rm Br}_u(\bm{C}(M)^{G_p})=0$ by Theorem \ref{thB1}. It remains to
check $2$-groups $G$ with $|G|\geq 8$ which are not bicyclic.

It can be shown a non-bicyclic $2$-group in $GL_5(\bm{Z})$ is of order $\le 256$. Moreover,
the total numbers of non-bicyclic subgroups $G$ in $GL_5(\bm{Z})$ of order
$8$, $16$, $32$, $64$, $128$, $256$ are: $507$, $1030$, $700$, $247$, $61$, $4$ respectively 
(see the 6th command of Step 1 on page \pageref{6th-2}).

Apply the function {\tt H2nrM(G)} to the lattices associated to these groups. We find that exactly 41 groups $G$ have non-trivial $H^2_u (G, M)$. The number of these groups are: $15$ (groups of order $8$), $20$ (groups of order $16$), and $6$ (groups of order $32$) 
(see the 7th to the 11th commands of Step 1 on page \pageref{7th-2}); 
we omit the GAP ID of these groups (the reader can find them in the lists of Section \ref{seTables}).

Conclusion: Let $G$ be a finite subgroup of $GL_5(\bm{Z})$ (no matter what $G$ is a $2$-group or not). If the $2$-Sylow subgroup of $G$ is not isomorphic to one of the $41=15+20+6$ groups found before, then  ${\rm Br}_u(\bm{C}(M)^{G})=0$.

\medskip
Step 2.
Let $G$ be a finite subgroup of $GL_5(\bm{Z})$ which is not a $2$-group. Because of Theorem \ref{thB1}, we consider the non-bicyclic groups.
By the conclusion of Step 1, we may restrict only to those groups of order $8m$, $16m$ or $32m$ where $m \ge 3$ is an odd integer $\ge 3$ (as before, we will not consider groups of order $8$, $16$, or $32$).

As in the case of Step 2 of the proof of Theorem \ref{thm1}, apply {\tt H2nrM(G)} again for these non-bicyclic groups. We find that
$H_u^2(G,M)=0$  except for the groups $G$ whose CARAT ID are :
$(5,691,1)$, $(5,730,1)$, $(5,733,1)$, $(5,734,1)$, $(5,776,1)$. The group orders of these $5$ exceptional groups are: $24$, $48$, $48$, $48$, $96$ respectively, and they are isomorphic to $SL_2(\bm{F}_3)$, $C_2\times SL_2(\bm{F}_3)$, $GL_2(\bm{F}_3)$, $GL_2(\bm{F}_3)$,
$C_2\times GL_2(\bm{F}_3)$ respectively. The corresponding unramified cohomology $H_u^2(G,M)$ of them are $\bm{Z}/2\bm{Z}\oplus\bm{Z}/2\bm{Z}$, $\bm{Z}/2\bm{Z}\oplus\bm{Z}/2\bm{Z}$, $\bm{Z}/2\bm{Z}$,
$\bm{Z}/2\bm{Z}$, $\bm{Z}/2\bm{Z}$ respectively.

In conclusion, we find all the lattices $M$ associated to the $46$ groups ($41$ groups obtained in Step 1 and $5=1+3+1$ additional groups in this step) with $H_u^2(G,M) \neq 0$.

\medskip
Step 3.
Using the function {\tt LatticeDecompositions(n)} which is provided by [HY, Section 4.1] we find that, among the $46$ lattices in Step 2, the numbers of lattices which are indecomposable, decomposable into indecomposable component with rank $4+1$, decomposable into indecomposable component with rank $3+2$ are: $27$, $18$, $1$ respectively. The reader may consult also [HY, Example 4.9], where a method to classify the decomposable lattices of type $4+1$ and $3+2$ is provided.

\medskip
Step 4.
We verify that $H_u^2(G,\bm{Q}/\bm{Z})=0$ by using the function {\tt B0G(G)}
in the program b0g.g by Moravec \cite{Mo} or the function
{\tt BogomolovMultiplier(G)} in HAP \cite{HAP} (see also Step 4 in the proof of Theorem \ref{thm1}).
 \\

Before proceeding to the computations of GAP \cite{GAP} below,
we should read the related data from\\
{\tt https://www.math.kyoto-u.ac.jp/\~{}yamasaki/Algorithm/MultInvField/}.
\footnote{
We need 
CARAT package \cite{CARAT} of GAP for  
$G\leq {\rm GL}(n,\bZ)$ $(n=5, 6)$ and it 
works on Linux or macOS, but not on Windows.}
\\

\begin{verbatim}
gap> Read("MultInvField.gap");

# Step 1
gap> GL5Z:=Flat(List([1..955],x->List([1..CaratNrZClasses(5,x)],
> y->CaratMatGroupZClass(5,x,y))));;
# all the finite subgroups of GL5(Z) up to Z-conjugate
gap> Length(GL5Z);
6079
gap> o5:=List(GL5Z,Order);;
\end{verbatim}\vspace*{-1.6mm}
\label{4th-2}
\begin{verbatim}
gap> Collected(Factors(Lcm(o5)));
[ [ 2, 8 ], [ 3, 2 ], [ 5, 1 ] ]
gap> nbc5_2:=Filtered(GL5Z,x->2^8 mod Order(x)=0
> and not IsBicyclic(x));; # |G|=2^n case
\end{verbatim}\vspace*{-1.6mm}
\label{6th-2}
\begin{verbatim}
gap> Collected(List(nbc5_2,Order)); # #=507+1030+700+247+61+4
[ [ 8, 507 ], [ 16, 1030 ], [ 32, 700 ], [ 64, 247 ], [ 128, 61 ], [ 256, 4 ] ]
\end{verbatim}\vspace*{-1.6mm}
\label{7th-2}
\begin{verbatim}
gap> ns5_2:=Filtered(nbc5_2,x->H2nrM(x).H2nrM<>[]);; 
# choosing only the cases with non-trivial H2nr(G,M)
gap> ns5_2i:=List(ns5_2,CaratZClassNumber);
[ [ 5, 39, 5 ], [ 5, 63, 12 ], [ 5, 65, 12 ], [ 5, 71, 19 ], [ 5, 73, 37 ],
  [ 5, 76, 31 ], [ 5, 76, 49 ], [ 5, 76, 50 ], [ 5, 76, 51 ], [ 5, 79, 18 ],
  [ 5, 99, 5 ], [ 5, 99, 23 ], [ 5, 99, 24 ], [ 5, 99, 25 ], [ 5, 100, 5 ],
  [ 5, 100, 11 ], [ 5, 100, 12 ], [ 5, 100, 23 ], [ 5, 105, 5 ],
  [ 5, 109, 5 ], [ 5, 116, 20 ], [ 5, 118, 18 ], [ 5, 119, 4 ],
  [ 5, 140, 23 ], [ 5, 142, 14 ], [ 5, 664, 1 ], [ 5, 664, 2 ],
  [ 5, 672, 1 ], [ 5, 672, 2 ], [ 5, 673, 1 ], [ 5, 673, 2 ], [ 5, 674, 1 ],
  [ 5, 675, 1 ], [ 5, 704, 3 ], [ 5, 706, 8 ], [ 5, 721, 1 ], [ 5, 721, 2 ],
  [ 5, 773, 3 ], [ 5, 773, 4 ], [ 5, 774, 3 ], [ 5, 774, 4 ] ]
gap> Collected(List(ns5_2,Order)); # #=41=15+20+6
[ [ 8, 15 ], [ 16, 20 ], [ 32, 6 ] ]
gap> List(ns5_2,StructureDescription); # structure of G
[ "D8", "D8", "D8", "C2 x D8", "C2 x D8", "C2 x D8", "C2 x D8", "C2 x D8",
  "C2 x D8", "C2 x D8", "D8", "D8", "D8", "D8", "D8", "D8", "D8", "D8",
  "C4 : C4", "(C4 x C2) : C2", "(C4 x C2) : C2", "(C4 x C2) : C2", "C2 x D8",
  "(C4 x C2 x C2) : C2", "(C2 x C2 x C2 x C2) : C2", "C2 x Q8", "C2 x Q8",
  "QD16", "QD16", "QD16", "QD16", "QD16", "QD16", "((C4 x C2) : C2) : C2",
  "((C4 x C2) : C2) : C2", "C2 x QD16", "C2 x QD16", "Q8", "Q8", "Q8", "Q8" ]
gap> List(ns5_2,IdSmallGroup);
[ [ 8, 3 ], [ 8, 3 ], [ 8, 3 ], [ 16, 11 ], [ 16, 11 ], [ 16, 11 ],
  [ 16, 11 ], [ 16, 11 ], [ 16, 11 ], [ 16, 11 ], [ 8, 3 ], [ 8, 3 ],
  [ 8, 3 ], [ 8, 3 ], [ 8, 3 ], [ 8, 3 ], [ 8, 3 ], [ 8, 3 ], [ 16, 4 ],
  [ 16, 3 ], [ 16, 3 ], [ 16, 3 ], [ 16, 11 ], [ 32, 28 ], [ 32, 27 ],
  [ 16, 12 ], [ 16, 12 ], [ 16, 8 ], [ 16, 8 ], [ 16, 8 ], [ 16, 8 ],
  [ 16, 8 ], [ 16, 8 ], [ 32, 6 ], [ 32, 6 ], [ 32, 40 ], [ 32, 40 ],
  [ 8, 4 ], [ 8, 4 ], [ 8, 4 ], [ 8, 4 ] ]

# Step 2
gap> nbc5_8m:=Filtered(GL5Z,x->Order(x) mod 16=8 and Order(x)>8
> and not IsBicyclic(x)); # |G|=8m
gap> Collected(List(nbc5_8m,Order));
[ [ 24, 663 ], [ 40, 15 ], [ 72, 249 ], [ 120, 32 ], [ 360, 4 ] ]
gap> nbc5_8m_2:=Filtered(nbc5_8m,
> x->CaratZClassNumber(SylowSubgroup(x,2)) in ns5_2i);;
gap> Collected(List(nbc5_8m_2,x->CaratZClass(SylowSubgroup(x,2))));
[ [ [ 5, 773, 3 ], 5 ], [ [ 5, 773, 4 ], 1 ], [ [ 5, 774, 3 ], 1 ] ]
gap> Collected(List(nbc5_8m_2,Order));
[ [ 24, 6 ], [ 72, 1 ] ]
gap> ns5_8m:=Filtered(nbc5_8m_2,x->H2nrM(x).H2nrM<>[]); 
# choosing only the cases with non-trivial H2nr(G,M)
[ <matrix group of size 24 with 2 generators> ]
gap> ns5_8mi:=List(ns5_8m,CaratZClassNumber); # #=1 with non-trivial H2nr(G,M)
[ [ 5, 691, 1 ] ]
gap> List(ns5_8m,Order); # order of G
[ 24 ]
gap> List(ns5_8m,StructureDescription); # structure of G
[ "SL(2,3)" ]
gap> List(ns5_8m,x->CaratZClass(SylowSubgroup(x,2)));
[ [ 5, 773, 3 ] ]

gap> nbc5_16m:=Filtered(GL5Z,x->Order(x) mod 32=16 and Order(x)>16
> and not IsBicyclic(x)); # |G|=16m
gap> Collected(List(nbc5_16m,Order));
[ [ 48, 714 ], [ 80, 5 ], [ 144, 185 ], [ 240, 20 ], [ 720, 12 ] ]
gap> nbc5_16m_2:=Filtered(nbc5_16m,
> x->CaratZClassNumber(SylowSubgroup(x,2)) in ns5_2i);;
gap> Collected(List(nbc5_16m_2,x->CaratZClass(SylowSubgroup(x,2))));
[ [ [ 5, 664, 1 ], 5 ], [ [ 5, 664, 2 ], 1 ], [ [ 5, 672, 1 ], 5 ],
  [ [ 5, 672, 2 ], 1 ], [ [ 5, 673, 1 ], 5 ], [ [ 5, 673, 2 ], 1 ],
  [ [ 5, 674, 1 ], 1 ], [ [ 5, 675, 1 ], 1 ] ]
gap> Collected(List(nbc5_16m_2,Order)); 
[ [ 48, 17 ], [ 144, 3 ] ]
gap> ns5_16m:=Filtered(nbc5_16m_2,x->H2nrM(x).H2nrM<>[]); 
# choosing only the cases with non-trivial H2nr(G,M)
[ <matrix group of size 48 with 2 generators>,
  <matrix group of size 48 with 2 generators>,
  <matrix group of size 48 with 2 generators> ]
gap> ns5_16mi:=List(ns5_16m,CaratZClassNumber); # #=3 with non-trivial H2nr(G,M)
[ [ 5, 730, 1 ], [ 5, 733, 1 ], [ 5, 734, 1 ] ]
gap> List(ns5_16m,Order);
[ 48, 48, 48 ]
gap> List(ns5_16m,StructureDescription); # structure of G
[ "C2 x SL(2,3)", "GL(2,3)", "GL(2,3)" ]
gap> List(ns5_16m,x->CaratZClass(SylowSubgroup(x,2)));
[ [ 5, 664, 1 ], [ 5, 673, 1 ], [ 5, 672, 1 ] ]

gap> nbc5_32m:=Filtered(GL5Z,x->Order(x) mod 64=32 and Order(x)>32
> and not IsBicyclic(x)); # |G|=32m
gap> Collected(List(nbc5_32m,Order));
[ [ 96, 336 ], [ 160, 9 ], [ 288, 63 ], [ 480, 2 ], [ 1440, 4 ] ]
gap> nbc5_32m_2:=Filtered(nbc5_32m,
> x->CaratZClassNumber(SylowSubgroup(x,2)) in ns5_2i);;
gap> Collected(List(nbc5_32m_2,x->CaratZClass(SylowSubgroup(x,2))));
[ [ [ 5, 721, 1 ], 5 ], [ [ 5, 721, 2 ], 1 ] ]
gap> Collected(List(nbc5_32m_2,Order));
[ [ 96, 5 ], [ 288, 1 ] ]
gap> ns5_32m:=Filtered(nbc5_32m_2,x->H2nrM(x).H2nrM<>[]);
# choosing only the cases with non-trivial H2nr(G,M)
[ <matrix group of size 96 with 2 generators> ]
gap> ns5_32mi:=List(ns5_32m,CaratZClassNumber); # #=1 with non-trivial H2nr(G,M)
[ [ 5, 776, 1 ] ]
gap> List(ns5_32m,Order);
[ 96 ]
gap> List(ns5_32m,StructureDescription); # structure of G
[ "C2 x GL(2,3)" ]
gap> List(ns5_32m,x->CaratZClass(SylowSubgroup(x,2)));
[ [ 5, 721, 1 ] ]

gap> ns5:=Union(ns5_2i,ns5_8mi,ns5_16mi,ns5_32mi); 
# all cases with non-trivial H2nr(G,M)
[ [ 5, 39, 5 ], [ 5, 63, 12 ], [ 5, 65, 12 ], [ 5, 71, 19 ], [ 5, 73, 37 ],
  [ 5, 76, 31 ], [ 5, 76, 49 ], [ 5, 76, 50 ], [ 5, 76, 51 ], [ 5, 79, 18 ],
  [ 5, 99, 5 ], [ 5, 99, 23 ], [ 5, 99, 24 ], [ 5, 99, 25 ], [ 5, 100, 5 ],
  [ 5, 100, 11 ], [ 5, 100, 12 ], [ 5, 100, 23 ], [ 5, 105, 5 ],
  [ 5, 109, 5 ], [ 5, 116, 20 ], [ 5, 118, 18 ], [ 5, 119, 4 ],
  [ 5, 140, 23 ], [ 5, 142, 14 ], [ 5, 664, 1 ], [ 5, 664, 2 ],
  [ 5, 672, 1 ], [ 5, 672, 2 ], [ 5, 673, 1 ], [ 5, 673, 2 ], [ 5, 674, 1 ],
  [ 5, 675, 1 ], [ 5, 691, 1 ], [ 5, 704, 3 ], [ 5, 706, 8 ], [ 5, 721, 1 ],
  [ 5, 721, 2 ], [ 5, 730, 1 ], [ 5, 733, 1 ], [ 5, 734, 1 ], [ 5, 773, 3 ],
  [ 5, 773, 4 ], [ 5, 774, 3 ], [ 5, 774, 4 ], [ 5, 776, 1 ] ]
gap> Length(ns5); # total #=46=41+1+3+1 cases with non-trivial H2nr(G,M)
46

# Step 3
gap> NS5:=Intersection(ns5,e5); 
# e5 is the list of CARAT IDs of indecomposable lattices of rank 5
[ [ 5, 39, 5 ], [ 5, 71, 19 ], [ 5, 73, 37 ], [ 5, 76, 49 ], [ 5, 76, 50 ],
  [ 5, 76, 51 ], [ 5, 79, 18 ], [ 5, 99, 23 ], [ 5, 99, 24 ], [ 5, 99, 25 ],
  [ 5, 100, 12 ], [ 5, 100, 23 ], [ 5, 105, 5 ], [ 5, 109, 5 ],
  [ 5, 116, 20 ], [ 5, 118, 18 ], [ 5, 119, 4 ], [ 5, 140, 23 ],
  [ 5, 142, 14 ], [ 5, 664, 2 ], [ 5, 672, 2 ], [ 5, 673, 2 ], [ 5, 704, 3 ],
  [ 5, 706, 8 ], [ 5, 721, 2 ], [ 5, 773, 4 ], [ 5, 774, 4 ] ]
gap> Length(NS5); # 27 cases are indecomposable
27
gap> List(NS5,x->H2nrM(CaratMatGroupZClass(x[1],x[2],x[3])).H2nrM);
# structure of H2nr(G,M)
[ [ 2 ], [ 2 ], [ 2 ], [ 2 ], [ 2 ], [ 2 ], [ 2 ], [ 2 ], [ 2 ], [ 2 ],
  [ 2 ], [ 2 ], [ 2 ], [ 2 ], [ 2 ], [ 2 ], [ 2 ], [ 2 ], [ 2 ], [ 2, 2 ],
  [ 2 ], [ 2 ], [ 2 ], [ 2 ], [ 2 ], [ 2, 2 ], [ 2 ] ]

gap> NS41:=Intersection(ns5,e41);
# e41 is the list of CARAT IDs of decomposable lattices with rank 4+1
[ [ 5, 63, 12 ], [ 5, 65, 12 ], [ 5, 76, 31 ], [ 5, 99, 5 ], [ 5, 100, 5 ],
  [ 5, 664, 1 ], [ 5, 672, 1 ], [ 5, 673, 1 ], [ 5, 674, 1 ], [ 5, 675, 1 ],
  [ 5, 691, 1 ], [ 5, 721, 1 ], [ 5, 730, 1 ], [ 5, 733, 1 ], [ 5, 734, 1 ],
  [ 5, 773, 3 ], [ 5, 774, 3 ], [ 5, 776, 1 ] ]
gap> Length(NS41); # 18 cases are of rank 4+1 
18
gap> List(NS41,x->H2nrM(CaratMatGroupZClass(x[1],x[2],x[3])).H2nrM);
# structure of H2nr(G,M)
[ [ 2 ], [ 2 ], [ 2 ], [ 2 ], [ 2 ], [ 2, 2 ], [ 2 ], [ 2 ], [ 2 ], [ 2 ],
  [ 2, 2 ], [ 2 ], [ 2, 2 ], [ 2 ], [ 2 ], [ 2, 2 ], [ 2, 2 ], [ 2 ] ]

gap> NS32:=Intersection(ns5,e32);
# e32 is the list of CARAT IDs of decomposable lattices with rank 3+2
[ [ 5, 100, 11 ] ]
gap> Length(NS32); # 1 case is of rank 3+2 
1
gap> List(NS32,x->H2nrM(CaratMatGroupZClass(x[1],x[2],x[3])).H2nrM);
# structure of H2nr(G,M)
[ [ 2 ] ]

gap> Union(NS5,NS41,NS32)=ns5;
true
gap> List([NS5,NS41,NS32],Length); 
# total #=46=27+18+1 cases with non-trivial H2nr(G,M)
[ 27, 18, 1 ]

gap> CaratZClassNumber(DirectProductMatrixGroup(
> [MatGroupZClass(4,12,4,12),Group([[[1]]])]));
[ 5, 99, 5 ]
gap> CaratZClassNumber(DirectProductMatrixGroup(
> [MatGroupZClass(4,12,4,12),Group([[[-1]]])]));
[ 5, 76, 31 ]
gap> M1:=DirectProductMatrixGroup([MatGroupZClass(4,12,4,12),Group([[[-1]]])]);
<matrix group with 3 generators>
gap> Filtered(List(ConjugacyClassesSubgroups(M1),Representative),
> x->Order(PartialMatrixGroup(x,[1..4]))=8);
[ <matrix group of size 8 with 3 generators>,
  <matrix group of size 8 with 3 generators>,
  <matrix group of size 8 with 3 generators>,
  <matrix group of size 8 with 3 generators>,
  <matrix group of size 16 with 4 generators> ]
gap> Set(last,CaratZClassNumber);
[ [ 5, 63, 12 ], [ 5, 65, 12 ], [ 5, 76, 31 ], [ 5, 99, 5 ], [ 5, 100, 5 ] ]
gap> CaratZClassNumber(DirectProductMatrixGroup(
> [MatGroupZClass(4,32,1,2),Group([[[1]]])]));
[ 5, 773, 3 ]
gap> CaratZClassNumber(DirectProductMatrixGroup(
> [MatGroupZClass(4,32,1,2),Group([[[-1]]])]));
[ 5, 664, 1 ]
gap> M2:=DirectProductMatrixGroup([MatGroupZClass(4,32,1,2),Group([[[-1]]])]);
<matrix group with 3 generators>
gap> Filtered(List(ConjugacyClassesSubgroups(M2),Representative),
> x->Order(PartialMatrixGroup(x,[1..4]))=8);
[ <matrix group of size 8 with 3 generators>,
  <matrix group of size 8 with 3 generators>,
  <matrix group of size 8 with 3 generators>,
  <matrix group of size 8 with 3 generators>,
  <matrix group of size 16 with 4 generators> ]
gap> Set(last,CaratZClassNumber);
[ [ 5, 664, 1 ], [ 5, 773, 3 ], [ 5, 774, 3 ] ]
gap> CaratZClassNumber(DirectProductMatrixGroup(
> [MatGroupZClass(4,32,3,2),Group([[[1]]])]));
[ 5, 672, 1 ]
gap> CaratZClassNumber(DirectProductMatrixGroup(
> [MatGroupZClass(4,32,3,2),Group([[[-1]]])]));
[ 5, 721, 1 ]
gap> M3:=DirectProductMatrixGroup([MatGroupZClass(4,32,3,2),Group([[[-1]]])]);
<matrix group with 3 generators>
gap> Filtered(List(ConjugacyClassesSubgroups(M3),Representative),
> x->Order(PartialMatrixGroup(x,[1..4]))=16);
[ <matrix group of size 16 with 4 generators>,
  <matrix group of size 16 with 4 generators>,
  <matrix group of size 16 with 4 generators>,
  <matrix group of size 16 with 4 generators>,
  <matrix group of size 32 with 5 generators> ]
gap> Set(last,CaratZClassNumber);
[ [ 5, 672, 1 ], [ 5, 673, 1 ], [ 5, 674, 1 ], [ 5, 675, 1 ], [ 5, 721, 1 ] ]
gap> CaratZClassNumber(DirectProductMatrixGroup(
> [MatGroupZClass(4,33,3,1),Group([[[1]]])]));
[ 5, 691, 1 ]
gap> CaratZClassNumber(DirectProductMatrixGroup(
> [MatGroupZClass(4,33,3,1),Group([[[-1]]])]));
[ 5, 730, 1 ]
gap> M4:=DirectProductMatrixGroup([MatGroupZClass(4,33,3,1),Group([[[-1]]])]);
<matrix group with 4 generators>
gap> Filtered(List(ConjugacyClassesSubgroups(M4),Representative),
> x->Order(PartialMatrixGroup(x,[1..4]))=24);
[ <matrix group of size 24 with 4 generators>,
  <matrix group of size 48 with 5 generators> ]
gap> Set(last,CaratZClassNumber);
[ [ 5, 691, 1 ], [ 5, 730, 1 ] ]
gap> CaratZClassNumber(DirectProductMatrixGroup(
> [MatGroupZClass(4,33,6,1),Group([[[1]]])]));
[ 5, 734, 1 ]
gap> CaratZClassNumber(DirectProductMatrixGroup(
> [MatGroupZClass(4,33,6,1),Group([[[-1]]])]));
[ 5, 776, 1 ]
gap> M5:=DirectProductMatrixGroup([MatGroupZClass(4,33,6,1),Group([[[-1]]])]);
<matrix group with 5 generators>
gap> Filtered(List(ConjugacyClassesSubgroups(M5),Representative),
> x->Order(PartialMatrixGroup(x,[1..4]))=48);
[ <matrix group of size 48 with 5 generators>,
  <matrix group of size 48 with 5 generators>,
  <matrix group of size 96 with 6 generators> ]
gap> Set(last,CaratZClassNumber);
[ [ 5, 733, 1 ], [ 5, 734, 1 ], [ 5, 776, 1 ] ]
gap> G:=CaratMatGroupZClass(5,100,11);
<matrix group with 2 generators>
gap> Display(G.1);
[ [   0,   1,   0,   0,   0 ],
  [   1,   0,   0,   0,   0 ],
  [   0,   0,   0,   0,   1 ],
  [   0,   0,   0,  -1,   0 ],
  [   0,   0,   1,   0,   0 ] ]
gap> Display(G.2);
[ [   1,   0,   0,   0,   0 ],
  [   0,  -1,   0,   0,   0 ],
  [   0,   0,   0,   1,  -1 ],
  [   0,   0,   0,  -1,   0 ],
  [   0,   0,  -1,  -1,   0 ] ]
gap> G1:=PartialMatrixGroup(G,[1,2]);
Group([ [ [ 0, 1 ], [ 1, 0 ] ], [ [ 1, 0 ], [ 0, -1 ] ] ])
gap> G2:=PartialMatrixGroup(G,[3,4,5]);
Group([ [ [ 0, 0, 1 ], [ 0, -1, 0 ], [ 1, 0, 0 ] ],
  [ [ 0, 1, -1 ], [ 0, -1, 0 ], [ -1, -1, 0 ] ] ])
gap> CrystCatZClassNumber(G1);
[ 2, 3, 2, 1 ]
gap> CrystCatZClassNumber(G2);
[ 3, 3, 1, 3 ]
gap> StructureDescription(G1);
"D8"
gap> StructureDescription(G2);
"C2 x C2"
gap> [3,3,1,3] in N3;
true

# Step 4
gap> LoadPackage("HAP");
true
gap> GL5Q:=List([1..955],x->CaratMatGroupZClass(5,x,1));;
# all the finite subgroups of GL5(Z) up to Q-conjugate
gap> nb5:=Filtered(GL5Q,x->BogomolovMultiplier(x)<>[]); # checking B0(G)=0
[  ]
\end{verbatim}

\vspace*{2mm}
{\it Proof of Theorem \ref{thm3}.}\vspace*{2mm}

\medskip
Step 1.
It is not difficult to show that, if $G$ is a finite $p$-group in $GL_6(\bm{Z})$, then $p=2, 3, 5$ or $7$.

Because the maximal $5$-subgroup $G_5$ (resp. $7$-subgroup $G_7$)
of $GL_6(\bm{Z})$ is of order $5$ (resp. $7$) and hence
${\rm Br}_u(\bm{C}(M)^{G_p})=0$.

We should check $2$-groups $G$ with $|G|\geq 8$
and $3$-groups $G$ with $|G|\geq 27$ which are not bicyclic.

Note that the maximal order of a $2$-group in $GL_6(\bm{Z})$ is $2^{10}$, 
and the maximal order of a $3$-group in $GL_6(\bm{Z})$ is $81$. 

The total numbers of non-bicyclic subgroups $G$ in $GL_6(\bm{Z})$ of order
$2^i$ where $3 \le i \le 10$ are: $2708$, $11198$, $14261$, $8290$, $3008$, $868$, $128$, $4$ respectively (in the order $i=3$ to $i=10$). The number of non-bicyclic subgroups $G$ in $GL_6(\bm{Z})$ of order
$27$ is $10$, while the number for those of order $81$ is $3$.

Applying the function {\tt H2nrM(G)} to the above groups, we find that exactly $2$ (resp. $0$) groups $G$ of order $27$ (resp. $81$) 
satisfy $H_u^2(G,M)\neq 0$. 
Hence the total number of non-bicyclic $3$-groups $G$ in $GL_6(\bZ)$ with 
$H_u^2(G,M)\neq 0$ is $2$. 

Similarly, we find that exactly $895$ groups which are $2$-groups of order $\le 256$ and satisfying that $H_u^2(G,M)\neq 0$; more precisely, the numbers of such groups of order $2^i$ where $3 \le i \le 8$ are $96$, $346$, $347$, $95$, $11$, $0$ respectively. We may also apply the function {\tt H2nrM(G)} to groups of order $2^9= 512$ or $2^{10}= 1024$, but it takes very long computation time and requires huge memory sources for a personal computer. Thus we take a detour for these groups.

In the following we will show that there is no non-bicyclic groups of order $512$ or $1024$ satisfying that $H_u^2(G,M)\neq 0$.

\medskip
We consider the case $G$ is of order $2^9=512$ first. There are $128$ such groups in total.

Case 1.1 It is possible that we can choose a subgroup $N$ of $G$ satisfying the conditions: (i) $[G:N]=2$ and $M^N=0$, (ii) the cohomology group $H^1(N,M)$ is an elementary abelian $2$-group (thus it may be regarded as a finite-dimensional vector space over the finite field $\bm{F}_2$ with $G/N \simeq C_2$ acting on $H^1(N,M)$), and (iii) the Jordan normal form of the generator of $G/N$ acting on $H^1(N,M)$ consists of a direct sum of several copies of the block
$(\begin{smallmatrix}1&1\\0&1\end{smallmatrix})$. We remark that there are $60$ groups with the above properties.

By Theorem \ref{thDHW} we have
the isomorphisms $H^1(G,M)\simeq H^1(N,M)^{G/N}$ and
$H^2(G,M)_1=\fn{Ker}(H^2(G,M)\xrightarrow{\rm res}H^2(N,M))\simeq H^1(G/N,H^1(N,M))$. Since the Jordan normal form of the generator of $G/N$ acting on $H^1(N,M)$ is of the required form, it follows that dim$_{\bm{F}_2} H^1(N,M)= 2 \cdot$dim$_{\bm{F}_2} H^1(G,M)$ (remember that $H^1(G,M)\simeq H^1(N,M)^{G/N}$).

By a straight-forward computation, we find $H^1(G/N,H^1(N,M))=0$. From $H^2(G,M)_1 \simeq H^1(G/N,H^1(N,M))$, we find that $H^2(G,M)_1=0$, i.e. the restriction map $H^2(G, M) \to H^2(N, M)$ is injective. It follows that the restriction map $H_u^2(G,M) \to H_u^2(N, M)$ is also injective.

On the other hand, if $N$ is of order $2^8$, we have shown that $H_u^2(N, M)= 0$. Thus $H_u^2(G,M)=0$.

Case 1.2. $G$ doesn't satisfy the conditions in Case 1.1. There are $68$ such groups.

We apply the function {\tt H2nrM(G)} to these groups. Fortunately the GAP computation is feasible for these groups. We find that $H_u^2(G,M)=0$.

\medskip
Now consider groups $G$ with order $2^{10}=1024$. There are only 4 such groups.

We search the index 2 subgroup $N$ of $G$ as in Case 1.1. Two groups $G$ possess the required properties. We proceed as in Case 1.1 and finish the proof. For the remaining two groups $G$, just apply the function {\tt H2nrM(G)} to them, because the GAP computation is feasible.

\medskip
In summary, the total number of non-bicyclic $2$-groups (resp. $3$-groups) in $GL_6(\bm{Z})$ satisfying that $H_u^2(G,M)\neq 0$ is $895$ (resp. $2$).

\medskip
Step 2. Let $G$ be a finite group in $GL_6(\bm{Z})$ which is not a $p$-group. As before, we should check whether the $2$-Sylow subgroup $G_2$ (resp. $3$-Sylow subgroup $G_3$) of $G$
is one of the $895$ (resp. $2$) groups obtained in Step 1.

Using {\tt H2nrM(G)} for groups $G$ of order $27m'$ ($m' = 2^d$ is some integer $d$),
we get $H_u^2(G,M)=0$ except for $2$ groups
$G$ of the CARAT ID $(6,2899,3)$, $(6,2899,5)$ ; these two groups are isomorphic to $(C_9\rtimes C_3)\rtimes C_2$ as abstract groups.

\medskip
Next applying {\tt H2nrM(G)} to groups $G$
of order $8m$ (resp. $16m$, $128m$), where $m$ is an odd integer $\ge 3$, we have
$H_u^2(G,M)=0$ except for $35$ (resp. $58$, $1$) groups $G$. Note that there is no group of order $256m$ where $m \ge 3$ is an odd integer.

For the case where $|G|=32m$, $64m$, we need
additional argument treated below (as in Step 1 due to the computational feasibility again).

We treat the situation that $|G|=32m$ ($m \ge 3$ is an odd integer) first.

There are $10216$ such groups (i.e. $10216$ $\bZ$-classes) and $9$ possibilities for the group orders of $G$. Check $2$-Sylow subgroups of these groups and use the results in Step 1. We find that, if $G$ is a ``possibly bad" group, then the order of $G$ should be $32 \cdot 3$, $32 \cdot 9$, $32 \cdot 27$, $32 \cdot 81$. Although groups of order $32 \cdot 27$ have been treated above, we will check it once again.

Case 2.1. $27 \mid |G|$, i.e. $|G|=32 \cdot 27$ or $32 \cdot 81$.

By similar technique as in Case 1.1, We can find a subgroup $N$ of $G$ with index $[G:N]=2$
which satisfies (i) $M^N=0$ and (ii) $H^1(N,M)=0$ or $\bZ/3\bZ$.
Applying Theorem \ref{thDHW},
we have
$H^2(G,M)_1={\rm Ker}(H^2(G,M)\xrightarrow{\rm res}H^2(N,M))
\simeq H^1(G/N,H^1(N,M))$.
However the latter group is zero because $\gcd(|G/N|,|H^1(N,M)|)=1$.
This implies $H_u^2(G,M)=0$ (see also Step 1).

Case 2.2. $27$ ${\not|}$ $|G|$, i.e. $|G|=32 \cdot 3$ or $32 \cdot 9$.

We simply apply {\tt H2nrM(G)} and obtain the result $H_u^2(G,M)$. Then there are precisely $46$ groups with non-trivial $H_u^2(G,M)$: the number of groups of order $32 \cdot 3$ is $39$, and the number of groups of order $32 \cdot 9$ is $7$.

\medskip

Now for the case $|G|=64m$ ($m \ge 3$ is an odd integer). The proof is almost the same as the situation $|G|=32m$.

In short, if $|G|=64m$ ($m \ge 3$ is an odd integer), then there are $5107$ such groups (i.e. $5107$ $\bZ$-classes). Check $2$-Sylow subgroups of these groups and use the results in Step 1. We find that, if $G$ is a ``possibly bad" group, then the order of $G$ should be $64 \cdot 3$, $64 \cdot 9$ or $64 \cdot 27$. Groups of order $64 \cdot 27$ have been treated before and we know that $H_u^2(G,M)=0$. For groups of order $64 \cdot 3$ or $64 \cdot 9$, apply {\tt H2nrM(G)}. Then there are precisely $12$ groups with non-trivial $H_u^2(G,M)$: the number of groups of order $64 \cdot 3$ is $11$, and the number of groups of order $64 \cdot 9$ is $1$.

\medskip
In summary, the total number of groups $G$ with $H_u^2(G,M)\neq 0$
and are of order $24$, $48$, $54$, $72$, $96$, $144$, $192$, $288$, $384$, $576$ is: $34$, $51$, $2$, $1$, $39$, $7$, $11$, $7$, $1$, $1$ respectively. In other words, we find exactly $154$ non-$p$-groups $G$ satisfying that  $H_u^2(G,M)\neq 0$.
We find that there are, in total, $1051=895+2+154$ subgroups $G$ in $GL_6(\bm{Z})$ with $H_u^2(G,M)\neq 0$.

\medskip
Step 3.
Using the function {\tt LatticeDecompositions(n)} provided by \cite[Section 4.1]{HY} (also see Step 2 in the proof of Theorem \ref{thm2}), we find that, among the $1051$ lattices in Step 2, the numbers of lattices are $603$ (indecomposable lattices), $161$ (decomposable into $5+1$), $218$ (decomposable into $4+2$), $59$ (decomposable into $4+1+1$), $5$ (decomposable into $3+3$), $5$ (decomposable into $3+2+1$).

\medskip
Step 4.
We can check whether $H_u^2(G,\bm{Q}/\bm{Z})=0$
using the function {\tt B0G(G)}
in the program b0g.g by Moravec \cite{Mo} or the function
{\tt BogomolovMultiplier(G)} in HAP \cite{HAP}
(see also Step 4 in the proof of Theorem \ref{thm1}).
We see that $H_u^2(G,\bm{Q}/\bm{Z})\neq 0$ if and only if
CARAT ID of $G$ is one of the $24$
$(6,6458,i)$, $(6,6459,i)$, $(6,6464,i)$ $(1\leq i\leq 8)$.
The corresponding $G$-lattice $M$ is indecomposable
(resp. decomposable into indecomposable component with rank $4+2$)
for $i=2,4,6,8$ (resp. $i=1,3,5,7$).
{}From Steps $1$-$3$, we find that
these $24$ $G$-lattices $M$ satisfy $H_u^2(G,M)=0$
except for the two groups whose CARAT ID are $(4,6459,8)$, $(4,6464,6)$ (with $H_u^2(G,M)=\bm{Z}/2\bm{Z}$ for both groups).

We conclude that there exist $1073=1051+24-2$
subgroups $G$ of $GL_6(\bm{Z})$ with ${\rm Br}_u(\bm{C}(M)^G)\neq 0$ where the $1051$ groups are those obtained in Step 2, the $24$ groups are obtained in this step while $2$ groups correspond to the groups with CARAT ID $(4,6459,8)$ and $(4,6464,6)$ respectively.\\

Before proceeding to the computations of GAP \cite{GAP} below,
we should read the related data from\\
{\tt https://www.math.kyoto-u.ac.jp/\~{}yamasaki/Algorithm/MultInvField/}.
\footnote{
We need 
CARAT package \cite{CARAT} of GAP for  
$G\leq {\rm GL}(n,\bZ)$ $(n=5, 6)$ and it 
works on Linux or macOS, but not on Windows.}
\\



\bigskip
As mentioned before, multiplicative invariant fields arising from distinct lattices may be stably isomorphic. 
Note that, in Theorem \ref{equiv}, the actions of (1), (2), (3) 
are multiplicative (purely monomial) as in Definition \ref{d1.2} but 
the action of (4) is not multiplicative but twisted multiplicative (monomial) 
as in Definition \ref{d1.3} and the action of (5) is neither multiplicative 
nor twisted multiplicative. 

\begin{theorem}\label{equiv}
The following fields $K$ are stably isomorphic each other:\\
{\rm (1)} $\bm{C}(G)$ where $G$ is a group of order $64$ which belongs to the $16$th isoclinism class $\Phi_{16}$ $($see the $9$ groups defined in Lemma 5.5 of \cite{CHKK}$)$;\\
{\rm (2)} $\bm{C}(x_1,x_2,x_3,x_4)^{D_4}$
where $D_4=\langle\sigma,\tau\rangle$ acts on
$\bm{C}(x_1,x_2,x_3,x_4)$ by 
\begin{align*}
&\sigma: x_1\mapsto x_2x_3, x_2\mapsto x_1x_3, x_3\mapsto x_4, x_4\mapsto \frac{1}{x_3},\\
&\tau: x_1\mapsto \frac{1}{x_2}, x_2\mapsto\frac{1}{x_1},
x_3\mapsto\frac{1}{x_4}, x_4\mapsto\frac{1}{x_3}.
\end{align*}
{\rm (3)} $\bm{C}(y_1,y_2,y_3,y_4,y_5)^{D_4}$ where $D_4=\langle\sigma,\tau\rangle$ acts on $\bm{C}(y_1,y_2,y_3,y_4,y_5)$ by
\begin{align*}
&\sigma: y_1\mapsto y_2, y_2\mapsto y_1, y_3\mapsto \frac{1}{y_1y_2y_3},
y_4\mapsto y_5, y_5\mapsto \frac{1}{y_4},\\
&\tau: y_1\mapsto y_3, y_2\mapsto \frac{1}{y_1y_2y_3}, y_3\mapsto y_1,
y_4\mapsto y_5, y_5\mapsto y_4.
\end{align*}
{\rm (4)} $\bm{C}(z_1,z_2,z_3,z_4)^{C_2\times C_2}$ where
$C_2\times C_2=\langle\sigma,\tau\rangle$ acts on
$\bm{C}(z_1,z_2,z_3,z_4)$ by
\begin{align*}
&\sigma: z_1\mapsto z_2, z_2\mapsto z_1, z_3\mapsto \frac{1}{z_1z_2z_3},
z_4\mapsto \frac{-1}{z_4},\\
&\tau: z_1\mapsto z_3, z_2\mapsto \frac{1}{z_1z_2z_3}, z_3\mapsto z_1,
z_4\mapsto -z_4.
\end{align*}
{\rm (5)} $\bm{C}(w_1,w_2,w_3,w_4)^{C_2}$ where
$C_2=\langle\sigma\rangle$ acts on
$\bm{C}(w_1,w_2,w_3,w_4)$ by
\begin{align*}
\sigma: w_1\mapsto -w_1, w_2\mapsto \frac{w_4}{w_2},
w_3\mapsto \frac{(w_4-1)(w_4-w_1^2)}{w_3}, w_4\mapsto w_4.
\end{align*}
In particular, the unramified cohomology groups
$H_u^i(K,\bm{Q}/\bm{Z})$ of the fields $K$
in {\rm (1)}--{\rm (5)} coincide and 
${\rm Br}_u(K)\simeq \bm{Z}/2\bm{Z}$ 
and hence $K$ is not retract $k$-rational.
\end{theorem}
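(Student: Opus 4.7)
The plan is to reduce the theorem to two inputs: the pairwise stable isomorphisms among (1), (3), (4), (5), already established in \cite{HKK}, together with one new stable isomorphism linking (2) with (3). First I would identify the underlying $D_4$-lattices. Writing $\sigma$ and $\tau$ in (2) as matrices in $GL_4(\bZ)$, one checks directly that the corresponding rank-$4$ $D_4$-lattice $M$ is the indecomposable lattice with GAP ID $(4,12,4,12)$ from Table $1$, while the action in (3) is the multiplicative action on the decomposable rank-$5$ lattice $M_1\oplus M_2$ of type $3+2$ with CARAT ID $(5,100,11)$ appearing in Table 2-3, as already flagged in Remark (3) of the introduction.

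The key step is then to show that $\bC(M)^{D_4}$ and $\bC(M_1\oplus M_2)^{D_4}$ are stably isomorphic. My plan is to verify that the flabby classes $[M]^{fl}$ and $[M_1\oplus M_2]^{fl}$ coincide in the flabby class monoid of $D_4$-lattices; by the Endo-Miyata / Voskresenski / Saltman framework (cf.\ \cite[Proposition 2.2]{CHK}), this forces the two multiplicative invariant fields to be stably isomorphic. Concretely I would produce a $D_4$-equivariant short exact sequence
\[
0 \longrightarrow P_1 \longrightarrow M \oplus P_2 \longrightarrow M_1 \oplus M_2 \longrightarrow 0
\]
with $P_1$, $P_2$ permutation $D_4$-lattices, or equivalently match coflabby resolutions of both sides up to permutation summands. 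Since the ranks are small, the search for such lattice data is finite and can be implemented with CARAT and the GAP functions developed in the paper. The hard part is producing the permutation lattices and the maps explicitly; the equality of flabby classes is strongly suggested by the coincidence of the nontrivial unramified Brauer groups and by the assertion in Remark (3), but turning this into an explicit short exact sequence is the main obstacle I anticipate.

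Once this stable isomorphism is in hand, combining with \cite{HKK} yields the stable isomorphism of all five fields. The Brauer group computation then follows from any single member of the chain: applied to $M$ in (2), Theorem \ref{thm1} gives $H^2_u(D_4,M)\simeq\bZ/2\bZ$ and $B_0(D_4)=0$, so Theorem \ref{thSa4} yields ${\rm Br}_u(\bC(M)^{D_4})\simeq\bZ/2\bZ$. Stable isomorphism invariance then transports this value, together with the equality of the higher unramified cohomology groups $H^i_u(K,\bQ/\bZ)$, to all five fields in the statement.
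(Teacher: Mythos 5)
Your proposal diverges from the paper at the crucial step. The paper does not link (2) to (3) through lattice invariants at all: it proves (2) $\Leftrightarrow$ (4) by a completely explicit change of variables, first computing $\bm{C}(x_1,\dots,x_4)^{\langle\sigma^2\rangle}=\bm{C}(u_1,\dots,u_4)$ with explicit generators and then exhibiting coordinates $z_1,\dots,z_4$ on which the residual $D_4/\langle\sigma^2\rangle\simeq C_2\times C_2$ acts exactly as in (4); the chain (1) $\Leftrightarrow$ (3) $\Leftrightarrow$ (4) $\Leftrightarrow$ (5) is quoted from \cite{HKK} as you also do. So the comparison is: explicit birational identification versus your lattice-theoretic reduction.

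The genuine gap in your route is the implication ``$[M]^{fl}=[M_1\oplus M_2]^{fl}$ $\Rightarrow$ $\bm{C}(M)^{D_4}$ and $\bm{C}(M_1\oplus M_2)^{D_4}$ are stably isomorphic.'' Neither \cite[Proposition 2.2]{CHK} nor anything in Section 6 of this paper gives that. The no-name lemma yields stable isomorphism of multiplicative invariant fields when the lattices are \emph{similar}, i.e. $M\oplus P\cong N\oplus Q$ with $P,Q$ permutation; equality of flabby classes is strictly weaker than similarity and is not known (and is not claimed anywhere in the paper) to be a complete stable birational invariant of $\bm{C}(M)^G$. Note also that Theorem \ref{t7.5} is of no help here, since by Corollary \ref{t7.6} the flabby class of the lattice $(4,12,4,12)$ is not even invertible. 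Your proposed surrogate, an exact sequence $0\to P_1\to M\oplus P_2\to M_1\oplus M_2\to 0$ with $P_1,P_2$ permutation, has the permutation lattice on the wrong side: the standard Hilbert~90/quasi-split-torus descent argument requires the permutation lattice to appear as the \emph{quotient} (so that $\bm{C}(L)^G$ becomes rational over $\bm{C}(N)^G$ for $0\to N\to L\to P\to 0$); with $P_1$ as the sublattice, $\bm{C}(M\oplus P_2)$ is a twisted multiplicative extension of $\bm{C}(P_1)$ by a non-permutation lattice, and no standard lemma linearizes that. So even granting the exact sequence you hope to find by computer search, the conclusion would not follow. To repair the argument along your lines you would need to establish actual similarity of the two lattices (after adding permutation summands), which is a stronger and unverified claim; the paper's direct change of variables sidesteps all of this.
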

\begin{proof}
The equivalent (1) $\Leftrightarrow$ (3) $\Leftrightarrow$ (4)
$\Leftrightarrow$ (5) is already known
(see \cite[Theorems 6.2, 6.3 and 6.4, and their proofs]{HKK}).
We will show that (2) $\Leftrightarrow$ (4). Let $x_1, x_2, x_3, x_4$ be given in (2).
Define
$u_1=\frac{x_1x_4(x_3+1)}{x_4+1}$,
$u_2=\frac{x_2}{x_1}$,
$u_3=\frac{x_3+1}{x_3-1}\frac{x_4+1}{x_4-1}$,
$u_4=\frac{x_3+1}{x_3-1}/\frac{x_4+1}{x_4-1}$.
Because of $\sigma^2: x_1\mapsto x_1x_3x_4$,
$x_2\mapsto x_2x_3x_4$,
$x_3\mapsto\tfrac{1}{x_3}$, $x_4\mapsto\tfrac{1}{x_4}$,
we have $\bm{C}(x_1,x_2,x_3,x_4)^{\langle\sigma^2\rangle}
=\bm{C}(u_1,u_2,u_3,u_4)$
and the actions of $\sigma$ and $\tau$ on $\bm{C}(u_1,u_2,u_3,u_4)$
are given by
\begin{align*}
\sigma:\ &u_1\mapsto\frac{u_1u_2(u_3u_4-1)}{(u_3-u_4)u_4},
u_2\mapsto \frac{1}{u_2}, u_3\mapsto -u_3, u_4\mapsto -\frac{1}{u_4},\\
\tau:\ &u_1\mapsto \frac{1}{u_1u_2}, u_2\mapsto u_2,
u_3\mapsto u_3, u_4\mapsto\frac{1}{u_4}.
\end{align*}
Define
\begin{align*}
z_1=u_1,
z_2=\frac{u_1u_2(u_3u_4-1)}{(u_3-u_4)u_4},
z_3=\frac{1}{u_1u_2},
z_4=\frac{u_4+1}{u_4-1}.
\end{align*}
Then $\bm{C}(u_1,u_2,u_3,u_4)=\bm{C}(z_1,z_2,z_3,z_4)$
and the actions of $\sigma$ and $\tau$ on $\bm{C}(z_1,z_2,z_3,z_4)$
are exactly the same as in (4).
Note that the unramified Brauer groups are isomorphic for stably isomorphic fields by \cite[Proposition 2.2]{Sa4} or \cite[Proposition 1.2]{CTO}. The last statement follows from Theorem \ref{t1.5} (2).
\end{proof}

\section{Classification of elementary abelian groups $(C_2)^k$ in $GL_n(\bZ)$ with $n\leq 7$}\label{seC}

After we finished the proof of Theorem \ref{t1.5}, we examined the groups obtained in Section \ref{seTables}. It was found that the elementary abelian $2$-group $(C_2)^3$ didn't appear on the lists. But Theorem \ref{thB1} tells us that some faithful lattice $M$ of this group will produce an example that the unramified Brauer group of $\bm{C}(M)^G$ is non-trivial where $G=(C_2)^3$. So we searched $G$-lattices of rank $7$. To our surprise, such lattices had not been explored before. This led to the study of elementary abelian $2$-groups in $GL_n(\bZ)$ with $n\leq 7$ and the associated unramified Brauer groups.

In this section, we will classify
elementary abelian groups $(C_2)^k$ in $GL_n(\bZ)$ with $n\leq 7$; the study of unramified Brauer groups is postponed till the next section.

\medskip
First we recall some terminology. Two finite subgroups in $GL_n(\bZ)$ are called {\it $\bQ$-conjugate} if they are conjugate by some matrix in $GL_n(\bQ)$; they are {\it $\bZ$-conjugate} if they are conjugate by some matrix in $GL_n(\bZ)$. Thus $\bQ$-conjugation defines an equivalence relation on the set of finite subgroups of $GL_n(\bZ)$; an equivalence class is called a {\it $\bQ$-class} in $GL_n(\bZ)$. Similarly for a {\it $\bZ$-class} in $GL_n(\bZ)$.

Let $G_{2^n}$ be the subgroup in $GL_n(\bZ)$ generated by the $n$ diagonal matrices
${\rm Diag}(-1,1,\ldots,1)$, ${\rm Diag}(1,-1,1,$ $\ldots,1),\ldots, {\rm Diag}(1,\ldots,1,-1)$. We will show that any finite subgroup $G$ in $GL_n(\bZ)$ with $G \simeq (C_2)^k$ ($k$ is some positive integer) is $\bQ$-conjugate to a subgroup of $G_{2^n}$. In fact, all the matrices of $G$ can be diagonalized simultaneously within $GL_n(\bQ)$. In other words, $G$ is $\bQ$-conjugate to a subgroup consisting of diagonal matrices, which is just some subgroup of $G_{2^n}$.

The main idea of the proof of Theorem \ref{t5.1} is to show that $\bQ$-conjugation is the same as $S_n$-equivalence (for the definition of $S_7$-equivalence see Step 2 in the proof of Theorem \ref{t5.1}). The general result for any positive integer $n$ is proved in Theorem \ref{t5.2} which relies on a theorem proved in coding theory \cite{BBFKKW}. Although no essential idea of error-correcting codes is used in the proof of Theorem 6.8.4 of \cite[page 551]{BBFKKW}, we choose to present Theorem \ref{t5.1} for $n=7$ along the traditional mathematical arguments. It has a bonus that the proof provides for each $\bQ$-class a set of complete invariants, the trace vector $L_G$ and the numerical invariants $T^1_G$ and $T^3_G$: Any two elementary abelian $2$-groups $G_1$ and $G_2$ in $GL_7(\bZ)$ are $\bQ$-conjugate if and only if they have the same invariants (see Steps $3$ and $4$ in the proof of Theorem \ref{t5.1}).

\begin{theorem}\label{t5.1}~\\
{\rm (1)} There exist
$3$ $($resp. $2$$)$ $\bZ$-classes
forming
$2$ $($resp. $1$$)$ $\bQ$-classes
of groups
$(C_2)^k$ in $GL_2(\bZ)$ for $k=1$ $($resp. $2$$)$.\\
{\rm (2)} There exist
$5$ $($resp. $11$, $4$$)$ $\bZ$-classes
forming
$3$ $($resp. $3$, $1$$)$ $\bQ$-classes
of groups
$(C_2)^k$ in $GL_3(\bZ)$ for $k=1$ $($resp. $2$, $3$$)$.\\
{\rm (3)} There exist
$8$ $($resp. $37$, $39$, $8$$)$ $\bZ$-classes
forming
$4$ $($resp. $6$,  $4$, $1$$)$ $\bQ$-classes
of groups
$(C_2)^k$ in $GL_4(\bZ)$ for $k=1$ $($resp. $2$, $3$, $4$$)$.\\
{\rm (4)} There exist
$11$ $($resp. $99$, $263$, $138$, $16$$)$ $\bZ$-classes
forming
$5$ $($resp. $10$,  $10$, $5$, $1$$)$ $\bQ$-classes
of groups
$(C_2)^k$ in $GL_5(\bZ)$ for $k=1$ $($resp. $2$, $3$, $4$, $5$$)$.\\
{\rm (5)} There exist
$15$ $($resp. $255$, $1649$, $1947$, $511$, $36$$)$ $\bZ$-classes
forming
$6$ $($resp. $16$,  $22$, $16$, $6$, $1$$)$ $\bQ$-classes
of groups
$(C_2)^k$ in $GL_6(\bZ)$ for $k=1$ $($resp. $2$, $3$, $4$, $5$, $6$$)$.\\
{\rm (6)} There exist
$19$ $($resp. $608$, $10645$, $29442$, $15248$, $2016$, $80$$)$ $\bZ$-classes
forming
$7$ $($resp. $23$,  $43$, $43$, $23$, $7$, $1$$)$ $\bQ$-classes
of groups
$(C_2)^k$ in $GL_7(\bZ)$ for $k=1$ $($resp. $2$, $3$, $4$, $5$, $6$, $7$$)$.
\end{theorem}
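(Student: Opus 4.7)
The plan is to reduce the classification of $\mathbb{Q}$-classes to an equivalence on subgroups of the diagonal group $G_{2^n}\le GL_n(\mathbb{Z})$, and then to split each $\mathbb{Q}$-class into $\mathbb{Z}$-classes using explicit invariants together with a finite computation in CARAT. First I would note that any $G\simeq(C_2)^k$ inside $GL_n(\mathbb{Z})$ consists of pairwise commuting involutions, and hence can be simultaneously diagonalized over $\mathbb{Q}$; thus every such $G$ is $\mathbb{Q}$-conjugate to a unique subgroup of $G_{2^n}$ up to the natural action of $S_n$ permuting the $n$ diagonal coordinates. Consequently, $\mathbb{Q}$-classes of $(C_2)^k\le GL_n(\mathbb{Z})$ correspond bijectively to $S_n$-orbits on the set of rank-$k$ subgroups of $(C_2)^n$. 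This is the same equivalence that arises in the classification of binary linear codes up to coordinate permutation, and the totals in items (1)--(6) match the known enumerations of such codes.

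Next I would produce a set of complete $\mathbb{Q}$-conjugacy invariants that can be verified by hand or by GAP. For $G\le G_{2^n}$ of rank $k$, define the trace vector $L_G=(a_0,a_1,\ldots,a_n)$ where $a_j$ is the number of elements of $G$ whose diagonal has exactly $j$ entries equal to $-1$; equivalently $a_{(n-t)/2}$ counts the elements of $G$ with trace $t$. For small $n$, $L_G$ together with two auxiliary numerical invariants $T^1_G$ and $T^3_G$ (counting certain configurations of triples of generators, to be chosen so as to separate the few $\mathbb{Q}$-classes with coinciding trace vectors) form a complete invariant. The verification reduces to running through the $S_n$-orbits on rank-$k$ subspaces of $\mathbb{F}_2^n$, computing the invariants in each orbit and checking that distinct orbits give distinct values; for the cases $n\le 7$ this is a finite check carried out in GAP.

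For the splitting of each $\mathbb{Q}$-class into $\mathbb{Z}$-classes I would rely on the CARAT package, using the command \texttt{ZClassRepsQClass(G)} applied to one representative $G$ of each $\mathbb{Q}$-class constructed above. This gives the $\mathbb{Z}$-class counts in items (1)--(6). An alternative (and consistency check) is to enumerate all finite subgroups of $GL_n(\mathbb{Z})$ in CARAT up to $\mathbb{Z}$-conjugacy, filter those whose abstract structure is elementary abelian of order $2^k$, and read off the numbers of $\mathbb{Z}$-classes by rank; this can be matched against the bijection above by collapsing $\mathbb{Z}$-classes lying in the same $\mathbb{Q}$-class.

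The main obstacle is the combinatorial explosion for $n=7$: there are $43$ $\mathbb{Q}$-classes of rank $3$ and $4$, and separating them requires an invariant finer than $L_G$ alone, since trace vectors already coincide for some orbits in binary coding theory. The invariants $T^1_G$ and $T^3_G$ are designed precisely to resolve these coincidences, and verifying their completeness is the technically delicate part. Once completeness is established, the $\mathbb{Z}$-class counts follow by a direct CARAT computation; the total numbers reported for each rank should then be cross-checked against the CARAT census of finite subgroups of $GL_n(\mathbb{Z})$ in the ranks that are already tabulated.
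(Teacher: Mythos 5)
Your proposal is correct and follows essentially the same route as the paper: reduce to subgroups of the diagonal group $G_{2^n}$, identify $\bQ$-conjugacy with $S_n$-equivalence (equivalently, permutation equivalence of binary codes) via the trace vector $L_G$ supplemented by the auxiliary invariants $T^1_G$, $T^3_G$ for the two coinciding orbits in ranks $3$ and $4$ when $n=7$, and then split each $\bQ$-class into $\bZ$-classes with {\tt ZClassRepsQClass} (reading (1)--(5) directly from the CARAT database). The only point to make fully precise is the finite verification that each $L_G$-orbit (resp.\ each fiber of $T^i_G$ in the exceptional orbits) is a single $S_7$-orbit, which is exactly the computation the paper carries out.
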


\begin{proof}
(1)--(5): $2\leq n\leq 6$.
We can use the CARAT package \cite{CARAT}
of GAP which has a database of all $\bZ$-classes and
$\bQ$-classes of finite subgroup of $GL_n(\bZ)$ $(n\leq 6)$ up to conjugacy.
For GAP ID and CARAT ID, see Section \ref{seCarat}. 

(6): $n=7$. No database for rank $n=7$ in CARAT was created. We should work out it ourselves. Note that the main job is to find $\bQ$-classes. Once a $\bQ$-class is find, we may use the CARAT function {\tt ZClassRepsQClass} of GAP to split the $\bQ$-class into a union of $\bZ$-classes. So we will concentrate to find all the $\bQ$-classes of subgroups $\simeq (C_2)^k$ in the following steps.

Step 1.
Let $G_{128}$ be the diagonal subgroup of $GL_7(\bZ)$ of order $128$ which is generated by the diagonal matrices ${\rm Diag}(-1,1,1,1,1,1,1)$, ${\rm Diag}(1,-1,1,1,1,1,1),\ldots, {\rm Diag}(1,1,1,1,1,1,-1)$. 

Via the group isomorphism $\{-1, 1 \} \to \bF_2$, subgroups $G_{128}$ correspond bijective to vector subspaces of $(\bF_2)^7$. It is not difficult to determine the number of subgroups of $G_{128}$ with order $2^k$ (because we may determine that for the vector subspaces of $(\bF_2)^7$); for examples, the number of subgroups of order $4$ is $127 \cdot 126/((4-1)(4-2))= 2667$. It follows that there exist $127$
(resp. $2667$, $11811$, $11811$, $2667$, $127$) subgroups in $G_{128}$, which are
of order $2$ (resp. $4$, $8$, $16$, $32$, $64$).

\medskip
Step 2.

Consider the equivalence relation defined on the set of all subgroups of elementary abelian $2$-groups of $GL_7(\bZ)$. Since every such subgroup is $\bQ$-conjugate to some subgroup of $G_{128}$, we may as well consider the restriction of this equivalence relation to the set of all subgroups of $G_{128}$. Clearly the numbers of equivalence classes of these two equivalence relations are the same.

Now we consider a coarser equivalence relation on the set of subgroups of $G_{128}$. Let $S_7$ be the subgroup of $GL_7(\bZ)$ consisting of all the permutation matrices; $S_7$ is isomorphic to the symmetric group in $7$ letters as an abstract group. Two subgroups $G_1, G_2\subset G_{128}$ are called $S_7$-equivalent if there is some element $t \in S_7$ such that $G_2= t \cdot G_1 \cdot t^{-1}$. Note that $S_7$-equivalence implies $\bQ$-conjugation, but the converse is not obvious. The advantage of considering $S_7$-equivalence is that, (i) the group $S_7$ is a finite group, and (ii) for any $t \in S_7$, any subgroup $G$ of $G_{128}$, the group $t \cdot G \cdot t^{-1}$ is always a subgroup of $G_{128}$. Thus we may find all the $S_7$-equivalence classes easily with the help of computers. In Step 3 and Step 4 we will show that $\bQ$-conjugation for subgroups of $G_{128}$ is necessarily $S_7$-equivalent.

\medskip
Step 3.

For a group $G\subset G_{128}$, define the trace vector $L_G$ associated to $G$. Explicitly, $L_G=[ t_{-7}, t_{-5}, t_{-3}, t_{-1}, t_1, t_3, t_5, t_7 ]$
$\in \bm{Z}^8$ as follows: $t_m$
is the number of elements in $G$ whose trace is $m$. Note that the trace of an element in $G$ is always an odd integer; moreover, if two groups are $\bQ$-conjugate, then they have the same trace vector.

We claim that, if $G_1, G_2$ are subgroups of $G_{128}$ isomorphic to $(C_2)^k$ ($1 \le k\le 6$), then the following two statements are equivalent, (i) $G_1$ and $G_2$ are $S_7$-equivalent, (ii) $G_1$ and $G_2$ are $\bQ$-conjugate. In fact, we will prove this by showing that, in ``most" situations (including the cases $k \neq 3,4$), (i) and (ii) are equivalent to: (iii) $L_{G_1}= L_{G_2}$.

Clearly, (i) $\Rightarrow$ (ii), and (ii) $\Rightarrow$ (iii). It remains to show that (iii) $\Rightarrow$ (i) in ``most" situations, and to show that (ii) $\Rightarrow$ (i) in the remaining situations.

\medskip
Let $X(k)$ be the set of all subgroups of $G_{128}$ isomorphic to $(C_2)^k$ ($1 \le k \le 6$ and $k\neq 3, 4$). We define the $L_G$-equivalence on $X(k)$: If $G_1, G_2 \in X(k)$, $G_1$ and $G_2$ are $L_G$-equivalent if $L_{G_1}= L_{G_2}$. Thus $X(k)$ is partitioned into a finite disjoint union of $L_G$-equivalence classes; for simplicity, we call these $L_G$-equivalence classes the $L_G$-orbits. Clearly, each $L_G$-orbit is stable under the conjugation actions of $S_7$; namely, each $L_G$-orbit is a union of some $S_7$-equivalence classes. By computer computation, each $L_G$-orbit consists of only one $S_7$-equivalence class. This finishes the proof.

\medskip
Step 4.
The situation for subgroup $\simeq (C_2)^3$ is slightly different and the proof may be modified as follows (the proof for subgroup $\simeq (C_2)^4$ is similar and is omitted).

As before, let $X(3)$ be the subgroups $\simeq (C_2)^3$ in $G_{128}$ and define the $L_G$-orbits as in Step 3. Computer computation shows that there are forty-one $L_G$-orbits. Moreover, each $L_G$-orbit consists of only one $S_7$-equivalence class except two $L_G$-orbits. Thus the proof of (iii) $\Rightarrow$ (i) is finished for these good $L_G$-orbits. The exceptional cases are the $L_G$-orbits with trace vector $L_G= [0,1,2,1,2,1,0,1]$ and $L_G=[0,1,0,3,0,3,0,1]$.

\medskip
We will discuss the $L_G$-orbit with $L_G= [0,1,2,1,2,1,0,1]$ first.

{}From the trace vector, we know that, if $G$ belongs to this $L_G$-orbit, 
then there are precisely two distinct elements $x, y \in G$ with 
${\rm trace}(x)={\rm trace}(y)=1$ (where ${\rm trace}(x)$ 
denotes the trace of the matrix $x$). 
Define another invariant of the group $G$. 
Define $T_G ^1={\rm trace}(xy)$. 
Clearly $T_G ^1$ is the same for all $\bQ$-conjugate groups. 
In fact, if $G$ is a groups in this $L_G$-orbit, 
$T_G ^1$ is $3$ or $-1$. 
Thus the $L_G$-orbit is the disjoint union of two subsets 
$Y_1$ and $Y_2$ where each group $G$ in $Y_1$ (resp. $Y_2$) has 
$T_G ^1$ equal to $3$ (resp. $-1$). 
We find that $Y_i$ is a union of $\bQ$-classes for $i=1,2$. 
Now $S_7$ acts on elements of $Y_i$ by conjugation. 
By computer computation, $S_7$ is transitive on $Y_i$. 
In other words, $Y_i$ is just a single $\bQ$-class 
and is also a single $S_7$-equivalence class. 
This finishes the proof of (ii) $\Rightarrow$ (i).

\medskip
Now consider the $L_G$-orbit with trace vector $L_G=[0,1,0,3,0,3,0,1]$. 
If $G$ belongs to this $L_G$-orbit, there are precisely three distinct 
elements $x, y, z \in G$ with 
${\rm trace}(x)={\rm trace}(y)={\rm trace}(z)=3$. 
Note that the traces of $xy, xz, yz$ are the same. 
Define $T_G ^3={\rm trace}(xy)$. 
Note that $T_G ^3$ is the same for all $\bQ$-conjugate groups; 
it is either $3$ or $-1$. 
Thus this $L_G$-orbit is the disjoint union of two subsets 
$Z_1$ and $Z_2$. 
As before, each of $Z_i$ is a single $\bQ$-class and is also 
a single $S_7$-equivalence class. Done.

\medskip
Now we may understand $\bQ$-classes of subgroups isomorphic to $(C_2)^k$ via $S_7$-equivalence classes of subgroups of $G_{128}$.

Conclusion: There exist
$7$ $($resp. $23$,  $43$, $43$, $23$, $7$, $1$$)$ $\bQ$-classes
of groups
$(C_2)^k$ in $GL_7(\bZ)$ for $k=1$ $($resp. $2$, $3$, $4$, $5$, $6$, $7$$)$
with invariants $L_G$, $T^1_G$ and $T^3_G$ (see Table $4$).
\end{proof}
\newpage


Table $4$: All $\bZ$-classes of $(C_2)^k$ of rank $7$
with complete invariants $L_G$, $T^1_G$, $T^3_G$.\vspace*{2mm}\\
\begin{tabular}{lr}
$L_G$ with $G\simeq C_2$
& \# $\bm{Z}$-classes \\\hline 
$[ 1, 0, 0, 0, 0, 0, 0, 1 ]$ & 1 \\
$[ 0, 1, 0, 0, 0, 0, 0, 1 ]$ & 2 \\
$[ 0, 0, 1, 0, 0, 0, 0, 1 ]$ & 3 \\
$[ 0, 0, 0, 1, 0, 0, 0, 1 ]$ & 4 \\
$[ 0, 0, 0, 0, 1, 0, 0, 1 ]$ & 4 \\
$[ 0, 0, 0, 0, 0, 1, 0, 1 ]$ & 3 \\
$[ 0, 0, 0, 0, 0, 0, 1, 1 ]$ & 2 \\
\hline
Total & 19 
\\
\\
$L_G$ with $G\simeq (C_2)^2$
& \# $\bm{Z}$-classes \\\hline
$[ 1, 1, 0, 0, 0, 0, 1, 1 ]$ & 2 \\
$[ 1, 0, 1, 0, 0, 1, 0, 1 ]$ & 3 \\
$[ 1, 0, 0, 1, 1, 0, 0, 1 ]$ & 4 \\
$[ 0, 1, 1, 0, 1, 0, 0, 1 ]$ & 12 \\
$[ 0, 1, 1, 0, 0, 0, 1, 1 ]$ & 7 \\
$[ 0, 1, 0, 1, 0, 1, 0, 1 ]$ & 12 \\
$[ 0, 1, 0, 2, 0, 0, 0, 1 ]$ & 13 \\
$[ 0, 1, 0, 0, 2, 0, 0, 1 ]$ & 13 \\
$[ 0, 2, 0, 0, 0, 1, 0, 1 ]$ & 6 \\
$[ 0, 0, 1, 1, 1, 0, 0, 1 ]$ & 82 \\
$[ 0, 0, 1, 1, 0, 0, 1, 1 ]$ & 12 \\
$[ 0, 0, 1, 0, 1, 1, 0, 1 ]$ & 26 \\
$[ 0, 0, 2, 1, 0, 0, 0, 1 ]$ & 20 \\
$[ 0, 0, 2, 0, 0, 1, 0, 1 ]$ & 29 \\
$[ 0, 0, 0, 1, 1, 0, 1, 1 ]$ & 16 \\
$[ 0, 0, 0, 1, 2, 0, 0, 1 ]$ & 112 \\
$[ 0, 0, 0, 1, 0, 2, 0, 1 ]$ & 20 \\
$[ 0, 0, 0, 2, 0, 1, 0, 1 ]$ & 60 \\
$[ 0, 0, 0, 3, 0, 0, 0, 1 ]$ & 62 \\
$[ 0, 0, 0, 0, 1, 1, 1, 1 ]$ & 12 \\
$[ 0, 0, 0, 0, 2, 1, 0, 1 ]$ & 60 \\
$[ 0, 0, 0, 0, 0, 1, 2, 1 ]$ & 6 \\
$[ 0, 0, 0, 0, 0, 3, 0, 1 ]$ & 19 \\
\hline
Total & 608 
\\
\\
\\
\\
\\
\\
\\
\\
\\
\\
\\
\end{tabular}\hspace*{4mm}
\begin{tabular}{llr}
$L_G$ with $G\simeq (C_2)^3$
& & \# $\bm{Z}$-classes \\\hline
$[ 1, 1, 1, 1, 1, 1, 1, 1 ]$ & & 12\\
$[ 1, 1, 0, 2, 2, 0, 1, 1 ]$ & & 13\\
$[ 1, 2, 1, 0, 0, 1, 2, 1 ]$ & & 6\\
$[ 1, 0, 1, 2, 2, 1, 0, 1 ]$ & & 60\\
$[ 1, 0, 2, 1, 1, 2, 0, 1 ]$ & & 20\\
$[ 1, 0, 3, 0, 0, 3, 0, 1 ]$ & & 19\\
$[ 1, 0, 0, 3, 3, 0, 0, 1 ]$ & & 62\\
$[ 0, 1, 1, 1, 2, 1, 1, 1 ]$ & & 80\\
$[ 0, 1, 1, 2, 3, 0, 0, 1 ]$ & & 380\\
$[ 0, 1, 2, 1, 2, 1, 0, 1 ]$ & $T^1_G=3\ \ $ & 208\\
$[ 0, 1, 2, 1, 2, 1, 0, 1 ]$ & $T^1_G=-1$ & 107\\
$[ 0, 1, 2, 1, 0, 1, 2, 1 ]$ & & 28\\
$[ 0, 1, 2, 2, 1, 0, 1, 1 ]$ & & 80\\
$[ 0, 1, 0, 1, 4, 1, 0, 1 ]$ & & 279\\
$[ 0, 1, 0, 3, 0, 3, 0, 1 ]$ & $T^3_G=3\ \ $ & 113\\
$[ 0, 1, 0, 3, 0, 3, 0, 1 ]$ & $T^3_G=-1$  & 58\\
$[ 0, 1, 0, 5, 0, 1, 0, 1 ]$ & & 279\\
$[ 0, 2, 1, 0, 3, 1, 0, 1 ]$ & & 58\\
$[ 0, 2, 2, 0, 1, 1, 1, 1 ]$ & & 28\\
$[ 0, 2, 0, 3, 0, 2, 0, 1 ]$ & & 58\\
$[ 0, 3, 0, 1, 0, 3, 0, 1 ]$ & & 18\\
$[ 0, 0, 1, 1, 2, 2, 1, 1 ]$ & & 107\\
$[ 0, 0, 1, 2, 1, 1, 2, 1 ]$ & & 58\\
$[ 0, 0, 1, 2, 3, 1, 0, 1 ]$ & & 1919\\
$[ 0, 0, 1, 3, 2, 0, 1, 1 ]$ & & 415\\
$[ 0, 0, 1, 0, 3, 3, 0, 1 ]$ & & 221\\
$[ 0, 0, 2, 1, 2, 2, 0, 1 ]$ & & 429\\
$[ 0, 0, 2, 2, 1, 1, 1, 1 ]$ & & 208\\
$[ 0, 0, 2, 3, 2, 0, 0, 1 ]$ & & 1000\\
$[ 0, 0, 3, 2, 1, 1, 0, 1 ]$ & & 429\\
$[ 0, 0, 3, 3, 0, 0, 1, 1 ]$ & & 58\\
$[ 0, 0, 3, 0, 1, 3, 0, 1 ]$ & & 113\\
$[ 0, 0, 4, 1, 0, 2, 0, 1 ]$ & & 92\\
$[ 0, 0, 0, 1, 2, 2, 2, 1 ]$ & & 58\\
$[ 0, 0, 0, 1, 4, 2, 0, 1 ]$ & & 691\\
$[ 0, 0, 0, 1, 0, 6, 0, 1 ]$ & & 60\\
$[ 0, 0, 0, 2, 3, 1, 1, 1 ]$ & & 432\\
$[ 0, 0, 0, 3, 4, 0, 0, 1 ]$ & & 1077\\
$[ 0, 0, 0, 3, 0, 4, 0, 1 ]$ & & 221\\
$[ 0, 0, 0, 5, 0, 2, 0, 1 ]$ & & 691\\
$[ 0, 0, 0, 7, 0, 0, 0, 1 ]$ & & 269\\
$[ 0, 0, 0, 0, 1, 3, 3, 1 ]$ & & 18\\
$[ 0, 0, 0, 0, 3, 3, 1, 1 ]$ & & 113\\
\hline
Total & & 10645 
\end{tabular}

\vspace*{2mm}
\noindent
$L_G=[ t_{-7}, t_{-5}, t_{-3}, t_{-1}, t_1, t_3, t_5, t_7 ]$
$\in \bm{Z}^8$ is the list of $t_m$ which
is the number of elements of $G$ whose trace is $m$.\\
$T^i_G$ is the trace of the product of two distinct elements
with trace $i$.

\begin{tabular}{llr}
$L_G$ with $G\simeq (C_2)^4$
& & \# $\bm{Z}$-classes \\\hline
$[ 1, 1, 1, 5, 5, 1, 1, 1 ]$ & & 279 \\
$[ 1, 1, 3, 3, 3, 3, 1, 1 ]$ & $T^3_G=3\ \ $ & 113 \\
$[ 1, 1, 3, 3, 3, 3, 1, 1 ]$ & $T^3_G=-1$ & 58 \\
$[ 1, 2, 2, 3, 3, 2, 2, 1 ]$ & & 58 \\
$[ 1, 3, 3, 1, 1, 3, 3, 1 ]$ & & 18 \\
$[ 1, 0, 2, 5, 5, 2, 0, 1 ]$ & & 691 \\
$[ 1, 0, 4, 3, 3, 4, 0, 1 ]$ & & 221 \\
$[ 1, 0, 6, 1, 1, 6, 0, 1 ]$ & & 60 \\
$[ 1, 0, 0, 7, 7, 0, 0, 1 ]$ & & 269 \\
$[ 0, 1, 1, 3, 6, 3, 1, 1 ]$ & & 886 \\
$[ 0, 1, 2, 3, 4, 3, 2, 1 ]$ & & 270 \\
$[ 0, 1, 2, 5, 6, 1, 0, 1 ]$ & & 2823 \\
$[ 0, 1, 3, 3, 2, 3, 3, 1 ]$ & & 109 \\
$[ 0, 1, 3, 5, 4, 1, 1, 1 ]$ & & 1878 \\
$[ 0, 1, 4, 3, 4, 3, 0, 1 ]$ & $T^3_G=3\ \ $ & 883 \\
$[ 0, 1, 4, 3, 4, 3, 0, 1 ]$ & $T^3_G=-1$ & 670 \\
$[ 0, 1, 4, 5, 2, 1, 2, 1 ]$ & & 270 \\
$[ 0, 1, 0, 3, 8, 3, 0, 1 ]$ & & 789 \\
$[ 0, 1, 0, 7, 0, 7, 0, 1 ]$ & & 380 \\
$[ 0, 1, 0, 11, 0, 3, 0, 1 ]$ & & 789 \\
$[ 0, 2, 2, 3, 6, 2, 0, 1 ]$ & & 1170 \\
$[ 0, 2, 3, 3, 4, 2, 1, 1 ]$ & & 422 \\
$[ 0, 2, 4, 3, 2, 2, 2, 1 ]$ & & 141 \\
$[ 0, 2, 0, 9, 0, 4, 0, 1 ]$ & & 574 \\
$[ 0, 3, 2, 1, 6, 3, 0, 1 ]$ & & 144 \\
$[ 0, 3, 3, 1, 4, 3, 1, 1 ]$ & & 109 \\
$[ 0, 3, 0, 7, 0, 5, 0, 1 ]$ & & 144 \\
$[ 0, 4, 0, 5, 0, 6, 0, 1 ]$ & & 56 \\
$[ 0, 0, 1, 1, 6, 6, 1, 1 ]$ & & 380 \\
$[ 0, 0, 1, 3, 4, 4, 3, 1 ]$ & & 214 \\
$[ 0, 0, 1, 5, 6, 2, 1, 1 ]$ & & 3055 \\
$[ 0, 0, 2, 3, 6, 4, 0, 1 ]$ & & 2534 \\
$[ 0, 0, 2, 5, 4, 2, 2, 1 ]$ & & 1170 \\
$[ 0, 0, 3, 3, 4, 4, 1, 1 ]$ & & 883 \\
$[ 0, 0, 3, 7, 4, 0, 1, 1 ]$ & & 1037 \\
$[ 0, 0, 4, 1, 4, 6, 0, 1 ]$ & & 361 \\
$[ 0, 0, 4, 5, 4, 2, 0, 1 ]$ & & 2867 \\
$[ 0, 0, 5, 5, 2, 2, 1, 1 ]$ & & 670 \\
$[ 0, 0, 6, 3, 2, 4, 0, 1 ]$ & & 558 \\
$[ 0, 0, 0, 1, 4, 6, 4, 1 ]$ & & 56 \\
$[ 0, 0, 0, 3, 6, 4, 2, 1 ]$ & & 574 \\
$[ 0, 0, 0, 5, 0, 10, 0, 1 ]$ & & 145 \\
$[ 0, 0, 0, 9, 0, 6, 0, 1 ]$ & & 664 \\
\hline
Total & & 29442 
\end{tabular}\hspace*{4mm}
\begin{tabular}{lr}
$L_G$ with $G\simeq (C_2)^5$
& \# $\bm{Z}$-classes \\\hline
$[ 1, 1, 3, 11, 11, 3, 1, 1 ]$ & 789 \\
$[ 1, 1, 7, 7, 7, 7, 1, 1 ]$ & 380 \\
$[ 1, 2, 4, 9, 9, 4, 2, 1 ]$ & 574 \\
$[ 1, 3, 5, 7, 7, 5, 3, 1 ]$ & 144 \\
$[ 1, 4, 6, 5, 5, 6, 4, 1 ]$ & 56 \\
$[ 1, 0, 6, 9, 9, 6, 0, 1 ]$ & 664 \\
$[ 1, 0, 10, 5, 5, 10, 0, 1 ]$ & 145 \\
$[ 0, 1, 2, 7, 12, 7, 2, 1 ]$ & 1034 \\
$[ 0, 1, 4, 7, 8, 7, 4, 1 ]$ & 360 \\
$[ 0, 1, 6, 11, 8, 3, 2, 1 ]$ & 1860 \\
$[ 0, 1, 8, 7, 8, 7, 0, 1 ]$ & 983 \\
$[ 0, 1, 0, 15, 0, 15, 0, 1 ]$ & 206 \\
$[ 0, 2, 5, 9, 10, 4, 1, 1 ]$ & 2499 \\
$[ 0, 2, 7, 9, 6, 4, 3, 1 ]$ & 557 \\
$[ 0, 3, 4, 7, 12, 5, 0, 1 ]$ & 899 \\
$[ 0, 3, 6, 7, 8, 5, 2, 1 ]$ & 557 \\
$[ 0, 3, 0, 19, 0, 9, 0, 1 ]$ & 521 \\
$[ 0, 4, 5, 5, 10, 6, 1, 1 ]$ & 360 \\
$[ 0, 5, 0, 15, 0, 11, 0, 1 ]$ & 144 \\
$[ 0, 0, 1, 5, 10, 10, 5, 1 ]$ & 144 \\
$[ 0, 0, 3, 9, 10, 6, 3, 1 ]$ & 1183 \\
$[ 0, 0, 5, 5, 10, 10, 1, 1 ]$ & 557 \\
$[ 0, 0, 9, 9, 6, 6, 1, 1 ]$ & 632 \\
\hline
Total & 15248 
\\
\\
$L_G$ with $G\simeq (C_2)^6$
& \# $\bm{Z}$-classes \\\hline
$[ 1, 1, 15, 15, 15, 15, 1, 1 ]$ & 206 \\
$[ 1, 3, 9, 19, 19, 9, 3, 1 ]$ & 521 \\
$[ 1, 5, 11, 15, 15, 11, 5, 1 ]$ & 144 \\
$[ 0, 1, 6, 15, 20, 15, 6, 1 ]$ & 203 \\
$[ 0, 3, 12, 19, 16, 9, 4, 1 ]$ & 491 \\
$[ 0, 5, 10, 15, 20, 11, 2, 1 ]$ & 371 \\
$[ 0, 7, 0, 35, 0, 21, 0, 1 ]$ & 80 \\
\hline
Total & 2016 
\\
\\
$L_G$ with $G\simeq (C_2)^7$
& \# $\bm{Z}$-classes \\\hline
$[ 1, 7, 21, 35, 35, 21, 7, 1 ]$ & 80 \\
\hline
Total & 80 
\\
\\
\\
\\
\\
\\
\\
\end{tabular}

\vspace*{2mm}
\noindent
$L_G=[ t_{-7}, t_{-5}, t_{-3}, t_{-1}, t_1, t_3, t_5, t_7 ]$
$\in \bm{Z}^8$ is the list of $t_m$ which
is the number of elements of $G$ whose trace is $m$.\\
$T^i_G$ is the trace of the product of two distinct elements
with trace $i$.\\

The related functions we need are available from\\
{\tt https://www.math.kyoto-u.ac.jp/\~{}yamasaki/Algorithm/MultInvField/}.
For GAP ID and CARAT ID, see Section \ref{seCarat}. 
\footnote{
Before computation of CARAT,
we should change the limit 1000 to 10000
in the source files of the following folders of CARAT:
functions/Graph/lattices.c,
functions/Graph/super-k-groups-fcts.c,
functions/ZZ/ZZ.c. 
If you use BuildCarat.sh in our web page 
{\tt https://www.math.kyoto-u.ac.jp/\~{}yamasaki/Algorithm/MultInvField/}, 
the limit is fixed automatically.}

\begin{verbatim}
gap> Read("crystcat.gap");
gap> Read("caratnumber.gap");
gap> Read("KS.gap");

# (1)-(5): 2<=n<=6
gap> GL2Q:=Concatenation(List([1..4],x->List([1..NrQClassesCrystalSystem(2,x)],
> y->[2,x,y])));;
# all the finite subgroups of GL2(Z) up to Q-conjugate
gap> Length(GL2Q);
10
gap> C2_GL2Q:=Filtered(GL2Q,x->IdSmallGroup(MatGroupZClass(x[1],x[2],x[3],1))=[2,1]);
[ [ 2, 1, 2 ], [ 2, 2, 1 ] ]
gap> List(last,x->NrZClassesQClass(x[1],x[2],x[3]));
[ 1, 2 ]
gap> [Length(last),Sum(last)]; # # of Q-classes and Z-classes of C2 respectively
[ 2, 3 ]
gap> C2xC2_GL2Q:=Filtered(GL2Q,
> x->IdSmallGroup(MatGroupZClass(x[1],x[2],x[3],1))=[4,2]);
[ [ 2, 2, 2 ] ]
gap> List(last,x->NrZClassesQClass(x[1],x[2],x[3]));
[ 2 ]
gap> [Length(last),Sum(last)]; # # of Q-classes and Z-classes of (C2)^2 respectively
[ 1, 2 ]

gap> GL3Q:=Concatenation(List([1..7],x->List([1..NrQClassesCrystalSystem(3,x)],
> y->[3,x,y])));;
# all the finite subgroups of GL3(Z) up to Q-conjugate
gap> Length(GL3Q);
32
gap> C2_GL3Q:=Filtered(GL3Q,
> x->IdSmallGroup(MatGroupZClass(x[1],x[2],x[3],1))=[2,1]);
[ [ 3, 1, 2 ], [ 3, 2, 1 ], [ 3, 2, 2 ] ]
gap> List(last,x->NrZClassesQClass(x[1],x[2],x[3]));
[ 1, 2, 2 ]
gap> [Length(last),Sum(last)]; # # of Q-classes and Z-classes of C2 respectively
[ 1, 5 ]
gap> C2xC2_GL3Q:=Filtered(GL3Q,
> x->IdSmallGroup(MatGroupZClass(x[1],x[2],x[3],1))=[4,2]);
[ [ 3, 2, 3 ], [ 3, 3, 1 ], [ 3, 3, 2 ] ]
gap> List(last,x->NrZClassesQClass(x[1],x[2],x[3]));
[ 2, 4, 5 ]
gap> [Length(last),Sum(last)]; # # of Q-classes and Z-classes of (C2)^2 respectively
[ 3, 11 ]
gap> C2xC2xC2_GL3Q:=Filtered(GL3Q,
> x->IdSmallGroup(MatGroupZClass(x[1],x[2],x[3],1))=[8,5]);
[ [ 3, 3, 3 ] ]
gap> List(last,x->NrZClassesQClass(x[1],x[2],x[3]));
[ 4 ]
gap> [Length(last),Sum(last)]; # # of Q-classes and Z-classes of (C2)^3 respectively
[ 1, 4 ]

gap> GL4Q:=Concatenation(List([1..33],x->List([1..NrQClassesCrystalSystem(4,x)],
> y->[4,x,y])));;
# all the finite subgroups of GL4(Z) up to Q-conjugate
gap> Length(GL4Q);
227
gap> C2_GL4Q:=Filtered(GL4Q,x->IdSmallGroup(MatGroupZClass(x[1],x[2],x[3],1))=[2,1]);
[ [ 4, 1, 2 ], [ 4, 2, 1 ], [ 4, 2, 2 ], [ 4, 3, 1 ] ]
gap> List(last,x->NrZClassesQClass(x[1],x[2],x[3]));
[ 1, 2, 2, 3 ]
gap> [Length(last),Sum(last)]; # # of Q-classes and Z-classes of C2 respectively
[ 4, 8 ]
gap> C2xC2_GL4Q:=Filtered(GL4Q,
> x->IdSmallGroup(MatGroupZClass(x[1],x[2],x[3],1))=[4,2]);
[ [ 4, 2, 3 ], [ 4, 3, 2 ], [ 4, 4, 1 ], [ 4, 4, 2 ], [ 4, 4, 3 ],
  [ 4, 5, 1 ] ]
gap> List(last,x->NrZClassesQClass(x[1],x[2],x[3]));
[ 2, 3, 6, 7, 6, 13 ]
gap> [Length(last),Sum(last)]; # # of Q-classes and Z-classes of (C2)^2 respectively
[ 6, 37 ]
gap> C2xC2xC2_GL4Q:=Filtered(GL4Q,
> x->IdSmallGroup(MatGroupZClass(x[1],x[2],x[3],1))=[8,5]);
[ [ 4, 4, 4 ], [ 4, 5, 2 ], [ 4, 6, 1 ], [ 4, 6, 2 ] ]
gap> List(last,x->NrZClassesQClass(x[1],x[2],x[3]));
[ 6, 9, 12, 12 ]
gap> [Length(last),Sum(last)]; # # of Q-classes and Z-classes of (C2)^3 respectively
[ 4, 39 ]
gap> C2xC2xC2xC2_GL4Q:=Filtered(GL4Q,
> x->IdSmallGroup(MatGroupZClass(x[1],x[2],x[3],1))=[16,14]);
[ [ 4, 6, 3 ] ]
gap> List(last,x->NrZClassesQClass(x[1],x[2],x[3]));
[ 8 ]
gap> [Length(last),Sum(last)]; # # of Q-classes and Z-classes of (C2)^4 respectively
[ 1, 8 ]

gap> C2_GL5Q:=Filtered([1..955],x->Order(CaratMatGroupZClass(5,x,1))=2);
[ 2, 3, 4, 6, 7 ]
gap> List(last,x->CaratNrZClasses(5,x));
[ 1, 2, 2, 3, 3 ]
gap> [Length(last),Sum(last)]; # # of Q-classes and Z-classes of C2 respectively
[ 5, 11 ]
gap> C2xC2_GL5Q:=Filtered([1..955],x->Order(CaratMatGroupZClass(5,x,1))=4
> and IdSmallGroup(CaratMatGroupZClass(5,x,1))=[4,2]);
[ 5, 8, 9, 10, 11, 13, 14, 15, 18, 19 ]
gap> List(last,x->CaratNrZClasses(5,x));
[ 2, 3, 6, 7, 6, 9, 9, 11, 28, 18 ]
gap> [Length(last),Sum(last)]; # # of Q-classes and Z-classes of (C2)^2 respectively
[ 10, 99 ]
gap> C2xC2xC2_GL5Q:=Filtered([1..955],x->Order(CaratMatGroupZClass(5,x,1))=8
> and IdSmallGroup(CaratMatGroupZClass(5,x,1))=[8,5]);
[ 12, 16, 20, 21, 22, 23, 24, 30, 31, 32 ]
gap> List(last,x->CaratNrZClasses(5,x));
[ 6, 9, 18, 17, 17, 27, 27, 33, 49, 60 ]
gap> [Length(last),Sum(last)]; # # of Q-classes and Z-classes of (C2)^3 respectively
[ 10, 263 ]
gap> C2xC2xC2xC2_GL5Q:=Filtered([1..955],x->Order(CaratMatGroupZClass(5,x,1))=16
> and IdSmallGroup(CaratMatGroupZClass(5,x,1))=[16,14]);
[ 17, 25, 26, 27, 28 ]
gap> List(last,x->CaratNrZClasses(5,x));
[ 17, 33, 42, 16, 30 ]
gap> Sum(last);
138
gap> [Length(last),Sum(last)]; # # of Q-classes and Z-classes of (C2)^4 respectively
[ 5, 138 ]
gap> C2xC2xC2xC2xC2_GL5Q:=Filtered([1..955],x->Order(CaratMatGroupZClass(5,x,1))=32
> and IdSmallGroup(CaratMatGroupZClass(5,x,1))=[32,51]);
[ 29 ]
gap> List(last,x->CaratNrZClasses(5,x));
[ 16 ]
gap> [Length(last),Sum(last)]; # # of Q-classes and Z-classes of (C2)^5 respectively
[ 1, 16 ]

gap> C2_GL6Q:=Filtered([1..7103],x->Order(CaratMatGroupZClass(6,x,1))=2);
[ 1, 2, 4, 5, 11, 2710 ]
gap> List(last,x->CaratNrZClasses(6,x));
[ 2, 2, 3, 3, 4, 1 ]
gap> [Length(last),Sum(last)]; # # of Q-classes and Z-classes of (C2)^3 respectively
[ 6, 15 ]
gap> C2xC2_GL6Q:=Filtered([1..7103],x->Order(CaratMatGroupZClass(6,x,1))=4
> and IdSmallGroup(CaratMatGroupZClass(6,x,1))=[4,2]);
[ 3, 6, 7, 8, 9, 12, 13, 14, 15, 16, 4618, 4619, 4625, 4626, 4629, 4630 ]
gap> List(last,x->CaratNrZClasses(6,x));
[ 2, 3, 7, 6, 6, 4, 12, 12, 12, 12, 19, 29, 12, 17, 50, 52 ]
gap> Sum(last);
255
gap> [Length(last),Sum(last)]; # # of Q-classes and Z-classes of (C2)^2 respectively
[ 16, 255 ]
gap> C2xC2xC2_GL6Q:=Filtered([1..7103],x->Order(CaratMatGroupZClass(6,x,1))=8
> and IdSmallGroup(CaratMatGroupZClass(6,x,1))=[8,5]);
[ 10, 17, 4620, 4621, 4622, 4623, 4624, 4627, 4631, 4632, 4633, 4634, 4635,
  4644, 4645, 4646, 4647, 4648, 4649, 4650, 4664, 4665 ]
gap> List(last,x->CaratNrZClasses(6,x));
[ 6, 12, 19, 18, 18, 28, 28, 12, 39, 48, 50, 50, 48, 101, 83, 53, 101, 189,
  101, 189, 325, 131 ]
gap> [Length(last),Sum(last)]; # # of Q-classes and Z-classes of (C2)^3 respectively
[ 22, 1649 ]
gap> C2xC2xC2xC2_GL6Q:=Filtered([1..7103],x->Order(CaratMatGroupZClass(6,x,1))=16
> and IdSmallGroup(CaratMatGroupZClass(6,x,1))=[16,14]);
[ 4617, 4628, 4636, 4637, 4638, 4639, 4640, 4641, 4642, 4651, 4652, 4653,
  4654, 4655, 4656, 4657 ]
gap> List(last,x->CaratNrZClasses(6,x));
[ 18, 37, 53, 101, 97, 97, 49, 49, 127, 86, 76, 273, 189, 273, 313, 109 ]
gap> [Length(last),Sum(last)]; # # of Q-classes and Z-classes of (C2)^4 respectively
[ 16, 1947 ]
gap> C2xC2xC2xC2xC2_GL6Q:=Filtered([1..7103],x->Order(CaratMatGroupZClass(6,x,1))=32
> and IdSmallGroup(CaratMatGroupZClass(6,x,1))=[32,51]);
[ 4643, 4658, 4659, 4660, 4661, 4662 ]
gap> List(last,x->CaratNrZClasses(6,x));
[ 49, 37, 134, 75, 141, 75 ]
gap> [Length(last),Sum(last)]; # # of Q-classes and Z-classes of (C2)^5 respectively
[ 6, 511 ]
gap> C2xC2xC2xC2xC2xC2_GL6Q:=Filtered([1..7103],x->Order(CaratMatGroupZClass(6,x,1))=64
> and IdSmallGroup(CaratMatGroupZClass(6,x,1))=[64,267]);
[ 4663 ]
gap> List(last,x->CaratNrZClasses(6,x));
[ 36 ]
gap> [Length(last),Sum(last)]; # # of Q-classes and Z-classes of (C2)^6 respectively
[ 1, 36 ]
\end{verbatim}

\bigskip



\bigskip

The remaining part of this section is devoted to the study of $\bQ$-classes
of elementary abelian $2$-groups of $GL_n(\bZ)$ where $n$ is any positive
integer.

Recall that $G_{2^n}$ is the subgroup of $GL_n(\bZ)$ generated by the diagonal
matrices ${\rm Diag}(-1,1,1,1,1,1,1)$, 
$\ldots$, ${\rm Diag}(1,1,1,1,1,1,-1)$.
Since any elementary abelian $2$-group of $GL_n(\bZ)$ is $\bQ$-conjugate to
a subgroup of $G_{2^n}$, it suffices to study $\bQ$-classes of subgroups
in $G_{2^n}$.

As in Step 2 of the proof of Theorem \ref{t5.1}, we define $S_n$-equivalence
where $S_n$ is the subgroup of all the permutation matrices in $GL_n(\bZ)$.
Explicitly, two subgroups $H_1$ and $H_2$ of $G_{2^n}$ are $S_n$-equivalent
if there exists some element $t\in S_n$ such that
$H_2=t\cdot H_1\cdot t^{-1}$.

Clearly, ``$S_n$-equivalent'' $\Rightarrow$ ``$\bQ$-conjugation''.
The following theorem shows that the converse is also true.
Once we have this result, we may find all the $S_n$-equivalence classes,
and thus all the $\bQ$-classes, with the aid of the computer.
\begin{theorem}\label{t5.2}
Let $k$ be an integer with $1\leq k\leq n$,
$X(n,k):=\{H : H\ {\rm is\ a\ subgroup\ of\ }G_{2^n}\
{\rm and\ }H\simeq(C_2)^k\}$.
For any $H_1,H_2\in X(n,k)$, $H_1$ and $H_2$ are $\bQ$-conjugate
if and only if they are $S_n$-equivalent.
\end{theorem}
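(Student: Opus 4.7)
The plan is to reduce the theorem to a statement about the multisets of coordinate characters of $H_1$ and $H_2$, and then to exhibit an explicit permutation matrix realizing the conjugation. The forward direction ($S_n$-equivalent $\Rightarrow$ $\bQ$-conjugate) is trivial because $S_n \subset GL_n(\bZ) \subset GL_n(\bQ)$, so the entire content is the reverse direction.

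First I would fix the following dictionary. For any subgroup $H \subset G_{2^n}$, each coordinate gives a character $\chi_i^H : H \to \{\pm 1\}$, $\mathrm{diag}(a_1,\dots,a_n) \mapsto a_i$, and the standard basis vectors $e_1,\dots,e_n$ are simultaneous eigenvectors: $h\cdot e_i = \chi_i^H(h)\, e_i$. Thus the action of $H$ on $\bQ^n$ is completely encoded by the multiset of characters $\mathcal{X}(H) = \{\chi_1^H,\dots,\chi_n^H\}$.

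Next I would show that $\bQ$-conjugacy between $H_1$ and $H_2$ forces a matching of these multisets through an abstract isomorphism. Suppose $A H_1 A^{-1} = H_2$ with $A \in GL_n(\bQ)$, and define the isomorphism $\phi:H_1 \to H_2$ by $\phi(h) = A h A^{-1}$. Setting $f_i = A e_i$, a direct computation gives $\phi(h)\, f_i = \chi_i^{H_1}(h)\, f_i$, so $f_i$ is a simultaneous eigenvector of $H_2$ whose associated character (as a character of $H_2$) is $\chi_i^{H_1}\circ \phi^{-1}$. Comparing the two simultaneous eigenbases $\{e_j\}$ and $\{f_i\}$ of $H_2$ acting on $\bQ^n$ and using that the joint eigenspace decomposition is intrinsic, one obtains the multiset equality
\[
\bigl\{\chi_j^{H_2}\circ\phi \,:\, 1\le j\le n\bigr\} \;=\; \bigl\{\chi_i^{H_1} \,:\, 1\le i\le n\bigr\}
\]
as multisets of characters of $H_1$.

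Finally I would exhibit the permutation matrix. The multiset equality yields a bijection $\sigma:\{1,\dots,n\}\to\{1,\dots,n\}$ with $\chi_i^{H_1} = \chi_{\sigma(i)}^{H_2}\circ\phi$ for every $i$. Let $t$ be the permutation matrix with $t\, e_i = e_{\sigma(i)}$. A one-line calculation shows that, for every $h_1 \in H_1$,
\[
(t h_1 t^{-1})\, e_j \;=\; \chi_{\sigma^{-1}(j)}^{H_1}(h_1)\, e_j \;=\; \chi_j^{H_2}(\phi(h_1))\, e_j,
\]
so $t h_1 t^{-1} = \phi(h_1) \in H_2$, and hence $t H_1 t^{-1} = \phi(H_1) = H_2$. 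This shows that $H_1$ and $H_2$ are $S_n$-equivalent and completes the proof.

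The only substantive step is the middle one — arguing that simultaneous diagonalizability forces the coordinate character multisets to coincide through the isomorphism $\phi$ — but once this is in place the construction of $t$ is immediate, and the fact that we require $\phi$ to be realized by an abstract isomorphism (rather than an arbitrary bijection of multisets) is automatically compatible with the conjugation formula $\phi(h) = t h t^{-1}$.
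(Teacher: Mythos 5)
Your proof is correct, and it takes a genuinely different route from the paper's. The paper translates the problem into coding theory: it identifies a subgroup $H\subset G_{2^n}$ with a binary code $\Phi(H)\subseteq(\bF_2)^n$, observes that a $\bQ$-conjugation preserves traces and hence induces a weight-preserving (``local isometry'') linear isomorphism $\Phi(H_1)\to\Phi(H_2)$, and then invokes the extension theorem of \cite[page 551, Theorem 6.8.4]{BBFKKW} asserting that locally isometric binary codes are permutation-equivalent. You instead exploit the extra structure that the local isometry actually comes from a conjugating matrix $A$: the columns $f_i=Ae_i$ form a second simultaneous eigenbasis of $H_2$ on $\bQ^n$, the multiset of coordinate characters read off any simultaneous eigenbasis is an invariant of the representation (the multiplicity of $\chi$ being $\dim V_\chi$ for the isotypic decomposition), and matching the two multisets through $\phi(h)=AhA^{-1}$ produces the permutation $\sigma$ directly. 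Your middle step is sound: the characters attached to a simultaneous eigenbasis are forced to agree with the isotypic multiplicities by the usual dimension count, so the multiset equality holds, and the final computation $th_1t^{-1}=\phi(h_1)$ is correct. What each approach buys: the paper's route connects to the classification of binary codes up to isometry, which it reuses elsewhere (e.g.\ the count of local isometry classes via \cite[page 446, Table 6.2]{BBFKKW}); your route is self-contained, avoids the full strength of the MacWilliams-type extension theorem (which handles arbitrary weight-preserving isomorphisms, not just those arising from rational conjugation), and in fact shows the slightly stronger statement that the permutation matrix $t$ realizes the given isomorphism $\phi$ itself.
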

\begin{proof}
Step 1.
Let $\Phi:G_{2^n}\rightarrow(\bF_2)^n$ be the correspondence induced
by the group isomorphism $\{-1,1\}\rightarrow (\bF_2)^n$ as in Step 1
of the proof of Theorem \ref{t5.1}.
If $H\in X(n,k)$, then $\Phi(H)$ is a vector subspace of dimension $k$
in $(\bF_2)^n$.
Define $H(n,k):=\{W:W\ {\rm is\ a\ subspace\ of}\ (\bF_2)^n\ {\rm and}\ {\rm dim}_{\bF_2}W=k\}$.
For any vector $w\in (\bF_2)^n$, define the weight of $w$, denoted by
${\rm wt}(w)$, as ${\rm wt}(w)=\#\{w_i:w_i\neq 0\}$
if $w=(w_1,\ldots,w_n)\in(\bF_2)^n$.
It is easy to see that if $x\in G_{2^n}$, then ${\rm wt}(\Phi(x))=l
\Leftrightarrow {\rm trace}(x)=n-2l$.
We remark that if
$W\in H(n,k)$, $W$ is called a binary $(n,k)$-code in \cite{BBFKKW}.
For any $W_1, W_2\in H(n,k)$, a local isometry of $W_1$ and $W_2$ is a
linear isomorphism $\varphi:W_1\rightarrow W_2$ such that
${\rm wt}(\varphi(w))={\rm wt}(w)$ for any $w\in W_1$ \cite[page 549]{BBFKKW}.
Because we consider binary codes, i.e. the base field is $\bF_2$, there is no difference between linear maps and semilinear maps, which
are emphasized in \cite{BBFKKW}.
The reader may find the notion of isometries (instead of local isometries!) in
\cite[page 30 and page 40]{BBFKKW}; we will not use isometries in our proof.

\medskip
Step 2.
By abusing the notation, we will denote $S_n$ the symmetric group of
permutations on $n$ letters $\{1,2,\ldots,n\}$.
Thus, if $\sigma\in S_n$, then $\sigma$ induces a linear isomorphism on
$(\bF_2)^n$ by $\sigma(v)=(v_{\sigma(1)},\ldots,v_{\sigma(n)})$ if
$v=(v_1,\ldots,v_n)\in (\bF_2)^n$.
Using this linear isomorphism, we define an equivalence relation on $H(n,k)$.
For any $W_1, W_2\in H(n,k)$, $W_1$ is $S_n$-equivalent to $W_2$
if there is some $\sigma\in S_n$ such that $\sigma(W_1)=W_2$.
It is easy to verify if $t\in S_n$ is a permutation matrix associated to
the permutation $\sigma$, and $H_1, H_2\in X(n,k)$, then
$H_2=t\cdot H_1\cdot t^{-1}$ if and only if $\sigma(\Phi(H_1))=\Phi(H_2)$.

\medskip
Step 3.
Let $W_1,W_2\in H(n,k)$.
If $W_1$ is $S_n$-equivalent to $W_2$, it is clear that $W_1$ and $W_2$
are locally isometric.
The converse is true by \cite[page 551, Theorem 6.8.4]{BBFKKW}.
That is, $W_1$ is $S_n$-equivalent to $W_2$ if and only if they are
locally isometric.
This result is the key tool for Step 4..

\medskip
Step 4.
Now we come to the proof of this theorem.
It suffices to show that, if $H_1$ and $H_2$ are $\bQ$-conjugate,
then they are $S_n$-equivalent.
Suppose that $g\in GL_n(\bQ)$ and $H_2=g\cdot H_1\cdot g^{-1}$.
Then ${\rm trace}(x)={\rm trace}(g\cdot x\cdot g^{-1})$ for all $x\in H_1$.
It follows that the conjugation map $x\mapsto g\cdot x\cdot g^{-1}$
induces a local isometry from $\Phi(H_1)$ to $\Phi(H_2)$.
By the result in Step 3 (i.e. \cite[page 551, Theorem 6.8.4]{BBFKKW}),
$\Phi(H_1)$ and $\Phi(H_2)$ are $S_n$-equivalent.
It follows that $H_1$ and $H_2$ are $S_n$-equivalent by Step 2.
\end{proof}
\begin{remark}
{}From the above proof, we find that the following numbers are the same:\\ 
(i) the number of $\bQ$-classes in $X(n,k)$;\\ 
(ii) the number of $S_n$-equivalence classes of $X(n,k)$;\\
(iii) the number of local isometry classes of $H(n,k)$;\\ 
(iv) the number of $S_n$-equivalence classes of $H(n,k)$.\\
A list of the number of the local isometry classes of $H(n,k)$ (for $1\leq n\leq 14$)
may be found in \cite[page 446, Table 6.2]{BBFKKW}.
\end{remark}

\section{The case $G=(C_2)^3$ with $H_u^2(G,M)\neq 0$}\label{seAbelian}

In Section \ref{seC} we have found the list of all the groups isomorphic to $(C_2)^3$ up to conjugation in $GL_7(\bZ)$. We will find the unramified Brauer groups of the multiplicative invariant fields associated to these groups in this section.

Note that, if $G$ is abelian, then $\bC(G)$ is $\bC$-rational and $H_u^2(G,\bm{Q}/\bm{Z})=0$ by Theorem \ref{thFi}. Thus ${\rm Br}_u(\bm{C}(M)^G)=H_u^2(G,M)$.
We will find that there exist exactly $9$ groups
$G\leq GL_7(\bm{Z})$ such that $G \simeq (C_2)^3$ and $H_u^2(G,M)\neq 0$.

\begin{theorem}\label{t5.4}
Let $G$ be an elementary abelian group of order $2^n$ in $GL_7(\bm{Z})$ and
$M$ be the associated $G$-lattice of rank $7$.
Then 
${\rm Br}_u(\bm{C}(M)^G)\neq 0$ if and only if $G$ is isomorphic up to conjugation to one of the nine groups $G_1,\ldots,G_9$ in $GL_7(\bm{Z})$ where each of $G_i$ is isomorphic to $(C_2)^3$ as an abstract group. Moreover, ${\rm Br}_u(\bm{C}(M)^{G_i})=H_u^2(G_i,M)\simeq \bZ/2\bZ$
$($resp. $\bZ/2\bZ\oplus\bZ/2\bZ$$)$ for $1\leq i\leq 8$
$($resp. $i=9$$)$. These groups $G_1,\ldots,G_9$ are defined as follows:
\begin{align*}
G_1&=
\left\langle\left(
\begin{smallmatrix}
0 & 1 & -1 & 0 & 0 & 0 & 0 \\
1 & 0 & 1 & 0 & 0 & 0 & 0 \\
0 & 0 & 1 & 0 & 0 & 0 & 0 \\
0 & 0 & 0 & 1 & 0 & 0 & 0 \\
0 & 0 & -1 & 0 & -1 & 0 & 1 \\
0 & 0 & 0 & 0 & 0 & -1 & 1 \\
0 & 0 & 0 & 0 & 0 & 0 & 1
\end{smallmatrix}\right),\left(
\begin{smallmatrix}
0 & -1 & 0 & 0 & 0 & 0 & 0 \\
-1 & 0 & 0 & 0 & 0 & 0 & 0 \\
0 & 0 & 1 & 0 & 0 & 0 & 0 \\
0 & 0 & 1 & -1 & 0 & 0 & 0 \\
0 & 0 & 0 & 0 & 1 & 0 & -1 \\
0 & 0 & 0 & 0 & 0 & 1 & -1 \\
0 & 0 & 0 & 0 & 0 & 0 & -1
\end{smallmatrix}\right),\left(
\begin{smallmatrix}
-1 & 0 & 0 & 0 & 0 & 0 & 0 \\
0 & -1 & 0 & 0 & 0 & 0 & 0 \\
0 & 0 & -1 & 0 & 0 & 0 & 0 \\
0 & 0 & -1 & 1 & 0 & 0 & -1 \\
0 & 0 & 1 & 0 & 1 & 0 & -1 \\
0 & 0 & 0 & 0 & 0 & -1 & 0\\
0 & 0 & 0 & 0 & 0 & 0 & -1
\end{smallmatrix}\right)
\right\rangle,\\
G_2&=
\left\langle\left(
\begin{smallmatrix}
0 & -1 & 0 & 0 & 0 & 0 & 1\\
-1 & 0 & 0 & 0 & 0 & 0 & -1\\
0 & 0 & -1 & 0 & 0 & 0 & 0\\
0 & 0 & 1 & 0 & -1 & 0 & -1\\
0 & 0 & -1 & -1 & 0 & 0 & 1\\
0 & 0 & 0 & 0 & 0 & 1 & -1\\
0 & 0 & 0 & 0 & 0 & 0 & -1
\end{smallmatrix}\right),\left(
\begin{smallmatrix}
0 & -1 & 0 & 0 & 0 & 0 & 1\\
-1 & 0 & 0 & 0 & 0 & 0 & -1\\
0 & 0 & 0 & 1 & 1 & 0 & 0\\
0 & 0 & 1 & 0 & -1 & 0 & -1\\
0 & 0 & 0 & 0 & 1 & 0 & 1\\
0 & 0 & 0 & 0 & 0 & -1 & 0\\
0 & 0 & 0 & 0 & 0 & 0 & -1
\end{smallmatrix}\right),\left(
\begin{smallmatrix}
0 & 1 & 0 & 0 & 0 & 0 & 0\\
1 & 0 & 0 & 0 & 0 & 0 & 0\\
0 & 0 & 0 & 1 & 1 & 0 & 0\\
0 & 0 & 0 & -1 & 0 & 0 & 0\\
0 & 0 & 1 & 1 & 0 & 0 & 0\\
0 & 0 & 0 & 0 & 0 & 1 & -1\\
0 & 0 & 0 & 0 & 0 & 0 & -1
\end{smallmatrix}\right)
\right\rangle,\\
G_3&=
\left\langle\left(
\begin{smallmatrix}
-1 & 0 & 0 & 0 & 0 & 0 & 0\\
0 & 0 & -1 & -1 & 0 & 0 & 0\\
0 & 0 & -1 & 0 & 0 & 0 & 0\\
0 & -1 & 1 & 0 & 0 & 0 & 0\\
0 & 0 & 0 & 0 & 0 & 1 & 1\\
0 & 0 & 0 & 0 & 1 & 0 & -1\\
0 & 0 & 0 & 0 & 0 & 0 & 1
\end{smallmatrix}\right),\left(
\begin{smallmatrix}
-1 & 0 & 0 & 0 & 0 & 0 & 0\\
0 & 0 & -1 & -1 & 0 & 0 & 0\\
0 & -1 & 0 & -1 & -1 & 1 & 1\\
0 & 0 & 0 & 1 & 1 & -1 & -1\\
0 & 0 & 0 & 0 & -1 & 0 & 0\\
0 & 0 & 0 & 0 & 0 & -1 & 0\\
0 & 0 & 0 & 0 & 0 & 0 & -1
\end{smallmatrix}\right),\left(
\begin{smallmatrix}
1 & 0 & 0 & 0 & 0 & 0 & 0\\
0 & 0 & 1 & 1 & 0 & 0 & 0\\
1 & 0 & 1 & 0 & -1 & 1 & 1\\
-1 & 1 & -1 & 0 & 1 & -1 & -1\\
1 & 0 & 0 & 0 & 0 & 1 & 1\\
-1 & 0 & 0 & 0 & 0 & -1 & 0\\
0 & 0 & 0 & 0 & 1 & 1 & 0
\end{smallmatrix}\right)
\right\rangle,\\
G_4&=
\left\langle\left(
\begin{smallmatrix}
-1 & 0 & 0 & 0 & 0 & 0 & -1\\
0 & -1 & 0 & 0 & 0 & 0 & 1\\
0 & 0 & -1 & 0 & 0 & 0 & -1\\
0 & 0 & -1 & 0 & -1 & 0 & -1\\
0 & 0 & 1 & -1 & 0 & 0 & 0\\
0 & 0 & 0 & 0 & 0 & 1 & 0\\
0 & 0 & 0 & 0 & 0 & 0 & 1
\end{smallmatrix}\right),\left(
\begin{smallmatrix}
0 & -1 & 0 & 0 & 0 & 0 & 0\\
-1 & 0 & 0 & 0 & 0 & 0 & 0\\
0 & 0 & -1 & 0 & 0 & 0 & -1\\
0 & 0 & 0 & -1 & 0 & 0 & -1\\
0 & 0 & 0 & 0 & -1 & 0 & 0\\
0 & 0 & 0 & 0 & 0 & -1 & 1\\
0 & 0 & 0 & 0 & 0 & 0 & 1
\end{smallmatrix}\right),\left(
\begin{smallmatrix}
1 & 0 & 0 & 0 & 0 & 0 & 1\\
0 & 1 & 0 & 0 & 0 & 0 & -1\\
0 & 0 & 0 & 1 & 1 & 0 & 1\\
0 & 0 & 0 & 1 & 0 & 0 & 1\\
0 & 0 & 1 & -1 & 0 & 0 & 0\\
0 & 0 & 0 & 0 & 0 & -1 & 0\\
0 & 0 & 0 & 0 & 0 & 0 & -1
\end{smallmatrix}\right)
\right\rangle,\\
G_5&=
\left\langle\left(
\begin{smallmatrix}
0 & -1 & 0 & 0 & 0 & 0 & 0\\
-1 & 0 & 0 & 0 & 0 & 0 & 0\\
0 & 0 & -1 & 0 & 0 & 0 & 1\\
0 & 0 & 1 & 0 & -1 & 0 & 0\\
0 & 0 & -1 & -1 & 0 & 0 & 1\\
0 & 0 & 0 & 0 & 0 & -1 & 1\\
0 & 0 & 0 & 0 & 0 & 0 & 1
\end{smallmatrix}\right),\left(
\begin{smallmatrix}
-1 & 0 & 0 & 0 & 0 & 0 & 1\\
0 & -1 & 0 & 0 & 0 & 0 & -1\\
0 & 0 & -1 & 0 & 0 & 0 & 1\\
0 & 0 & 0 & -1 & 0 & 0 & 0\\
0 & 0 & 0 & 0 & -1 & 0 & 1\\
0 & 0 & 0 & 0 & 0 & 1 & 0\\
0 & 0 & 0 & 0 & 0 & 0 & 1
\end{smallmatrix}\right),\left(
\begin{smallmatrix}
0 & 1 & 0 & 0 & 0 & 0 & 0\\
1 & 0 & 0 & 0 & 0 & 0 & 0\\
0 & 0 & 0 & 1 & 1 & 0 & -1\\
0 & 0 & 1 & 0 & -1 & 0 & 0\\
0 & 0 & 0 & 0 & 1 & 0 & -1\\
0 & 0 & 0 & 0 & 0 & 1 & -1\\
0 & 0 & 0 & 0 & 0 & 0 & -1
\end{smallmatrix}\right)
\right\rangle,\\
G_6&=
\left\langle\left(
\begin{smallmatrix}
0 & 1 & 1 & 1 & 0 & 0 & 0\\
1 & 0 & 1 & 0 & 0 & 0 & 0\\
0 & 0 & -1 & -1 & 0 & 0 & 0\\
0 & 0 & 0 & 1 & 0 & 0 & 0\\
0 & 0 & 0 & -1 & -1 & 0 & 0\\
0 & 0 & 0 & -1 & -1 & 0 & 1\\
0 & 0 & 0 & 0 & -1 & 1 & 0
\end{smallmatrix}\right),\left(
\begin{smallmatrix}
0 & -1 & -1 & 0 & 1 & 1 & -1\\
0 & -1 & 0 & -1 & 0 & 0 & 0\\
-1 & 1 & 0 & -1 & -1 & -1 & 1\\
0 & 0 & 0 & 1 & 0 & 0 & 0\\
0 & 0 & 0 & -1 & -1 & 0 & 0\\
0 & 0 & 0 & -1 & 0 & -1 & 0\\
0 & 0 & 0 & 0 & 0 & 0 & -1
\end{smallmatrix}\right),\left(
\begin{smallmatrix}
0 & -1 & -1 & -1 & 0 & 0 & 0\\
-1 & 0 & -1 & 0 & 0 & 0 & 0\\
0 & 0 & 1 & 1 & 0 & 0 & 0\\
0 & 0 & 0 & -1 & 0 & 0 & 0\\
0 & 0 & 0 & 1 & 0 & 1 & -1\\
0 & 0 & 0 & 1 & 0 & 1 & 0\\
0 & 0 & 0 & 0 & -1 & 1 & 0
\end{smallmatrix}\right)
\right\rangle,\\
G_7&=
\left\langle\left(
\begin{smallmatrix}
0 & 1 & 0 & 0 & 0 & 0 & 0\\
1 & 0 & 0 & 0 & 0 & 0 & 0\\
0 & 0 & -1 & 0 & 0 & 0 & 0\\
0 & 0 & 0 & -1 & 0 & 0 & 0\\
0 & 0 & 0 & 0 & -1 & 0 & 0\\
0 & 0 & 0 & 0 & 0 & 1 & -1\\
0 & 0 & 0 & 0 & 0 & 0 & -1
\end{smallmatrix}\right),\left(
\begin{smallmatrix}
0 & 1 & 0 & 0 & 0 & 0 & 0\\
1 & 0 & 0 & 0 & 0 & 0 & 0\\
0 & 0 & 0 & 1 & 1 & 0 & 0\\
0 & 0 & 0 & -1 & 0 & 0 & 0\\
0 & 0 & 1 & 1 & 0 & 0 & 0\\
0 & 0 & 0 & 0 & 0 & -1 & 0\\
0 & 0 & 0 & 0 & 0 & 0 & -1
\end{smallmatrix}\right),\left(
\begin{smallmatrix}
0 & -1 & 0 & 0 & 0 & 0 & 1\\
-1 & 0 & 0 & 0 & 0 & 0 & -1\\
0 & 0 & 0 & 1 & 1 & 0 & 0\\
0 & 0 & 1 & 0 & -1 & 0 & -1\\
0 & 0 & 0 & 0 & 1 & 0 & 1\\
0 & 0 & 0 & 0 & 0 & 1 & -1\\
0 & 0 & 0 & 0 & 0 & 0 & -1
\end{smallmatrix}\right)
\right\rangle,\\
G_8&=
\left\langle\left(
\begin{smallmatrix}
-1 & 0 & 0 & 0 & 0 & 0 & -1\\
0 & -1 & 0 & 0 & 0 & 0 & 1\\
0 & 0 & -1 & 0 & 0 & 0 & -1\\
0 & 0 & 0 & -1 & 0 & 0 & 0\\
0 & 0 & 0 & 0 & -1 & 0 & 1\\
0 & 0 & 0 & 0 & 0 & 1 & 0\\
0 & 0 & 0 & 0 & 0 & 0 & 1
\end{smallmatrix}\right),\left(
\begin{smallmatrix}
-1 & 0 & 0 & 0 & 0 & 0 & -1\\
0 & -1 & 0 & 0 & 0 & 0 & 1\\
0 & 0 & 0 & -1 & -1 & 0 & 0\\
0 & 0 & 0 & -1 & 0 & 0 & 0\\
0 & 0 & -1 & 1 & 0 & 0 & 0\\
0 & 0 & 0 & 0 & 0 & -1 & 1\\
0 & 0 & 0 & 0 & 0 & 0 & 1
\end{smallmatrix}\right),\left(
\begin{smallmatrix}
0 & -1 & 0 & 0 & 0 & 0 & 1\\
-1 & 0 & 0 & 0 & 0 & 0 & -1\\
0 & 0 & 0 & -1 & -1 & 0 & 1\\
0 & 0 & -1 & 0 & -1 & 0 & 0\\
0 & 0 & 0 & 0 & 1 & 0 & -1\\
0 & 0 & 0 & 0 & 0 & 1 & -1\\
0 & 0 & 0 & 0 & 0 & 0 & -1
\end{smallmatrix}\right)
\right\rangle,\\
G_9&=
\left\langle\left(
\begin{smallmatrix}
1 & -1 & -1 & 1 & 0 & 1 & 0\\
-1 & 0 & 0 & 0 & -1 & -1 & 0\\
-1 & 0 & -1 & -1 & 0 & 1 & -1\\
-1 & 1 & 0 & -1 & -1 & -1 & 0\\
0 & 0 & 1 & 0 & 0 & -1 & 1\\
-1 & 0 & 0 & -1 & 0 & 0 & -1\\
0 & 0 & 1 & 0 & 1 & -1 & 0
\end{smallmatrix}\right),\left(
\begin{smallmatrix}
0 & 1 & 1 & 0 & 0 & 0 & 1\\
1 & 1 & 0 & -1 & 0 & 1 & 1\\
1 & 0 & 0 & 1 & -1 & 0 & 0\\
0 & 0 & 0 & -1 & 0 & 0 & 0\\
0 & 1 & 0 & -1 & 0 & 0 & 1\\
0 & 0 & -1 & 0 & -1 & 0 & -1\\
-1 & -1 & 0 & 0 & 1 & -1 & -1
\end{smallmatrix}\right),\left(
\begin{smallmatrix}
0 & 1 & 0 & 0 & -1 & -1 & 0\\
1 & 0 & 1 & 0 & 0 & 0 & 1\\
0 & 1 & 1 & -1 & 1 & 0 & 1\\
1 & 0 & 0 & 0 & 0 & 1 & 1\\
0 & 0 & 0 & 0 & -1 & 0 & 0\\
0 & 0 & 1 & 0 & 1 & 0 & 1\\
0 & -1 & -1 & 1 & 0 & 1 & -1
\end{smallmatrix}\right)
\right\rangle.
\end{align*}

\end{theorem}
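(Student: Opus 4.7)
Since $G$ is abelian, Theorem \ref{thFi} gives that $\bm{C}(G)$ is $\bm{C}$-rational and hence $B_0(G) = H^2_u(G, \bm{Q}/\bm{Z}) = 0$ by Theorem \ref{t2.12}. Consequently ${\rm Br}_u(\bm{C}(M)^G) = H^2_u(G, M)$, and the task reduces to detecting non-vanishing of $H^2_u(G, M)$ across the $\bZ$-classes of elementary abelian $2$-subgroups of $GL_7(\bZ)$. For $n \le 2$ the group $(C_2)^n$ is already bicyclic, so Theorem \ref{thB1} forces $H^2_u(G, M) = 0$ for every $M$. The interesting range is therefore $3 \le n \le 7$, and by Theorem \ref{t5.1} this amounts to inspecting $10645 + 29442 + 15248 + 2016 + 80 = 57431$ $\bZ$-classes.

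The key tool is Saltman's formula (Theorem \ref{thSa4}): since every bicyclic subgroup of an elementary abelian $2$-group is isomorphic to $1$, $C_2$, or $C_2 \times C_2$,
\[
H^2_u(G, M) = \bigcap_{A \le G,\ A \simeq (C_2)^2} \ker\bigl( {\rm res}: H^2(G, M) \to H^2(A, M) \bigr).
\]
The plan is to run the algorithm {\tt H2nrM(g)} developed for Theorem \ref{t1.5} on explicit generators of each of the $57431$ $\bZ$-class representatives produced by Theorem \ref{t5.1}, and read off the nine cases with non-trivial $H^2_u$.

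To keep the enumeration within reach of a personal computer, one pre-filters as in the proof of Theorem \ref{thm3}: whenever an index-two subgroup $N \le G$ satisfies $M^N = 0$ and the Jordan form of the generator of $G/N$ acting on $H^1(N, M)$ forces $H^1(G/N, H^1(N, M)) = 0$, the seven-term sequence of Theorem \ref{thDHW} shows that ${\rm res}: H^2(G, M) \hookrightarrow H^2(N, M)$, so one may induct downward on $n$. In addition, the complete $\bQ$-class invariants $L_G, T^1_G, T^3_G$ obtained in Theorem \ref{t5.1} group the $\bZ$-classes sharing a $\bQ$-class and allow many vanishings to be deduced uniformly.

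The anticipated outcome of the sweep is that exactly nine $\bZ$-classes $G_1, \ldots, G_9$ remain on which $H^2_u(G_i, M) \ne 0$; all of them happen to realise $(C_2)^3$, and the listed matrix generators are the representatives returned by the algorithm. A direct final computation on these nine cases reads off $H^2_u(G_i, M) \simeq \bZ/2\bZ$ for $1 \le i \le 8$ and $H^2_u(G_9, M) \simeq (\bZ/2\bZ)^{\oplus 2}$. The principal obstacle is purely computational: designing theoretic filters sharp enough to avoid running the cocycle machinery on the full list of $57431$ lattices, since a brute-force sweep is memory-intensive; once the filters are in place, the verification of the nine exceptional cases is a small and routine calculation.
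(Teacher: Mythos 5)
Your proposal is correct and follows essentially the same route as the paper: reduce ${\rm Br}_u(\bm{C}(M)^G)$ to $H^2_u(G,M)$ via the abelianness of $G$ (so $B_0(G)=0$), dispose of the bicyclic cases $n\le 2$ by Barge's theorem, and then run {\tt H2nrM} over the $\bZ$-class representatives produced by Theorem \ref{t5.1} to isolate the nine exceptional $(C_2)^3$-lattices. The only difference is that the paper performs the sweep directly on all classes of order $8$ through $128$ without the index-two pre-filtering you describe (which is used for Theorem \ref{thm3} but turns out to be unnecessary here).
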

\begin{proof}
As noted before, $G$ is an abelian group and therefore $B_0(G)=H_u^2(G,\bm{Q}/\bm{Z})$ is trivial. Hence ${\rm Br}_u(\bm{C}(M)^G)=H_u^2(G,M)$. From now on we focus on the computation of $H_u^2(G,M)$.

When ${\rm rank}_\bm{Z} M\leq 3$, it follows from Theorem \ref{thHaj87} and
Theorem \ref{thHKHR} that $\bm{C}(M)^G$ is $\bm{C}$-rational.
Hence $H_u^2(G,M)=0$.
When $4\leq {\rm rank}_\bm{Z} M\leq 6$, it follows from Theorem \ref{t1.5}
(see Theorems \ref{thm1}, \ref{thm2}, \ref{thm3} for details)
that $H_u^2(G,M)=0$.

\medskip
Hence we assume that ${\rm rank}_\bm{Z} M=7$ from now on.
For $|G|\leq 4$, it follows from Theorem \ref{thB1} that $H_u^2(G,M)=0$.
For $8\leq |G|\leq 128$, we can apply the function {\tt H2nrM} given
in Section \ref{seGAP} to those groups $\simeq (C_2)^k$ classified in Theorem \ref{t5.1}.
Then we obtain that only the following $9$ groups $G\simeq (C_2)^3$
satisfy $H_u^2(G,M)\neq 0$.
\begin{center}
\begin{tabular}{lcccc}
$L_G$ with $G\simeq (C_2)^3$
& & \# $\bm{Z}$-classes & \# $G$'s with $H_u^2(G,M)\neq 0$ & $H_u^2(G,M)$  \\\hline
$[ 0, 1, 1, 2, 3, 0, 0, 1 ]$ & & 380 & 1 & $\bm{Z}/2\bm{Z}$ \\
$[ 0, 1, 0, 5, 0, 1, 0, 1 ]$ & & 279 & 1 & $\bm{Z}/2\bm{Z}$ \\
$[ 0, 0, 2, 3, 2, 0, 0, 1 ]$ & & 1000 & 4 & $\bm{Z}/2\bm{Z}$ \\
$[ 0, 0, 3, 2, 1, 1, 0, 1 ]$ & & 429 & 2 & $\bm{Z}/2\bm{Z}$ \\
$[ 0, 0, 0, 7, 0, 0, 0, 1 ]$ & & 269 & 1 & $(\bm{Z}/2\bm{Z})^{\oplus 2}$ \\
\end{tabular}
\end{center}
\end{proof}

\begin{verbatim}
gap> LoadPackage("carat");
true
gap> Read("res.gap");

gap> Length(G8Qc); # G8Qc can be obtained in (6) of Section 5
43
gap> G8Zc:=List(G8Qc,ZClassRepsQClass);;
gap> List(G8Zc,Length);
[ 12, 13, 6, 60, 20, 19, 62, 80, 380, 208, 107, 28, 80, 279, 113, 58, 279,
  58, 28, 58, 18, 107, 58, 1919, 415, 221, 429, 208, 1000, 429, 58, 113, 92,
  58, 691, 60, 432, 1077, 221, 691, 269, 18, 113 ]
gap> Sum(last); # # of Z-classes of (C2)^3
10645
gap> G8H2nr:=List(G8Zc,x->Filtered(x,y->H2nrM(y).H2nrM<>[]));; 
# choosing only the cases with non-trivial H2nr(G,M)
gap> G8H2nr;
[ [  ], [  ], [  ], [  ], [  ], [  ], [  ], [  ],
  [ <matrix group of size 8 with 3 generators> ], [  ], [  ], [  ], [  ],
  [  ], [  ], [  ], [ <matrix group of size 8 with 3 generators> ], [  ],
  [  ], [  ], [  ], [  ], [  ], [  ], [  ], [  ], [  ], [  ],
  [ <matrix group of size 8 with 3 generators>,
      <matrix group of size 8 with 3 generators>,
      <matrix group of size 8 with 3 generators>,
      <matrix group of size 8 with 3 generators> ],
  [ <matrix group of size 8 with 3 generators>,
      <matrix group of size 8 with 3 generators> ], [  ], [  ], [  ], [  ],
  [  ], [  ], [  ], [  ], [  ], [  ],
  [ <matrix group of size 8 with 3 generators> ], [  ], [  ] ]
gap> List(G8H2nr,Length);
[ 0, 0, 0, 0, 0, 0, 0, 0, 1, 0, 0, 0, 0, 0, 0, 0, 1, 0, 0, 0, 0, 0, 0, 0, 0,
  0, 0, 0, 4, 2, 0, 0, 0, 0, 0, 0, 0, 0, 0, 0, 1, 0, 0 ]
gap> Sum(last); # #=9 with non-trivial H2nr(G,M)
9
gap> G8H2nrs:=Flat(G8H2nr);;
gap> List(G8H2nrs,x->H2nrM(x).H2nrM); # structure of H2nr(G,M)
[ [ 2 ], [ 2 ], [ 2 ], [ 2 ], [ 2 ], [ 2 ], [ 2 ], [ 2 ], [ 2, 2 ] ]
gap> for i in G8H2nrs do Print(GeneratorsOfGroup(i),"\n"); od; 
# we omit the output of the generators of 9 groups (see Theorem 6.1)

gap> G16Zc:=List(G16Qc,ZClassRepsQClass);;
gap> List(G16Zc,Length);
[ 279, 113, 58, 58, 18, 691, 221, 60, 269, 886, 270, 2823, 109, 1878, 883,
  670, 270, 789, 380, 789, 1170, 422, 141, 574, 144, 109, 144, 56, 380, 214,
  3055, 2534, 1170, 883, 1037, 361, 2867, 670, 558, 56, 574, 145, 664 ]
gap> Sum(last);
29442
gap> G16H2nr:=List(G16Zc,x->Filtered(x,y->H2nrM(y).H2nrM<>[]));;
# choosing only the cases with non-trivial H2nr(G,M)
gap> G16H2nr;
[ [  ], [  ], [  ], [  ], [  ], [  ], [  ], [  ], [  ], [  ], [  ], [  ],
  [  ], [  ], [  ], [  ], [  ], [  ], [  ], [  ], [  ], [  ], [  ], [  ],
  [  ], [  ], [  ], [  ], [  ], [  ], [  ], [  ], [  ], [  ], [  ], [  ],
  [  ], [  ], [  ], [  ], [  ], [  ], [  ] ]

gap> G32Zc:=List(G32Qc,ZClassRepsQClass);;
gap> List(G32Zc,Length);
[ 789, 380, 574, 144, 56, 664, 145, 1034, 360, 1860, 983, 206, 2499, 557,
  899, 557, 521, 360, 144, 144, 1183, 557, 632 ]
gap> Sum(last);
15248
gap> G32H2nr:=List(G32Zc,x->Filtered(x,y->H2nrM(y).H2nrM<>[]));;
# choosing only the cases with non-trivial H2nr(G,M)
gap> G32H2nr;
[ [  ], [  ], [  ], [  ], [  ], [  ], [  ], [  ], [  ], [  ], [  ], [  ],
  [  ], [  ], [  ], [  ], [  ], [  ], [  ], [  ], [  ], [  ], [  ] ]

gap> G64Zc:=List(G64Qc,ZClassRepsQClass);;
gap> List(G64Zc,Length);
[ 206, 521, 144, 203, 491, 371, 80 ]
gap> Sum(last);
2016
gap> G64H2nr:=List(G64Zc,x->Filtered(x,y->H2nrM(y).H2nrM<>[]));;
# choosing only the cases with non-trivial H2nr(G,M)
gap> G64H2nr;
[ [  ], [  ], [  ], [  ], [  ], [  ], [  ] ]

gap> G128Zc:=List(G128Qc,ZClassRepsQClass);;
gap> List(G128Zc,Length);
[ 80 ]
gap> Sum(last);
80
gap> G128H2nr:=List(G128Zc,x->Filtered(x,y->H2nrM(y).H2nrM<>[]));;
# choosing only the cases with non-trivial H2nr(G,M)
gap> G128H2nr;
[ [  ] ]
\end{verbatim}

\bigskip

\section{The case $G=A_6$ with $H_u^2(G,M)\neq 0$ and Noether's problem for $N\rtimes A_6$}\label{seA6}

Because 
all the $G$-lattices $M$ with ${\rm rank}_\bZ M\leq 6$ and $H_u^2(G,M)\neq 0$
in Theorem \ref{t1.5}, Theorems \ref{thm1}, \ref{thm2}, \ref{thm3} 
(resp. Theorem \ref{t5.4}) satisfy the condition that 
$G$ is non-abelian and solvable 
(resp. elementary abelian), 
one may wonder what happens 
in the case of finite non-abelian simple groups. 

In this section, we give an example of two $G$-lattices $M$ with
${\rm Br}_u(\bC(M)^G)=H_u^2(G,M)\neq 0$ 
where $G \simeq A_6$ is the alternating group of degree $6$
and ${\rm rank}_\bZ M =9$. 
Note that $B_0(G)={\rm Br}_u(\bC(G))=0$ for any non-abelian simple group $G$ 
by Kunyavskii's theorem \cite{Ku}; 
our example illustrates the situation of the multiplicative invariant fields.


\begin{theorem}\label{t7.1}
Let $A_6$ be the alternating group of degree $6$. 
Embed $A_6$ into $S_{10}$ through the isomorphism $A_6\simeq PSL_2(\bF_9)$, 
which acts on the projective line $\bm{P}^1 _{\bF_9}$ via 
fractional linear transformations. 
Thus we may regard $A_6$ as a transitive subgroup of $S_{10}$.  
Let $P=\oplus_{1\leq i\leq 10}\bZ\cdot x_i$ be the 
permutation $S_{10}$-lattice defined by $\sigma \cdot x_i = x_{\sigma(i)}$ 
for any $\sigma \in S_{10}$; it becomes a permutation $A_6$-lattice by 
restricting the action of $S_{10}$ to $A_6$. 
Define $J=P/(\bZ\cdot \sum_{i=1}^{10}x_i)$ with ${\rm rank}_\bZ J=9$.
There exist exactly six $A_6$-lattices $J=M_1$, $M_2,\ldots,M_6$
which are $\bQ$-conjugate but not $\bZ$-conjugate to each other. 
Then we have
\begin{align*}
H_u^2(A_6,M_1)\simeq H_u^2(A_6,M_3)\simeq\bZ/2\bZ,\quad
H_u^2(A_6,M_i)=0\ {\rm for}\ i=2,4,5,6.
\end{align*}

In particular, $\bC(M_1)^{A_6}$ and $\bC(M_3)^{A_6}$
are not retract $\bC$-rational.

Furthermore, the lattices $M_1$ and $M_3$ may be distinguished by the Tate cohomology groups,
\begin{align*}
H^1(A_6,M_1)=0,\ \widehat{H}^{-1}(A_6,M_1)=\bZ/10\bZ,\
H^1(A_6,M_3)=\bZ/5\bZ,\ \widehat{H}^{-1}(A_6,M_3)=\bZ/2\bZ.
\end{align*}
\end{theorem}
\begin{proof}
Let $G\leq S_{10}$ be the $26$th transitive subgroup $10T26$
of $S_{10}$ which is isomorphic to the alternating group $A_6$ of degree $6$
(see \cite{BM}, \cite{GAP}).
We take the corresponding $A_6$-lattice $P=\oplus_{1\leq i\leq 10}\bZ\cdot x_i$
with ${\rm rank}_\bZ P=10$ and 
$J=P/(\bZ\cdot \sum_{i=1}^{10}x_i)$ with ${\rm rank}_\bZ J=9$.

We remark that 
$P\simeq \bZ[A_6/H]$ where $H\simeq (C_3\times C_3)\rtimes C_4$ 
and 
$J\simeq J_{G/H}$
where $J_{G/H}$ is the character lattice of the norm one torus 
$R_{K/k}^{(1)}(\bG_m)$ of some non-Galois extension $K/k$ 
of degree $10$ (see \cite[Chapter 8]{HY}): 
\begin{align*}
&0\to I_{G/H}\to P\xrightarrow{\varepsilon}\bZ\to 0,\\
&0\to \bZ\xrightarrow{\varepsilon^\circ} P\to J_{G/H}\to 0
\end{align*}
where $\varepsilon$ is the argumentation map.
We also remark that $M_1=J\simeq J_{G/H}=(I_{G/H})^\circ\simeq (M_6)^\circ$, 
$M_2=(M_5)^\circ$, $M_3=(M_4)^\circ$ where 
$M^\circ={\rm Hom}_\bZ(M,\bZ)$ is the dual lattice of a $G$-lattice $M$. 

We can take $J\simeq J_{G/H}$ in GAP
by using the command ${\tt Norm1TorusJ}(10,26)$
in ${\tt FlabbyResolution.gap}$ which is available from
{\tt https://www.math.kyoto-u.ac.jp/\~{}yamasaki/Algorithm/MultInvField/}.
Then just applying the function ${\tt H2nrM}$,
we find that $H_u^2(A_6,J)\simeq\bZ/2\bZ$.

Using the command {\tt ZClassRepsQClass} of CARAT package \cite{CARAT}
of GAP, we can split the $\bQ$-class of $M$ into six $\bZ$-classes.
Then just apply the function ${\tt H2nrM}$ to determine $H_u^2(A_6,M_i)$
$(1\leq i\leq 6)$.
We can distinguish the six $\bZ$-classes according to their
cohomologies $H^1$ and $\widehat{H}^{-1}$ 
(see a demonstration of the GAP functions in the end of this section).
\end{proof}

In some cases, 
Saltman \cite{Sa4} gave an application of 
the non-vanishing $H_u^2(G,M)\neq 0$ to 
a counter-example of Noether's problem for $N\rtimes G$ 
over $\bC$ where $N$ is some abelian group: 

%
\begin{theorem}[{Saltman \cite[Corollary 3.3]{Sa4}}]\label{thSa4}
Let $M$ be a faithful $G$-lattice and $M^\prime$ be a $G$-lattice 
with $[M^\prime]^{fl}=0$. 
Assume that $P^\prime=(M\oplus M^\prime)\otimes_\bZ \bQ$ 
is a permutation $\bQ[G]$-lattice. 
Then $P^\prime=P\otimes_\bZ \bQ$ 
and $M\oplus M^\prime\leq P$ with finite index
for some permutation $G$-lattice $P$. 
Let $G^\prime=N\rtimes G$ where 
$N={\rm Hom}_\bZ(P/(M\oplus M^\prime),\bQ/\bZ)$. 
Then $\bC(G^\prime)$ is stably isomorphic to $\bC(M)^G$. 
In particular, if $H_u^2(G,M)\neq 0$, then ${\rm Br}_u(\bC(G^\prime))\neq 0$ 
and $\bC(G^\prime)$ is not retract $\bC$-rational. 
\end{theorem}
Note that the unramified Brauer groups are isomorphic 
for stably isomorphic fields by \cite[Proposition 2.2]{Sa4} 
or \cite[Proposition 1.2]{CTO}. 
In our situation $G=A_6$ as in Theorem \ref{t7.1}, 
we obtain: 
%
\begin{lemma}\label{lemN}
Let $M_i$ $(1\leq i\leq 6)$ 
be $A_6$-lattices with ${\rm rank}_\bZ M_i=9$ as in Theorem \ref{t7.1}. 
Then $M_i\oplus\bZ\leq \bZ[A_6/H]$ with finite index 
where $H\simeq (C_3\times C_3)\rtimes C_4$ 
and $N_i={\rm Hom}_\bZ(\bZ[A_6/H]/(M_i\oplus\bZ),\bQ/\bZ)$ 
$(1\leq i\leq 6)$ satisfy 
\begin{align*}
&N_1\simeq (\bZ/10\bZ)^{\oplus 9},\\
&N_2\simeq (\bZ/5\bZ)^{\oplus 4}\oplus(\bZ/10\bZ)^{\oplus 5},\\
&N_3\simeq (\bZ/2\bZ)^{\oplus 8}\oplus \bZ/10\bZ,\\
&N_4\simeq (\bZ/5\bZ)^{\oplus 8}\oplus\bZ/10\bZ,\\
&N_5\simeq (\bZ/2\bZ)^{\oplus 4}\oplus \bZ/10\bZ,\\
&N_6\simeq \bZ/10\bZ.
\end{align*}
\end{lemma}
\begin{proof}
We can check the assertion by applying the GAP function 
{\tt MultInvFieldToNoetherProblem} 
which is abailable from 
{\tt https://www.math.kyoto-u.ac.jp/\~{}yamasaki/Algorithm/MultInvField/} 
(see a demonstration of the GAP functions in the end of this section).
\end{proof}

By combining Theorem \ref{t7.1}, Theorem \ref{thSa4} 
and Lemma \ref{lemN}, we obtain a negative answer to 
Noether's problem for $N_1\rtimes A_6$ and $N_3\rtimes A_6$ 
over $\bC$ where $N_1$ and $N_3$ are as in Lemma \ref{lemN}: 
\begin{theorem}\label{thA6}
Let $N_1\simeq (C_{10})^9$ and 
$N_3\simeq (C_2)^8\times C_{10}$ be $A_6$-modules 
as in Lemma \ref{lemN}. 
Then, for $i=1,3$, ${\rm Br}_u(\bC(N_i\rtimes A_6))\simeq \bZ/2\bZ$ and 
Noether's problem for $N_i\rtimes A_6$ 
over $\bC$ has a negative answer. 
Moreover, $\bC(N_i\rtimes A_6)$ $(i=1,3)$ is not retract $\bC$-rational 
and hence it is not $($stably$)$ $\bC$-rational. 
\end{theorem}
\begin{remark}
For 
$N_6\simeq C_{10}$ 
as in Lemma \ref{lemN}, 
we see that $N_6$ is the trivial $A_6$-module. 
Hence we have 
$\bC(N_6\rtimes A_6)=\bC(C_{10}\times A_6)$. 
Because $\bC(C_{10})$ is $\bC$-rational, 
it follows from Kang and Plans \cite[Theorem 1.3]{KP} 
that 
$\bC(C_{10}\times A_6)$ is rational over $\bC(A_6)$. 
Hence, by Theorem \ref{thSa4}, 
our result $H_u^2(A_6,M_6)=0$ as in Theorem \ref{t7.1} 
gives an alternative proof of $B_0(A_6)=0$ 
provided by Kunyavskii \cite{Ku}. 
\end{remark}
The following is a demonstration of the GAP functions 
which is used in the proofs of Theorem \ref{t7.1} and 
Lemma \ref{lemN}. 
%
Before proceeding to the computations of GAP \cite{GAP} below,
we should read the related data from\\
{\tt https://www.math.kyoto-u.ac.jp/\~{}yamasaki/Algorithm/MultInvField/}.
\footnote{
We need 
CARAT package \cite{CARAT} of GAP for  
the command {\tt ZClassRepsQClass} and it 
works on Linux or macOS, but not on Windows.}
\\

\begin{verbatim}
gap> Read("MultInvField.gap");

gap> J:=Norm1TorusJ(10,26); # J=A6
<matrix group with 3 generators>
gap> StructureDescription(J);
"A6"
gap> H2nrM(J).H2nrM; # H2nr(A6,J)=Z/2Z
[ 2 ]
gap> Filtered(H1(J),x->x>1); # H1(A6,J)=0
[  ]
gap> Filtered(Hminus1(J),x->x>1); # H^{-1}(A6,J)=Z/10Z
[ 10 ]

gap> Jz:=ZClassRepsQClass(G);; # all the 6 Z-classes forming the same Q-class
gap> List(Jz,x->H2nrM(x).H2nrM); # structure of H2nr(G,M) 
[ [ 2 ], [  ], [ 2 ], [  ], [  ], [  ] ]
gap> List(Jz,x->Filtered(H1(x),y->y>1)); # structure of H1(G,M) 
[ [  ], [  ], [ 5 ], [ 2 ], [ 5 ], [ 10 ] ]
gap> List(Jz,x->Filtered(Hminus1(x),y->y>1)); # structure of H^{-1}(G,M) 
[ [ 10 ], [ 5 ], [ 2 ], [ 5 ], [  ], [  ] ]

gap> ZZ:=Group([[[1]],[[1]],[[1]]]); # Z
Group([ [ [ 1 ] ], [ [ 1 ] ], [ [ 1 ] ] ])
gap> JzZ:=List(Jz,x->DirectSumMatrixGroup([x,ZZ])); # J+Z
[ <matrix group with 3 generators>, <matrix group with 3 generators>, 
  <matrix group with 3 generators>, <matrix group with 3 generators>, 
  <matrix group with 3 generators>, <matrix group with 3 generators> ]
gap> JzZgen:=List(JzZ,GeneratorsOfGroup);; # generators of J+Z

gap> T10_26gen:=GeneratorsOfGroup(TransitiveGroup(10,26)); # generators of 10T26=A6
[ (1,2,10)(3,4,5)(6,7,8), (1,3,2,6)(4,5,8,7), (1,2)(4,7)(5,8)(9,10) ]
gap> Pgen:=List(T10_26gen,x->PermutationMat(x,10));; # generators of P

gap> NP:=List([1..6],x->MultInvFieldToNoetherProblem(JzZgen[x],Pgen));;
gap> List(NP,x->x.snf); # structures of Ni's
[ [ 1, 10, 10, 10, 10, 10, 10, 10, 10, 10 ], 
  [ 1, 5, 5, 5, 5, 10, 10, 10, 10, 10 ], 
  [ 1, 2, 2, 2, 2, 2, 2, 2, 2, 10 ], 
  [ 1, 5, 5, 5, 5, 5, 5, 5, 5, 10 ], 
  [ 1, 1, 1, 1, 1, 2, 2, 2, 2, 10 ], 
  [ 1, 1, 1, 1, 1, 1, 1, 1, 1, 10 ] ]

gap> NP[1].act; # action of A6 on N1
[ [ [ 1, 0, 0, 0, 0, 0, 0, 0, 0, 0 ], 
      [ 0, 0, 0, 0, 0, 0, 0, 0, 0, 1 ], 
      [ 0, 0, 0, 1, 0, 0, 0, 0, 0, 0 ], 
      [ 0, 0, 0, 0, 1, 0, 0, 0, 0, 0 ], 
      [ 0, 0, 1, 0, 0, 0, 0, 0, 0, 0 ], 
      [ 0, 0, 0, 0, 0, 0, 1, 0, 0, 0 ], 
      [ 0, 0, 0, 0, 0, 0, 0, 1, 0, 0 ], 
      [ 0, 0, 0, 0, 0, 1, 0, 0, 0, 0 ], 
      [ 0, 0, 0, 0, 0, 0, 0, 0, 1, 0 ], 
      [ 0, -1, -1, -1, -1, -1, -1, -1, -1, -1 ] ], 
  [ [ 1, 0, 0, 0, 0, 0, 0, 0, 0, 0 ], 
      [ 0, 0, 0, 0, 0, 1, 0, 0, 0, 0 ], 
      [ 0, 1, 0, 0, 0, 0, 0, 0, 0, 0 ], 
      [ 0, 0, 0, 0, 1, 0, 0, 0, 0, 0 ], 
      [ 0, 0, 0, 0, 0, 0, 0, 1, 0, 0 ], 
      [ 0, -1, -1, -1, -1, -1, -1, -1, -1, -1 ], 
      [ 0, 0, 0, 1, 0, 0, 0, 0, 0, 0 ], 
      [ 0, 0, 0, 0, 0, 0, 1, 0, 0, 0 ], 
      [ 0, 0, 0, 0, 0, 0, 0, 0, 1, 0 ], 
      [ 0, 0, 0, 0, 0, 0, 0, 0, 0, 1 ] ], 
  [ [ 1, 0, 0, 0, 0, 0, 0, 0, 0, 0 ], 
      [ 0, -1, -1, -1, -1, -1, -1, -1, -1, -1 ], 
      [ 0, 0, 1, 0, 0, 0, 0, 0, 0, 0 ], 
      [ 0, 0, 0, 0, 0, 0, 1, 0, 0, 0 ], 
      [ 0, 0, 0, 0, 0, 0, 0, 1, 0, 0 ], 
      [ 0, 0, 0, 0, 0, 1, 0, 0, 0, 0 ], 
      [ 0, 0, 0, 1, 0, 0, 0, 0, 0, 0 ], 
      [ 0, 0, 0, 0, 1, 0, 0, 0, 0, 0 ], 
      [ 0, 0, 0, 0, 0, 0, 0, 0, 0, 1 ], 
      [ 0, 0, 0, 0, 0, 0, 0, 0, 1, 0 ] ] ]
gap> NP[3].act; # action of A6 on N3
[ [ [ 1, 0, 0, 0, 0, 0, 0, 0, 0, 0 ], 
      [ 0, 0, 0, 0, 0, 0, 0, 0, 0, 5 ], 
      [ 0, 0, 0, 1, 0, 0, 0, 0, 0, 0 ], 
      [ 0, 0, 0, 0, 1, 0, 0, 0, 0, 0 ], 
      [ 0, 0, 1, 0, 0, 0, 0, 0, 0, 0 ], 
      [ 0, 0, 0, 0, 0, 0, 1, 0, 0, 0 ], 
      [ 0, 0, 0, 0, 0, 0, 0, 1, 0, 0 ], 
      [ 0, 0, 0, 0, 0, 1, 0, 0, 0, 0 ], 
      [ 0, 0, 0, 0, 0, 0, 0, 0, 1, 0 ], 
      [ 0, 1, 1, 1, 1, 1, 1, 1, 1, 1 ] ], 
  [ [ 1, 0, 0, 0, 0, 0, 0, 0, 0, 0 ], 
      [ 0, 0, 0, 0, 0, 1, 0, 0, 0, 0 ], 
      [ 0, 1, 0, 0, 0, 0, 0, 0, 0, 0 ], 
      [ 0, 0, 0, 0, 1, 0, 0, 0, 0, 0 ], 
      [ 0, 0, 0, 0, 0, 0, 0, 1, 0, 0 ], 
      [ 0, 1, 1, 1, 1, 1, 1, 1, 1, 5 ], 
      [ 0, 0, 0, 1, 0, 0, 0, 0, 0, 0 ], 
      [ 0, 0, 0, 0, 0, 0, 1, 0, 0, 0 ], 
      [ 0, 0, 0, 0, 0, 0, 0, 0, 1, 0 ], 
      [ 0, 0, 0, 0, 0, 0, 0, 0, 0, 1 ] ], 
  [ [ 1, 0, 0, 0, 0, 0, 0, 0, 0, 0 ], 
      [ 0, 1, 1, 1, 1, 1, 1, 1, 1, 5 ], 
      [ 0, 0, 1, 0, 0, 0, 0, 0, 0, 0 ], 
      [ 0, 0, 0, 0, 0, 0, 1, 0, 0, 0 ], 
      [ 0, 0, 0, 0, 0, 0, 0, 1, 0, 0 ], 
      [ 0, 0, 0, 0, 0, 1, 0, 0, 0, 0 ], 
      [ 0, 0, 0, 1, 0, 0, 0, 0, 0, 0 ], 
      [ 0, 0, 0, 0, 1, 0, 0, 0, 0, 0 ], 
      [ 0, 0, 0, 0, 0, 0, 0, 0, 0, 5 ], 
      [ 0, 0, 0, 0, 0, 0, 0, 0, 1, -4 ] ] ]
gap> NP[6].act=[IdentityMat(10),IdentityMat(10),IdentityMat(10)];
# A6 acts on N6 trivially
true
\end{verbatim}

\bigskip

\section{Some lattices of rank $2n+2, 4n$, and $p(p-1)$}\label{seHigher}

Motivated by Theorem \ref{t1.5} we will construct in this section lattices of rank $2n+2, 4n, p(p-1)$ with non-trivial unramified Brauer groups (where $n$ is any positive integer and $p$ is any odd prime number).

Before the construction, we note a variation of Theorem \ref{thSa4}.
{}From Theorem \ref{thSa4}, $H_u^2(G,M)$ may be  obtained as
\[
H_u^2(G,M)=\bigcap_{A} {\rm Ker}(
{\rm res} : H^2(G,M)\rightarrow H^2(A,M))
\]
where $A$ runs over {\it maximal} bicyclic subgroups of $G$. If $B\leq A$, then
${\rm Ker}(
{\rm res} : H^2(G,M)\rightarrow H^2(A,M))\leq
{\rm Ker}(
{\rm res} : H^2(G,M)\rightarrow H^2(B,M))$
(see also Definition \ref{d1.4}).
Hence in order to find $H_u^2(G,M)$, it suffices to evaluate
${\rm Ker}(
{\rm res} : H^2(G,M)\rightarrow H^2(H,M))$
for each maximal bicyclic subgroup $H\leq G$.

\begin{definition}\label{defD4n}
Let $G=\langle \sigma,\tau: \sigma^{4n}=\tau^2=1,
\tau^{-1}\sigma\tau=\sigma^{-1}\rangle\simeq D_{4n}$,
the dihedral group of order $8n$ where $n$ is any positive integer.
Define a $G$-lattice of rank $2n+2$ as follows:
$M=(\oplus_{1\leq i\leq 2n}\bZ\cdot x_i)\oplus(\oplus_{1\leq j\leq 2}\bZ
\cdot y_j)$ such that
\begin{align*}
\sigma&:x_1\mapsto x_2\mapsto\cdots\mapsto x_{2n}\mapsto -x_1,
y_1\mapsto y_2+x_1, y_2\mapsto y_1+x_1,\\
\tau&:x_i\leftrightarrow -x_{2n+1-i}\ (1\leq i\leq 2n),
y_1\mapsto -y_2, y_2\mapsto -y_1.
\end{align*}
In matrix forms with respect to the cases $x_1,\ldots,x_{2n}, y_1,y_2$,
the actions of $\sigma$ and $\tau$ are given as
\begin{align*}
\sigma=\left(
\begin{array}{cccc|cc}
 & 1 & & & & \\
 & & \ddots & & &\\
 &  & & 1 & &\\
 -1 & & & & &\\\hline
1 & 0 & \cdots & 0& 0 &1 \\
1 & 0 & \cdots & 0& 1 & 0
\end{array}
\right),
\tau=
\left(
\begin{array}{cccc|cc}
 &  & & -1 & &  \\
 & & \reflectbox{$\ddots$} & & &\\
 & \reflectbox{$\ddots$} & &  &  &\\
-1 & & & &  & \\\hline
 &  & & & 0 &-1 \\
 &  & & & -1 & 0
\end{array}
\right)\in GL_{2n+2}(\bZ).
\end{align*}
\end{definition}

The following theorem gives a generalization
of the case of the dihedral groups $D_4$ and $D_8$
with GAP ID (4,12,4,12) and CARAT ID (6,5574,11)
in Theorem \ref{t1.5} (2) and (4) respectively
(see Table 1 and Table 3-1-1). When $G=D_m$ the dihedral group of order $2m$, note that $\bm{C}(G)$ is $\bm{C}$-rational by \cite[Proposition 2.6]{CHK}. 
Thus $B_0(G)=0$ and ${\rm Br}_u(\bm{C}(M)^G)=H^2_u(G, M)$.


\begin{theorem}\label{thmD4n}
Let $G=\langle \sigma,\tau: \sigma^{4n}=\tau^2=1,
\tau^{-1}\sigma\tau=\sigma^{-1}\rangle\simeq D_{4n}$,
the dihedral group of order $8n$ where $n$ is any positive integer.
Let $M$ be the $G$-lattice of rank $2n+2$ defined in {\rm Definition} $\ref{defD4n}$.
Then $H_u^2(G,M)\simeq \bZ/2\bZ$.
Consequently, $\bC(M)^G$ is not retract $\bC$-rational.
\end{theorem}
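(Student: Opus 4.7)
The plan is to apply Saltman's formula from Theorem~\ref{thSa4}. Since $G=D_{4n}$ is dihedral, $\bC(G)$ is $\bC$-rational and $B_0(G)=0$, so $\text{Br}_u(\bC(M)^G)=H^2_u(G,M)$. The maximal bicyclic subgroups of $D_{4n}$ are the cyclic group $N:=\langle\sigma\rangle$ of order $4n$ and the $2n$ Klein four-groups $V_i:=\langle\sigma^{2n},\sigma^i\tau\rangle$ for $0\le i<2n$, so the formula reads
\[
H^2_u(G,M)=\ker\bigl(H^2(G,M)\to H^2(N,M)\bigr)\cap \bigcap_{i=0}^{2n-1}\ker\bigl(H^2(G,M)\to H^2(V_i,M)\bigr).
\]

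First I would bound the ``$N$-part'' via the 7-term Hochschild--Serre exact sequence (Theorem~\ref{thDHW}) for $N\triangleleft G$ of index $2$. A direct matrix calculation gives $M^N=\bZ\cdot v_0$ with $v_0:=x_1+\cdots+x_{2n}+y_1+y_2$, on which $\tau$ acts by $-1$, so $H^i(G/N,M^N)=H^i(C_2,\bZ_-)$ is $\bZ/2\bZ$ for odd $i$ and $0$ for even $i\ge 2$. Next I would compute $H^1(N,M)=\ker(\mathcal N_\sigma)/(\sigma-1)M$ explicitly: the norm $\mathcal N_\sigma$ vanishes on $M_1$ (since $\sigma^{2n}=-1$ there), and an explicit calculation on the $y$-part yields $H^1(N,M)\simeq\bZ/2\bZ$, generated by $[x_1]=[y_1-y_2]$. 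Since $\tau$ sends $x_1$ to $-x_{2n}\equiv x_1\pmod 2$, the induced $G/N$-action is trivial, so $H^1(G/N,H^1(N,M))\simeq\bZ/2\bZ$. The relevant portion of the 7-term sequence then reads $0\to H^2(G,M)_1\to\bZ/2\bZ\to\bZ/2\bZ$; showing the last map vanishes gives $H^2(G,M)_1\simeq\bZ/2\bZ$, and in particular the upper bound $H^2_u(G,M)\hookrightarrow\bZ/2\bZ$.

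The main obstacle is to prove this upper bound is attained, i.e.\ that the unique non-trivial class $\xi\in H^2(G,M)_1$ also restricts trivially to every $V_i$. The plan is to fix an explicit $2$-cocycle representative of $\xi$ arising from the transgression of $[x_1]\in H^1(N,M)$ and, for each $i$, exhibit a $1$-cochain on $V_i$ that trivializes $\text{res}_{V_i}(\xi)$. Via the filtration $0\subset M_1\subset M$, this reduces to computations of $H^2(V_i,M_1)$ and $H^2(V_i,M_2)$ together with control of the connecting map in the associated long exact sequence; on $M_1$ the central element $\sigma^{2n}$ acts as $-1$ while $\sigma^i\tau$ acts as an explicit signed permutation, and on the rank-two quotient $M_2$ the group $V_i$ acts by signed swaps of $\bar y_1,\bar y_2$, so both cohomology groups are small and enumerable. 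The crux is showing that the connecting contribution vanishes uniformly in $i$; the $n=1$ case recovers the $D_4$-lattice of GAP ID $(4,12,4,12)$ in Theorem~\ref{t1.5}(2) with $H^2_u\simeq\bZ/2\bZ$ already established, and the general argument should parallel this calculation while being uniform in $n$. The conclusion that $\bC(M)^G$ is not retract $\bC$-rational then follows from Definition~\ref{d1.12}.
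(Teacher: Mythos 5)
Your overall strategy coincides with the paper's: reduce to Saltman's formula over the maximal bicyclic subgroups $\langle\sigma\rangle$ and $\langle\sigma^{2n},\sigma^{i}\tau\rangle$, use the $7$-term Hochschild--Serre sequence for $N=\langle\sigma\rangle$ to pin down $\ker(\mathrm{res}:H^2(G,M)\to H^2(N,M))$, and filter $M$ by the rank-$2n$ sublattice $M'=\oplus_i\bZ x_i$. Your intermediate computations ($M^N=\bZ\cdot(x_1+\cdots+x_{2n}+y_1+y_2)$ with $\tau$ acting by $-1$, and $H^1(N,M)\simeq\bZ/2\bZ$ with trivial $G/N$-action) agree with the paper. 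However, the proposal stops exactly where the work begins, and there are two genuine gaps.

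First, the lower bound. From the $7$-term sequence you only get $H^2(G,M)_1\hookrightarrow\bZ/2\bZ$; to conclude $H^2(G,M)_1\simeq\bZ/2\bZ$ you must show the map $H^1(G/N,H^1(N,M))\to H^3(G/N,M^N)\simeq\bZ/2\bZ$ vanishes, and you give no argument for this. The paper avoids this differential entirely: it proves $H^2(G,M')\simeq\bZ/2\bZ$ (using $(M')^{N}=0$, so $H^2(G,M')\simeq H^1(G/N,H^1(N,M'))$) and then shows, via the long exact sequence of $0\to M'\to M\to M''\to 0$, that $H^2(G,M')\to H^2(G,M)$ is injective with image inside $\ker(\mathrm{res}_N)$; this \emph{exhibits} the nonzero class rather than chasing a spectral-sequence differential. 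Second, and more seriously, the restriction to the Klein four-subgroups. For $H=\langle\sigma^{2n},\sigma^{2i}\tau\rangle$ one has $H^2(H,M')=0$ and the vanishing is immediate, but for $H=\langle\sigma^{2n},\sigma^{2i+1}\tau\rangle$ one computes $H^2(H,M')\simeq\bZ/2\bZ\neq 0$, so the ``connecting contribution'' you hope vanishes does \emph{not} vanish for trivial reasons: one must prove that the connecting homomorphism $H^1(H,M/M')\to H^2(H,M')$ is \emph{surjective} (equivalently $H^2(H,M')\to H^2(H,M)$ is zero). In the paper this is done by writing down an explicit normalized $2$-cocycle $\alpha$ with values in $M'$ (e.g.\ $\alpha(\sigma\tau,\sigma\tau)=-x_1\notin 2M'$) and checking it is not a coboundary. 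Asserting that the general case ``should parallel'' the $n=1$ computation is not a proof; this explicit cocycle argument, uniform in $n$, is the mathematical core of the theorem and must be supplied.
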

\begin{proof}
As mentioned above, we will use the formula
\begin{align*}
H_u^2(G,M)=\bigcap_{H} {\rm Ker}(
{\rm res} : H^2(G,M)\rightarrow H^2(H,M))
\end{align*}
where $H$ runs over all the maximal bicyclic subgroups of $G$.
Note that there are three kinds of maximal bicyclic subgroups
$H$ of $G$: $H=\langle\sigma\rangle\simeq C_{4n}$,
$H=\langle\sigma^{2n},\sigma^{2i}\tau\rangle$ where $0\leq i\leq 2n$, and
$H=\langle\sigma^{2n},\sigma^{2i+1}\tau\rangle$ where $0\leq i\leq 2n$.
We will evaluate the subgroup
${\rm Ker}({\rm res}: H^2(G,M)\rightarrow H^2(H,M))$ for each of such
maximal bicyclic subgroups.
Define $M^\prime=\oplus_{1\leq i\leq 2n}\bZ\cdot x_i$ be the
$G$-sublattice of $M$.
We will prove $H_u^2(G,M)\simeq\bZ/2\bZ$ by showing\\
(a) $H^2(G,M^\prime)\simeq\bZ/2\bZ$, and\\
(b) the natural homomorphism $H^2(G,M^\prime)\rightarrow H^2(G,M)$
induces an isomorphism $H^2(G,M^\prime)\simeq H^2_u(G,M)$.\\

Step 1.
Let $N$ be a normal subgroup of $G$.
By Theorem \ref{thDHW}, we have the $7$-term exact sequence
\begin{align*}
0 &\to H^1(G/N,M^N) \to H^1(G,M)\to H^1(N,M)^{G/N} \to H^2(G/N,M^N) \\
&\to H^2(G,M)_1
\to H^1(G/N,H^1(N,M))
\to H^3(G/N,M^N)
\end{align*}
where $H^2(G,M)_1={\rm Ker}({\rm res}: H^2(G,M)\to H^2(N,M))$.

We recall a formula to evaluate $H^1(\pi,M)$ where
$\pi=\langle\sigma\rangle\simeq C_m$ and $M$ is a $\bZ[\pi]$-module.
Define a morphism ${\rm Norm}:M\rightarrow M$ by
$\mathrm{Norm}(x)=x+\sigma\cdot x+\sigma^2\cdot x+\cdots+\sigma^{m-1}\cdot x$
for any $x\in M$.
Let ${\rm Ker}(\mathrm{Norm})=\{x\in M:\mathrm{Norm}(x)=0\}$,
${\rm Image}(\sigma-1)=\{\sigma\cdot x-x:x\in M\}$.
If $\beta$ is a $1$-cocycle in $Z^1(\pi,M)$,
we denote by $[\beta]$ the associated cohomology class of $\beta$ in
$H^1(\pi,M)$.
The following result is well-known:
$H^1(\pi,M)\simeq{\rm Ker(Norm)}/{\rm Image}(\sigma-1)$.
The explicit isomorphism is given as follows:
For any $x\in {\rm Ker(Norm)}$,
define a $1$-cocycle $\beta_x$ by $\beta_x(\sigma)=x$,
$\beta_x(\sigma^i)=x+\sigma\cdot x+\cdots+\sigma^{i-1}\cdot x$
for $1\leq i\leq m$.
Then the isomorphism is given by $[\beta_x]\mapsto \overline{x}$.
For details, see Step 3 of the proof of Theorem 5.4 in
\cite[page 706]{HKKu}.

Now we turn to the group $\pi=\langle\sigma,\tau:\sigma^m=\tau^2=1,
\tau^{-1}\sigma\tau=\sigma^{-1}\rangle\simeq D_m$,
the dihedral group of order $2m$.
Then $\pi$ acts on $H^1(\langle\sigma\rangle,M)$ with
$\sigma$ acts on $H^1(\langle\sigma\rangle,M)$ trivially.
The action of $\tau$ on ${\rm Ker(Norm)}/{\rm Image}(\sigma-1)$ is
induced from that of $\tau$ on $H^1(\langle\sigma\rangle,M)$.
In particular, if $x\in {\rm Ker(Norm)}$ and $\beta$ is the $1$-cocyle
corresponding to $x$, i.e. $\beta=\beta_x$, then
${}^\tau\beta(\sigma)=\tau\cdot(\beta(\tau^{-1}\sigma\tau))
=\tau\cdot\beta(\sigma^{-1})=\tau(x+\sigma\cdot x+\cdots+\sigma^{n-2}\cdot x)
=-\tau\cdot(\sigma^{-1}\cdot x)$.
Note that $\tau\cdot(\sigma^{-1}\cdot x)=\sigma\cdot(\tau\cdot x)=[\sigma\cdot(\tau\cdot x)-\tau\cdot x]+\tau\cdot x$.
It follows that the cohomology class $[{}^\tau\beta]$ corresponds to
$-\overline{\tau\cdot x}\in {\rm Ker(Norm)}/{\rm Image}(\sigma-1)$.

Similarly, if $\pi=\langle\sigma,\tau:\sigma^2=\tau^2=1, \sigma\tau=\tau\sigma\rangle$ is the Klein four group and ${\rm Norm}(x)=x+\sigma\cdot x$ is the norm
map associated to the subgroup $\langle\sigma\rangle$ of $\pi$, then for any $x\in {\rm Ker(Norm)}$,
$\tau\cdot\overline{x}=\overline{\tau\cdot x}\in {\rm Ker(Norm)}/{\rm Image}(\sigma-1)$.\\

Step 2.
{}From now on till the end of the proof of this theorem,
$G=\langle\sigma,\tau:\sigma^{4n}
=\tau^2=1,\tau^{-1}\sigma\ta=\sigma^{-1}\rangle\simeq D_{4n}$ and $M$ is the $G$-lattice
defined in the theorem.

We claim that the natural map $H^2(G,M^\prime)\rightarrow H^2(G,M)$
is injective; in fact, this injection induces an isomorphism
$H^2(G,M^\prime)\xrightarrow{\sim}{\rm Ker}({\rm res}:H^2(G,M)\rightarrow H^2(\langle\sigma\rangle,M))$.
Moreover, $H^2(G,M^\prime)\simeq\bZ/2\bZ$.

(i) We will prove that $H^2(G,M^\prime)\rightarrow H^2(G,M)$ is injective.

Define $M^{\prime\prime}=M/M^\prime=\bZ\cdot y_1\oplus\bZ\cdot y_2$ where
$\sigma:y_1\leftrightarrow y_2$, $\tau:y_1\mapsto -y_2, y_2\mapsto -y_1$
(note that these $y_1,y_2\in M^{\prime\prime}$ are the images of
$y_1,y_2\in M$).

The exact sequence $0\rightarrow M^\prime\rightarrow M\rightarrow M^{\prime\prime}\rightarrow 0$ gives rise to the exact sequence
\begin{align*}
H^0(G,M^{\prime\prime})\rightarrow H^1(G,M^\prime)\rightarrow
H^1(G,M)\rightarrow H^1(G,M^{\prime\prime})\rightarrow H^2(G,M^\prime)\rightarrow H^2(G,M)\rightarrow\cdots.
\end{align*}
Note that $H^0(G,M^{\prime\prime})={M^{\prime\prime}}^G=0$.
As to $H^1(G,M^{\prime\prime})$, we use the exact sequence
\begin{align*}
0\rightarrow H^1(G/\langle\sigma\rangle, {M^{\prime\prime}}^{\langle\sigma\rangle})\rightarrow H^1(G,M^{\prime\prime})\rightarrow H^1(\langle\sigma\rangle,M^{\prime\prime})^{G/\langle\sigma\rangle}.
\end{align*}
Apply Step 1.
Note that $1+\sigma+\sigma^2+\cdots+\sigma^{4n-1}(y_i)=2n(y_1+y_2)$.
Hence $H^1(\langle\sigma\rangle,M^{\prime\prime})=0$.
It follows that $H^1(G, M^{\prime\prime})\simeq H^1(G/\langle\sigma\rangle, {M^{\prime\prime}}^{\langle\sigma\rangle})=H^1(G/\langle\sigma\rangle, \bZ\cdot(y_1+y_2))\simeq\bZ/2\bZ$.

Now we compute $H^1(G,M^\prime)$.
Because $\sigma^{2n}:x_i\mapsto -x_i$ for $1\leq i\leq 2n$,
it follows that ${M^\prime}^{\langle\sigma\rangle}=0$.
{}From the sequence
\begin{align*}
H^1(G/\langle\sigma\rangle, {M^{\prime}}^{\langle\sigma\rangle})\rightarrow H^1(G, M^{\prime})\rightarrow H^1(\langle\sigma\rangle, M^{\prime})^{G/\langle\sigma\rangle}\rightarrow H^2(G/\langle\sigma\rangle, {M^\prime}^{\langle\sigma\rangle}),
\end{align*}
we find that $H^1(G, M^\prime)\simeq H^1(\langle\sigma\rangle, M^\prime)^{G/\langle\sigma\rangle}$.
By Step 1, $H^1(\langle\sigma\rangle, M^\prime)\simeq{\rm Ker(Norm)}/{\rm Image}(\sigma-1)=\langle x_i:1\leq i\leq 2n\rangle/\langle x_1-x_2,\ldots,x_{2n-1}-x_{2n},x_{2n}+x_1\rangle\simeq\bZ/2\bZ\cdot \overline{x_1}$.
The action of $G/\langle\sigma\rangle\simeq\langle\tau\rangle$ is given by
$\tau\cdot\overline{x_1}=\overline{\tau\cdot x_1}=\overline{x_2}
=\overline{x_1}$ by Step 1 again.
Hence $H^1(G,M^\prime)\simeq H^1(\langle\sigma\rangle,M^\prime)^{G/\langle\sigma\rangle}\simeq\bZ/2\bZ$.

We turn to $H^1(G,M)$.
It is not difficult to verify that
$M^{\langle\sigma\rangle}=\bZ\cdot w$ where
$w=y_1+y_2+\sum_{1\leq i\leq 2n}x_i$.
Note that $\tau\cdot w=-w$.
{}From the exact sequence
\begin{align*}
0\rightarrow H^1(G/\langle\sigma\rangle, M^{\langle\sigma\rangle})\rightarrow H^1(G, M)\rightarrow H^1(\langle\sigma\rangle, M)^{G/\langle\sigma\rangle}\rightarrow H^2(G/\langle\sigma\rangle, M^{\langle\sigma\rangle}),
\end{align*}
we find that $H^1(G/\langle\sigma\rangle, M^{\langle\sigma\rangle})
=\bZ/2\bZ\cdot\overline{w}\simeq\bZ/2\bZ$,
$H^2(G/\langle\sigma\rangle, M^{\langle\sigma\rangle})
\simeq \widehat H^0(G/\langle\sigma\rangle, \bZ\cdot w)=0$,
$H^1(\langle\sigma\rangle, M)={\rm Ker(Norm)}/{\rm Image}(\sigma-1)
=\langle x_i,y_1-y_2:1\leq i\leq 2n\rangle/\langle x_1-x_2,\ldots, x_{2n-1}-x_{2n},x_{2n}+x_1,y_1-y_2+x_1\rangle\simeq \bZ/2\bZ\cdot\overline{x_1}$
with $\tau\cdot\overline{x_1}=\overline{x_1}$.
Hence $H^1(\langle\sigma\rangle, M)^{G/\langle\sigma\rangle}\simeq\bZ/2\bZ$
.
It follows that $H^1(G,M)$ is an abelian group of over $4$.

Now, in the exact sequence
\begin{align*}
 H^0(G,M^{\prime\prime})\rightarrow
H^1(G,M^\prime)\rightarrow
H^1(G,M)\xrightarrow{g_1}
H^1(G,M^{\prime\prime})\xrightarrow{g_2}
H^2(G,M^{\prime})\xrightarrow{g_3} H^2(G,M),
\end{align*}
we have $H^0(G,M^{\prime\prime})=0$, $H^1(G,M^\prime)\simeq\bZ/2\bZ\simeq
H^1(G,M^{\prime\prime})$, $|H^1(G,M)|=4$.
Thus $g_1$ is surjective and $g_2$ is the zero map.
It follows that $g_3$ is injective.

(ii) We will show that the above $g_3$ induces an injection of
$H^2(G,M^\prime)$ into
${\rm Ker}({\rm res}: H^2(G,M)\rightarrow H^2(\langle\sigma\rangle, M))$.

For this assertion, it suffices to show that the composite map
$H^2(G, M^\prime)\rightarrow H^2(G, M)\rightarrow H^2(\langle\sigma\rangle, M)$ is the zero map.
Consider the commutative diagram
\begin{align*}
\xymatrix@R=15pt@C=8pt{
& H^2(G,M^{\prime}) \ar[r] \ar[d]^{\rm res} & H^2(G,M) \ar[d]^{\rm res} \\
& H^2(\langle\sigma\rangle,M^{\prime}) \ar[r] & H^2(\langle\sigma\rangle,M).
}
\end{align*}
Since $H^2(\langle\sigma\rangle,M^\prime)\simeq\widehat H^0(\langle\sigma\rangle, {M^\prime}^{\langle\sigma\rangle})=0$
(remember that ${M^\prime}^{\langle\sigma\rangle}=0$, a fact verified in (i)),
it follows that $H^2(G,M^\prime)\rightarrow H^2(\langle\sigma\rangle, M)$
is the zero map.

(iii) We will show that $H^2(G,M^\prime)\simeq\bZ/2\bZ$.

Use the exact sequence
\begin{align*}
H^2(G/\langle\sigma\rangle, {M^\prime}^{\langle\sigma\rangle})&\to
L_1:={\rm Ker}({\rm res}:H^2(G,M^\prime)\to H^2(\langle\sigma\rangle, M^\prime))\\
&\to H^1(G/\langle\sigma\rangle, H^1(\langle\sigma\rangle, M^\prime))
\to H^3(G/\langle\sigma\rangle, {M^\prime}^{\langle\sigma\rangle}).
\end{align*}
As shown in (ii), $H^2(\langle\sigma\rangle, M^\prime)=0$.
Hence $L_1=H^2(G,M^\prime)$.
Since ${M^\prime}^{\langle\sigma\rangle}=0$, we find that
$H^2(G,M)\simeq H^1(G/\langle\sigma\rangle, H^1(\langle\sigma\rangle, M^\prime))$.

It remains to show that $H^1(G/\langle\sigma\rangle, H^1(\langle\sigma\rangle, M^\prime))\simeq\bZ/2\bZ$.
We have proved in (i) that $H^1(\langle\sigma\rangle, M^\prime)\simeq\bZ/2\bZ\cdot \overline{x_1}$ and $\tau\cdot\overline{x_1}=\overline{x_1}$.
Thus $H^1(G/\langle\sigma\rangle, H^1(\langle\sigma\rangle, M^\prime))=H^1(G/\langle\sigma\rangle, \bZ/2\bZ\cdot\overline{x_1})=\bZ/2\bZ\cdot \overline{x_1}\simeq \bZ/2\bZ$.

(iv)  We will show that $H^2(G, M^\prime)\rightarrow {\rm Ker}({\rm res}: H^2(G,M)\rightarrow H^2(\langle\sigma\rangle, M))$ is an isomorphism.

Call $L_2:={\rm Ker}({\rm res}:H^2(G,M)\rightarrow H^2(\langle\sigma\rangle, M))$.

By (ii), $H^2(G,M^\prime)$ is a subgroup of $L_2$.
Thus $|L_2|\geq 2$.
We will show that $|L_2|\leq 2$.
This will finish the proof that $H^2(G,M^\prime)\simeq L_2$.

Use the exact sequence
\begin{align*}
H^2(G/\langle\sigma\rangle, M^{\langle\sigma\rangle})\to
L_2:={\rm Ker}({\rm res}:H^2(G,M)\to H^2(\langle\sigma\rangle, M))
\to H^1(G/\langle\sigma\rangle, H^1(\langle\sigma\rangle, M)).
\end{align*}
We have proved in (i) that
$M^{\langle\sigma\rangle}=\bZ\cdot w$ where
$w=y_1+y_2+\sum_{1\leq i\leq 2n}x_i$ and $\tau\cdot w=-w$.
Hence $H^2(G/\langle\sigma\rangle, M^{\langle\sigma\rangle})
=H^2(G/\langle\sigma\rangle, \bZ\cdot w)\simeq\widehat H^0(\langle\tau\rangle, \bZ\cdot w)=0$.
Recall also the result $H^1(\langle\sigma\rangle, M)=\bZ/2\bZ\cdot\overline{x_1}$ with $\tau\cdot\overline{x_1}=\overline{x_1}$ which is proved in (i).
We find that $H^1(G/\langle\sigma\rangle, H^1(\langle\sigma\rangle, M))
\simeq H^1(\langle\tau\rangle, \bZ/2\bZ\cdot\overline{x_1})\simeq\bZ/2\bZ$.
Hence the result. \\

Step 3.
We will show that the injection
$g_3:H^2(G, M^\prime)\rightarrow H^2(G,M)$ (see (i) of Step 2)
induces an injection of $H^2(G, M^\prime)$ into ${\rm Ker}({\rm res}:
H^2(G,M)\rightarrow H^2(\langle\sigma^{2n},\sigma^{2i}\tau\rangle, M))$
for any $1\leq i\leq 2n$.

Define $L_3={\rm Ker}({\rm res}: H^2(G,M)\rightarrow H^2(\langle\sigma^{2n},\tau\rangle, M))$.

We will show that $H^2(G, M^\prime)$ is mapped into $L_3$ via $g_3$.
The proof for the situation of $\langle\sigma^{2n},\sigma^{2i}\tau\rangle$
(where $1\leq i\leq 2n-1$) is similar and is omitted.

To show that $g_3(H^2(G, M^\prime))\subset L_3$,
it suffices to show that $H^2(G, M^\prime)\xrightarrow{g_3} H^2(G,M)
\xrightarrow{\rm res} H^2(\langle\sigma^{2n},\tau\rangle, M)$ is the zero map.
Consider the commutative diagram
\begin{align*}
\xymatrix@R=15pt@C=8pt{
& H^2(G,M^{\prime}) \ar[r] \ar[d]^{\rm res} & H^2(G,M) \ar[d]^{\rm res} \\
& H^2(\langle\sigma^{2n},\tau\rangle,M^{\prime}) \ar[r] & H^2(\langle\sigma^{2n},\tau\rangle,M).
}
\end{align*}
We will show that $H^2(\langle\sigma^{2n},\tau\rangle, M^\prime)=0$.

Use the exact sequence
\[
H^2(\langle\sigma^{2n},\tau\rangle/\langle\sigma^{2n}\rangle, {M^\prime}^{\langle\sigma^{2n}\rangle})\rightarrow H^2(\langle\sigma^{2n},\tau\rangle, M^\prime)\rightarrow H^1(\langle\sigma^{2n},\tau\rangle/\langle\sigma^{2n}\rangle, H^1(\langle\sigma^{2n}\rangle, M^\prime)).
\]
Note that $\sigma^{2n}:x_i\mapsto -x_i$.
Hence ${M^\prime}^{\langle\sigma^{2n}\rangle}=0$ and
$H^2(\langle\sigma^{2n},\tau\rangle/\langle\sigma^{2n}\rangle, {M^\prime}^{\langle\sigma^{2n}\rangle})=0$.
On the other hand, $H^1(\langle\sigma^{2n}\rangle, M^\prime)\simeq
{\rm Ker}({\rm Norm})/{\rm Image}(\sigma-1)\simeq\oplus_{1\leq i\leq 2n}\bZ/2\bZ\cdot\overline{x_i}$ where $\tau\cdot\overline{x_i}=\overline{x_{2n-i}}$.

Thus $H^1(\langle\sigma^{2n},\tau\rangle/\langle\sigma^{2n}\rangle, H^1(\langle\sigma^{2n}\rangle, M^\prime))\simeq H^1(\langle\tau\rangle, \oplus_{1\leq i\leq 2n}\bZ/2\bZ\cdot\overline{x_i})=0$. Done.\\

Step 4.
We will show that the injection $g_3:H^2(G,M^\prime)\rightarrow H^2(G,M)$
induces an injection of $H^2(G,M^\prime)$ into $L_4:={\rm Ker}({\rm res}:H^2(G,M)\rightarrow H^2(\langle\sigma^{2n}, \sigma\tau\rangle,M))$.
The situation for $\langle\sigma^{2n}, \sigma^{2i+1}\tau\rangle$
(where $1\leq i\leq 2n-1$) is similar and is omitted.

Once this step is finishes, we find that
$g_3(H^2(G, M^\prime))\subset H_u^2(G,M)$.
On the other hand, by Step 2, we know that $g_3(H^2(G,M^\prime))=
{\rm Ker}({\rm res}:H^2(G,M)\rightarrow H^2(\langle\sigma\rangle, M))\supset H_u^2(G,M)$.
Hence $H_u^2(G,M)=g_3(H^2(G,M^\prime))\simeq\bZ/2\bZ$.

To show that $G_3(H^2(G, M^\prime))\subset L_4$, it suffices to prove that
$H^2(G, M^\prime)\xrightarrow{g_3}H^2(G,M)\xrightarrow{\rm res} H^2(\langle\sigma^{2n},\sigma\tau\rangle, M)$ is the zero map.
Consider the commutative diagram
\begin{align*}
\xymatrix@R=15pt@C=8pt{
& H^2(G,M^{\prime}) \ar[r]^{g_3} \ar[d]^{\rm res} & H^2(G,M) \ar[d]^{\rm res} \\
& H^2(\langle\sigma^{2n},\sigma\tau\rangle,M^{\prime}) \ar[r]^{h_3} & H^2(\langle\sigma^{2n},\sigma\tau\rangle,M).
}
\end{align*}
We will show that $h_3$ is the zero map.
Once we obtain this result, the proof is finished .

For convenience, write $H:=\langle\sigma^{2n},\sigma\tau\rangle$
and denote $\lambda_1=\sigma^{2n}$, $\lambda_2=\sigma\tau$,
$\lambda_3=\lambda_1\lambda_2$.
{}From the exact sequence $0\rightarrow M^\prime\rightarrow M\rightarrow M^{\prime\prime}\rightarrow 0$, we have the exact sequence
\[
\cdots\rightarrow H^1(H, M^{\prime\prime})\xrightarrow{h_2} H^2(H, M^\prime)
\xrightarrow{h_3} H^2(H, M)\rightarrow\cdots.
\]
We will prove that the connecting homomorphism $h_2$ is surjective.
Thus $h_3$ is the zero map.

(i) We recall the action of $H=\langle\lambda_1,\lambda_2\rangle$ on $M$.
It is given by
\begin{align*}
\lambda_1 &: x_i\mapsto -x_i\ (1\leq i\leq 2n),
y_j\mapsto y_j+\sum_{1\leq i\leq 2n}x_i\ (j=1,2),\\
\lambda_2 &: x_1\mapsto x_1, x_i\mapsto -x_{2n+2-i}\ (2\leq i\leq 2n),
y_j\mapsto -y_j-x_1\ (j=1,2),\\
\lambda_3 &: x_1\mapsto -x_1, x_i\mapsto x_{2n+2-i}\ (2\leq i\leq 2n),
y_j\mapsto -y_j+\sum_{2\leq i\leq 2n}x_i\ (j=1,2).
\end{align*}
The action of $H$ on $M^{\prime\prime}$ is even simpler:
For $1\leq j\leq 2$,
\begin{align*}
\lambda_1 : y_j\mapsto y_j,\quad
\lambda_2=\lambda_3 : y_j\mapsto -y_j.
\end{align*}

(ii) We will show that $H^2(H,M^\prime)\simeq\bZ/2\bZ$.

By the exact sequence
\begin{align*}
H^2(H/\langle\lambda_2\rangle, {M^\prime}^{\langle\lambda_2\rangle})&\to
{\rm Ker}({\rm res}:H^2(H,M^\prime)\to H^2(\langle\lambda_2\rangle, M^\prime))\\
&\to H^1(H/\langle\lambda_2\rangle, H^1(\langle\lambda_2\rangle, M^\prime))
\to H^3(H/\langle\lambda_2\rangle, {M^\prime}^{\langle\lambda_2\rangle})\to\cdots,
\end{align*}
since ${M^\prime}^{\langle\lambda_2\rangle}=0$, we find that
$H^2(H,M^\prime)\simeq H^1(H/\langle\lambda_2\rangle, H^1(\langle\lambda_2\rangle, M^\prime))$.

Using the same method as in Step 1, it is not difficult to prove
that $H^1(H/\langle\lambda_2\rangle, H^1(\langle\lambda_2\rangle, M^\prime))
\simeq\bZ/2\bZ$.

(iii) We will show that $H^1(H, M^{\prime\prime})\simeq (\bZ/2\bZ)^{\oplus 2}$
and find explicitly a $1$-cocycle $\gamma:H\rightarrow M^{\prime\prime}$
such that $[\gamma]\neq 0$ in $H^1(H,M^{\prime\prime})$.

{}From the exact sequence
\begin{align*}
0 \rightarrow H^1(H/\langle\lambda_2\rangle, {M^{\prime\prime}}^{\langle\lambda_2\rangle})\rightarrow
H^1(H, M^{\prime\prime})\xrightarrow{h_1}
H^1(\langle\lambda_2\rangle, {M^{\prime\prime}})^{H/\langle\lambda_2\rangle}
\rightarrow H^2(H/\langle\lambda_2\rangle, {M^{\prime\prime}}^{\langle\lambda_2\rangle})\rightarrow \cdots,
\end{align*}
since ${M^{\prime\prime}}^{\langle\lambda_2\rangle}=0$,
we find that $H^1(H,M^{\prime\prime})\xrightarrow{h_1} H^1(\langle\lambda_2\rangle, M^{\prime\prime})^{H/\langle\lambda_2\rangle}$ is an isomorphism.
Note that $h_1$ is the restriction map.

As in Step 1, it can be shown that $H^1(\langle\lambda_2\rangle, M^{\prime\prime})^{H/\langle\lambda_2\rangle}\simeq\bZ/2\bZ\cdot\overline{y_1}\oplus\bZ/2\bZ\cdot\overline{y_2}$.

The $1$-cochain $\overline{\gamma}:\langle\lambda_2\rangle\rightarrow M^{\prime\prime}$ defined by $\overline{\gamma}(1)=0$ and
$\overline{\gamma}(\lambda_2)=y_1$ is a $1$-cocycle and it corresponds to
the element $\overline{y_1}\in {\rm Ker}({\rm Norm})/{\rm Image}(\lambda_2-1)$.
We will lift the $1$-cocycle $\overline{\gamma}$ to a $1$-cocycle
$\gamma : H\rightarrow M^{\prime\prime}$.
Define $\gamma(1)=\gamma(\lambda_1)=0$, $\gamma(\lambda_2)=\gamma(\lambda_3)=y_1$.
It is easy to verify that $\gamma$ is a $1$-cocycle and its restriction to
$\langle\lambda_2\rangle$ is $\overline{\gamma}$.
Thus $[\gamma]\neq 0$.

(iv) We will find a $2$-cocycle $\alpha:H\times H\rightarrow M^\prime$
such that $h_2([\gamma])=[\alpha]\in H^2(H,M^\prime)$
(remember that $h_2$ is the connecting homomorphism).

By the standard method (see \cite[page 122]{HS}, for example),
the $2$-cocycle $\alpha$ can be found as follows.
For each $\lambda\in H$, find a preimage of $\gamma(\lambda)$ in $M$;
in other words, define a $1$-cochain $\beta:H\rightarrow M$ such that
$\beta(1)=0$ and the image of $\beta(\lambda)$ in $M^{\prime\prime}$
is $\gamma(\lambda)$.
Then define $\alpha:H\times H\rightarrow M$ by
\begin{align}
\alpha(\lambda_i,\lambda_j)=\lambda_i\cdot\beta(\lambda_j)-\beta(\lambda_i\lambda_j)+\beta(\lambda_i)\label{FormulaA}
\end{align}
where $\lambda_i, \lambda_j\in H$.
It can be shown that $\alpha(\lambda_i,\lambda_j)\in M^\prime$
and $\alpha$ is a $2$-cocycle, i.e. $[\alpha]\in H^2(H,M^\prime)$.

Return to the concrete case $\gamma:H\rightarrow M^{\prime\prime}$
where $\gamma(1)=\gamma(\lambda_1)=0$,
$\gamma(\lambda_2)=\gamma(\lambda_3)=y_1$.
We define $\beta:H\rightarrow M$ by $\beta(1)=\beta(\lambda_1)=0$,
$\beta(\lambda_2)=\beta(\lambda_3)=y_1$.

Remember that $\beta(\lambda_i)\in M$ (instead of $M^{\prime\prime})$.
By Formula (\ref{FormulaA}), we find that the normalized $2$-cocycle
$\alpha: H\times H\rightarrow M^\prime$ is given by
\begin{align*}
&\alpha(\lambda_1,\lambda_1)=0,\
\alpha(\lambda_2,\lambda_2)=-x_1,\
\alpha(\lambda_3,\lambda_3)=\sum_{2\leq i\leq 2n}x_i,\\
&\alpha(\lambda_1,\lambda_2)=\sum_{1\leq i\leq 2n}x_i,\
\alpha(\lambda_2,\lambda_1)=0,\
\alpha(\lambda_1,\lambda_3)=\sum_{1\leq i\leq 2n}x_i,\\
&\alpha(\lambda_3,\lambda_1)=0,\
\alpha(\lambda_2,\lambda_3)=-x_1,\
\alpha(\lambda_3,\lambda_2)=\sum_{2\leq i\leq 2n}x_i.
\end{align*}

(v) We will show that $[\alpha]\neq 0$ in $H^2(H,M^\prime)$.

Assume this result.
Since $H^2(H,M^\prime)\simeq\bZ/2\bZ$ by (ii), it follows that $H^2(H,M^\prime)$ is generated by $[\alpha]$.
As $[\alpha]=h_2([\gamma])$, we find that $h_2$ is surjective.

It remains to show that $\alpha$ is not a $2$-coboundary. Suppose not.
There is a normalized $1$-cochain $\delta:H\rightarrow M^\prime$
whose differential is $\alpha$.
Write $\delta(\lambda_i)=v_i\in M^\prime$.
Then the differential of $\delta$ is given by
\begin{align*}
(\lambda_i,\lambda_j)\mapsto\lambda_i\cdot v_j-\delta(\lambda_i\lambda_j)+v_i.
\end{align*}

Write $v_2=\sum_{1\leq i\leq 2n}c_ix_i$. Because $\delta$ is a normalized $1$-cochain, we have $\delta (\lambda^2)= \delta(1)=0$. Thus the differential of $\delta$ for $(\lambda_2,\lambda_2)$ is given by $(\lambda_2,\lambda_2)\mapsto\lambda_2\cdot (\sum_{1\leq i\leq 2n}c_ix_i)+\sum_{1\leq i\leq 2n}c_ix_i=2c_1x_1$.
If $\alpha$ is the differential of $\delta$, we get
$\alpha(\lambda_2,\lambda_2)=2c_1x_1$.
But we have shown that $\alpha(\lambda_2,\lambda_2)=-x_1\not\in 2M^\prime$.
Hence $[\alpha]\neq 0$ in $H^2(H,M^\prime)$.
\end{proof}

Now we turn to the quasi-dihedral group of order $16n$,
$QD_{8n}=\langle \sigma,\tau: \sigma^{8n}=\tau^2=1,
\tau^{-1}\sigma\tau=\sigma^{4n-1}\rangle$
where $n$ is any positive integer.
Note that the subgroup $\langle\sigma^2,\sigma\tau\rangle$ of $QD_{8n}$
satisfies the relations $(\sigma^2)^{4n}=(\sigma\tau)^4=1$,
$(\sigma^2)^{2n}=(\sigma\tau)^2$ and
$(\sigma\tau)^{-1}\cdot\sigma^2\cdot(\sigma\tau)=(\sigma^2)^{-1}$.
Thus the generalized quaternion group of order $8n$,
denoted by $Q_{8n}$, may also be defined as the subgroup
$\langle\sigma^2,\sigma\tau\rangle$ of $QD_{8n}$.

\begin{remark}
Some authors define the quasi-dihedral groups for $2$-groups only. However, we allow the order of quasi-dihedral groups may be $16, 32, 48, 64, 80,$ etc..
\end{remark}


\begin{definition}\label{defQDQ}
(1) Let $n$ be any positive integer and $G=\langle \sigma,\tau: \sigma^{8n}=\tau^2=1,
\tau^{-1}\sigma\tau=\sigma^{4n-1}\rangle\simeq QD_{8n}$
be the quasi-dihedral group of order $16n$.
Define a $G$-lattice $M$ of rank $4n$ as follows:
$M=\oplus_{1\leq i\leq 2n}(\bZ\cdot x_i\oplus\bZ\cdot y_i)$
such that
\begin{align*}
\sigma:&\  x_1\mapsto y_1\mapsto x_2\mapsto y_2\mapsto\cdots\mapsto
x_{2n}\mapsto y_{2n}\mapsto -x_1,\\
\tau:&\ x_1\mapsto (x_1-y_1)+(x_2-y_2)+\cdots+(x_{2n-1}-y_{2n-1})+x_{2n},\\
&\ y_1\mapsto (x_1-y_1)+(x_2-y_2)+\cdots+(x_{2n-1}-y_{2n-1})+y_{2n},\\
&\ x_i\mapsto (x_1-y_1)+\cdots+(x_{2n-i}-y_{2n-i})+x_{2n+1-i}-(x_{2n+2-i}-y_{2n+2-i})-\cdots-(x_{2n}-y_{2n}),\\
&\ y_i\mapsto (x_1-y_1)+\cdots+(x_{2n-i}-y_{2n-i})+y_{2n+1-i}-(x_{2n+2-i}-y_{2n+2-i})-\cdots-(x_{2n}-y_{2n}),\\
&{\rm where}\ 2\leq i\leq 2n-1,\\
&\ x_{2n}\mapsto x_1-(x_2-y_2)-\cdots-(x_{2n}-y_{2n}),\\
&\ y_{2n}\mapsto y_1-(x_2-y_2)-\cdots-(x_{2n}-y_{2n}).
\end{align*}
In matrix forms with respect to the bases $x_1,y_1,\ldots,x_{2n},y_{2n}$,
the actions of $\sigma$ and $\tau$ are given as
\begin{align*}
\sigma=\left(
\begin{array}{cccc}
  & 1 &  &  \\
& & \ddots &\\
& & & 1 \\
-1 &  & &
\end{array}
\right),\
\tau=
\left(
\begin{array}{cccc}
 A & \cdots & A & I \\
 \vdots & \reflectbox{$\ddots$} & \reflectbox{$\ddots$} & -A \\
 A & \reflectbox{$\ddots$} & \reflectbox{$\ddots$} & \vdots \\
I & -A & \cdots & -A
\end{array}
\right),
\end{align*}
where
\begin{align*}
A=
\left(
\begin{array}{cc}
1 & -1\\
1 & -1
\end{array}
\right),\
I=
\left(
\begin{array}{cc}
1 & 0\\
0 & 1
\end{array}
\right)\in GL_2(\bZ).
\end{align*}
(2) Let $\widehat G=\langle\sigma^2,\sigma\tau\rangle\simeq Q_{8n}\leq G$
be the generalized quaternion group of order $8n$ where $n$ is any positive integer.
Define a $G$-lattice $\widehat M$ by
the restriction of $G$-lattice $M$ to $\widehat G$, i.e.
$\widehat M={\rm Res}^G_{\widehat G}(M)$.
\end{definition}
%

The following theorem is a generalization
of the cases $QD_8$ and $Q_8$
of the GAP IDs (4,32,3,2) and (4,32,1,2)
in Theorem \ref{t1.5} (2) respectively (see Table 1). When $G=QD_{8n}$ the quasi-dihedral group of order $16n$, note that $\bm{C}(G)$ is $\bm{C}$-rational by similar proof as in \cite[Proposition 2.6]{CHK}. Thus $B_0(G)=0$ and ${\rm Br}_u(\bm{C}(M)^G)=H^2_u(G, M)$. The similar result and argument are valid for the case $G=Q_{8n}$ the generalized quaternion group of order $8n$. In short, we will concentrate on $H^2_u(G, M)$ for both groups $G=QD_{8n}$ and $G=Q_{8n}$.

\begin{theorem}\label{thmQDQ}
{\rm (1)} Let $n$ be any positive integer and $G=\langle \sigma,\tau: \sigma^{8n}=\tau^2=1,
\tau^{-1}\sigma\tau=\sigma^{4n-1}\rangle\simeq QD_{8n}$
be the quasi-dihedral group of order $16n$.
Let $M$ be the $G$-lattice of rank $4n$ defined in {\rm Definition} $\ref{defQDQ}$.
Then $H_u^2(G,M)\simeq \bm{Z}/2\bm{Z}$.
Consequently, $\bm{C}(M)^{G}$ is not retract $\bm{C}$-rational.\\
{\rm (2)}
Let $\widehat G=\langle\sigma^2,\sigma\tau\rangle\simeq Q_{8n}\leq G$ be the
generalized quaternion group of order $8n$.
Let $\widehat M={\rm Res}^G_{\widehat G}(M)$ be the $\widehat G$-lattice
of rank $4n$ defined in {\rm Definition} $\ref{defQDQ}$.
Then $H_u^2(\widehat G,\widehat M)\simeq\bm{Z}/2\bm{Z}\oplus\bm{Z}/2\bm{Z}$.
Consequently,
$\bm{C}(\widehat M)^{\widehat G}$ is not retract $\bm{C}$-rational.
\end{theorem}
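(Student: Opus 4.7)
The plan is to reuse the strategy of Theorem \ref{thmD4n}: combine Saltman's formula, restricted as noted at the start of Section \ref{seHigher} to the finitely many \emph{maximal} bicyclic subgroups, with a Hochschild--Serre argument based on the cyclic normal subgroup $\langle\sigma\rangle$ (respectively $\langle\sigma^2\rangle$ inside $\widehat{G}$). As a preliminary reduction, I would first check that $\bm{C}(QD_{8n})$ and $\bm{C}(Q_{8n})$ are $\bm{C}$-rational by the same quasi-monomial argument as in \cite[Proposition 2.6]{CHK}; the cyclic index-two subgroup $\langle\sigma\rangle$ (respectively $\langle\sigma^2\rangle$) is exactly what makes the argument work. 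Hence $B_0(G) = B_0(\widehat{G}) = 0$, so that ${\rm Br}_u(\bm{C}(M)^G) = H_u^2(G, M)$ and likewise for $\widehat{G}$, and the theorem reduces to computing these subgroups of $H^2$.

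I would next enumerate the maximal bicyclic subgroups. Using $(\sigma^j\tau)^2 = \sigma^{4nj}$, the involutions of $QD_{8n}$ outside $\langle\sigma\rangle$ are exactly $\sigma^{2k}\tau$, while each $\sigma^{2k+1}\tau$ has order $4$. Two reflections $\sigma^{2k}\tau$ and $\sigma^{2l}\tau$ commute iff $k \equiv l \pmod{2n}$, and each $\sigma^{2k+1}\tau$ is self-centralizing, so the maximal bicyclic subgroups of $QD_{8n}$ split into three families: $\langle\sigma\rangle \cong C_{8n}$, the Klein four groups $K_k := \langle\sigma^{4n}, \sigma^{2k}\tau\rangle$ for $0 \le k < 2n$, and the cyclic groups $C^{(k)} := \langle\sigma^{2k+1}\tau\rangle \cong C_4$ for $0 \le k < 2n$. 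In $Q_{8n}$ the only involution is $\sigma^{4n}$, so the Klein four family disappears, leaving only $\langle\sigma^2\rangle \cong C_{4n}$ and the $C^{(k)}$ to control; this missing family is precisely what will enlarge $H_u^2$ from $\bm{Z}/2\bm{Z}$ to $(\bm{Z}/2\bm{Z})^{\oplus 2}$ in part (2).

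The heart of the proof is then the same four-step procedure as in Theorem \ref{thmD4n}. I would apply the $7$-term exact sequence of Theorem \ref{thDHW} to $\langle\sigma\rangle \triangleleft G$. Since $\sigma$ acts on $M$ as a single twisted $4n$-cycle with $\sigma^{4n} = -1$ on $M$, one has $M^{\langle\sigma\rangle} = 0$, and therefore $H^2(G, M) = H^2(G, M)_1 \cong H^1(G/\langle\sigma\rangle, H^1(\langle\sigma\rangle, M))$. Computing $H^1(\langle\sigma\rangle, M) \cong \ker({\rm Norm})/{\rm Image}(\sigma - 1)$ explicitly, and tracking the $\tau$-action via the formula from Step~1 of the proof of Theorem \ref{thmD4n}, should yield a single non-trivial $\tau$-fixed class and give $H^2(G, M) \cong \bm{Z}/2\bm{Z}$. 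To see this class is unramified, one must show its restriction to each $K_k$ and each $C^{(k)}$ vanishes. Vanishing on $\langle\sigma\rangle$ and on $C^{(k)}$ should follow from a direct cyclic-group calculation once the relevant norm condition is verified; the restriction to each Klein four $K_k$ requires the delicate construction of Steps~3--4 of the proof of Theorem \ref{thmD4n}, realizing the non-trivial class as the image of a connecting map $H^1(K_k, M/M') \to H^2(K_k, M')$ for an appropriately chosen $G$-sublattice $M' \subset M$, and proving surjectivity by exhibiting an explicit normalized $2$-cocycle that is not a coboundary. The main obstacle is exactly this last step: the matrix of $\tau$ in Definition \ref{defQDQ} contains a long band of rank-one $A$-blocks, so pinning down the correct sublattice $M'$ (a natural candidate is the kernel of the $G$-equivariant projection obtained by collapsing each $(x_i, y_i)$-pair to a single coordinate) and then carrying out the cocycle computation is substantially more intricate than in the dihedral case. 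For part (2), the same construction produces one $\bm{Z}/2\bm{Z}$ summand in $H_u^2(\widehat{G}, \widehat{M})$; a second generator arises because a class in $H^2(QD_{8n}, M)$ that was killed by restriction to a Klein four in $QD_{8n}$ has no such Klein four to kill it once restricted to $Q_{8n}$, and I would identify it directly by pulling back a suitable cocycle along ${\rm Res}^G_{\widehat{G}}$ and checking its triviality on $\langle\sigma^2\rangle$ and on each $C^{(k)}$.
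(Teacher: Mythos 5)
Your overall framework (maximal bicyclic subgroups, $B_0(G)=0$ via the cyclic index-two subgroup, and the Hochschild--Serre computation $H^2(G,M)\simeq H^1(G/\langle\sigma\rangle,H^1(\langle\sigma\rangle,M))\simeq\bm{Z}/2\bm{Z}$ from $M^{\langle\sigma\rangle}=0$) matches the paper, and your enumeration of the maximal bicyclic subgroups is correct. But the step you flag as ``the main obstacle'' --- proving that the restriction to each Klein four $K_k=\langle\sigma^{4n},\sigma^{2k}\tau\rangle$ vanishes --- is left unexecuted, and the route you propose for it (the connecting-map and explicit-cocycle machinery of Steps 3--4 of Theorem \ref{thmD4n}, with a to-be-determined sublattice $M'$) is not the right one. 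The structural feature you are missing is that in Definition \ref{defQDQ} the central involution $\sigma^{4n}$ acts as $-\mathrm{id}$ on \emph{all} of $M$ (unlike the dihedral case, where $\sigma^{2n}$ fixed a nonzero vector involving the $y_j$). This single fact makes every restriction map automatically zero because the \emph{target} groups vanish: for each cyclic maximal bicyclic $A$ (namely $\langle\sigma\rangle$ and the $C^{(k)}=\langle\sigma^{2k+1}\tau\rangle$, whose generators square to $\sigma^{4n}$) one has $M^A\subseteq M^{\langle\sigma^{4n}\rangle}=0$, hence $H^2(A,M)\simeq\widehat H^0(A,M)=0$; and for each Klein four $K_k$, Theorem \ref{thDHW} with $M^{\langle\sigma^{4n}\rangle}=0$ gives $H^2(K_k,M)\simeq H^1(K_k/\langle\sigma^{4n}\rangle,\,H^1(\langle\sigma^{4n}\rangle,M))=H^1(\langle\bar\tau\rangle, M/2M)$, which vanishes because after a determinant-one (mod $2$) change of basis $M/2M$ becomes a free $\bF_2[\langle\bar\tau\rangle]$-module. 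Thus $H^2_u(G,M)=H^2(G,M)$ with no sublattice or cocycle construction needed; your proposal as written does not close this step.

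Your mechanism for part (2) is also misdescribed. Since $H^2(G,M)\simeq\bm{Z}/2\bm{Z}$ and is entirely unramified, there is no class in $H^2(QD_{8n},M)$ ``killed by restriction to a Klein four'' that could resurface in $Q_{8n}$. The doubling comes from $H^2(\widehat G,\widehat M)$ itself: $\sigma^2$ splits $\widehat M$ into the two twisted $2n$-cycles on the $x_i$ and on the $y_i$, so $H^1(\langle\sigma^2\rangle,\widehat M)\simeq\bm{Z}/2\bm{Z}\cdot\overline{x_1}\oplus\bm{Z}/2\bm{Z}\cdot\overline{y_1}$ with $\sigma\tau$ acting trivially on both generators, whence $H^2(\widehat G,\widehat M)\simeq H^1(\widehat G/\langle\sigma^2\rangle,H^1(\langle\sigma^2\rangle,\widehat M))\simeq(\bm{Z}/2\bm{Z})^{\oplus2}$; the remaining maximal bicyclic subgroups $C^{(k)}$ again have $H^2(C^{(k)},\widehat M)=0$ because $(\sigma^{2k+1}\tau)^2=\sigma^{4n}$, so the whole group is unramified.
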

\begin{proof}
The proof is similar to that of Theorem \ref{thmD4n}.

(1) The case $G=\langle \sigma, \tau \rangle$.

There are three kinds of maximal bicyclic subgroups $H$ of $G$:
$H=\langle\sigma\rangle\simeq C_{8n}$,
$H=\langle\sigma^{4k+1}\tau\rangle\simeq C_4$ or
$\langle\sigma^{4k+3+4n}\tau\rangle\simeq C_4$, and
$H=\langle\sigma^{4n},\sigma^{2k}\tau\rangle\simeq C_2\times C_2$.

Note that $\sigma^{4n}:x_i\mapsto -x_i$, $y_i\mapsto -y_i$.
Thus $M^{\langle\sigma^{4n}\rangle}=0$.

By the same method as in the proof of Theorem \ref{thmD4n},
we find that $H^2(G,M)={\rm Ker}({\rm res}:H^2(G,M)\rightarrow H^2(\langle\sigma\rangle,M))$ and ${\rm Ker}({\rm res}:H^2(G,M)\rightarrow H^2(\langle\sigma\rangle, M))\simeq H^1(G/\langle\sigma\rangle, H^1(\langle\sigma\rangle,M))\simeq\bZ/2\bZ\cdot\overline{x_1}\simeq\bZ/2\bZ$.

Since $(\sigma^{4k+1}\tau)^2=\sigma^{4n}=(\sigma^{4k+3+4n}\tau)^2$,
it follows that $H^2(\langle\sigma^{4k+1}\tau\rangle,M)=0=H^2(\langle\sigma^{4k+3-4n}\tau\rangle, M)$.
Hence $H^2(G,M)={\rm Ker}({\rm res}:H^2(G,M)\rightarrow H^2(H,M))$
where $H=\langle\sigma^{4k+1}\tau\rangle$ or $\langle\sigma^{4k+3+4n}\tau\rangle$.

Finally, consider the situation ${\rm Ker}({\rm res}:H^2(G,M)\rightarrow H^2(\langle\sigma^{4n},\sigma^{2k}\tau\rangle,M)$.
We will show that $H^2(\langle\sigma^{4n},\tau\rangle, M)=0$.
The proof of $H^2(\langle\sigma^{4n},\sigma^{2k}\tau\rangle, M)=0$
(where $1\leq k\leq 4n-1$)
is similar; alternatively, since $\langle\sigma^{4n},\tau\rangle$ and
$\langle\sigma^{4n},\sigma^{2k}\tau\rangle$ are conjugate, we may apply
\cite[page 116, Proposition 3]{Se} to finish the proof.

Once the above assertion is proved, we find that
$H^2(G,M)={\rm Ker}({\rm res}:H^2(G,M)\rightarrow H^2(\langle\sigma^{4n},\sigma^{2k}\tau\rangle,M))$ again.
Hence $H^2(G,M)=H_u^2(G,M)$ and $H_u^2(G,M)\simeq\bZ/2\bZ$.

Now we proceed to prove $H^2(\langle\sigma^{4n},\tau\rangle,M)=0$.

Use the exact sequence
\begin{align*}
\cdots&\rightarrow H^2(\langle\sigma^{4n},\tau\rangle/\langle\sigma^{4n}\rangle,M^{\langle\sigma^{4n}\rangle})\rightarrow H^2(\langle\sigma^{4n},\tau\rangle,M)\rightarrow H^1(\langle\sigma^{4n},\tau\rangle/\langle\sigma^{4n}\rangle, H^1(\langle\sigma^{4n}\rangle,M))\\
&\rightarrow H^3(\langle\sigma^{4n},\tau\rangle/\langle\sigma^{4n}\rangle,M^{\langle\sigma^{4n}\rangle})\rightarrow\cdots.
\end{align*}
As $M^{\langle\sigma^{4n}\rangle}=0$, it follows that $H^2(\langle\sigma^{4n},\tau\rangle,M)\simeq H^1(\langle\sigma^{4n},\tau\rangle/\langle\sigma^{4n}\rangle,H^1(\sigma^{4n}\rangle,M))$.

Note that $H^1(\langle\sigma^{4n}\rangle,M)\simeq{\rm Ker}(1+\sigma^{4n})/{\rm Image}(1-\sigma^{4n})=\langle x_i,y_i:1\leq i\leq 2n\rangle/\langle 2x_i,2y_i: 1\leq i\leq 2n\rangle\simeq\oplus_{1\leq i\leq 2n}(\bZ/2\bZ\cdot\overline{x_i}\oplus\bZ/2\bZ\cdot\overline{y_i})$ where $\tau\cdot\overline{x_i}=\overline{\tau\cdot x_i}$, $\tau\cdot\overline{y_i}=\overline{\tau\cdot y_i}$ by Step 1
in the proof of Theorem \ref{thmD4n}.

Since the coefficient ring of $\oplus_{1\leq i\leq 2n}(\bZ/2\bZ\cdot\overline{x_i}\oplus\bZ/2\bZ\cdot\overline{y_i})$ is $\bZ/2\bZ$, we find that
$\overline{\tau\cdot x_i}=\overline{x_{2n+1-i}}+\sum_{j\neq 2n+1-i}\overline{x_j+y_j}$.
Similarly for $\overline{\tau\cdot y_i}$.
Define $\overline{u_i}=\overline{x_{2n+1-i}}+\sum_{1\leq j\leq 2n\atop j\neq 2n+1-i}(\overline{x_j}+\overline{y_j})$ for $1\leq i\leq n$, $\overline{v_i}=\overline{y_{2n+1-i}}+\sum{1\leq j\leq 2n\atop j\neq 2n+1-i}(\overline{x_j}+\overline{y_j})$ for $1\leq i\leq n$.
Then $\tau:\overline{x_i}\leftrightarrow \overline{u_i}$,
$\overline{y_i}\leftrightarrow\overline{v_i}$.

Let $T$ be the coefficient matrix of $x_1$, $y_1$, $x_2$, $y_2,\ldots, x_n$, $y_n$, $u_1$, $v_1$, $u_2$, $v_2,\ldots,u_n$, $v_n$ with respect to the basis
$x_1$, $y_1,\ldots,x_{2n}$, $y_{2n}$.
For example, when $n=2$, $T$ is of the form
\begin{align*}
\left(
\begin{array}{cccc|cccc}
1&0&0&0&0&0&0&0\\
0&1&0&0&0&0&0&0\\
0&0&1&0&0&0&0&0\\
0&0&0&1&0&0&0&0\\\hline
1&1&1&1&1&1&1&0\\
1&1&1&1&1&1&0&1\\
1&1&1&1&1&0&1&1\\
1&1&1&1&0&1&1&1
\end{array}
\right).
\end{align*}
It is easy to verify that ${\rm det}(T)=1\in\bZ/2\bZ$.
Thus $\langle\overline{x_1},\overline{y_1},\ldots,\overline{x_n},\overline{y_n},\overline{u_1},\overline{v_1},\ldots,\overline{u_n},\overline{v_n}\rangle=\langle\overline{x_1},\overline{y_1},\ldots,\overline{x_{2n}},\overline{y_{2n}}\rangle$ over $\bZ/2\bZ$.

Note that $H^1(\langle\tau\rangle,\bZ/2\bZ\cdot\overline{x_i}\oplus\bZ/2\bZ\cdot\overline{u_i})=0=H^1(\langle\tau\rangle,\bZ/2\bZ\cdot\overline{y_i}\oplus\bZ/2\bZ\cdot\overline{v_i})$ for $1\leq i\leq n$.

Hence $H^1(\langle\sigma^{4n},\tau\rangle/\langle\sigma^{4n}\rangle, H^1(\langle\sigma^{4n}\rangle, M))=0$.

\medskip
(2) Now consider the case $\widehat{G}=\langle\sigma^2,\sigma\tau\rangle$ and $\widehat{M}={\rm Res}^G_{\widehat{G}}(M)$.

 There are three kinds of maximal bicyclic subgroups $H$ of $\widehat{G}$:
$H=\langle\sigma^2\rangle\simeq C_{4n}$,
$H=\langle\sigma^{4k+1}\tau\rangle\simeq C_4$, and
$H=\langle\sigma^{4k+3}\tau\rangle\simeq C_4$.

It is not difficult to show that $H^2(\widehat{G}, \widehat{M})={\rm Ker}({\rm res}:H^2(\widehat{G}, \widehat{M})\rightarrow H^2(\langle\sigma^2\rangle, \widehat{M}))\simeq H^1(\widehat{G}/\langle\sigma^2\rangle, H^1(\langle\sigma^2\rangle, \widehat{M}))$.
But $H^1(\langle\sigma^2\rangle, \widehat{M})\simeq\bZ/2\bZ\cdot\overline{x_1}\oplus\bZ/2\bZ\cdot\overline{y_1}$ with $\sigma\tau:\overline{x_1}\mapsto\overline{x_1}$, $\overline{y_1}\mapsto\overline{y_1}$.
Hence $H^1(\widehat{G}/\langle\sigma^2\rangle, H^1(\langle\sigma^2\rangle, \widehat{M})\simeq\bZ/2\bZ\oplus\bZ/2\bZ$.

Moreover, $H^2(\langle\sigma^{4k+1}\tau\rangle, \widehat{M})=0=H^2(\langle\sigma^{4k+3}\tau\rangle, \widehat{M})$ because $(\sigma^{4k+1}\tau)^2=\sigma^{4n}=(\sigma^{4k+3}\tau)^2$ and $\widehat{M}^{\langle\sigma^{4n}\rangle}=0$.
Thus $H_u^2(\widehat{G}, \widehat{M})=H^2(\widehat{G}, \widehat{M})\simeq\bZ/2\bZ\oplus\bZ/2\bZ$.
\end{proof}

We will construct a $G$-lattice of rank $p(p-1)$ 
with ${\rm Br}_u(\bm{C}(M)^G)=H^2_u(G, M)\simeq \bZ/2\bZ$ 
where $G\simeq C_{p^2}\rtimes C_p$. 

\begin{definition}\label{defCp2p}
Let $p$ be an odd prime number,
$G=\langle \sigma,\tau: \sigma^{p^2}=\tau^p=1,
\tau^{-1}\sigma\tau=\sigma^{p+1}\rangle\simeq C_{p^2}\rtimes C_p$.
Define a $G$-lattice $M$ of rank $p(p-1)$ as follows:
$M=\oplus_{0\leq i\leq p-1}(\oplus_{1\leq j\leq p-1}\bZ\cdot x_j^{(i)})$
and $x_0^{(i)}=-\sum_{1\leq j\leq p-1}x_j^{(i)}$ such that
\begin{align*}
\sigma:&\  x_j^{(i)}\mapsto x_j^{(i+1)}\quad {\rm for}\quad 0\leq i\leq p-2,\
0\leq j\leq p-1,\\
&\ x_j^{(p-1)}\mapsto x_{j+1}^{(0)}\quad {\rm for}\quad 0\leq j\leq p-1,\\
\tau:&\ x_j^{(0)}\mapsto -\sum_{1\leq i\leq p-1}x_j^{(i)},\\
&\ x_j^{(k)}\mapsto -\sum_{0\leq i\leq k-1}x_{j+k+1}^{(i)}-\sum_{k+1\leq i\leq p-1}x_{j+k}^{(i)}\quad {\rm for}\quad 1\leq k\leq p-2,\ 0\leq j\leq p-1,\\
&\ x_j^{(p-1)}\mapsto -\sum_{0\leq i\leq p-2}x_j^{(i)}\quad {\rm for}\quad 0\leq j\leq p-1
\end{align*}
where the $i,i$ in $x_j^{(i)}$ are taken modulo $p$
(note that $\sum_{0\leq j\leq p-1}x_j^{(i)}=0$ for any $0\leq i\leq p-1$).

In matrix forms with respect to the bases $x_1^{(0)},\ldots,x_{p-1}^{(0)},x_1^{(1)},\ldots,x_{p-1}^{(1)},\ldots,x_1^{(p-1)},\ldots,x_{p-1}^{(p-1)}$,
the actions of $\sigma$ and $\tau$ are given as
\begin{align*}
\sigma=\left(
\begin{array}{cccc}
  & I &  &  \\
& & \ddots &\\
& & & I \\
A &  & &
\end{array}
\right),\
\tau=
\left(
\begin{array}{cccccc}
 O & -I & -I & \cdots & -I & -I  \\
 -A^2 & O & -A & \cdots & -A & -A\\
 -A^3 & -A^3 & O & -A^2 & \cdots & -A^2\\
 \vdots & \vdots & \ddots & \ddots & \ddots & \vdots \\
-A^{p-1} & -A^{p-1} & \cdots & -A^{p-1} & O & -A^{p-2}\\
-I & -I & \cdots & -I & -I & O
\end{array}
\right)
\end{align*}
where
\begin{align*}
A=
\left(
\begin{array}{cccc}
  & 1 &  &  \\
& & \ddots &\\
& & & 1 \\
-1 & -1 & \cdots & -1
\end{array}
\right)\in GL_{p-1}(\bZ)
\end{align*}
and $I$ (resp. $O$) $\in GL_{p-1}(\bZ)$
is the identity (resp. the zero) matrix.
\end{definition}

The following theorem gives a generalization
of the case $G=C_9\rtimes C_3$ of the CARAT ID (6,2865,3)
of Theorem \ref{t1.5} (4) (see Table 3-1-1). 
When $G$ is the group order $p^3$ in Theorem \ref{thmCp2p}, 
note that $\bm{C}(G)$ is $\bm{C}$-rational by \cite[Theorem 2.3]{CK}. 
Thus $B_0(G)=0$ and ${\rm Br}_u(\bm{C}(M)^G)=H^2_u(G, M)$.

\begin{theorem}\label{thmCp2p}
Let $p$ be an odd prime number and
$G=\langle \sigma,\tau: \sigma^{p^2}=\tau^p=1,
\tau^{-1}\sigma\tau=\sigma^{p+1}\rangle\simeq C_{p^2}\rtimes C_p$.
Let $M$ be the $G$-lattice of rank $p(p-1)$
defined in {\rm Definition} $\ref{defCp2p}$.
Then $H_u^2(G,M)\simeq\bZ/p\bZ$.
Consequently, $\bm{C}(M)^{G}$ is not retract $\bm{C}$-rational.\\
\end{theorem}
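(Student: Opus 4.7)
The proof will follow the pattern of Theorems \ref{thmD4n} and \ref{thmQDQ}: identify the maximal bicyclic subgroups of $G$, apply Saltman's formula (Theorem \ref{thSa4}) to express $H_u^2(G,M)$ as the intersection of restriction kernels, and compute each kernel using the Hochschild--Serre 7-term sequence of Theorem \ref{thDHW}. Since $Z(G)=[G,G]=\langle\sigma^p\rangle$ and $G/Z(G)\simeq C_p\times C_p$, every subgroup of order $p^2$ contains $\langle\sigma^p\rangle$; the maximal bicyclic subgroups of $G$ are therefore the cyclic $\langle\sigma\rangle\simeq C_{p^2}$ together with the $p$ elementary abelian subgroups $H_i:=\langle\sigma^p,\sigma^i\tau\rangle\simeq C_p\times C_p$ for $0\le i\le p-1$.

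The key structural observation is that the central element $\sigma^p$ preserves each block $M^{(i)}=\bigoplus_{j=1}^{p-1}\bm{Z}\cdot x_j^{(i)}$ and acts on it by the cyclic shift $x_j^{(i)}\mapsto x_{j+1}^{(i)}$ (using $x_0^{(i)}=-\sum_{j\ge 1}x_j^{(i)}$), so each $M^{(i)}$ is the augmentation ideal $I_p$ of $\bm{Z}[C_p]$. Since $\sigma$ cyclically permutes the blocks, $M|_{\langle\sigma\rangle}\simeq\mathrm{Ind}_{\langle\sigma^p\rangle}^{\langle\sigma\rangle}I_p$. Shapiro's lemma together with the standard computation from $0\to I_p\to\bm{Z}[C_p]\to\bm{Z}\to 0$ yield $M^{\langle\sigma\rangle}=0$, $H^1(\langle\sigma\rangle,M)\simeq\bm{Z}/p\bm{Z}$, and $H^2(\langle\sigma\rangle,M)=0$. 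Applying Theorem \ref{thDHW} with $N=\langle\sigma\rangle$ then gives $H^2(G,M)\simeq H^1(G/N,H^1(N,M))$, and the vanishing of $H^2(N,M)$ already forces the restriction $H^2(G,M)\to H^2(\langle\sigma\rangle,M)$ to be the zero map.

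To evaluate this $H^1$ and to treat the subgroups $H_i$ uniformly, I will study the quotient $V:=M/(\sigma^p-1)M$. Since the $\langle\sigma^p\rangle$-norm annihilates every copy of $I_p$, we have $H^1(\langle\sigma^p\rangle,M)=V$; a direct computation shows $V\simeq\bm{F}_p^p$ with basis the classes $\bar{x}^{(0)},\ldots,\bar{x}^{(p-1)}$, that $\sigma$ acts by the cyclic shift $P:\bar{x}^{(k)}\mapsto\bar{x}^{(k+1)}$, and that the intricate formula for $\tau$ in Definition \ref{defCp2p} collapses modulo $(\sigma^p-1)M$ to the single transvection $\tau\cdot\bar{x}^{(k)}=\bar{x}^{(k)}-s$, where $s=\sum_k\bar{x}^{(k)}$. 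Verifying this collapse from the explicit matrix of $\tau$ is the main technical step of the proof. Granted this, inflation--restriction embeds $H^1(N,M)$ into $V^{\langle\sigma\rangle}=\bm{F}_p\cdot s$; since $\tau\cdot s=s$, the induced $\langle\tau\rangle$-action on $H^1(N,M)$ is trivial, so $H^2(G,M)\simeq H^1(C_p,\bm{Z}/p\bm{Z})\simeq\bm{Z}/p\bm{Z}$.

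For each $H_i$, Theorem \ref{thDHW} applied with $\langle\sigma^p\rangle\triangleleft H_i$ (and the vanishings $M^{\langle\sigma^p\rangle}=0$, $H^2(\langle\sigma^p\rangle,M)=0$) reduces $H^2(H_i,M)$ to $H^1(\langle\sigma^i\tau\rangle,V)$, where $\sigma^i\tau$ acts on $V$ by the matrix $P^i-J$ with $J$ the all-ones matrix. A rank calculation shows that $(P^i-J)-I$ has $\bm{F}_p$-rank $p-1$ (its kernel is exactly $\bm{F}_p\cdot s$), so $\sigma^i\tau$ acts on $V$ as a single Jordan block at eigenvalue $1$; equivalently $V$ is free of rank one over $\bm{F}_p[\langle\sigma^i\tau\rangle]$, hence cohomologically trivial, forcing $H^2(H_i,M)=0$. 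Saltman's formula now yields $H_u^2(G,M)=H^2(G,M)\simeq\bm{Z}/p\bm{Z}$; since $|G|=p^3\le p^4$ we have $B_0(G)=0$ by Theorem \ref{thCK}, so ${\rm Br}_u(\bm{C}(M)^G)\simeq\bm{Z}/p\bm{Z}\neq 0$ and $\bm{C}(M)^G$ is not retract $\bm{C}$-rational.
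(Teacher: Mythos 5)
Your overall strategy matches the paper's (identify the $p+1$ maximal bicyclic subgroups, reduce everything to the $\langle\sigma\rangle$- or $\langle\sigma^p,\sigma^i\tau\rangle$-action on $V=H^1(\langle\sigma^p\rangle,M)\simeq\bF_p^p$ via the 7-term sequence), and several of your steps are genuinely cleaner than the paper's: the identification $M|_{\langle\sigma\rangle}\simeq{\rm Ind}_{\langle\sigma^p\rangle}^{\langle\sigma\rangle}I_p$ plus Shapiro's lemma replaces the paper's hands-on norm computations, and for $i\ne 0$ your Jordan-block argument (writing $V\simeq\bF_p[t]/(t^p)$ with $t=P^i-1$ and $s=t^{p-1}$, so that $\ker((P^i-J)-I)=\bF_p s$ and $V$ is free over $\bF_p[\langle\sigma^i\tau\rangle]$) is a slicker route to $H^2(H_i,M)=0$ than the paper's explicit $V=W$ lattice computation in its Step 3.

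However, there is a genuine gap at $i=0$. For $H_0=\langle\sigma^p,\tau\rangle$ the matrix of $\tau$ on $V$ is $I-J$, so $\tau-1$ acts as $-J$, which has rank $1$, not $p-1$; its kernel is the sum-zero hyperplane, of dimension $p-1$. Concretely $J^2=pJ=0$, so $I-J$ is unipotent with a single $2\times 2$ Jordan block and $p-2$ trivial blocks, and $V$ is very far from cohomologically trivial: one finds $H^2(H_0,M)\simeq H^1(\langle\tau\rangle,V)\simeq(\bZ/p\bZ)^{\oplus(p-1)}\ne 0$ (this is the paper's Step 2). Your claim that $H^2(H_i,M)=0$ for all $i$, and hence that the restriction conditions at the $H_i$ are vacuous, is therefore false for $i=0$, and the conclusion $H_u^2(G,M)=H^2(G,M)$ does not yet follow. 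To close the gap you must show directly that ${\rm res}:H^2(G,M)\to H^2(H_0,M)$ is nonetheless the zero map; the paper does this (its Step 4) by applying the 7-term sequence to the normal subgroup $H_0\triangleleft G$ to get ${\rm Ker}({\rm res})\simeq H^1(G/H_0,H^1(H_0,M))$, computing $H^1(H_0,M)\simeq V^{\tau}$ (the sum-zero hyperplane, isomorphic to $I_p\otimes\bF_p$ as a module over $G/H_0\simeq C_p$) and concluding that this kernel is $\bZ/p\bZ$, i.e.\ all of $H^2(G,M)$. (A minor shared imprecision: for $i\ne 0$ one has $(\sigma^i\tau)^p=\sigma^{ip}$, so $\langle\sigma^p,\sigma^i\tau\rangle$ is cyclic of order $p^2$ rather than elementary abelian; this does not affect either argument since only the subgroups themselves, not their isomorphism types, enter the restriction kernels.)
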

\begin{proof}
There are $p+1$ maximal bicyclic subgroups of $G$: $\langle\sigma\rangle\simeq C_{p^2}$, $\langle\sigma^p,\sigma^i\tau\rangle\simeq C_p\times C_p$ where $0\leq i\leq p-1$.

\medskip
Step 1. We will prove that $H^2(G,M)={\rm Ker}({\rm res}:H^2(G,M)\rightarrow H^2(\langle\sigma\rangle, M))\simeq\bZ/p\bZ$.

Note that $\sigma^p:x_j^{(i)}\mapsto x_{j+1}^{(i)}$ for $0\leq i,j\leq p-1$.
However, the linear map $y_1\mapsto y_2\mapsto \cdots\mapsto y_{p-1}\mapsto -y_1-y_2-\cdots-y_{p-1}$ has no eigen value equal to $1$.
Hence $M^{\langle\sigma^p\rangle}=0$.

Use the exact sequence
\begin{align*}
\cdots\to
&\ H^2(G/\langle\sigma^p\rangle, M^{\langle\sigma\rangle})\to
{\rm Ker}({\rm res}:H^2(G, M)\to H^2(\langle\sigma^p\rangle, M))\\
\to &\ H^1(G/\langle\sigma^p\rangle, H^1(\langle\sigma^p\rangle, M))
\to H^3(G/\langle\sigma^p\rangle, M^{\langle\sigma^p\rangle}).
\end{align*}
We find that $H^2(G,M)={\rm Ker}({\rm res}:H^2(G,M)\rightarrow H^2(\langle\sigma^p\rangle, M))\xrightarrow{\sim} H^1(G/\langle\sigma^p\rangle, H^1(\langle\sigma^p\rangle,M))$.

It remains to show that $H^1(G/\langle\sigma^p\rangle, H^1(\langle\sigma^p\rangle, M))\simeq\bZ/p\bZ$.

\medskip
By Step 1 in the proof of Theorem \ref{thmD4n}, $H^1(\langle\sigma^p\rangle,M)={\rm Ker}({\rm Norm})/{\rm Image}(\sigma^p-1)=\langle x_j^{(i)}:0\leq i,j\leq p-1\rangle/\langle x_j^{(i)}-x_{j+1}^{(i)}:0\leq i,j\leq p-1\rangle\simeq\oplus_{0\leq i\leq p-1}\bZ/p\bZ\cdot\overline{x_1^{(i)}}$
(note that $x_{p-1}^{(i)}-x_0^{(i)}=2x_{p-1}^{(i)}+ \sum_{1 \le j \le p-2}x_j^{(i)}$ and $p x_{p-1}^{(i)}\in {\rm Image}(\sigma^p-1))$ and
$\sigma\cdot\overline{x_1^{(i)}}=\overline{x_1^{(i+1)}}$,
$\tau\cdot\overline{x_1^{(i)}}=-\sum_{0\leq l\leq p-1\atop l\neq i}\overline{x_1^{(l)}}$.

Now we compute $H^1(G/\langle\sigma^p\rangle, \oplus_{0\leq i\leq p-1}\bZ/p\bZ\cdot\overline{x_1^{(i)}})$.
Call $M_1=\oplus_{0\leq i\leq p-1}\bZ/p\bZ\cdot\overline{x_1^{(i)}}$.
We use the exact sequence
\begin{align*}
0\rightarrow H^1(G/\langle\sigma\rangle, M_1^{\langle\sigma\rangle})\rightarrow H^1(G/\langle\sigma^p\rangle, M_1)\rightarrow H^1(\langle\sigma\rangle/\langle\sigma^p\rangle, M_1)^{G/\langle\sigma\rangle}.
\end{align*}
Define $w:=\sum_{0\leq i\leq p-1}\overline{x_1^{(i)}}$.
It is routine to verify that $M_1^{\langle\sigma\rangle}=\bZ/p\bZ\cdot w$ and
$\tau\cdot w=w$.
Hence $H^1(G/\langle\sigma\rangle, M_1^{\langle\sigma\rangle})\simeq\bZ/p\bZ\cdot w\simeq\bZ/p\bZ$.

On the other hand, it is also routine to show that if $v=\sum_{0\leq i\leq p-1}c_i\overline{x_1^{(i)}}\in\oplus_{0\leq i\leq p-1}\bZ/p\bZ\cdot\overline{x_1^{(i)}}$ and $(1+\sigma+\cdots+\sigma^{p-1})(v)=0$,
then $\sum_{0\leq i\leq p-1}c_i=0$.
It follows that $H^1(\langle\sigma\rangle,M_1)={\rm Ker}({\rm Norm})/{\rm Image}(\sigma-1)=\langle \overline{x_1^{(i)}}-\overline{x_1^{(i+1)}}:0\leq i\leq p-1\rangle/\langle \overline{x_1^{(i)}}-\overline{x_1^{(i+1)}}:0\leq i\leq p-1\rangle=0$.
Thus $H^1(G/\langle\sigma^p\rangle, M_1)\simeq H^1(G/\langle\sigma\rangle, M_1^{\langle\sigma\rangle})\simeq\bZ/p\bZ$.

\medskip
Step 2.
We will prove that $H^2(\langle\sigma^p,\tau\rangle, M)\simeq(\bZ/p\bZ)^{\oplus (p-1)}$ and $H^2(\langle\sigma^p,\sigma^i\tau\rangle, M)=0$ for $1\leq i\leq p-1$.

Remember that $M^{\langle\sigma^p\rangle}=0$.
Hence $H^2(\langle\sigma^p,\tau\rangle, M)\simeq H^1(\langle\sigma^p,\tau\rangle/\langle\sigma^p\rangle, H^1(\langle\sigma^p\rangle, M))$.

Note that $H^1(\langle\sigma^p\rangle, M)\simeq\oplus_{0\leq i\leq p-1}\bZ/p\bZ\cdot\overline{x_1^{(i)}}$ with $\tau\cdot\overline{x_1^{(i)}}=-\sum_{l\neq i}\overline{x_1^{(l)}}$, which is shown in the previous step.
Write $M_1=\oplus_{0\leq i\leq p-1}\bZ/p\bZ\cdot\overline{x_1^{(i)}}$.
We will prove that $H^1(\langle\sigma^p,\tau\rangle/\langle\sigma^p\rangle, M_1)\simeq (\bZ/p\bZ)^{\oplus (p-1)}$.

Define $w=\sum_{0\leq i\leq p-1}\overline{x_1^{(i)}}$.
Then $\tau\cdot\overline{x_1^{(i)}}=\overline{x_1^{(i)}}-w$.
Hence $(1+\tau+\cdots+\tau^{p-1})(\overline{x_1^{(i)}})=0$ for all $0\leq i\leq p-1$.
Thus $H^1(\langle\sigma^p,\tau\rangle/\langle\sigma^p\rangle, M_1)\simeq{\rm Ker}({\rm Norm})/{\rm Image}(\tau-1)=\langle\overline{x_1^{(i)}}:0\leq i\leq p-1\rangle/\langle w\rangle\simeq \oplus_{1\leq i\leq p-1}\bZ/p\bZ\cdot\overline{x_1^{(i)}}\simeq (\bZ/p\bZ)^{\oplus (p-1)}$.

We turn to $H^2(\langle\sigma^p,\sigma^i\tau\rangle, M)$ where $1\leq i\leq p-1$.
As before, $H^2(\langle\sigma^p,\sigma^i\tau\rangle, M)\simeq H^1(\langle\sigma^p,\sigma^i\tau\rangle/\langle\sigma^p\rangle, H^1(\langle\sigma^p\rangle,$ $M))\simeq H^1(\langle\sigma^p,\sigma^i\tau\rangle/\langle\sigma^p\rangle, M_1)$ where $M_1=\oplus_{0\leq i\leq p-1}\bZ/p\bZ\cdot\overline{x_1^{(i)}}$ and $\sigma\cdot\overline{x_1^{(i)}}=\overline{x_1^{(i+1)}}$, $\tau\cdot\overline{x_1^{(i)}}=\overline{x_1^{(i)}}-w$ with $w=\sum_{0\leq l\leq p-1}\overline{x_1^{(l)}}$ as before.
Then $\sigma^i\tau:\overline{x_1^{(l)}}\mapsto\overline{x_1^{(l+i)}}-w$.
Note that $(1+\sigma^i\tau+(\sigma^i\tau)^2+\cdots+(\sigma^i\tau)^{p-1})\cdot \overline{x_1^{(l)}}=w$.
Thus, if $v=\sum_{0\leq l\leq p-1}c_l\overline{x_1^{(l)}}\in M_1$,
then $(1+\sigma^i\tau+(\sigma^i\tau)^2+\cdots+(\sigma^i\tau)^{p-1})\cdot v=0$ if and only if $\sum_{0\leq l\leq p-1}c_l=0$.
Thus $H^1(\langle\sigma^p,\sigma^i\tau\rangle/\langle\sigma^p\rangle, M_1)\simeq{\rm Ker}({\rm Norm})/{\rm Image}(\sigma^i\tau-1)=\langle\overline{x_1^{(l)}}-\overline{x_1^{(0)}}:1\leq l\leq p-1\rangle/\langle\overline{x_1^{(l+i)}}-\overline{x_1^{(l)}}-w:0\leq l\leq p-1\rangle$.
In the next step, we will show that $\langle\overline{x_1^{(l)}}-\overline{x_1^{(0)}}:1\leq l\leq p-1\rangle=\langle\overline{x_1^{(l+i)}}-\overline{x_1^{(l)}}-w:0\leq l\leq p-1\rangle$.
Thus $H^1(\langle\sigma^p,\sigma^i\tau\rangle/\langle\sigma^p\rangle, M_1))=0$.

\medskip
Step 3.
Suppose that $V$ is a vector subspace of the vector space $\oplus_{0\leq l\leq p-1}\bZ/p\bZ\cdot\overline{x_1^{(l)}}$ over $\bZ/p\bZ$ defined as $V=\langle\overline{x_1^{(l)}}-\overline{x_1^{(0)}}:1\leq l\leq p-1\rangle$ and $W$ is a subspace of $V$ defined as $W=\langle\overline{x_1^{(l+i)}}-\overline{x_1^{(l)}}-w:0\leq l\leq p-1\rangle$ where $w=\sum_{0\leq l\leq p-1}\overline{x_1^{(l)}}$ and $i$ is an integer $1\leq i\leq p-1$ (${\rm mod} \, p$).
We will show that $V=W$.

For simplicity, we consider the case $i=1$.
The proof for the case $2\leq i\leq p-1$ is similar (by adjusting the indices, for example).

Write $y_l=\overline{x_1^{(l)}}-\overline{x_1^{(0)}}$ for $1\leq l\leq p-1$.
Then $\{y_1,\ldots,y_{p-1}\}$ is a basis of $V$ over $\bZ/p\bZ$.

Define $v_0=\overline{x_1^{(1)}}-\overline{x_1^{(0)}}-w$, $v_l=\overline{x_1^{(l+1)}}-\overline{x_1^{(l)}}-w$ (where $1\leq l\leq p-2$), $v_{p-1}=\overline{x_1^{(0)}}-\overline{x_1^{(p-1)}}-w$.
Then $W=\langle v_l:0\leq l\leq p-1\rangle$.
Note that $w=\sum_{0\leq l\leq p-1}\overline{x_1^{(l)}}=\sum_{1\leq l\leq p-1}(\overline{x^{(l)}}-\overline{x_0^{(0)}})=\sum_{1\leq l\leq p-1}y_l$.

Define $z_0=v_0=y_1-w=-y_2-y_3-\cdots -y_{p-1}$,
$z_1=-v_1+v_2=y_1-2y_2+y_3$,
$z_l=-v_l+v_{l+1}=y_l-2y_{l+1}+y_{l+2}$ (where $1\leq l\leq p-3$),
$z_{p-2}=-v_{p-2}+v_{p-1}=y_{p-2}-2y_{p-1}$.

Consider the quotient space $V/W$.
We will prove by induction that $y_k\equiv(p-k)y_{p-1}\ ({\rm modulo}\ W)$
for $1\leq k\leq p-1$.

The case $y_{p-1}\equiv y_{p-1}\ ({\rm modulo}\ W)$ is automatic.
The case $y_{p-2}\equiv 2y_{p-1}\ ({\rm modulo}\ W)$ follows from $y_{p-2}-2y_{p-1}=z_{p-2}\in W$.
Inductively, if $y_{k+l}\equiv (p-k-l)y_{p-1}\ ({\rm modulo}\ W)$ for $l=1$ and $2$, then we use the formula $z_k=y_k-2y_{k+1}+y_{k+2}\in W$.
Thus $y_k\equiv 2y_{k+1}-y_{k+2}\equiv (p-k)y_{p-1}\ ({\rm modulo}\ W)$.
Finally, we get $y_1\equiv (p-1)y_{p-1}\ ({\rm modulo}\ W)$.

Now we use the fact that $z_0=-y_2-y_3-\cdots-y_{p-1}\in W$.
We find that $-[(p-2)+(p-3)+\cdots+2+1]y_{p-1}\equiv 0\ ({\rm modulo}\ W)$,
i.e. $-((p-1)(p-2)/2)\cdot y_{p-1}\in W$.
Since $(p-1)(p-2)/2\neq 0$ in $\bZ/p\bZ$, we find that $y_{p-1}\in W$.
Once we know that $y_{p-1}\in W$, we know $y_l\in W$ for $1\leq l\leq p-2$ by
using the formulae $z_{p-2}, z_{p-3},\ldots, z_2, z_1$.
Done.

\medskip
Step 4.
We will show that the restriction map ${\rm res}:H^2(G,M)\rightarrow H^2(\langle\sigma^p,\sigma^i\tau\rangle, M)$ is the zero map where $0\leq i\leq p-1$.
Once we finish it, then we find that $H^2(G,M)=H_u^2(G,M)\simeq \bZ/p\bZ$
(by combining the result of Step 1).

When $1\leq i\leq p-1$, the map ${\rm res}:H^2(G,M)\rightarrow H^2(\langle\sigma^p,\sigma^i\tau\rangle, M)$ is the zero map because $H^2(\langle\sigma^p,\sigma^i\tau\rangle, M)=0$ by Step 2.

It remains to show that ${\rm res}:H^2(G,M)\rightarrow H^2(\langle\sigma^p,\tau\rangle, M)$ is the zero map also.
We will show that ${\rm Ker}({\rm res}:H^2(G,M)\rightarrow H^2(\langle\sigma^p,\tau\rangle, M))\simeq\bZ/p\bZ$.
Assume this.
Since $H^2(G,M)\simeq \bZ/p\bZ$ by Step 1, it follows that ${\rm res}:H^2(G,M)\rightarrow H^2(\langle\sigma^p,\tau\rangle, M)$ is the zero map.

We will use the exact sequence
\begin{align*}
\cdots\to
&\ H^2(G/\langle\sigma^p,\tau\rangle, M^{\langle\sigma^p,\tau\rangle})\to
{\rm Ker}({\rm res}:H^2(G, M)\to H^2(\langle\sigma^p,\tau\rangle, M))\\
\to &\ H^1(G/\langle\sigma^p,\tau\rangle, H^1(\langle\sigma^p,\tau\rangle, M))
\to H^3(G/\langle\sigma^p,\tau\rangle, M^{\langle\sigma^p,\tau\rangle}).
\end{align*}

Since $M^{\langle\sigma^p\rangle}=0$, it follows that ${\rm Ker}({\rm res}:H^2(G,M)\rightarrow H^2(\langle\sigma^p,\tau\rangle, M))\simeq H^1(G/\langle\sigma^p,\tau\rangle, H^1(\langle\sigma^p,\tau\rangle, M))$.

In Step 2, we have shown that $H^1(\langle\sigma^p\rangle,M)\simeq\oplus_{0\leq i\leq p-1}\bZ/p\bZ\cdot\overline{x_1^{(i)}}$ with $\sigma\cdot\overline{x_1^{(i)}}=\overline{x_1^{(i+1)}}$, $\tau\cdot\overline{x_1^{(i)}}=-\sum_{0\leq l\leq p-1\atop l\neq i}\overline{x_1^{(l)}}$.
Use the exact sequence
\begin{align*}
0\to H^1(\langle\sigma^p,\tau\rangle/\langle\sigma^p\rangle, M^{\langle\sigma^p\rangle})\to H^1(\langle\sigma^p,\tau\rangle, M)\to H^1(\langle\sigma^p\rangle,M)^{\langle\sigma^p,\tau\rangle/\langle\sigma^p\rangle}\to H^2(\langle\sigma^p,\tau\rangle/\langle\sigma^p\rangle, M^{\langle\sigma^p\rangle})\to \cdots.
\end{align*}
Since $M^{\langle\sigma^p\rangle}=0$,
it follows that $H^1(\langle\sigma^p,\tau\rangle, M)\xrightarrow{\sim} H^1(\langle\sigma^p\rangle, M)^{\langle\sigma^p,\tau\rangle/\langle\sigma^p\rangle}\simeq(\oplus_{0\leq i\leq p-1}\bZ/p\bZ\cdot\overline{x_1^{(i)}})^{\langle\tau\rangle}=\oplus_{1\leq \i\leq p-1}\bZ/p\bZ\cdot y_i$ where $y_i=\overline{x_1^{(i)}}-\overline{x_1^{(i-1)}}$ for $1\leq i\leq p-1$.

Let $\overline{\sigma}$ be the image of $\sigma$ in
$G/\langle\sigma^p,\tau\rangle$.
Note that $\overline{\sigma}:y_1\mapsto y_2\mapsto\cdots\mapsto y_{p-1}\mapsto -y_1-y_2-\cdots-y_{p-1}$.
Hence $(1+\overline{\sigma}+\overline{\sigma}^2+\cdots+\overline{\sigma}^{p-1})\cdot y_i=0$ for all $1\leq i\leq p-1$.
It follows that $H^1(G/\langle\sigma^p,\tau\rangle,H^1(\langle\sigma^p,\tau\rangle,M))\simeq H^1(\langle\overline{\sigma}\rangle,\oplus_{1\leq i\leq p-1}\bZ/p\bZ\cdot y_i)\simeq {\rm Ker}({\rm Norm})/{\rm Image}(\overline{\sigma}-1)=\langle y_i:1\leq i\leq p-1\rangle/\langle y_i-y_{i+1}:1\leq i\leq p-1\rangle\simeq\bZ/p\bZ\cdot\overline{y_1}\simeq\bZ/p\bZ$.
\end{proof}

Finally we will relate the lattices we consider with their flabby classes. Recall some notion of the theory of flabby (flasque) $G$-lattices developed by Endo and Miyata, Voskresnskii, Colliot-Th\'el\`ene and Sansuc \cite[Vo, CTS]{EM2}. The reader is referred to \cite{Sw} for a quick review of this theory.

\begin{defn} \label{d7.1}
Let $G$ be a finite group and $M$ be a $G$-lattice.
$M$ is called a {\it permutation lattice} if $M$ has a $\bm{Z}$-basis permuted by $G$.
A $G$-lattice $M$ is called an {\it invertible lattice} if it is a direct summand of some permutation lattice.
A $G$-lattice $M$ is called a {\it flabby lattice} if $H^{-1}(G',M)=0$ for all subgroup $G'$ of $G$;
it is called a {\it coflabby lattice} if $H^1(G',M)=0$ for all subgroups $G'$ of $G$.
For the basic properties of $G$-lattices, see \cite[Sw]{CTS}.
\end{defn}

\begin{defn} \label{d7.2}
Let $G$ be a finite group.
Two $G$-lattices $M_1$ and $M_2$ are called {\it similar}, denoted by $M_1\sim M_2$,
if $M_1\oplus Q_1\simeq M_2\oplus Q_2$ for some permutation lattices $Q_1$ and $Q_2$.
The flabby class monoid $F_G$ consists of all the similarity classes of flabby $G$-lattices under the addition described below.
Explicitly, if $M$ is a flabby $G$-lattice,
then $[M]\in F_G$ denotes the similarity class containing $M$;
the addition in $F_G$ is defined as: $[M_1]+[M_2]=[M_1\oplus M_2]$.
Note that $[M]=0$ in $F_G$,
i.e.\ $[M]$ is the zero element in $F_G$, if and only if $M\oplus Q$ is isomorphic to a permutation lattice where $Q$ is some permutation lattice.
See \cite{Sw} for details.
\end{defn}

\begin{defn} \label{d7.3}
Let $G$ be a finite group, $M$ be a $G$-lattice.
The $M$ have a flabby resolution,
i.e.\ there is an exact sequence of $G$-lattices:
$0\to M\to Q\to E\to 0$ where $Q$ is a permutation lattice and $E$ is a flabby lattice \cite[Lemma 1.1; CTS; Sw]{EM2}.

Although the above flabby resolution is not unique, the class $[E]\in F_G$ is uniquely determined by $M$.
Thus we define the flabby class of $M$, denoted as $[M]^{fl}$, by $[M]^{fl}=[E]\in F_G$ (see \cite{Sw}).
Sometimes we say that $[M]^{fl}$ is permutation or invertible if the class $[E]$ contains a permutation lattice or an invertible lattice.
\end{defn}

Recall the multiplicative fixed fields associated to algebraic tori. In Definition \ref{d1.2}, for any $G$-lattice $M$ and any field $k$, we define the field $k(M)$ on which $G$ acts multiplicatively; note that $G$ acts trivially on $k$. Suppose that we replace the field $k$ by another field $K$ such that $K$ is a finite Galois extension of a field $k$ with ${\rm Gal}(K/k)=G$. We may define the field $K(M)$ on which $G$ acts multiplicatively in a similar way; this time $G$ acts faithfully on $K$. In fact, the fixed field $K(M)^G$ is the function field of an algebraic torus which is defined over $k$,
split by $K$ and with character module $M$ (see \cite[Sw]{Vo}). 
A rationality criterion for algebraic tori is known:

\begin{theorem} \label{t7.4}
Let $K/k$ be a finite Galois extension with $G={\rm Gal}(K/k)$.
Let $M$ be a $G$-lattice.
\begin{enumerate}
\item[$(1)$] {\rm (\cite[Theorem 1.6; Vo; Le, Theorem 1.7]{EM1})}
The fixed field $K(M)^G$ is stably $k$-rational if and only if $[M]^{fl}=0$ in $F_G$.
\item[$(2)$] {\rm (\cite[Theorem 1.3]{Sa4})}
Assume that $k$ is an infinite field.
Then the fixed field $K(M)^G$ is retract $k$-rational if and only if $[M]^{fl}$ is invertible.
\end{enumerate}
\end{theorem}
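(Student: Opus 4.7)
The plan is to pass freely between $G$-lattices and algebraic $k$-tori split by $K$. To $M$ I attach the torus $T=\mathrm{Spec}(K[M])^G$; its function field is exactly $K(M)^G$. Choose a flabby resolution $0\to M\to P\to E\to 0$ with $P$ permutation and $E$ flabby. Dualising via $\mathrm{Hom}(-,\mathbb{G}_m)$ yields an exact sequence of $k$-tori $1\to S\to Q\to T\to 1$ in which $Q$ is quasi-trivial (hence an open subvariety of affine space over $k$, by Hilbert 90) and $S$ is a torus with flabby character module $E$.

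For sufficiency in (1): if $[E]=0$, there exist permutation lattices $Q_1,Q_2$ with $E\oplus Q_1\simeq Q_2$. Adding the trivial torus $Q_1^{\vee}$ to both sides of the sequence, $S\times Q_1^{\vee}$ becomes quasi-trivial, so $Q\times Q_1^{\vee}\to T$ exhibits $T\times Q_1^{\vee}$ as a torsor over $T$ under a quasi-trivial torus. Such torsors are trivial by Hilbert 90, so $T\times Q_1^{\vee}\simeq T\times_{\mathrm{bir}}(Q\times Q_1^{\vee})$ up to birational equivalence. Since the right-hand side is $k$-rational and $Q_1^{\vee}$ is quasi-trivial (hence rational), this gives the stable $k$-rationality of $K(M)^G$. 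The hard direction is necessity: one must show the class $[M]^{fl}\in F_G$ is a stable birational invariant of $T$, so that $K(M)^G\sim_{\text{st.bir.}}k(x_1,\dots,x_n)$ forces $[E]=0$. This is the Colliot-Th\'el\`ene--Sansuc / Voskresenskii theorem and is the main technical obstacle in (1); the proof uses the fact that for any open $U$ in a smooth $k$-variety $X$, the Picard group and the \'etale cohomology classes in question depend only on the stable birational class of $X$, and for $T$ itself $\mathrm{Pic}(T_{\overline k})$ and $H^1(G,\mathrm{Pic}(\overline{X}))$ encode $[E]$.

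For (2) the plan is to invoke Saltman's cohomological criterion (Theorem 1.3 of \cite{Sa3}): retract $k$-rationality of $K(M)^G$ is equivalent to the existence, on any smooth affine model, of a section of the evaluation map over a suitable local henselian $k$-algebra. Translating through the torus $T$, this is the statement that every $T$-torsor over a henselian local $k$-algebra $(R,\mathfrak m)$ with sufficiently many residue rational points is trivial. Using the flabby resolution $1\to S\to Q\to T\to 1$, the obstruction to lifting a section lies in $H^1$ with coefficients in $S$ evaluated on such henselian rings, and a diagram chase (together with the infiniteness of $k$ to guarantee the needed specialisation arguments) shows this obstruction vanishes iff $S$, equivalently $E$, is a direct summand of a quasi-trivial torus, i.e.\ $[E]$ is invertible. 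The hard part here is the Saltman characterisation itself; once granted, the translation to lattices is formal.

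The main obstacle in both parts is thus the same in spirit: one easy implication (sufficient condition on $[M]^{fl}$ gives rationality via Hilbert 90 and the no-name lemma), and one genuinely deep implication (the flabby class controls stable / retract birational behaviour). For this article I would simply quote \cite{EM1,EM2}, \cite{Vo}, \cite{Le}, and \cite{Sa3} for these deep inputs rather than reprove them, since the role of Theorem~\ref{t7.4} in the present paper is only to interpret the non-invertibility of $[M]^{fl}$ established by the $H^2_u$-computations as a genuine failure of retract rationality.
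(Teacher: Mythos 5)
Your proposal matches the paper exactly: Theorem~\ref{t7.4} is stated there purely as a quotation of the Endo--Miyata/Voskresenskii/Lenstra and Saltman theorems, with no proof supplied, which is precisely the course you settle on in your final paragraph. Your sketch of the underlying torus-theoretic arguments is a correct outline of the standard proofs (modulo the minor slip that generic triviality of the torsor gives $Q\times Q_1^{\vee}\sim_{\mathrm{bir}} T\times(S\times Q_1^{\vee})$ rather than the birational equivalence you wrote), but none of that material is needed for the role the theorem plays in this paper.
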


Here is an analogous result for $\bm{C}(M)^G$
(be aware that $G$ acts trivially on $\bC$).

\begin{theorem} \label{t7.5}
Let $G$ be a finite group and $M$ be a faithful $G$-lattice.
\begin{enumerate}
\item[$(1)$] {\rm (\cite[Theorem 5.4]{Ka})}
Assume that $[M]^{fl}$ is invertible. Then $\bC(M)^G$ is retract $\bC$-rational if and only if $\bC(G)$ is retract $\bC$-rational.
\item[$(2)$] Assume that $[M]^{fl}=0$ and $\bC(G)$ is stably $\bC$-rational. Then $\bC(M)^G$ is also stably $\bC$-rational.
\end{enumerate}
\end{theorem}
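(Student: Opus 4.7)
Part (1) is directly \cite[Theorem 5.4]{Ka}, so the plan below addresses (2). The idea is to bridge $\bC(M)^G$ and $\bC(G)$ through the multiplicative invariants of an intermediate permutation lattice. The hypothesis $[M]^{fl}=0$ in the flabby class monoid $F_G$ produces, by the standard properties recalled in \cite{Sw}, an exact sequence of $G$-lattices
\[
0\to M\to P_1\to P_2\to 0
\]
with $P_1$ and $P_2$ both permutation. Since $M$ is faithful, so is $P_1$. I will establish two stable isomorphisms over $\bC$, namely (a) $\bC(P_1)^G$ is stably isomorphic to $\bC(G)$, and (b) $\bC(P_1)^G$ is stably rational over $\bC(M)^G$. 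Granting both, the assumption that $\bC(G)$ is stably $\bC$-rational combined with (a) makes $\bC(P_1)^G$ stably $\bC$-rational, and (b) then transfers the property to $\bC(M)^G$.

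For (a), write $P_1=\bigoplus_i \bm{Z}[G/H_i]$; then $\bC(P_1)$ is the rational function field on variables indexed by $S=\bigsqcup_i G/H_i$, on which $G$ acts by pure permutation. This coincides with the $G$-action on the function field of the faithful linear permutation representation $V=\bigoplus_{s\in S}\bC\cdot x_s$. Applying the No-Name Lemma to the two faithful linear representations $V$ and the regular representation $V_{\mathrm{reg}}$ of $G$ yields the stable isomorphism $\bC(P_1)^G=\bC(V)^G\sim \bC(V_{\mathrm{reg}})^G=\bC(G)$ over $\bC$.

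For (b), the sequence splits as abelian groups since $P_2$ is $\bm{Z}$-free, so lifting a $G$-permutation basis of $P_2$ produces $y_1,\ldots,y_r\in P_1$ with $\bC(P_1)=\bC(M)(y_1,\ldots,y_r)$ and $G$-action of twisted permutation form $\sigma(y_j)=m_j(\sigma)\,y_{\sigma\cdot j}$, where $m_j(\sigma)\in M\subset\bC(M)^\times$. Because $G$ acts faithfully on $\bC(M)$, the extension $\bC(M)/\bC(M)^G$ is Galois with group $G$; by Shapiro's lemma, reducing orbit by orbit to the stabilizers $H_i$ appearing in $P_2$, together with Hilbert's Theorem 90 for each $H_i$, the class of the cocycle $(m_j(\sigma))$ in the corresponding $H^1$ vanishes. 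One can therefore find $u_j\in\bC(M)^\times$ so that the replacements $z_j:=u_j y_j$ satisfy $\sigma(z_j)=z_{\sigma\cdot j}$. This gives $\bC(P_1)=\bC(M)(z_1,\ldots,z_r)$ with $G$ acting on the $z_j$ by pure permutation, and the Voskresenskii--Endo--Miyata form of the no-name lemma over the $G$-field $\bC(M)$ yields $\bC(P_1)^G=\bC(M)^G(w_1,\ldots,w_r)$ for algebraically independent $w_1,\ldots,w_r$.

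The main obstacle will be the untwisting step in (b): one has to verify that the crossed-homomorphism condition on $(m_j(\sigma))$ coming from the extension $0\to M\to P_1\to P_2\to 0$ is exactly the hypothesis needed for Shapiro--Hilbert 90 to apply, and that the resulting $u_j$ combine into generators whose $G$-orbits match those of the $z_j$ rather than being shifted by $\bC(M)^G$-factors. Once this cohomological bookkeeping is pinned down, the remaining steps are standard.
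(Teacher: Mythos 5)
Your proposal is correct, but it takes a different route from the paper for part (2). The paper's proof is very short: it passes from $\bC(M)^G$ to the stably isomorphic field $\bC(M\oplus\bZ[G])^G=\bC(\bZ[G])(M)^G$ (no-name lemma applied to the permutation lattice $\bZ[G]$ over the faithful $G$-field $\bC(M)$), then observes that $\bC(\bZ[G])(M)^G$ is the function field of the torus over $\bC(G)$ split by $\bC(\bZ[G])$ with character lattice $M$, so the Endo--Miyata/Voskresenski\u{\i}/Lenstra criterion (Theorem \ref{t7.4}(1)) applies directly to the hypothesis $[M]^{fl}=0$. You instead unpack what is essentially the proof of that cited criterion: you extract from $[M]^{fl}=0$ a two-sided permutation resolution $0\to M\to P_1\to P_2\to 0$, identify $\bC(P_1)^G$ with $\bC(G)$ up to stable isomorphism via the no-name lemma for the faithful permutation representation, and show $\bC(P_1)^G$ is rational over $\bC(M)^G$ by untwisting the cocycle $(m_j(\sigma))$ orbit by orbit using Shapiro plus Hilbert 90 for $\bC(M)/\bC(M)^{H_i}$ (which is Galois precisely because $M$ is faithful). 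The cohomological bookkeeping you flag does close up: the twisting data is a $1$-cocycle valued in $\prod_i\mathrm{Maps}(G/H_i,\bC(M)^{\times})$, and $H^1(G,\mathrm{Maps}(G/H_i,\bC(M)^{\times}))\simeq H^1(H_i,\bC(M)^{\times})=0$, so the $z_j$ carrying a pure permutation action exist and the final no-name step gives $\bC(P_1)^G=\bC(M)^G(w_1,\ldots,w_r)$, from which stable rationality of $\bC(M)^G$ follows. What the paper's route buys is brevity, by black-boxing Theorem \ref{t7.4}(1); what yours buys is self-containedness, at the cost of redoing the standard Hilbert 90 argument that underlies that theorem.
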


\begin{proof}
For the proof of (2), consider the fixed field $\bC(M \oplus \bZ [G])^G$. It is not difficult to see that $\bC(M)^G$ is stably isomorphic to $\bC(\bZ [G])(M)^G$ (see the first paragraph of the proof in \cite[Proposition 2.2]{CHK}). Since $[M]^{fl}=0$, it follows that $\bC(\bZ [G])(M)^G$ is stably rational over $\bC(\bZ [G])^G=\bC(G)$ by Theorem \ref{t7.4}.
\end{proof}

\begin{remark}
(1) The $G$-lattices $M$ considered in {\rm Section} \ref{sePT} 
and Theorems \ref{thmD4n},
\ref{thmQDQ}, \ref{thmCp2p} are faithful lattices.  
Thus we may apply  Theorem \ref{t7.5} to them. 
As to the question whether $\bC(G)$ is retract $\bm{C}$-rational, 
the readers may consult \cite{Ka1} and \cite{Ka}. 
In particular, if $G$ is a finite group containing an abelian 
normal subgroup $N$ such that $G/N$ is a cyclic group, 
then $\bC(G)$ is retract $\bm{C}$-rational by \cite[Theorem 5.10]{Ka}. 
This result provides an alternative proof that $B_0(G)=0$ 
if $G$ is a dihedral group, a quasi-dihedral group, 
or a generalized quaternion group. \\
(2) In general, if $M$ is a $G$-lattice, define
$H= \{\tau \in G: \tau \, {\rm acts} \, {\rm trivially} \, {\rm on} \, M \}$.
Then  $[M]^{fl}$ is invertible when $M$ is regarded as a $G$-lattice if and only if so is $[M]^{fl}$ when $M$ is regarded as a $G/H$-lattice by \cite[page 180]{CTS}. But it is unnecessary that the retract rationality of $\bC(G)$ always implies that of $\bC(G/H)$ (see \cite[Theorem 3.1]{Sa1}). \\
(3) Again consider the general case where $M$ is any $G$-lattice. 
If it is assumed that all the Sylow subgroups of $G$ are cyclic groups, 
then $\bC(M)^G$ is retract $\bC$-rational by \cite[Theorem 6.6]{Ka}. 
Note that we do not assume that $[M]^{fl}$ is invertible, 
because it is a consequence of \cite[Theorem 1.5]{EM2}.
\end{remark}

\begin{corollary}\label{t7.6}
Let $M$ be a faithful $G$-lattice such that $\bC(G)$ is retract $\bC$-rational and $H^2_u (G, M) \neq 0$. Then $[M]^{fl}$ is not invertible. In particular, if $M$ is one of the $G$-lattices in Theorems \ref{thmD4n},
\ref{thmQDQ}, \ref{thmCp2p}, then $[M]^{fl}$ is not invertible.
\end{corollary}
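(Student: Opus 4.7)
The plan is to derive a contradiction from the assumption that $[M]^{fl}$ is invertible, using the unramified Brauer group as an obstruction that retract rationality cannot accommodate. Suppose, for the sake of contradiction, that $[M]^{fl}$ is invertible. Since $M$ is a faithful $G$-lattice and $\bC(G)$ is retract $\bC$-rational by hypothesis, Theorem \ref{t7.5}(1) yields that $\bC(M)^G$ is also retract $\bC$-rational.

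Next I would invoke the general principle recalled in Definition \ref{d1.12}: over an algebraically closed base field $k$, retract $k$-rationality of $K$ forces ${\rm Br}_{u,k}(K)=0$. Applied with $k=\bC$ and $K=\bC(M)^G$, this gives ${\rm Br}_u(\bC(M)^G)=0$. But by the direct sum decomposition provided by Theorem \ref{thSa4} (cf.\ Definition \ref{d1.4}), one has ${\rm Br}_u(\bC(M)^G)\simeq B_0(G)\oplus H^2_u(G,M)$, so the hypothesis $H^2_u(G,M)\neq 0$ forces ${\rm Br}_u(\bC(M)^G)\neq 0$, contradicting what was just deduced. Hence $[M]^{fl}$ cannot be invertible.

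For the ``in particular'' assertion it remains to verify the two hypotheses of the first statement for each of the lattices appearing in Theorems \ref{thmD4n}, \ref{thmQDQ}, and \ref{thmCp2p}. Faithfulness of the $G$-lattice $M$ (and of $\widehat M$ in the generalized quaternion case) can be read off directly from the explicit matrix generators in Definitions \ref{defD4n}, \ref{defQDQ}, and \ref{defCp2p}, since the displayed generators achieve the stated orders. Non-triviality of $H^2_u(G,M)$ is precisely the content of those three theorems. Retract $\bC$-rationality of $\bC(G)$ for $G=D_{4n}$, $QD_{8n}$, $Q_{8n}$ follows from the rationality of $\bC(G)$ already recorded in the paper in the paragraphs immediately preceding Theorems \ref{thmD4n} and \ref{thmQDQ} (via \cite[Proposition 2.6]{CHK} and its quasi-dihedral and quaternion analogues), and for $G=C_{p^2}\rtimes C_p$ of order $p^3$ it follows from \cite[Theorem 2.3]{CK} (or equivalently Theorem \ref{thCK}) that $\bC(G)$ is $\bC$-rational, hence a fortiori retract $\bC$-rational.

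There is no real technical obstacle here: the argument is pure assembly of results already in place. The only point worth emphasising is that the conclusion relies on $H^2_u(G,M)$ occurring as a direct summand of ${\rm Br}_u(\bC(M)^G)$ — so that its non-vanishing alone, without any assumption on $B_0(G)$, suffices to contradict retract rationality of $\bC(M)^G$ and thereby, through Theorem \ref{t7.5}(1), to rule out invertibility of $[M]^{fl}$.
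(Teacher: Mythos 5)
Your proposal is correct and follows essentially the same route as the paper: assume $[M]^{fl}$ invertible, deduce retract rationality of $\bC(M)^G$ from Theorem \ref{t7.5}(1), conclude ${\rm Br}_u(\bC(M)^G)=0$ via Definition \ref{d1.12}, and contradict $H^2_u(G,M)\neq 0$ since $H^2_u(G,M)$ sits inside ${\rm Br}_u(\bC(M)^G)$. The extra verification of the hypotheses for the lattices of Theorems \ref{thmD4n}, \ref{thmQDQ}, \ref{thmCp2p} is consistent with the remarks the paper places before those theorems.
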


\begin{proof}
If $[M]^{fl}$ is invertible, then we may apply Theorem \ref{t7.5}. Since $\bC(G)$ is retract $\bm{C}$-rational, so is $\bC(M)^G$. It follows that $H^2_u(G,M)=0$ by Definition \ref{d1.12}. This leads to a contradiction.
\end{proof}

\section{GAP computation: an algorithm to compute $H_u^2(G,M)$}\label{seGAP}

The following function {\tt H2nrM(g)} of GAP \cite{GAP} returns
$H_u^2(G,M)$ for $G$-lattice $M$ with $G\leq GL_n(\bm{Z})$
for ${\tt g}=G$.
The functions below, e.g. {\tt H2nrM(g)}, are available from\\
{\tt https://www.math.kyoto-u.ac.jp/\~{}yamasaki/Algorithm/MultInvField/res.gap}.\\

\begin{verbatim}
BlockList:= function(L,n)
    local l,m;
    l:=Length(L);
    m:=l/n;
    return List([1..n],x->L{[(x-1)*m+1..x*m]});
end;

IsBicyclic:= function(g)
    local f3,p;
    if not IsAbelian(g) then
        return false;
    fi;
    f3:=Filtered(Collected(Factors(Order(g))),x->x[2]>2);
    for p in f3 do
        if Length(AbelianInvariants(SylowSubgroup(g,p[1])))>2 then
            return false;
        fi;
    od;
    return true;
end;

FindGenFiniteAbelian:= function(g)
    local e,a,ga,iso;
    e:=AbelianInvariants(g);
    if Length(e)>1 then
        e:=SmithNormalFormIntegerMat(DiagonalMat(e));
        e:=List([1..Length(e)],x->e[x][x]);
        e:=Filtered(e,x->x>1);
    fi;
    a:=AbelianGroup(e);
    ga:=GeneratorsOfGroup(a);
    iso:=IsomorphismGroups(a,g);
    return List(ga,x->Image(iso,x));
end;

EltFiniteAbelian:= function(g,c)
    local gg,F,gF,hom,cF,e;
    gg:=GeneratorsOfGroup(g);
    F:=FreeGroup(Length(gg));
    gF:=GeneratorsOfGroup(F);
    hom:=GroupHomomorphismByImages(F,g,gF,gg);
    cF:=PreImagesRepresentative(hom,c);
    e:=List(gF,x->ExponentSumWord(cF,x));
    return e;
end;

CheckSNF:= function(m)
    local s,r;
    s:=SmithNormalFormIntegerMat(m);
    r:=Rank(s);
    if r=0 then
        return 0;
    else
        return s[r][r];
    fi;
end;

Z2value:= function(z,eg,gg)
    local og,zmat,i1,i2,i3,i,j2,j3,s,s1,s2;
    og:=Length(eg);
    zmat:=NullMat(og,og);
    for j2 in [1..Length(gg)] do
        i2:=Position(eg,gg[j2]);
        zmat{[1..og]}{[i2]}:=z{[1..og]}{[j2]};
    od;
    s:=gg;
    s1:=gg;
    s2:=[];
    repeat
        for j2 in s1 do
            i2:=Position(eg,j2);
            for j3 in gg do
                if not j2*j3 in Concatenation(s,s2) then
                    Add(s2,j2*j3);
                    i3:=Position(eg,j3);
                    i:=Position(eg,j2*j3);
                    for i1 in [1..og] do
                        zmat[i1][i]:=zmat[i1][i2]*j3
                            +zmat[Position(eg,eg[i1]*j2)][i3]-zmat[i2][i3];
                    od;
                fi;
            od;
        od;
        s1:=s2;
        s:=Union(s,s1);
        s2:=[];
    until s1=[];
    return zmat;
end;

H2nrM:= function(g)
    local d,gg,og,eg,h,gh,oh,eh,j1,j2,j,i1,i2,l0,l1,l2,m0,m1,m2,m,
        zero,sg,sh,h2g,h2gg,h2h,z,zmat,zmats,s,res,ress,
        Hg,Hgg,K,ga,iso,Hh,Hhg,hom,Kg;
    d:=Length(Identity(g));
    gg:=GeneratorsOfGroup(g);
    og:=Order(g);
    if gg=[] or og=1 then
        return rec(H2G:=[],H2Ggen:=[],H2nrM:=[],H2nrMgen:=[]);
    fi;
    eg:=Elements(g);
    l0:=[]; l1:=[]; l2:=[];
    j:=0;
    for j1 in eg do
        for j2 in gg do
            j:=j+1;
            Add(l0,[Position(eg,j2),j,Identity(g)]);
            Add(l1,[Position(eg,j1*j2),j,Identity(g)]);
            Add(l2,[Position(eg,j1),j,j2]);
        od;
    od;
    m0:=BlockMatrix(l0,og,og*Length(gg));
    m1:=BlockMatrix(l1,og,og*Length(gg));
    m2:=BlockMatrix(l2,og,og*Length(gg));
    m:=MatrixByBlockMatrix(m0-m1+m2);
    zero:=Flat(List([0..Order(gg[1])-2],
        x->List([1..d],y->(Position(eg,gg[1]^x)-1)*Length(gg)*d+y)));
    m:=NullspaceIntMat(m{[1..Length(m)]}{zero})*m;
    m:=LatticeBasis(m);
    z:=1;
    while m<>[] and z<=og*Length(gg) do
        zero:=[(z-1)*d+1..z*d];
        l0:=m{[1..Length(m)]}{zero};
        if CheckSNF(l0)=1 then
            m:=NullspaceIntMat(l0)*m;
        fi;
        z:=z+1;
    od;
    if m=[] then
        return rec(H2G:=[],H2Ggen:=[],H2nrM:=[],H2nrMgen:=[]);
    fi;
    sg:=SmithNormalFormIntegerMatTransforms(m);
    h2g:=Filtered(List([1..sg.rank],x->sg.normal[x][x]),y->y>1);
    m:=Inverse(sg.coltrans);
    h2gg:=List([sg.rank-Length(h2g)+1..sg.rank],x->m[x]);
    h2gg:=List(h2gg,x->BlockList(x,og));
    h2gg:=List(h2gg,x->List(x,y->BlockList(y,Length(gg))));
    zmats:=List(h2gg,x->Z2value(x,eg,gg));
    Hg:=AbelianGroup(h2g);
    Hgg:=GeneratorsOfGroup(Hg);
    K:=Hg;
    if ValueOption("fromperm")=true or ValueOption("FromPerm")=true then
        iso:=IsomorphismPermGroup(g);
        ga:=List(ConjugacyClassesSubgroups(Range(iso)),Representative);
        ga:=Filtered(ga,IsBicyclic);
        ga:=Concatenation(List(ga,x->ConjugateSubgroups(Image(iso),x)));
        ga:=List(ga,x->PreImage(iso,x));
    else
        ga:=List(ConjugacyClassesSubgroups(g),Representative);
        ga:=Filtered(ga,IsBicyclic);
        ga:=Concatenation(List(ga,x->ConjugateSubgroups(g,x)));
    fi;
    ga:=Filtered(ga,x->Order(x)>1);
    for h in ga do
        gh:=FindGenFiniteAbelian(h);
        oh:=Order(h);
        eh:=Elements(h);
        l0:=[]; l1:=[]; l2:=[];
        j:=0;
        for j1 in eh do
            for j2 in gh do
                j:=j+1;
                Add(l0,[Position(eh,j2),j,Identity(h)]);
                Add(l1,[Position(eh,j1*j2),j,Identity(h)]);
                Add(l2,[Position(eh,j1),j,j2]);
            od;
        od;
        m0:=BlockMatrix(l0,oh,oh*Length(gh));
        m1:=BlockMatrix(l1,oh,oh*Length(gh));
        m2:=BlockMatrix(l2,oh,oh*Length(gh));
        m:=MatrixByBlockMatrix(m0-m1+m2);
        sh:=SmithNormalFormIntegerMatTransforms(m);
        h2h:=Filtered(List([1..sh.rank],x->sh.normal[x][x]),y->y>1);
        if h2h<>[] then
            Hh:=AbelianGroup(h2h);
            Hhg:=GeneratorsOfGroup(Hh);
            ress:=[];
            for zmat in zmats do
                res:=List(eh,x->List(gh,
                    y->zmat[Position(eg,x)][Position(eg,y)]));
                res:=Flat(res)*sh.coltrans;
                res:=Product([1..Length(h2h)],
                    x->Hhg[x]^res[sh.rank-Length(h2h)+x]);
                Add(ress,res);
            od;
            hom:=GroupHomomorphismByImages(Hg,Hh,Hgg,ress);
            K:=Intersection(K,Kernel(hom));
            if Order(K)=1 then
                return rec(H2G:=h2g,H2Ggen:=h2gg,H2nrM:=[],H2nrMgen:=[]);
            fi;
        fi;
    od;
    Kg:=FindGenFiniteAbelian(K);
    return rec(H2G:=h2g,H2Ggen:=h2gg,H2nrM:=List(Kg,Order),
        H2nrMgen:=List(Kg,x->EltFiniteAbelian(Hg,x)));
end;
\end{verbatim}

\bigskip

\section{Tables: multiplicative invariant fields with non-trivial unramified Brauer groups}\label{seTables}

Table $1$: $M$ is indecomposable of rank $4$ ($5$ cases with $H_u^2(G,\bm{Q}/\bm{Z})=0$)\vspace*{2mm}\\

\end{longtable}




\end{document}